\def\titlerunning#1{\gdef\titrun{#1}}
\def\author#1{\gdef\autrun{\def\and{\unskip, }#1}\gdef\@author{#1}}
\def\address#1{{\def\and{\\\hspace*{18pt}}\renewcommand{\thefootnote}{}%
\footnote {#1}}%
\markboth{\autrun}{\titrun}}
\def\email#1{e-mail: #1}
\def\subjclass#1{{\renewcommand{\thefootnote}{}%
\footnote{\emph{Mathematics Subject Classification (2010):} #1}}}
\def\keywords#1{\par\medskip
\noindent\textbf{Keywords.} #1}
\newtheorem{thm}{Theorem}[section]
\newtheorem{lem}[thm]{Lemma}
\theoremstyle{definition}
\newtheorem{rem}[thm]{Remark}
\numberwithin{equation}{section}
\newcommand{\Sig}{{\Sigma}}
\newcommand{\PP}{{\mathbb P}}
\newcommand{\Gam}{{\Gamma}}
\newcommand{\R}{{\mathbb R}}
\newcommand{\Del}{{\Delta}}
\newcommand{\CL}{{\mathcal L}}
\newcommand{\bpp}{{\boldsymbol P}}
\newcommand{\alp}{{\alpha}}
\newcommand{\CV}{{\mathcal V}}
\newcommand{\proofend}{\hfill$\Box$\bigskip}
\newcommand{\K}{{\mathbb K}}
\newcommand{\CH}{{\mathcal CH}}
\newcommand{\sig}{{\sigma}}
\newcommand{\eps}{{\varepsilon}}
\newcommand{\CX}{{\mathcal X}}
\newcommand{\LIC}{\text{\rm left}}
\newcommand{\val}{{\mathop{val}}}
\newcommand{\Tor}{{\mathop{Tor}}}
\newcommand{\Val}{{\mathop{Val}}}
\newcommand{\SSS}{{\mathbb S}}
\newcommand{\del}{{\delta}}
\newcommand{\Id}{{\mathop{Id}}}
\newcommand{\Z}{{\mathbb Z}}
\newtheorem{proposition}[thm]{Proposition}
\newcommand{\bx}{{\boldsymbol p}}
\newcommand{\Pic}{{\mathop{Pic}}}
\newcommand{\bp}{{\boldsymbol z}}
\newcommand{\N}{{\mathbb N}}
\newcommand{\mt}{{\mathop{mt}}}
\newcommand{\bet}{{\beta}}
\newcommand{\bd}{{\boldsymbol d}}
\newcommand{\Idim}{{\mathop{idim}}}
\newcommand{\C}{{\mathbb C}}
\begin{document}


\baselineskip=17pt


\titlerunning{Welschinger invariants
of Del Pezzo surfaces}

\title{Welschinger invariants \\
of small non-toric Del Pezzo surfaces}

\author{Ilia Itenberg
\and Viatcheslav Kharlamov \and Eugenii Shustin}

\date{}

\maketitle

\address{I. Itenberg: Universit\'{e} de Strasbourg, IRMA
and Institut Universitaire de France, 7, rue Ren\'{e} Descartes,
67084 Strasbourg Cedex, France;
\email{ilia.itenberg@math.unistra.fr} \and V. Kharlamov:
Universit\'{e} de Strasbourg and IRMA, 7, rue Ren\'{e} Descartes,
67084 Strasbourg Cedex, France;
\email{viatcheslav.kharlamov@math.unistra.fr} \and E. Shustin:
School of Mathematical Sciences, Raymond and Beverly Sackler Faculty
of Exact Sciences, Tel Aviv University, Ramat Aviv, 69978 Tel Aviv,
Israel; \email{shustin@post.tau.ac.il}}

\subjclass{Primary 14N10; Secondary 14P05, 14T05, 14N35}


\begin{abstract}
We give a recursive formula for purely real Welschinger invariants
of the following real Del Pezzo surfaces: the projective plane blown
up at~$q$ real and $s \leq 1$ pairs of conjugate imaginary points,
where $q+2s\le 5$, and the real quadric blown up at $s \leq 1$ pairs
of conjugate imaginary points and having non-empty real part. The
formula is similar to Vakil's recursive formula \cite{Va} for
Gromov-Witten invariants of these surfaces and generalizes our
recursive formula~\cite{IKS3} for purely real Welschinger invariants
of real toric Del Pezzo surfaces. As a consequence,  we prove the
positivity of the Welschinger invariants under consideration and
their logarithmic asymptotic equivalence to genus zero Gromov-Witten
invariants.

\keywords{Tropical curves, real rational curves, enumerative
geometry, Welschinger invariants, Caporaso-Harris formula}
\end{abstract}

\tableofcontents

\begin{figure}[htb]
\hskip3.25in\includegraphics[height=6.5cm,width=5cm,angle=0,draft=false]{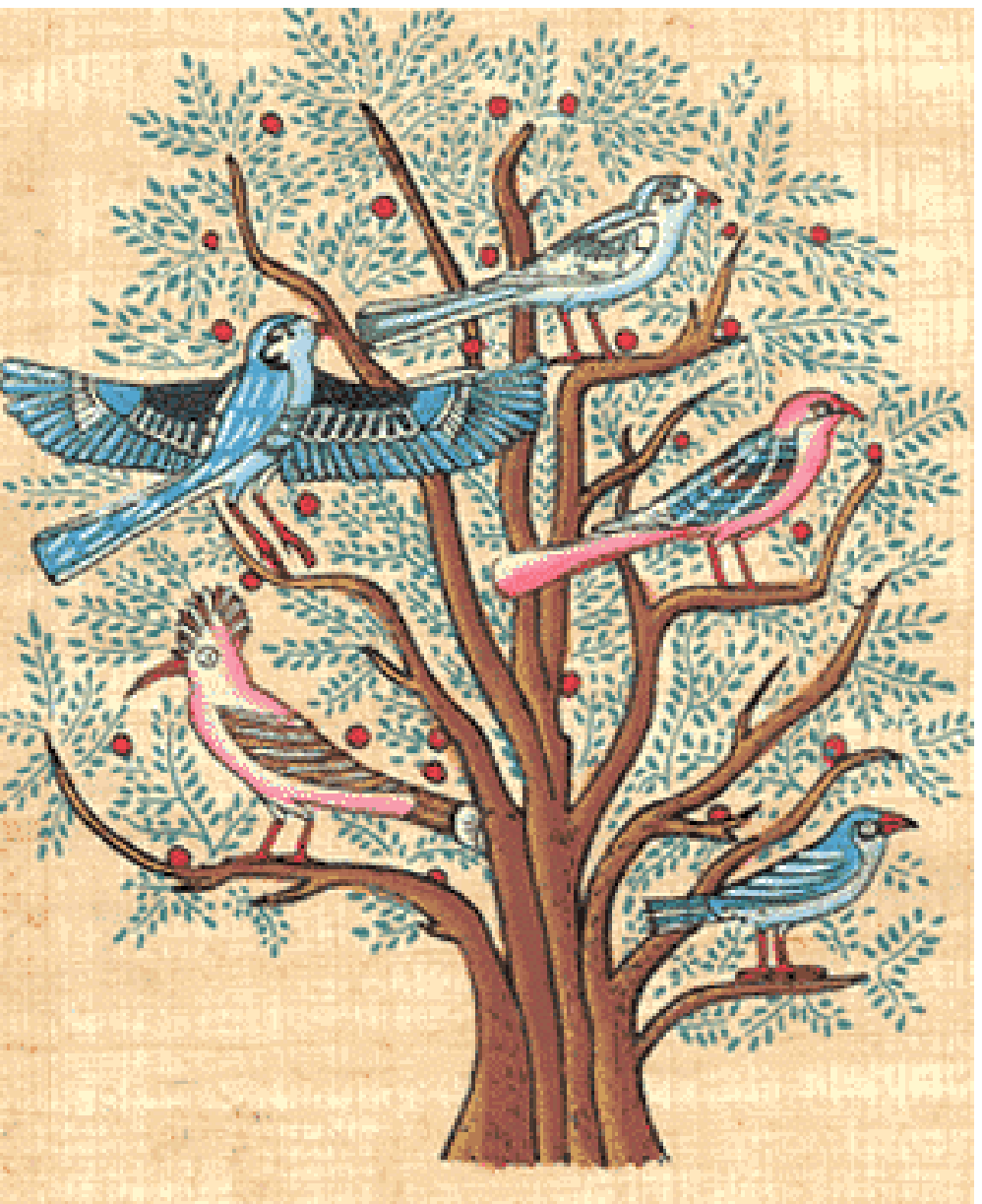}
\end{figure}
{\hskip3in \it Wall painting}

\vskip-5pt

{\hskip3in \it Tomb of Khnumhotep III}

\vskip-5pt

{\hskip3in \it Beni Hasan, Egypt, Middle Kingdom}

\section{Introduction}\label{intro}

Welschinger invariants play a role of real analogue of genus zero
Gromov-Witten invariants. As introduced in \cite{W1}, they count,
with certain signs, the real rational pseudo-holomorphic curves
which pass through given real configurations of points on a given
real rational symplectic four-fold (and on certain six-folds).
Tropical geometry together with the open Gromov-Witten theory
\cite{Sol1,Sol2} and symplectic field theory \cite{W2} provides
powerful tools for the study of Welschinger invariants.

In the case of Del Pezzo surfaces, which is treated in the present
article, the Welschinger count is equivalent to enumeration of real
rational algebraic curves. Here we restrict ourselves to
configurations of real points, and thus speak of {\it purely real}
Welschinger invariants. We use the tropical geometry techniques and
produce a recursive formula for purely real Welschinger invariants
$W(\Sig,D)$ of the following real Del Pezzo surfaces $\Sig$ (see
Theorem \ref{tn12} in section \ref{sec5}):
\begin{itemize}
\item $\PP^2_{q,s}$, the real plane blown up at a generic collection
of $q$ real points and $s$ pairs of conjugate imaginary points,
where $0\le q\le 5$, $0\le s\le 1$, $q+2s\le 5$,
\item the real quadric surface with a nonempty real part blown up at
$s\le 1$ pairs of conjugate imaginary points.
\end{itemize}
We show that together with certain explicit initial data, this
formula recursively determines all purely real Welschinger
invariants of the above surfaces (see Theorem \ref{tn12} in section
\ref{sec5}).

The formula we obtain here can be seen as a real version of Vakil's
recursive formula for Gromov-Witten invariants of these surfaces
\cite{Va}, and generalizes our earlier recursive formula for the
real toric Del Pezzo surfaces \cite{IKS3} in a similar manner as
Vakil's formula generalizes the Caporaso-Harris formula \cite{CH}.

As application, we derive a number of properties of the invariants
under consideration. In particular, we prove that for each surface
$\Sig$ as above and for any real nef and big divisor $D$ on $\Sig$,
the invariant $W(\Sig,D)$ is positive, and
$$\lim_{n\to\infty}\frac{\log W(\Sig,nD)}{n\log n}=\lim_{n\to\infty}
\frac{\log GW_0(\Sig,nD)}{n\log n}=-DK_\Sig\ ,$$ where
$GW_0(\Sig,nD)$ is the genus zero Gromov-Witten invariant (see
Theorem \ref{tn5} in section \ref{sec-pa}). The geometric meaning of
this result is that through any generic configuration of
$-DK_\Sig-1$ distinct real points of $\Sig$, one can trace a real
rational curve $C\in|D|$ and, furthermore, in the logarithmic scale
the number of such real rational curves is close to the number of
all complex rational curves $C\in|D|$ through the given
configuration.

In addition, we observe a congruence between purely real Welschinger
and genus zero Gromov-Witten invariants and show the monotone
behavior of $W(\Sig,D)$ with respect to the variable $D$ (see
Theorems \ref{tn6} and \ref{tn7} in sections \ref{sec-con} and
\ref{sec-mon}, respectively). The aforementioned positivity,
asymptotic and monotonicity properties extend our results for real
toric surfaces with standard and nonstandard real structures
obtained in \cite{IKS2,IKS4}.

The novelty of the present work consists first of all in application
of the tropical tools to {\it non-toric} Del Pezzo surfaces, namely,
to $\PP^2_{4,0}$, $\PP^2_{2,1}$, $\PP^2_{5,0}$, $\PP^2_{3,1}$ (we
call them {\it small non-toric}, since they are closer to toric
surfaces than other Del Pezzo surfaces $\PP^2_{q,s}$, $q+2s=6,7,8$).
The tools of tropical geometry, as they are developed in
\cite{Mi,Mi-icm} and \cite{Sh0,Sh1,Sh2}, and explored in
\cite{IKS,IKS2,IKS4,IKS3}, are essentially restricted to the toric
case. So, having a non-toric surface, we blow down some exceptional
divisors. Thus, we come to a toric surface, but as a price to pay,
the curves we are interested in unavoidably acquire some fixed
multiple points with prescribed multiplicities. It is a serious
problem, since for curves with fixed multiple points, correspondence
theorems similar to \cite{Mi, Sh0} are not known in general. One of
the obstacles is the fact that in this case a direct tropical
approach leads to tropical moduli spaces of wrong dimension. We
overcome this difficulty restricting our attention to very specific
configurations of points serving as constraints in our enumerative
problem (${\cal CH}$-configurations defined in section
\ref{CH-config}). We introduce a special class of tropical curves
matching ${\cal CH}$-configurations (see section \ref{sec4}) which,
on one hand, suit well for the patchworking of algebraic curves with
fixed multiple points (cf. \cite{Sh09}) and, on the other hand, are
adjusted to the {\it cutting procedure} and the proof of the {\it
tropical recursive formula} (see sections \ref{cutting} and
\ref{sec42}, \ref{sec43}). Then, we observe that the complex
tropical recursive formula, involving the numbers of complex curves
obtained by patchworking quantization of these specific tropical
curves, and Vakil's recursive formula, involving {\it all} complex
algebraic curves in count, coincide (see section \ref{adaptation}).
This allows us to get the key ingredient, a complex correspondence
theorem (section \ref{corresp-thm}). After that, we derive a {\it
real tropical recursive formula} (section \ref{sec43}) which
involves suitable tropical Welschinger multiplicities (cf.
\cite{Sh09}), and, using the complex correspondence theorem, convert
the tropical formula into a recursive formula for Welschinger
invariants.

It is natural to compare our formulas with J.~Solomon's recursive
formulas for Welschin- ger invariants. The latter formulas are
encoded in a real version of WDVV equations which was proposed by
J.~Solomon \cite{Sol2}. One of the differences is that Solomon's
formulas involve not only purely real Welschinger invariants but
also invariants associated with collections of real and conjugated
imaginary points, whereas our formulas contain only purely real
Welschinger invariants mixed with certain auxilliary tropical
numbers. Another feature is that the coefficients in Solomon's
formulas have alternating signs, whereas in our formulas all the
coefficients are {\it positive}. The latter circumstance appears to
be crucial in the proofs of the positivity, asymptotic grows
formula, and monotonicity of purely real Welschinger invariants.

The text is organized as follows. Section~\ref{tropical_curves}
contains a description of the class of tropical curves used in the
paper. In Section~\ref{algebraic-formulas} we present an adapted
version of Vakil's recursive formula. In
Section~\ref{tropical-formulas} we prove tropical recursive
formulas, and in Section~\ref{corresp-thm} we establish a
correspondence between the tropical and algebraic curves in count.
In Section \ref{Wel} we derive the recursive formula for the purely
real Welschinger invariants of the surfaces under consideration, and
in Section \ref{properties} we use the formula to prove the
aforementioned properties of Welschinger invariants.

\section{Tropical curves}\label{tropical_curves}

\subsection{Parameterized plane tropical
curves}\label{parameterization} For the reader convenience, we
recall here basic definitions and facts concerning tropical curves.
The details can be found in \cite{Mi}, \cite{Mi-icm}.

Let $\overline\Gam$ be a finite graph which has neither bivalent,
nor isolated vertices. Denote by $\Gam^0_\infty$ the set of
univalent vertices of $\overline\Gam$, and put $\Gam = \overline\Gam
\setminus \Gam^0_\infty$. Denote by $\Gam^1$ the set of edges of
$\overline\Gam$. An edge~$E$ of~$\overline\Gam$ is called an {\it
end} if~$E$ is incident to a univalent vertex, and is called a {\it
bounded edge} otherwise. We say that $\overline\Gam$ is an {\it
abstract tropical curve} if $\Gam$ is equipped with a metric such
that each bounded edge of $\overline\Gam$ is isometric to an open
bounded interval in $\R$, each end of $\overline\Gam$ incident to
exactly one univalent vertex is isometric to an open ray in $\R$,
and each end of $\overline\Gam$ incident to two univalent vertices
is isometric to $\R$.

A {\it plane parameterized tropical curve} is a pair
$(\overline\Gam, h)$, where $\overline\Gam$ is an abstract tropical
curve and $h: \Gam \to \R^2$ is a continuous map, such that
\begin{itemize}
\item for any edge $E \in \Gam^1$
the restriction of~$h$ to $E$ is a non-zero affine map, and $h(E)$
is contained in a line with rational slope,
\item for any edge $E \in \Gam^1$,
any vertex~$V$ incident to~$E$, and any point $P \in E$, one has
$dh_P(U_{V, P}(E)) = w(E)u_V(E)$, where $w(E)$ is a positive integer
number, $U_{V, P}(E)$ is the unit tangent vector to $E$ at $P$ such
that $U_{V, P}(E)$ points away from~$V$, and $u_V(E)$ is a {\it
primitive integer vector} ({\it i.e.}, a vector whose coordinates
are integer and mutually prime),
\item for each vertex~$V$ of~$\Gam$,
the following {\it balancing condition} is satisfied:
$$
\sum_{E\in\Gam^1,\ V\in\partial E} w(E) u_V(E)=0.
$$
\end{itemize}

For any edge~$E$ of a parameterized plane tropical curve
$(\overline\Gam, h)$, the number $w(E)$ is called the {\it weight}
of~$E$. The multi-set of vectors $$\{-w(E)u_V(E)\ |\
V\in\Gam^0_\infty, E \in \Gam^1, V \in \partial E\}$$ is called the
{\it degree} of a parameterized plane tropical curve
$(\overline\Gam, h)$ and is denoted by $\Del(\overline\Gam, h)$. For
any parameterized plane tropical curve $(\overline\Gam, h)$, the sum
of the vectors in $\Del(\overline\Gam, h)$ is equal to~$0$.

\subsection{Cutting procedure}\label{cutting}
Let $(\overline\Gam, h)$ be a parameterized plane tropical curve,
and~$x$ a point of~$\Gam$. We will construct a new parameterized
plane tropical curve $(\overline\Gam_x, h_x)$ out of
$(\overline\Gam, h)$ and~$x$.

If~$x$ belongs to an edge~$E$ of $(\overline\Gam, h)$, denote by
$V_1$ and $V_2$ the vertices incident to~$E$. Consider a graph
$\overline\Gam_x$ obtained from $\overline\Gam$ by removing~$E$,
introducing two new vertices $V^a_1$ and $V^a_2$ (called {\it added
vertices}), and introducing two new edges $E^a_1$ and $E^a_2$
(called {\it added edges}) such that $E^a_i$ is incident to $V_i$
and $V^a_i$, $i = 1, 2$. We say that the edges~$E^a_1$ and~$E^a_2$
{\it match} each other. The edge~$E$ is called the {\it predecessor}
of $V^a_1$, $V^a_2$, $E^a_1$, and $E^a_2$. Extend the metric on
$\Gam \setminus E$ to a metric on $(\Gam \setminus E) \cup (E^a_1
\cup E^a_2)$ in such a way that $E^a_i$, $i = 1, 2$, is isometric to
a ray of $\R$ if $V_i$ is not univalent, and $E^a_i$ is isometric to
$\R$ otherwise. This turns $\overline\Gam_x$ into an abstract
tropical curve (see Figure~\ref{new-picture1}). Denote by $\Gam_x$
the complement in $\overline\Gam_x$ of the univalent vertices, and
consider a map $h_x: \Gam_x \to \R^2$ such that
\begin{itemize}
\item $h_x|_{\Gam_x \setminus (E^a_1 \cup E^a_2)} = h|_{\Gam \setminus E}$,
\item $(\overline\Gam_x, h_x)$ is a parameterized plane tropical curve,
\item the vector $u_{V_i}(E^a_i)$, $i = 1, 2$,
of the curve $(\overline\Gam_x, h_x)$ coincides with the vector
$u_{V_i}(E)$ of the curve $(\overline\Gam, h)$,
\item the weight $w(E^a_i)$, $i = 1, 2$, of the edge $E^a_i$
of the curve $(\overline\Gam_x, h_x)$ coincides with the weight
$w(E)$ of the edge~$E$ of the curve $(\overline\Gam, h)$.
\end{itemize}
(Note that if the valencies of~$V_1$ and~$V_2$ are both greater
than~$1$, then the last two conditions in the definition of~$h_x$
follow from the first two conditions.)

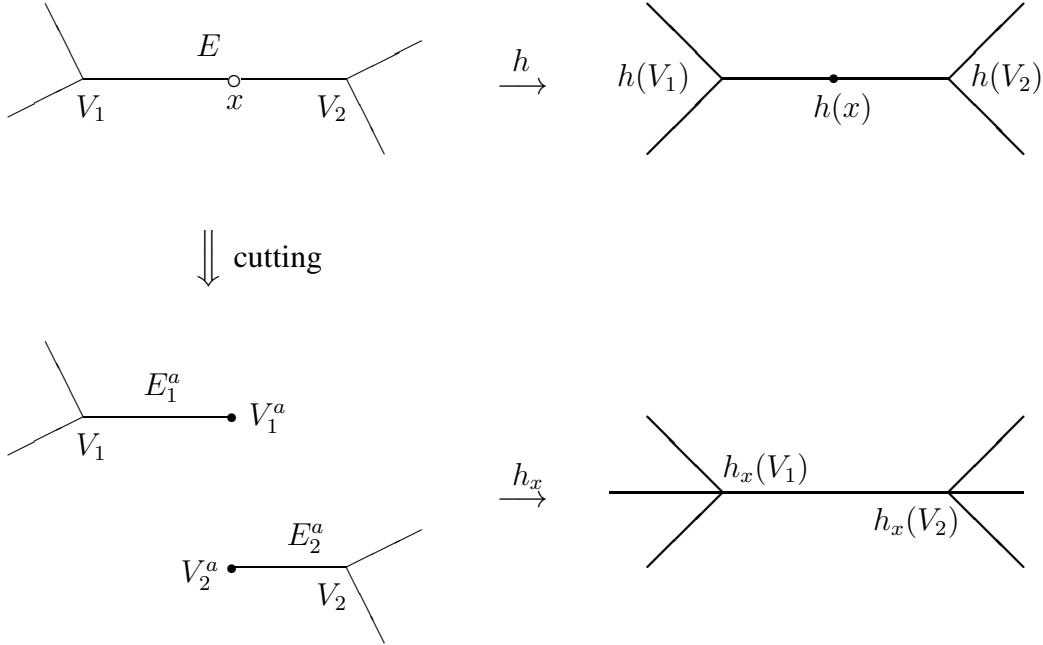
\begin{figure}
\setlength{\unitlength}{1cm}
\begin{picture}(14,9)(0,0)
\thinlines \put(0.5,7){\line(2,1){1}}\put(1.5,7.5){\line(-1,2){0.5}}
\put(1.5,7.5){\line(1,0){1.91}}\put(3.6,7.5){\line(1,0){1.4}}
\put(5,7.5){\line(2,1){1}}\put(5,7.5){\line(1,-2){ 0.5}}
\put(0.5,2.5){\line(2,1){1}}\put(1.5,3){\line(-1,2){0.5}}
\put(1.5,3){\line(1,0){2}}\put(3.5,1){\line(1,0){1.5}}\put(5,1){\line(1,-2){0.5}
} \put(5,1){\line(2,1){1}} \thicklines
\put(9,6.5){\line(1,1){1}}\put(10,7.5){\line(-1,1){1}}\put(10,7.5){\line(1,0){3}
}\put(13,7.5){\line(1,1){1}}
\put(13,7.5){\line(1,-1){1}}\put(8.5,2){\line(1,0){5.5}}\put(10,2){\line(-1,1){1
}}
\put(9,1){\line(1,1){1}}\put(13,2){\line(1,-1){1}}\put(13,2){\line(1,1){1}}
\put(7,7.3){$\longrightarrow$}\put(7.2,7.6){$h$}\put(11.4,7.35){$\overset{\bullet}{}$}
\put(3.4,2.84){$\overset{ \bullet}{}$}\put(3.4,0.83){$\overset{
\bullet}{}$}\put(1.4,7){$V_1$}
\put(1.4,2.5){$V_1$}\put(3.4,7.1){$x$}
\put(11.2,7){$h(x)$}\put(4.6,7){$V_2$}\put(4.6,0.5){$V_2$}
\put(3.7,2.9){$V_1^a$}\put(2.8,0.8){$V_2^a$}\put(8.6,7.4){$h(V_1)$}\put(13.3,
7.4){$h(V_2)$}\put(3.4,7.36){$\circ$}
\put(7,1.8){$\longrightarrow$}\put(7.2,2.1){$h_x$}\put(10,2.2){$h_x(V_1)$}
\put(12,1.5){$h_x(V_2)$}\put(3,7.8){$E$}\put(2.3,3.3){$E_1^a$}\put(4.1,1.3){
$E_2^a$} \put(3,5){$\Big\Downarrow$}\put(3.5,5){\rm cutting}
\end{picture}
\caption{Cutting, I}\label{new-picture1}
\end{figure}

If the point~$x$ coincides with a vertex~$V$ of $\Gam$, denote by
$E_1$, $\ldots$, $E_k$ the edges incident to~$V$, and denote by
$V_i$, $i = 1$, $\ldots$, $k$, the vertex incident to~$E_i$ and
different from~$V$. In this case, consider a graph $\overline\Gam_x$
obtained from $\overline\Gam$ by removing~$V$, $E_1$, $\ldots$,
$E_k$, introducing new vertices $V^a_1$, $\ldots$, $V^a_k$ (called
{\it added vertices}), and introducing new edges $E^a_1$, $\ldots$,
$E^a_k$ (called {\it added edges}) such that $E^a_i$ is incident to
$V_i$ and $V^a_i$, $i = 1$, $\ldots$, $k$. For each $i = 1$,
$\ldots$, $k$, both elements $V$ and $E_i$ of the graph
$\overline\Gam$ are called {\it predecessors} of $V^a_i$ and
$E^a_i$. Extend the metric on $\Gam \setminus (V \cup E_1 \cup
\ldots \cup E_k)$ to a metric on $(\Gam \setminus (V \cup E_1 \cup
\ldots \cup E_k))
 \cup (E^a_1 \cup \ldots \cup E^a_k)$
in such a way that $E^a_i$, $i = 1$, $\ldots$, $k$, is isometric to
a ray of $\R$ if $V_i$ is not univalent, and $E^a_i$ is isometric to
$\R$ otherwise. This turns $\overline\Gam_x$ into an abstract
tropical curve (see Figure~\ref{new-picture2}). Denote by $\Gam_x$
the complement in $\overline\Gam_x$ of the univalent vertices, and
consider a map $h_x: \Gam_x \to \R^2$ such that
\begin{itemize}
\item $h_x|_{\Gam_x \setminus (E^a_1 \cup \ldots \cup E^a_k)}
= h|_{\Gam \setminus (V \cup E_1 \cup \ldots \cup E_k)}$,
\item $(\overline\Gam_x, h_x)$ is a parameterized plane tropical curve,
\item the vector $u_{V_i}(E^a_i)$, $i = 1$, $\ldots$, $k$,
of the curve $(\overline\Gam_x, h_x)$ coincides with the vector
$u_{V_i}(E_i)$ of the curve $(\overline\Gam, h)$,
\item the weight $w(E^a_i)$, $i = 1$, $\ldots$, $k$, of the edge $E^a_i$
of the curve $(\overline\Gam_x, h_x)$ coincides with the weight
$w(E_i)$ of the edge~$E_i$ of the curve $(\overline\Gam, h)$.
\end{itemize}
(Again, if all vertices $V_1$, $\ldots$, $V_k$ have valencies
greater than~$1$, then the last two conditions in the definition
of~$h_x$ follow from the first two conditions.)

\begin{figure}
\setlength{\unitlength}{1cm}
\begin{picture}(11,10)(-1.5,0)
\thinlines
\put(2.45,8.55){\line(-1,1){0.95}}\put(2.55,8.52){\line(2,1){0.95}}
\put(3.5,9){\line(1,0){1}}\put(3.5,9){\line(0,1){0.5}}\put(2.5,6.5){\line(0,1){1.95
}}
\put(2.5,7){\line(-1,-1){0.5}}\put(2.5,7){\line(1,-1){0.5}}\put(2,3){\line(-1,1)
{1}}\put(3,3){\line(2,1){1}}
\put(4,3.5){\line(1,0){1}}\put(4,3.5){\line(0,1){0.5}}\put(2.5,0){\line(0,1){2}}
\put(2.5,0.5){\line(-1,-1){0.5}}\put(2.5,0.5){\line(1,-1){0.5}}\put(8.45,0){
\line(0,1){4}} \thicklines
\put(8.57,0.5){\line(-1,-1){0.5}}\put(8.57,0.5){\line(1,-1){0.5}}\put(7,3.5){
\line(1,-1){3}}
\put(7,1.25){\line(2,1){3}}\put(10,2.75){\line(1,0){1}}\put(10,2.75){\line(0,1){
0.5}} \put(8.5,8.5){\line(-1,1){1}}\put(8.5,8.5){\line(2,1){1}}
\put(9.5,9){\line(1,0){1}}\put(9.5,9){\line(0,1){0.5}}\put(8.5,6.5){\line(0,1){2
}} \put(8.5,7){\line(-1,-1){0.5}}\put(8.5,7){\line(1,-1){0.5}}

\put(1.4,9.4){$\overset{\bullet}{}$} \put(2.41,8.4){$\circ$}
\put(8.46,8.35){$\overset{\bullet}{}$}\put(0.9,3.87){$\overset{\bullet}{}$}
\put(1.92,2.88){$\overset{\bullet}{}$}\put(2.92,2.84){$\overset{\bullet}{}$}
\put(2.44,1.87){$\overset{\bullet}{}$}

\put(6,7.9){$\longrightarrow$}\put(6,2.4){$\longrightarrow$}\put(6.2,8.2){$h$}
\put(6.1,2.7){$h_x$}\put(0.9,9.4){$V_1$}\put(2,8.2){$V$}\put(3,9.1){$V_2$}
\put(2,7.05){$V_k$}\put(2.1,9){$E_1$}\put(2.9,8.3){$E_2$}\put(2.6,7.6){$E_k$}
\put(3.5,7.5){$\vdots$}\put(8.6,8.1){$h(V)$}\put(0.5,3.9){$V_1$}\put(1.5,2.6){
$V_1^a$}
\put(3.1,2.6){$V_2^a$}\put(4,3){$V_2$}\put(1.9,1.9){$V_k^a$}\put(1.9,0.5){$V_k$}
\put(1.5,3.7){$E_1^a$}\put(3.1,3.5){$E_2^a$}\put(2.6,1){$E_k^a$}
\put(2.3,5.1){$\Big\Downarrow$}\put(2.7,5.2){\rm cutting}
\put(3.5,1.5){$\vdots$}
\end{picture}
\caption{Cutting, II}\label{new-picture2}
\end{figure}
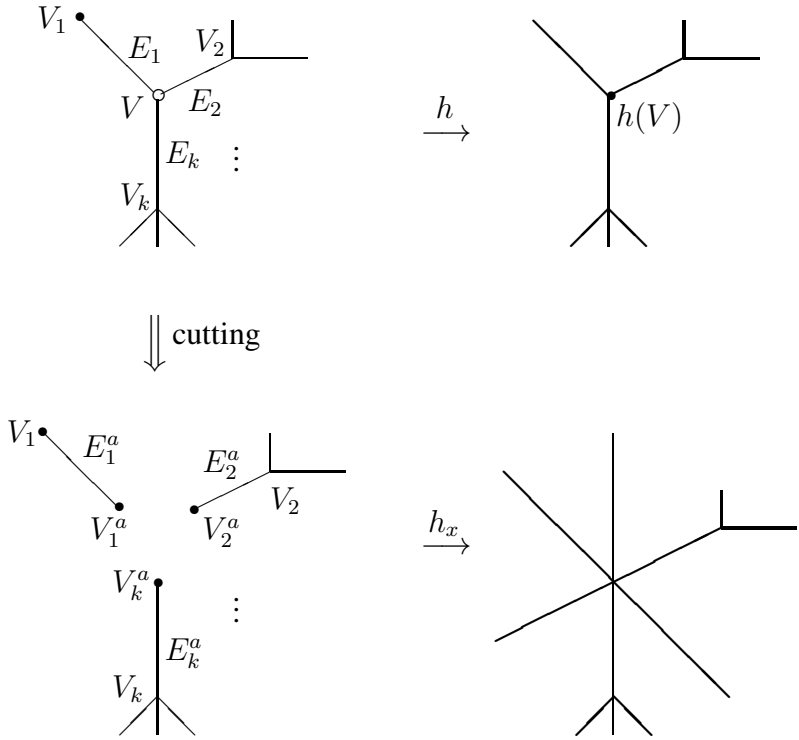

In the both cases considered above, the parameterized plane tropical
curve $(\overline\Gam_x, h_x)$ is called a {\it cut} of
$(\overline\Gam, h)$ at~$x$.

\subsection{$\CL$-curves}\label{sec4}
Put $\widehat\R^2 = (\{-\infty\} \cup \R) \times \R$ and
$L_{-\infty} = \{-\infty\} \times \R \subset \widehat\R^2$. Let
$(\overline\Gam, h)$ be a parameterized plane tropical curve, and
${\cal V}$ a subset of $\Gam^0_\infty$. Put $\widehat\Gam =
\overline\Gam \backslash {\cal V}$. We say that $(\overline\Gam,
{\cal V}, h)$ is an {\it $\CL$-curve} if
\begin{itemize}
\item for any univalent vertex~$V$ of $\widehat\Gam$
one has $u_V(E) = (1, 0)$, where $E$ is the end incident to~$V$,
\item for any univalent vertex~$V' \in {\cal V}$
one has $u_{V'}(E) \ne (1, 0)$, where~$E$ is the end incident
to~$V'$.
\end{itemize}

If $(\overline\Gam, {\cal V}, h)$ is an $\CL$-curve, then $h$
naturally extends to a map ${\widehat h}: \widehat\Gam \to
\widehat\R^2$, and the image under $\widehat h$ of any univalent
vertex of $\widehat\Gam$ belongs to $L_{-\infty}$. The ends
of~$\overline\Gam$ which are incident to univalent vertices
of~$\widehat\Gam$ are called {\it left}.

The {\it degree} $\Del(\overline\Gam, {\cal V}, h)$ of an
$\CL$-curve $(\overline\Gam, {\cal V}, h)$ is the degree
$\Del(\overline\Gam, h)$ of the parameterized plane tropical curve
$(\overline\Gam, h)$. An $\CL$-curve $(\overline\Gam, {\cal V}, h)$
is {\it irreducible}, if the graph $\overline\Gam$ is connected. The
{\it genus} of an irreducible $\CL$-curve $(\overline\Gam, {\cal V},
h)$ is the first Betti number $b_1(\overline\Gam)$
of~$\overline\Gam$. The irreducible $\CL$-curves of genus~$0$ are
called {\it rational}.

Let $(\overline\Gam, {\cal V}, h)$ be an $\CL$-curve, and
$\overline\Gam_1$, $\ldots$, $\overline\Gam_n$ the connected
components of~$\overline\Gam$. For any integer $j = 1$, $\ldots$,
$n$, put $\Gam_j = \overline\Gam_j \setminus \Gam^0_{\infty}$ and
denote by ${\cal V}_j$ the vertices belonging simultaneously to
$\cal V$ and $\overline\Gam_j$. The $\CL$-curves $(\overline\Gam_j,
{\cal V}_j, h|_{\Gam_j})$ are called {\it irreducible components} of
$(\overline\Gam, {\cal V}, h)$.

An edge of an $\CL$-curve $(\overline\Gam, {\cal V}, h)$ is said to
be {\it horizontal}, if the image of this edge under~$h$ is
contained in a horizontal line. An $\CL$-curve $(\overline\Gam,
{\cal V}, h)$ is {\it horizontal}, if $\overline\Gam$ is a segment
and $h(\Gam)$ is a horizontal line in $\R^2$. An irreducible
$\CL$-curve $(\overline\Gam, {\cal V}, h)$ is called {\it
one-sheeted} if among the vectors of its degree $\Del(\overline\Gam,
{\cal V}, h)$ there are exactly two vectors with non-zero second
coordinate, and each of these two vectors is of the form $(A, \pm
1)$, where~$A$ is an integer number.

An $\CL$-curve $(\overline\Gam, {\cal V}, h)$ is {\it
non-degenerate}, if for any non-univalent vertex~$V$
of~$\widehat\Gam$ the vectors $u_V(E_1)$, $\ldots$, $u_V(E_k)$
(where $E_1$, $\ldots$, $E_k$ are the edges incident to~$V$) span
$\R^2$. A non-degenerate $\CL$-curve $(\overline\Gam, {\cal V}, h)$
is called {\it simple} if any non-univalent vertex of~$\widehat\Gam$
has valency~$3$, and is called {\it pseudo-simple}, if for any
non-univalent vertex~$V$ of~$\widehat\Gam$
\begin{itemize}
\item there are exactly
three distinct vectors among the vectors $u_V(E_1)$, $\ldots$,
$u_V(E_k)$, where $E_1$, $\ldots$, $E_k$ are the edges incident
to~$V$,
\item an equality $u_V(E_i) = u_V(E_j)$,
where $i$ and $j$ are distinct elements of the set $\{1, 2, \ldots
k\}$, implies that $u_V(E_i) = u_V(E_j) = (1, 0)$.
\end{itemize}

If $(\overline\Gam, {\cal V}, h)$ is an $\CL$-curve, and~$x$ a point
of $\Gam$, then a cut $(\overline\Gam_x, h_x)$ of $(\overline\Gam,
h)$ at~$x$ gives rise to an $\CL$-curve $(\overline\Gam_x, {\cal
V}_x, h_x)$, where ${\cal V}_x= {\cal V} \cup {\cal V}^a$, and
${\cal V}^a$ is the set formed by the added vertices~$V^a_i$ of
$\overline\Gam_x$ such that for the unique edge~$E^a_i$ incident
to~$V^a_i$ one has $u_{V^a_i}(E^a_i) \ne (1, 0)$. The $\CL$-curve
$(\overline\Gam_x, {\cal V}_x, h_x)$ is also called a cut of
$(\overline\Gam, {\cal V}, h)$ at~$x$.

A {\it marked $\CL$-curve} $(\overline\Gam, {\cal V}, h, \bpp)$ is
an $\CL$-curve $(\overline\Gam, {\cal V}, h)$ equipped with a
$5$-tuple $\bpp = (\bpp^\flat, \bpp^\sharp, \bpp^1, \bpp^2,
\bpp^\nu)$ of disjoint finite sequences of distinct points
in~$\widehat\Gam$ such that
\begin{itemize}
\item each point in $\bpp^\flat$ is a univalent
vertex of $\widehat\Gam$,
\item each point in $\bpp^\sharp$, $\bpp^1$, and $\bpp^2$
is not a vertex of $\widehat\Gam$,
\item each point in $\bpp^\nu$
is a non-univalent vertex of $\widehat\Gam$,
\item
the connected components of the complement in $\overline\Gam$ of the
union of the sequences $\bpp^\flat$, $\bpp^\sharp$, $\bpp^1$,
$\bpp^2$, and $\bpp^\nu$ do not have loops, and each of these
components contains exactly one univalent vertex.
\end{itemize}
The elements of the union of $\bpp^\flat$, $\bpp^\sharp$, $\bpp^1$,
$\bpp^2$, and $\bpp^\nu$ are called {\it marked points}, and to
shorten the notation, we write $\bpp = \bpp^\flat \cup \bpp^\sharp
\cup \bpp^1 \cup \bpp^2 \cup \bpp^\nu$.

A left end~$E$ of a marked $\CL$-curve $(\overline\Gam, {\cal V}, h,
\bpp)$ is said to be of {\it $\alp$-type}, if it is incident to a
univalent vertex coinciding with a point in $\bpp^\flat$. We say
that an end~$E$ of a marked $\CL$-curve $(\overline\Gam, {\cal V},
h, \bpp)$ is {\it rigid}, if either~$E$ contains a marked point,
or~$E$ is left and of $\alp$-type. For any rigid end~$E$ of
$(\overline\Gam, {\cal V}, h, \bpp)$, a marked point certifying the
rigidity of~$E$ is unique and is called the {\it rigidity point}
of~$E$.

A rational marked $\CL$-curve is called {\it end-marked}, if each
marked point of this curve either coincides with a univalent vertex,
or belongs to an end. If the graph $\overline\Gam$ of such a curve
$(\overline\Gam, \CV, h, \bpp)$ is not a segment, then
$(\overline\Gam, \CV, h, \bpp)$ has exactly one non-rigid end.

Let $(\overline\Gam, \CV, h, \bpp)$ be a marked $\CL$-curve, and $x
\in \Gam$ a non-marked point. Among the connected components of
$\overline\Gam \setminus (\bpp \cup x)$ which are incident to~$x$,
there is exactly one component containing a univalent vertex. This
component is called the {\it free component} associated with~$x$.

If $(\overline\Gam, {\cal V}, h, \bpp)$ is a marked $\CL$-curve,
and~$x$ an arbitrary point of~$\Gam$, then the cut
$(\overline\Gam_x, {\cal V}_x, h_x)$ of $(\overline\Gam, {\cal V},
h)$ at~$x$ produces a marked $\CL$-curve $(\overline\Gam_x, {\cal
V}_x, h_x, \bpp_x)$ in the following way.

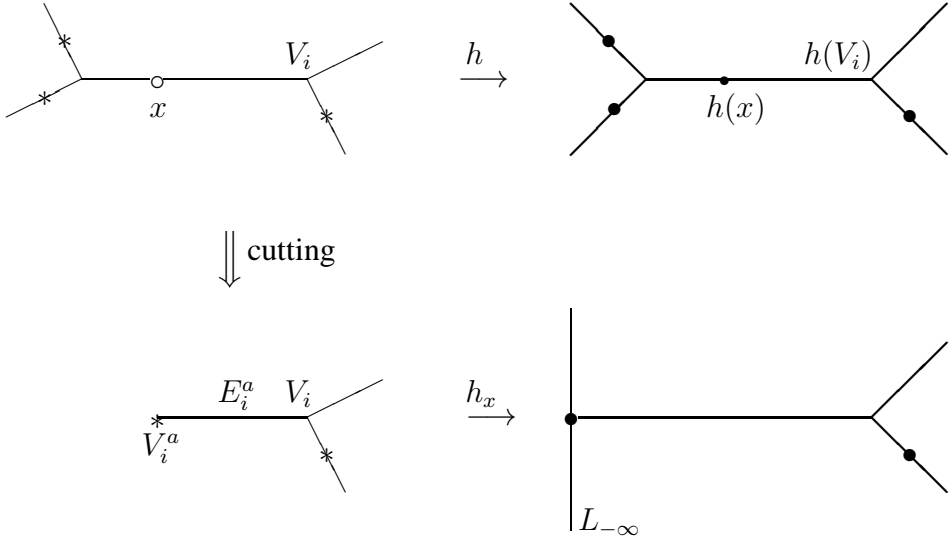
\begin{figure}
\setlength{\unitlength}{1cm}
\begin{picture}(13,7.5)(-1,0)
\thinlines \put(0,6){\line(2,1){1}}\put(0.5,7.5){\line(1,-2){0.5}}
\put(1,6.5){\line(1,0){0.9}}\put(2.09,6.5){\line(1,0){1.91}}
\put(4,6.5){\line(2,1){1}}\put(4,6.5){\line(1,-2){0.5} }
\put(2,2){\line(1,0){2}}\put(4,2){\line(2,1){1}}
\put(4,2){\line(1,-2){0.5}} \thicklines
\put(7.5,5.5){\line(1,1){1}}\put(7.5,7.5){\line(1,-1){1}}\put(8.5,6.5){\line(1,
0){3}}\put(11.5,6.5){\line(1,1){1}} \put(11.5,6.5){\line(1,-1){1}}
\put(7.6,2){\line(1,0){3.9}}\put(11.5,2){\line(1,1){1}}\put(11.5,2){\line(1,-1){
1}} \thinlines
\put(7.5,0.5){\line(0,1){1.4}}\put(7.5,2.05){\line(0,1){1.4}}

\put(1.9,6.36){$\circ$}\put(7.9,6.9){$\bullet$}\put(7.87,6){
$\bullet$}
\put(11.9,5.9){$\bullet$}\put(11.9,1.4){$\bullet$}\put(1.9,1.85){$\ast$}
\put(7.4,1.88){$\bullet$}\put(9.47,6.33){$\overset{\bullet}{}$}\put(0.4,6.15){${\ast}$}\put(0.67
,6.9){${\ast}$} \put(4.15,5.9){${\ast}$}\put(4.15,1.4){${\ast}$}
\put(1.9,6){$x$}\put(9.3,6){$h(x)$}\put(3.7,6.7){$V_i$}\put(10.6,6.7){$h(V_i)$}
\put(1.8,1.5){$V_i^a$}\put(3.7,2.2){$V_i$}\put(2.8,2.2){$E_i^a$}
\put(7.6,0.5){$L_{-\infty}$}\put(6,6.4){$\longrightarrow$}\put(6,1.9){
$\longrightarrow$}
\put(6.1,6.7){$h$}\put(6.1,2.2){$h_x$}\put(2.8,4){$\Big\Downarrow$}\put(3.2,4.1)
{\rm cutting}
\end{picture}
\caption{Cutting, III}\label{new-picture3}
\end{figure}

\begin{itemize}
\item For any $\aleph \in \{\flat, \sharp, 1, 2, \nu\}$
and any common element~$e$ (vertex or edge) of $\overline\Gam_x$ and
$\overline\Gam$, the intersections $\bpp^\aleph \cap e$ and
$(\bpp_x)^\aleph \cap e$ coincide.
\item The union of any added edge~$E^a_i$ of $\overline\Gam_x$
and the added vertex $V^a_i$ incident to $E^a_i$ contains at most
one marked point.
\item An added vertex $V^a_i$ of $\overline\Gam_x$
coincides with a marked point if and only if the following holds:
\begin{enumerate}
\item[(a)]
$u_{V^a_i}(E^a_i) = (1, 0)$, where $E^a_i$ is the edge incident to
$V^a_i$,
\item[(b)] the union of predecessors of $V^a_i$ and $E^a_i$
does not contain a marked point (notice that the union of
predecessors can contain at most one marked point),
\item[(c)] the vertex $V_i$
incident to $E^a_i$ and different from $V^a_i$ belongs to the free
component associated with~$x$ (see Figure~\ref{new-picture3}).
\end{enumerate}
\item An added edge $E^a_i$ of $\overline\Gam_x$
contains a marked point if and only if
\begin{enumerate}
\item[(a)] either
the union $U_i$ of predecessors of $V^a_i$ and $E^a_i$ contains a
marked point, where $V^a_i$ is the added vertex incident to $E^a_i$
(see Figure~\ref{new-picture4b});
\item[(b)] or $U_i$ does not contain a marked point,
the vertex $V_i$ incident to $E^a_i$ and different from $V^a_i$
belongs to the free component associated with~$x$, and
$u_{V^a_i}(E^a_i) \ne (1, 0)$ (see Figure~\ref{new-picture4a});
\end{enumerate}
moreover, if the union of the predecessors of $V^a_i$ and $E^a_i$
contains a point of $\bpp^\sharp$ (respectively, $\bpp^1$, $\bpp^2$,
$\bpp^\nu$), then the marked point of $E^a_i$ belongs to
$(\bpp_x)^\sharp$ (respectively, $(\bpp_x)^1$, $(\bpp_x)^2$,
$(\bpp_x)^1$).
\item If an added edge $E^a_i$ of $\overline\Gam_x$
contains a marked point and is incident to a non-univalent vertex
$V_i$ (such a vertex is necessarily a common vertex of
$\overline\Gam_x$ and $\overline\Gam$), then the distance (in
$\overline\Gam_x$) between $V_i$ and the marked point belonging to
$E^a_i$ is equal to the distance (in $\overline\Gam$) between $V_i$
and~$x$.
\end{itemize}

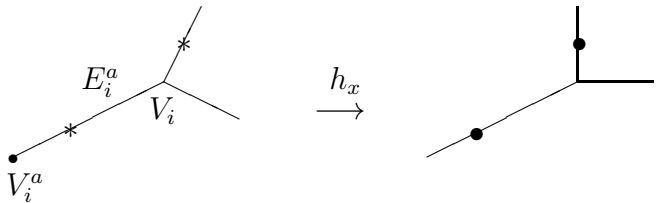
\begin{figure}
\setlength{\unitlength}{1cm}
\begin{picture}(10,15.5)(-2,0)
\thinlines \put(0.5,1){\line(2,1){2}}\put(2.5,2){\line(2,-1){1}}
\put(2.5,2){\line(1,2){0.5}}\put(2.15,5){\line(-2,3){0.6}}
\put(1.65,4.5){\line(1,1){0.5}} \put(2.2,5){\line(1,0){0.5}}
\put(1.4,6){\line(-1,0){1}}\put(1.55,6.02){\line(2,1){0.95}}\put(2.5,6.5){\line(2,-1){1
}} \put(2.5,6.5){\line(1,2){0.5}}\put(6,1){\line(2,1){2}}
\thicklines\put(8,2){\line(1,0){1}}
\put(8,2){\line(0,1){1}}\put(8,5){\line(0,1){1}}\put(7.5,4.5){\line(1,1){0.5
}}
\put(8.5,4.5){\line(-1,1){0.5}}\put(8,6){\line(-1,1){1}}\put(8,6){\line(2,1){1}}
\put(9,6.5){\line(0,1){1}}\put(9,6.5){\line(1,0){1}}

\put(7.92,4.9){$\bullet$}
\put(1.4,5.88){$\circ$}\put(8.3,6.1){$\bullet$}\put(8.76,6.9){
$\bullet$}
\put(6.55,1.2){$\bullet$}\put(7.92,2.4){$\bullet$}\put(0.42,0.82){$\overset{
\bullet}{}$}\put(7.96,5.84){$\overset{\bullet}{}$}
\put(2.05,4.9){$\ast$}\put(1.8,6.1){$\ast$}\put(2.65,6.9){$\ast$}\put(1.05,1.25){
$\ast$} \put(2.66,2.4){$\ast$}

\put(1.2,5.7){$x$}\put(2.3,6){$V_i$}\put(7.1,5.7){$h(x)$}\put(8.9,6){$h(V_i)$}
\put(2.3,1.5){$V_i$}\put(0.4,0.5){$V_i^a$}\put(1.4,1.9){$E_i^a$}
\put(4.5,5.8){$\longrightarrow$}\put(4.5,1.5){$\longrightarrow$}
\put(4.7,6.1){$h$}\put(4.7,1.9){$h_x$}\put(2,3.5){$\Big\Downarrow$}
\put(2.4,3.6){\rm cutting}\put(4.5,0){\rm (b)}

\thinlines \put(0.5,9){\line(2,1){2}}\put(2.5,10){\line(2,-1){1}}
\put(2.5,10){\line(1,2){0.5}}\put(2.15,13){\line(-2,3){0.6}}
\put(1.65,12.5){\line(1,1){0.5}} \put(2.15,13){\line(1,0){0.5}}
\put(1.5,14){\line(-1,0){1}}\put(1.5,14){\line(2,1){1}}\put(2.5,14.5){\line(2,
-1){1}} \put(2.5,14.5){\line(1,2){0.5}}\put(6,9){\line(2,1){2}}
\thicklines\put(8,10){\line(1,0){1}}
\put(8,10){\line(0,1){1}}\put(8,13){\line(0,1){1}}\put(7.5,12.5){\line(1,1){
0.5}}
\put(8.5,12.5){\line(-1,1){0.5}}\put(8,14){\line(-1,1){1}}\put(8,14){\line(2,1){
1}} \put(9,14.5){\line(0,1){1}}\put(9,14.5){\line(1,0){1}}

\put(7.7,12.7){$\bullet$}
\put(1.42,13.88){$\ast$}\put(1.41,13.88){$\circ$}\put(7.9,13.9){$\bullet$}\put(8.76,14.9){
$\bullet$}
\put(6.55,9.2){$\bullet$}\put(7.92,10.4){$\bullet$}\put(0.42,8.82){$\overset{
\bullet}{}$} \put(1.85,12.7){$\ast$}
\put(2.65,14.9){$\ast$}\put(1.05, 9.2){$\ast$}
\put(2.66,10.4){$\ast$}

\put(1.2,13.7){$x$}\put(2.3,14){$V_i$}\put(7.1,13.7){$h(x)$}\put(8.9,14){
$h(V_i)$}
\put(2.3,9.5){$V_i$}\put(0.4,8.5){$V_i^a$}\put(1.4,9.9){$E_i^a$}
\put(4.5,13.8){$\longrightarrow$}\put(4.5,9.5){$\longrightarrow$}
\put(4.7,14.1){$h$}\put(4.7,9.9){$h_x$}\put(2,11.5){$\Big\Downarrow$}
\put(2.4,11.6){\rm cutting}\put(4.5,8){\rm (a)}
\end{picture}
\caption{Cutting, IV}\label{new-picture4b}
\end{figure}

The marked $\CL$-curve obtained via this procedure is called a {\it
marked cut} of $(\overline\Gam, {\cal V}, h, \bpp)$ at~$x$.

Let $(\overline\Gam, {\cal V}, h, \bpp)$ be a marked $\CL$-curve,
and $x_1$, $\ldots$, $x_s$ a finite sequence of pairwise distinct
points of $\Gam$. We say that the set ${\cal X} = \{x_1, \ldots,
x_s\}$ is {\it sparse}, if no two distinct points of $\cal X$ belong
to the same edge of~$\Gam$ or to incident elements of~$\Gam$. If
$\cal X$ is sparse, define inductively a marked cut of
$(\overline\Gam, {\cal V}, h, \bpp)$ at $x_1$, $\ldots$, $x_s$ by
applying the cutting procedure successively at $x_1$, $\ldots$,
$x_s$. The marked $\CL$-curve obtained is also a marked cut of
$(\overline\Gam, {\cal V}, h, \bpp)$ at $x_{\sigma(1)}$, $\ldots$,
$x_{\sigma(s)}$, where $\sigma$ is an arbitrary permutation of the
set $\{1, \ldots, s\}$. The resulting curve is called a marked cut
of $(\overline\Gam, {\cal V}, h, \bpp)$ at the set ${\cal X}$ and is
denoted by $(\overline\Gam_{\cal X}, {\cal V}_{\cal X}, h_{\cal X},
\bpp_{\cal X})$. Notice that any marked cut of a non-degenerate
marked $\CL$-curve is non-degenerate, and any marked cut of a
pseudo-simple marked $\CL$-curve is pseudo-simple.

\begin{figure}
\setlength{\unitlength}{1cm}
\begin{picture}(10,7.5)(-2,0)
\thinlines \put(0.5,1){\line(2,1){2}}\put(2.5,2){\line(2,-1){1}}
\put(2.5,2){\line(1,2){0.5}}\put(2.15,5){\line(-2,3){0.6}}
\put(1.65,4.5){\line(1,1){0.5}} \put(2.2,5){\line(1,0){0.5}}
\put(1.4,6){\line(-1,0){1}}\put(1.55,6.02){\line(2,1){0.95}}\put(2.5,6.5){\line(2,-1){1
}} \put(2.5,6.5){\line(1,2){0.5}}\put(6,1){\line(2,1){2}}
\thicklines\put(8,2){\line(1,0){1}}
\put(8,2){\line(0,1){1}}\put(8,5){\line(0,1){1}}\put(7.5,4.5){\line(1,1){0.5
}}
\put(8.5,4.5){\line(-1,1){0.5}}\put(8,6){\line(-1,1){1}}\put(8,6){\line(2,1){1}}
\put(9,6.5){\line(0,1){1}}\put(9,6.5){\line(1,0){1}}

\put(7.92,4.9){$\bullet$}
\put(1.4,5.86){$\circ$}\put(7.4,6.4){$\bullet$}\put(8.76,6.9){
$\bullet$}\put(7.96,5.85){$\overset{\bullet}{}$}
\put(6.55,1.2){$\bullet$}\put(7.92,2.4){$\bullet$}\put(0.42,0.82){$\overset{
\bullet}{}$}
\put(2.05,4.9){$\ast$}\put(0.92,5.88){$\ast$}\put(2.65,6.9){$\ast$}\put(1.05,
1.2){$\ast$} \put(2.66,2.4){$\ast$}

\put(1.2,5.7){$x$}\put(2.3,6){$V_i$}\put(8.1,5.7){$h(x)$}\put(8.9,6){$h(V_i)$}
\put(2.3,1.5){$V_i$}\put(0.4,0.5){$V_i^a$}\put(1.4,1.9){$E_i^a$}
\put(4.5,5.8){$\longrightarrow$}\put(4.5,1.5){$\longrightarrow$}
\put(4.7,6.1){$h$}\put(4.7,1.9){$h_x$}\put(2,3.5){$\Big\Downarrow$}
\put(2.4,3.6){\rm cutting}
\end{picture}
\caption{Cutting, V}\label{new-picture4a}
\end{figure}
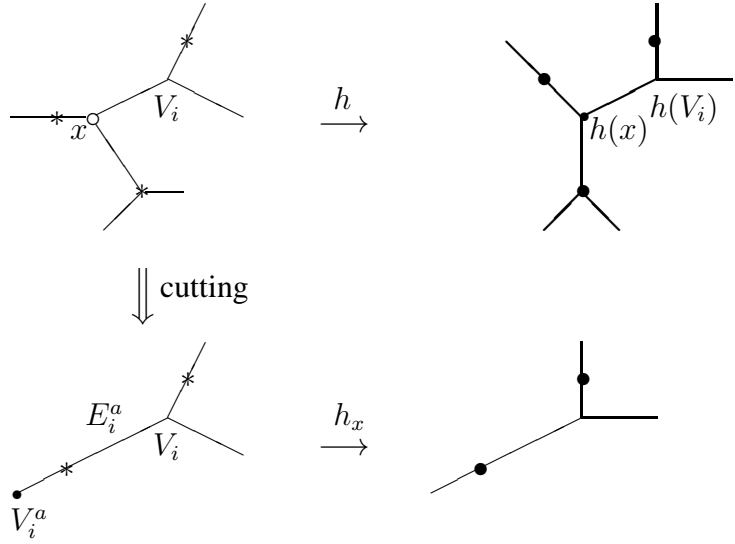

\subsection{Moduli spaces of
marked $\CL$-curves}\label{moduli-spaces}

We say that two marked $\CL$-curves $(\overline\Gam, {\cal V}, h,
\bpp)$ and $(\overline\Gam', {\cal V}', h',\bpp')$ have the same
{\it combinatorial type}, if there is a homeomorphism $\varphi:
\widehat\Gam \to \widehat\Gam'$ such that \begin{itemize}
\item $\varphi$ bijectively takes $\bpp$ onto $\bpp'$, respecting
their ordered $5$-tuple structures,
\item for any $V \in \overline\Gam$ and any edge~$E$ incident to~$V$,
the vectors $u_V(E)$ and $u_{\varphi(V)}(\varphi(E))$ coincide,
\item $w(E) = w(\varphi(E))$ for any edge $E\in\Gam^1$.
\end{itemize} If, in addition,
$h' \circ \varphi = h$, then $\varphi$ is called an {\it
isomorphism} and the curves $(\overline\Gam, \CV, h, \bpp)$ and
$(\overline\Gam', \CV', h', \bpp')$ are said to be {\it isomorphic}.
Note that, in this case, $\varphi$ defines an isometry of $\Gam$ and
$\Gam'$.

Let~$\Del$ be a finite multi-set of vectors in $\Z^2$ such that the
sum of all vectors in~$\Del$ is equal to~$0$, and let $k_ 1$, $k_2$,
$g$, $l$, and $r$ be non-negative integers such that $l \leq r$.
Denote by ${\cal M}(\Del, k_1, k_2, g, l, r)$ the set of isomorphism
classes of non-degenerate irreducible marked $\CL$-curves
$(\overline\Gam, {\cal V}, h, \bpp)$ of degree~$\Del$ and genus~$g$
such that
\begin{itemize}
\item the number of points in $\bpp$ is equal to $r + 1$,
\item the number of points in $\bpp^\flat$ is equal to~$l$,
\item the number of points in $\bpp^1 \cup \bpp^\nu$ is equal to $k_1$,
\item the number of points in $\bpp^2 \cup \bpp^\nu$ is equal to $k_2$.
\end{itemize}

Let $\Lambda(\Del, k_1, k_2, g, l, r)$ be the set of all possible
combinatorial types of marked $\CL$-curves whose isomorphism class
belongs to ${\cal M}(\Del, k_1, k_2, g, l, r)$. The set
$\Lambda(\Del, k_1, k_2, g, l, r)$ is clearly finite ({\it cf}., for
example, \cite[Proposition 3.7]{GM1} and \cite{Mi}). For a given
combinatorial type~$\lambda \in \Lambda(\Del, k_1, k_2, g, l, r)$,
denote by ${\cal M}^\lambda(\Del, k_1, k_2, g, l, r)$ the subset of
$\qquad$ ${\cal M}(\Del, k_1, k_2, g, l, r)$ formed by the
isomorphism classes of curves of type~$\lambda$.

There is a natural evaluation map $ev: {\cal M}(\Del, k_1, k_2, g,
l, r) \to (L_{-\infty})^l \times (\R^2)^{r + 1 - l}$ which
associates to any isomorphism class $[(\overline\Gam, {\cal V}, h,
\bpp)] \in {\cal M}(\Del, k_1, k_2, g, l, r)$ the sequence
${\widehat h}(\bpp)$. For a given combinatorial type $\lambda \in
\Lambda(\Del, k_1, k_2, g, l, r)$, denote by $ev^\lambda$ the
restriction of $ev$ on ${\cal M}^\lambda(\Del, k_1, k_2, g, l, r)$.

One can encode the elements $[(\overline\Gam, \CV, h, \bpp)]$ of
${\cal M}^\lambda(\Del, k_1, k_2, g, l, r)$ by
\begin{itemize}
\item the lengths
of all bounded edges of $\Gam$,
\item the image
$h(x) \in \R^2$ of some point $x \in \Gam$,
\item the coordinates of
the points of ${\widehat h}(\bpp^\flat)$ on $L_{-\infty}$,
\item the distances between~$P$ and $V_P$,
where~$P$ runs over all points in $\bpp^\sharp \cup \bpp^1 \cup
\bpp^2$, and $V_P$ is one of the vertices of the edge
containing~$P$.
\end{itemize}
This gives an identification of ${\cal M}^\lambda(\Del, k_1, k_2, g,
l, r)$ with the relative interior of a convex polyhedron in an
affine space, and, under this identification, the map $ev^\lambda$
becomes affine.

\begin{proposition}\label{finiteness}
For any element $\bx$ in $(L_{-\infty})^l \times (\R^2)^{r + 1 -
l}$, the inverse image $\qquad$ \mbox{$ev^{-1}(\bx) \subset {\cal
M}(\Del, k_1, k_2, g, l, r)$} is finite.
\end{proposition}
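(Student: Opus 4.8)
The plan is to show that $ev^{-1}(\bx)$ is finite by reducing it to the finiteness of the combinatorial types and establishing, for each fixed type, that $ev^\lambda$ has finite fibres. Since $\Lambda(\Del, k_1, k_2, g, l, r)$ is finite and $ev^{-1}(\bx) = \bigcup_\lambda (ev^\lambda)^{-1}(\bx)$, it suffices to prove that each $(ev^\lambda)^{-1}(\bx)$ is finite. For a fixed combinatorial type $\lambda$, the excerpt already identifies ${\cal M}^\lambda(\Del, k_1, k_2, g, l, r)$ with the relative interior of a convex polyhedron $P_\lambda$ in an affine space, and under this identification $ev^\lambda$ becomes an affine map. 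Thus the whole question reduces to an assertion about affine maps of polyhedra: an affine map has finite fibres on a polyhedron precisely when it is injective on that polyhedron, i.e. when it has trivial kernel as a linear map on the linear span. So I would prove that $ev^\lambda$ is injective, which for an affine map immediately yields that every fibre is a single point (hence finite).

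First I would fix a type $\lambda$ and use the explicit coordinates listed before the proposition to parameterize a curve in ${\cal M}^\lambda$: the lengths of all bounded edges of $\Gam$, the image $h(x)$ of one chosen point $x$, the coordinates on $L_{-\infty}$ of the points $\widehat h(\bpp^\flat)$, and the distances between each $P \in \bpp^\sharp \cup \bpp^1 \cup \bpp^2$ and a chosen incident vertex $V_P$. The target coordinates of $ev^\lambda$ are the images $\widehat h(P)$ of all marked points. The core of the argument is to recover each source coordinate from the data $\bx = ev^\lambda([\,\cdot\,])$. Since the curve is connected (irreducibility is built into ${\cal M}$) and the map $h$ on each edge is affine with a prescribed primitive direction $u_V(E)$ and weight $w(E)$ dictated by $\lambda$, once I know the position $h(P)$ of one point on the curve I can propagate the positions of all vertices along the connected graph: traversing a bounded edge $E$ of known direction moves the image by $w(E)\ell(E)\,u_V(E)$, so the edge length $\ell(E)$ is determined as soon as both endpoints' images are determined, and conversely.

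The key step is the propagation argument, and the main obstacle is to verify that the marked points supply enough positional data to pin down the images of \emph{all} vertices, not just those adjacent to a marked point. Here I would exploit the last defining property of a marked $\CL$-curve: the connected components of $\overline\Gam \setminus \bpp$ have no loops and each contains exactly one univalent vertex. This tree-like structure with a univalent-vertex anchor is exactly what makes the propagation well-defined and unambiguous: starting from the marked points (whose images are prescribed by $\bx$, either as points of $L_{-\infty}$ for $\bpp^\flat$ or as points of $\R^2$ otherwise) and moving into each acyclic free component, the affine structure with fixed slopes forces every vertex image, and hence every bounded-edge length, to be uniquely recoverable. One must check that the directions $u_V(E)$ never degenerate so as to leave a length undetermined — but the non-degeneracy hypothesis guarantees that at each vertex the incident directions span $\R^2$, ruling out the pathology of a free component that translates rigidly along a single line without changing any marked-point image. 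Combining these observations, $ev^\lambda$ is injective on each $P_\lambda$; therefore every fibre $(ev^\lambda)^{-1}(\bx)$ is a single point, the union over the finitely many types $\lambda$ is finite, and the proposition follows.
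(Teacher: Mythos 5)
Your overall framing is sound and in fact matches the paper's starting point: both you and the paper reduce to a fixed combinatorial type $\lambda$ using the finiteness of $\Lambda(\Del, k_1, k_2, g, l, r)$, and your additional observation that $ev^\lambda$ is affine on a relatively open convex polyhedron, so that each fibre is convex and hence finite if and only if it is a single point, is correct and legitimately reduces the proposition to injectivity of each $ev^\lambda$. The problem is in how you prove that injectivity: the propagation step, which is the entire mathematical content, does not work as stated. Knowing $h(P)$ for a marked point $P \in \bpp^\sharp \cup \bpp^1 \cup \bpp^2$ lying on an edge $E$ fixes only the \emph{line} containing $h(E)$ (the line through $h(P)$ with direction $u_V(E)$), not the images of the endpoints of $E$: the distance from $P$ to an incident vertex is itself one of the free coordinates of ${\cal M}^\lambda$. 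So ``traversing a bounded edge of known direction determines the next vertex image'' is false as a local statement, and the propagation never gets started. What actually has to be excluded is a nontrivial kernel vector of the affine map $ev^\lambda$, i.e.\ a displacement field $V \mapsto \delta(V)$ on the vertices with $\delta(V') - \delta(V)$ parallel to the direction of each bounded edge $VV'$ and $\delta(V)$ parallel to $u_V(E)$ for each edge $E$ carrying a marked point --- a \emph{mechanism} in which different vertices move by different vectors. Your check rules out only the special case of a whole component translating rigidly along one line; it does not address general flexes, and the appeal to non-degeneracy at a vertex is not obviously sufficient, since non-degenerate curves in ${\cal M}$ may well have several mutually parallel (e.g.\ horizontal) edges at one vertex, so the constraints at a single vertex can fail to pin $\delta(V)$ down.

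The gap is repairable, but a genuine argument is needed. Along your own line: suppose $\delta \not\equiv 0$ and choose, in a connected component of $\overline\Gam \setminus \bpp$ (a tree with exactly one univalent vertex $O$, by the definition of marked $\CL$-curves), a vertex $V$ with $\delta(V) \ne 0$ at maximal combinatorial distance from $O$; then every edge at $V$ other than the one pointing toward $O$ is forced to be parallel to $\delta(V)$ (edges toward marked points by the marked-point constraint, edges toward farther vertices because those satisfy $\delta = 0$ by maximality), and the balancing condition then forces the remaining edge to be parallel as well --- or yields $w(E)u_V(E) = 0$ --- contradicting non-degeneracy in either case. The paper instead avoids the tree analysis altogether: it first cuts at the sparse set $\bpp^\sharp \cup \bpp^1 \cup \bpp^2 \cup \bpp^\nu$ to reduce to rational non-degenerate \emph{end-marked} curves, where every marked point sits on an end, and then inducts on the number of non-univalent vertices, reconstructing $h(V)$ for a vertex $V$ incident to exactly one bounded edge as the intersection of the two distinct lines through the rigidity points of two ends with $u_V(E_i) \ne u_V(E_j)$ (such ends exist by non-degeneracy, and such a vertex exists by rationality). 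Either of these completions supplies the rigidity statement your sketch asserts but does not prove.
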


\begin{proof} Since the set $\Lambda(\Del, k_1, k_2, g, l, r)$ is
finite, it is enough to show that, for any $\lambda \in
\Lambda(\Del, k_1, k_2, g, l, r)$ and any element $\bx$ in
$(L_{-\infty})^l \times (\R^2)^{r + 1 - l}$, the inverse image
$\qquad$ $(ev^\lambda)^{-1}(\bx) \subset {\cal M}^\lambda(\Del, k_1,
k_2, g, l, r)$ is a finite set. Furthermore, since we can replace
any marked $\CL$-curve $(\overline\Gam, {\cal V}, h, \bpp)$ by the
collection of the irreducible components of a marked cut of
$(\overline\Gam, {\cal V}, h, \bpp)$ at $\bpp^\sharp \cup \bpp^1
\cup \bpp^2 \cup \bpp^\nu$
 (notice that the set $\bpp^\sharp \cup
\bpp^1 \cup \bpp^2 \cup \bpp^\nu$ is sparse), it is sufficient to
prove the statement in the situation, where $\lambda$ is a
combinatorial type of rational non-degenerate end-marked
$\CL$-curves.

For such a combinatorial type~$\lambda$ the statement is easily
proved by induction. Namely, let $(\overline\Gam, {\cal V}, h,
\bpp)$ be a marked $\CL$-curve of combinatorial type~$\lambda$, and
assume that~$\overline\Gam$ has at least two non-univalent vertices
(the other cases are evident). Since the curve $(\overline\Gam,
{\cal V}, h, \bpp)$ is rational, the graph $\overline\Gam$ contains
a non-univalent vertex~$V$ incident to exactly one bounded edge and
not incident to the non-rigid end. Denote by $E_1$, $\ldots$, $E_k$
the (rigid) ends incident to~$V$. Since the curve $(\overline\Gam,
{\cal V}, h, \bpp)$ is non-degenerate, among the ends $E_1$,
$\ldots$, $E_k$, we can find two ends $E_i$ and $E_j$ such that
$u_V(E_i) \ne u_V(E_j)$. Thus, the images under $\widehat h$ of the
rigidity points of $E_i$ and $E_j$ allow one to reconstruct the
image of $V$ under $\widehat h$. Modify the sequence $\bpp$ removing
from it the rigidity points of $E_1$, $\ldots$, $E_k$ and adding to
$\bpp^\nu$ a marked point~$P$ at~$V$ (as the last term of
$\bpp^\nu$). Denote the new sequence of marked points by $\bpp'$.
The graph of a marked cut of $(\overline\Gam, {\cal V}, h, \bpp')$
at~$P$ has fewer non-univalent vertices than $\overline\Gam$, and
one can apply the induction hypothesis. \end{proof}

\subsection{Symmetric ${\cal L}$-curves}

Let $(\overline\Gam,{\cal V},h,\bpp)$ be a marked ${\cal L}$-curve.
A homeomorphism $\xi:\widehat\Gam\to\widehat\Gam$ is called an {\it
involution}, if
\begin{itemize}
\item $\xi^2$ is the identity,
\item the restriction of $\xi$ on $\Gam$ is an isometry,
\item $\widehat h\circ\xi=\widehat h$,
\item $\xi$ is identical on
$\bpp^\flat \cup \bpp^\sharp \cup \bpp^\nu$, and takes $\bpp^1$
bijectively onto $\bpp^2$.
\end{itemize}
A marked $\CL$-curve $(\overline\Gam,{\cal V},h,\bpp)$ equipped with
an involution $\xi: \widehat\Gam \to \widehat\Gam$ is called {\it
symmetric}. A symmetric marked ${\cal L}$-curve is said to be {\it
irreducible} if it does not decompose into proper symmetric marked
${\cal L}$-subcurves. The {\it genus} of an irreducible symmetric
marked ${\cal L}$-curve $(\overline\Gam,{\cal V},h,\bpp,\xi)$ is
defined to be $b_1(\overline\Gam)-b_0(\overline\Gam)+1$.

We define a {\it combinatorial type} and an {\it isomorphism} of
symmetric marked ${\cal L}$-curves in the same way as in section
\ref{moduli-spaces} with an extra requirement that $\varphi$
commutes with the involutions. Given data $(\Del, k_1, k_2, g, l,
r)$ as in section~\ref{moduli-spaces}, we similarly introduce the
set ${\cal M}^{\text{\rm sym}}(\Del, k_1, k_2, g, l, r)$ of
isomorphism classes of irreducible symmetric marked ${\cal
L}$-curves of genus $g$, and consider the evaluation map
$ev^{\text{\rm sym}}: {\cal M}^{\text{\rm sym}}(\Del, k_1, k_2, g,
l, r) \to (L_{-\infty}) \times (\R^2)^{r + 1 - l}$. Clearly, ${\cal
M}^{\text{\rm sym}}(\Del, k_1, k_2, g, l, r)$ is nonempty only if
$k_1 = k_2$. The following statement can be deduced from
Proposition~\ref{finiteness}.

\begin{proposition}\label{sym}
(1) The set of combinatorial types of symmetric marked ${\cal
L}$-curves whose isomorphism classes belong to ${\cal M}^{\text{\rm
sym}}(\Del, k_1, k_2, g, l, r)$ is finite.

(2) For any element $\bx$ in $(L_{-\infty})^l \times (\R^2)^{r + 1 -
l}$, the inverse image of $\bx$ under the evaluation map
$ev^{\text{\rm sym}}: {\cal M}^{\text{\rm sym}}(\Del, k_1, k_2, g,
l, r) \to (L_{-\infty})^l \times (\R^2)^{r + 1 - l}$ is finite.
\end{proposition}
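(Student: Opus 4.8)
The plan is to reduce both assertions to the corresponding statements for ordinary (non-symmetric) marked $\CL$-curves, via the forgetful operation that erases the involution, combined with a decomposition into connected components. The key structural observation is that if $(\overline\Gam, \CV, h, \bpp, \xi)$ is an \emph{irreducible} symmetric marked $\CL$-curve, then $\xi$ acts on the set of connected components of $\overline\Gam$ as an involution, and indecomposability under $\xi$ forces this action to have a single orbit. Hence $\overline\Gam$ has either one component fixed by $\xi$ (so $b_0(\overline\Gam) = 1$ and the genus equals $b_1(\overline\Gam)$), or two components interchanged by $\xi$ (so $b_0(\overline\Gam) = 2$, the two components are isomorphic via $\xi$, and $\bpp^\flat, \bpp^\sharp, \bpp^\nu$ are necessarily empty, since they lie in the fixed locus of $\xi$ and the two components are vertex-disjoint). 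In particular $b_0(\overline\Gam) \le 2$ throughout.

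For part (1), I would forget $\xi$ and split $\overline\Gam$ into its at most two connected components. Each component is a non-degenerate irreducible marked $\CL$-curve: the marking axiom is inherited by restriction, and the data $(\Del, k_1, k_2, g, l, r)$ splits over the components into finitely many admissible distributions (the degrees form a partition of the multiset $\Del$, the genera obey the bookkeeping above, and the marked-point counts add up). For each such distribution the combinatorial types of each component form a finite set, because the relevant $\Lambda(\Del_j, k_{1,j}, k_{2,j}, g_j, l_j, r_j)$ is finite. Finally, for a fixed combinatorial type of the underlying curve there are only finitely many involutions $\xi$ meeting the four defining conditions, as $\xi$ must be a combinatorial automorphism of order at most two with prescribed action on $\bpp^\flat, \bpp^\sharp, \bpp^1, \bpp^2, \bpp^\nu$. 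This yields finitely many combinatorial types of symmetric marked $\CL$-curves.

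For part (2), fix $\bx \in (L_{-\infty})^l \times (\R^2)^{r+1-l}$; by part (1) it suffices to bound the fiber over each combinatorial type separately. Again I forget $\xi$ and pass to the connected components. Each component is an irreducible marked $\CL$-curve lying in some $\mathcal{M}(\Del_j, k_{1,j}, k_{2,j}, g_j, l_j, r_j)$, and since $ev^{\text{\rm sym}}$ records the entire tuple $\widehat h(\bpp)$, the images of the marked points lying on a given component are determined as a subtuple of $\bx$. Thus each component is confined to a single fiber of the ordinary evaluation map $ev$, which is finite by Proposition~\ref{finiteness}. Reconstructing the symmetric curve then costs only finite ambiguity: in the one-component case the curve equals its unique component and $\xi$ ranges over a finite set of involutions of it; in the two-component case one component is chosen among finitely many, its partner is forced to be its $\xi$-image, and $\xi$ is a compatible isomorphism, again with finitely many choices. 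Hence $(ev^{\text{\rm sym}})^{-1}(\bx)$ is finite.

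The step I expect to require the most care is verifying that each connected component, on its own, is a genuine element of some moduli space $\mathcal{M}(\Del_j, \dots)$ to which Proposition~\ref{finiteness} applies verbatim: one must check that the marking axiom (each complement component containing exactly one univalent vertex and no loops) is inherited by the restriction, and that the parameters $(k_{1,j}, k_{2,j}, l_j, r_j)$ are read off correctly. The two-component conjugate-pair case needs the supplementary remark that $\bpp^\flat, \bpp^\sharp, \bpp^\nu$ are empty there, so the paired points of $\bpp^1$ and $\bpp^2$ are the only marked points, and one must confirm that these do pin down each component's evaluation image through $\bx$. Once this bookkeeping is settled, the reduction to Proposition~\ref{finiteness} is immediate.
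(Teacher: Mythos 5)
Your proposal is correct and takes essentially the same route as the paper, whose entire proof is the one-line remark that the statement "can be deduced from Proposition~\ref{finiteness}": your orbit analysis (irreducibility forces a single $\xi$-orbit of components, hence either one invariant component or two swapped ones with $\bpp^\flat\cup\bpp^\sharp\cup\bpp^\nu=\emptyset$), the reduction of each component to an ordinary moduli space with finite $\Lambda(\cdot)$ and finite $ev$-fibers, and the finiteness of compatible involutions on a fixed combinatorial type is precisely the bookkeeping the authors leave to the reader. The care points you flag (inheritance of the marking axiom by components, and the fact that every component carries marked points pinning its evaluation image — which indeed follows since a marked-point-free component would be a tree with exactly one leaf) are exactly the right ones and are settled as you expect.
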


\section{Algebraic Caporaso-Harris type
formulas}\label{algebraic-formulas}

\subsection{Families of curves on Del Pezzo surfaces}\label{secn1}
Here, we establish some properties of generic points of "generalized
Severi varieties". These auxiliary results are close to similar
statements in \cite{CH,Va} used in the proof of the recursive
formulas, but our setting is slightly different and we need a more
detailed information on generic elements of the Severi varieties
considered.

Let $\Sig=\PP^2_q$ be the complex projective plane blown up at $0\le
q\le 5$ generic points, $E_i$, $1\le i\le q$, the exceptional curves
of the blow up, $L\in\Pic(\Sig)$ the pull-back of a line in $\PP^2$,
and $E$ a smooth rational curve linearly equivalent to
\begin{equation}\begin{cases}L,\quad & \text{if}\,\, q\le2,\\ L-E_3,\quad &
\text{if}\,\, q=3, \\ L-E_3-E_4,\quad & \text{if}\,\, q\ge4
.\end{cases}\label{en61}\end{equation} Denote by $\Pic_+(\Sig,E)$
and $\Pic(\Sig,E)$ the semigroups generated by effective irreducible
divisors $D\in\Pic(\Sig)$ such that $DE> 0$ or $DE\ge 0$,
respectively.

Following \cite[Section 2]{Va}, for a given effective divisor
$D\in\Pic(\Sig)$ and nonnegative integers $g,n$, we denote by
$\overline{\cal M}_{g,n}(\Sig,D)$ the moduli space of triples $(\hat
C, \hat\bp, \nu)$, where $\hat C$ is a genus~$g$ connected nodal
curve, $\hat\bp$ is a collection of $n$ marked points of $\hat C$,
and $\nu:\hat C\to\Sigma$ is a stable map such that $\nu_*\hat
C\in|D|$. The triples with a smooth curve $\hat C$ form an open
dense subset in $\overline{\cal M}_{g,n}(\Sig,D)$.

Let $\Z^\infty_+$ be the direct sum of countably many additive
semigroups \mbox{$\Z_+=\{m\in\Z\ |\ m\ge 0\}$}. Denote by $\theta_k$
the generator of the $k$-th summand, $k\in\N$. For
$\alp=(\alp_1,\alp_2,...)\in\Z^\infty_+$ put
$$
\displaylines{ \|\alp\|=\sum_{k=1}^\infty\alp_k,\quad
I\alp=\sum_{k=1}^\infty k\alp_k\ , \cr \mt(\alp,i)=k,\quad
\text{if}\quad \sum_{j<k}\alp_j< i\le\sum_{j\le k}\alp_j,\
i=1,...,\|\alp\|\ . }
$$
Let a divisor $D\in \Pic(\Sig,E)$, an integer $g$, and two elements
$\alp,\bet\in\Z^\infty_+$ satisfy
\begin{equation}
g\le g(\Sig,D)=\frac{D^2+DK_\Sig}{2}+1,\quad I\alp+I\bet=DE\ .
\label{en1}
\end{equation}
Given a sequence $\bp^\flat=(p_i)_{1\le i\le \|\alp\|}$ of
$\|\alp\|$ distinct points of $E$, define ${\cal V}_\Sig(D, g, \alp,
\bet, \bp^\flat)\subset\overline{\cal M}_{g,\|\alp\|}(\Sig,D)$ to be
the closure of the set of elements $(\hat C, \hat\bp, \nu)$ subject
to the following restrictions:
\begin{itemize}
\item $\hat C$ is smooth, $\hat\bp=(\hat p_i)_{1\le i\le\|\alp\|}$, $\nu(\hat p_i)=p_i$
for all $i=1,...,\|\alp\|$,
\item the divisor
$\bd=\nu^*(E)$ is of the form
\begin{equation}\bd=\sum_{i=1}^{\|\alp\|}\mt(\alp,i)\cdot\hat
p_i+\sum_{i=1}^{\|\bet\|}\mt(\bet,i)\cdot\hat q_i\ ,\label{ediv}
\end{equation}
where $(\hat q_i)_{1\le i\le\|\bet\|}$ is a sequence of $\|\bet\|$
distinct points of $\hat C\backslash\hat\bp$.
\end{itemize}
Put $$R_\Sig(D,g,\bet)=-D(E+K_\Sig)+\|\bet\|+g-1\ .$$ For a
component $V$ of ${\cal V}_\Sig(D,g,\alp,\bet,\bp^\flat)$, define
the {\it intersection dimension} $\Idim V$ to be the dimension of
the image of $V$ in the linear system $|D|$. Denote by $\Idim{\cal
V}_\Sig(D,g,\alp,\bet,\bp^\flat)$ the maximum of intersection
dimensions if the components.

\begin{lem}\label{ln1}
Consider the surface $\Sig=\PP^2_5$, the smooth rational curve $E$
linearly equivalent to $L-E_3-E_4$, and an irreducible curve
$C'\subset\Sig$ different from $E$. Let $D\in
\Pic(\Sig,E)\backslash\{0\}$, an integer~$g$, and
$\alp,\bet\in\Z^\infty_+$ satisfy (\ref{en1}), and let
$\bp^\flat=(p_i)_{1\le i\le\|\alp\|}$ be a generic sequence of
points of $E$.

If ${\cal V}_\Sig(D,g,\alp,\bet,\bp^\flat)\ne\emptyset$, then
$R_\Sig(D,g,\bet)\ge0$ and each component $V$ of $\qquad$ ${\cal
V}_\Sig(D,g,\alp,\bet,\bp^\flat)$ satisfies
\begin{equation}
\Idim V\le R_\Sig(D,g,\bet)\ .\label{enov1}
\end{equation}
If (\ref{enov1}) turns into equality and $D$ is not of the form
$mD_0$, where $m\ge 2$ and $D_0=E_3,E_4$, or $L-E_i-E_j$,
$\{i,j\}\subset\{1,2,5\}$, then for a generic element $(\hat C,
\hat\bp, \nu) \in V$ the map~$\nu$ is an immersion birational onto
$C=\nu(\hat C)$, where $C$ is a nodal curve nonsingular along $E$.
Furthermore, if in addition $R_\Sig(D,g,\bet)>0$, then the family
$V$ has no base points outside $\bp^\flat$, and $C$ crosses $C'$
transversally.
\end{lem}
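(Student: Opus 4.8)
The plan is to establish everything through a dimension count performed relative to $E$, in the spirit of \cite{CH} and \cite{Va}, exploiting the genericity of the five blown-up points (which makes $\Sig$ Del Pezzo) and of $\bp^\flat$ to force all imposed conditions to be independent. I first record the relevant expected dimensions. On the Del Pezzo surface $\Sig=\PP^2_5$ the family of integral genus-$g$ curves in $|D|$ has expected dimension $-DK_\Sig+g-1$; equivalently, a general such curve carries $g(\Sig,D)-g$ nodes, $g(\Sig,D)=(D^2+DK_\Sig)/2+1$ being the arithmetic genus. A contact of order $\mt(\alp,i)$ at the fixed point $p_i\in E$ should impose $\mt(\alp,i)$ conditions, and a free contact of order $\mt(\bet,j)$ should impose $\mt(\bet,j)-1$ conditions; summing over all contact points and using $I\alp+I\bet=DE$ from (\ref{en1}) subtracts $DE-\|\bet\|$, which yields the expected value $-DK_\Sig+g-1-(DE-\|\bet\|)=R_\Sig(D,g,\bet)$ and explains the shape of (\ref{enov1}).

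For the upper bound I would let $\bp^\flat$ vary, forming the universal family $\widetilde{\cal V}\subset|D|\times E^{\|\alp\|}$ whose fibre over $\bp^\flat$ is the image of ${\cal V}_\Sig(D,g,\alp,\bet,\bp^\flat)$ in $|D|$. The essential input is that contact of order $k$ with $E$ at a point is a condition of codimension $k-1$ on the equigeneric family; this holds because a general member $C$ of any component is integral and distinct from $E$, so $E\not\subset C$ and $C$ meets $E$ properly. This bounds the dimension of the free-contact locus in $|D|$, hence of $\widetilde{\cal V}$, by $R_\Sig(D,g,\bet)+\|\alp\|$. Projecting $\widetilde{\cal V}\to E^{\|\alp\|}$, only components dominating $E^{\|\alp\|}$ meet the fibre over a generic $\bp^\flat$, and for these the generic fibre has dimension at most $R_\Sig(D,g,\bet)$; this gives $\Idim V\le R_\Sig(D,g,\bet)$ for every component $V$, and since $V\ne\emptyset$ forces $\Idim V\ge0$ we also obtain $R_\Sig(D,g,\bet)\ge0$.

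The heart of the argument is the equality case. First note that the excluded classes $D_0=E_3,E_4,L-E_i-E_j$ are exactly the $(-1)$-curves on $\PP^2_5$ meeting $E$ with $D_0E=1$; for $m\ge2$ the class $mD_0$ satisfies $\dim|mD_0|<0$ in the naive sense and $|mD_0|$ reduces to the single non-reduced divisor $mD_0$, so the conclusion genuinely fails and these must be ruled out. For the remaining $D$, assuming $\Idim V=R_\Sig(D,g,\bet)$, I would argue that a general member is as nondegenerate as the dimension permits: were $\nu$ not an immersion, or did $C$ acquire a singularity worse than a node, a non-prescribed tangency to $E$, or a singular point on $E$, the corresponding equisingular stratum would have dimension strictly below $R_\Sig(D,g,\bet)$, contradicting the assumed equality. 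This step, controlling the codimension contributed by each possible degenerate local behaviour and checking that the sharp dimension excludes all of them, is where I expect the main difficulty to lie; it rests on the equisingular deformation theory developed in \cite{CH} and \cite{Va}, and the exclusion of the multiple rigid classes is precisely what keeps the general curve reduced and moving in its linear system.

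Finally, suppose in addition $R_\Sig(D,g,\bet)>0$, so the image of $V$ in $|D|$ is positive-dimensional and its general member is nodal. Since the equality $\Idim V=R_\Sig(D,g,\bet)$ is sharp, imposing one further generic point condition cuts the dimension by exactly one, so the members genuinely move and sweep out $\Sig$; a forced base point off $\bp^\flat$ would obstruct this and contradict sharpness, whence $V$ has no base points outside $\bp^\flat$. Knowing that the family is base-point free away from the finite set $\bp^\flat$ and positive-dimensional, I would invoke a Kleiman--Bertini type transversality argument to conclude that a general member $C$ meets the fixed integral curve $C'\ne E$ transversally, completing the proof.
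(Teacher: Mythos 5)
Your skeleton is the right one --- it is the Caporaso--Harris/Vakil dimension count that the paper itself runs --- but the three steps you flag as ``essential input'' are exactly where the content of the lemma lives, and the justifications you offer for them do not hold up. First, the claim that a contact of order $k$ with $E$ imposes $k-1$ independent conditions is \emph{not} a consequence of $C$ meeting $E$ properly; properness only makes the conditions well defined. The paper proves the expected codimension by working on the normalization: with $\bd=\nu^*(E)$ one needs $h^1(\hat C,{\cal N}(-\bd))=0$, which follows (via \cite[Observation 2.5]{CH}) from the positivity $\deg\bigl(c_1({\cal N}(-\bd))\otimes\omega_{\hat C}^{-1}\bigr)=-D(K_\Sig+E)+\|\bet\|=(2d-k_1-k_2-k_5)+\|\bet\|>0$, and this surface-specific inequality is itself proved by B\'ezout against the anticanonical-minus-$E$ conics $|2L-E_1-E_2-E_5|$ (observations (E1)--(E3) of the paper). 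The same positivity, in strengthened forms ($\ge 2$ for immersion off $E$, $\ge 4$ at $\nu^{-1}(E)$, and positivity of $c_1({\cal N}(-\bd-\bd'))\otimes\omega_{\hat C}^{-1}$ with $\bd'$ recording the putative degenerate point) is what drives every exclusion in the equality case --- non-immersion, non-nodal singularities, singular points on $E$, base points, tangency with $C'$. Deferring all of this to ``equisingular deformation theory in \cite{CH,Va}'' leaves the actual proof unwritten, since verifying these positivity inequalities on $\PP^2_5$ relative to $E=L-E_3-E_4$ is precisely what the paper does.

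Two further points would fail as stated. You assume from the outset that a general member of each component is integral with $\nu$ birational, and you discuss the excluded classes $mD_0$ only to explain why they are excluded; but nothing in your argument shows that for all \emph{other} $D$ a generic element of a component of maximal intersection dimension is not a multiple cover. The paper handles this separately: if $\nu$ is an $m$-fold cover of its image with class $D_0$, Riemann--Hurwitz gives $2-2g=m(2-2g')-r$, the birational-case bound applied to $D_0$ yields $\Idim{\cal V}_\Sig(D,g,\alp,\bet,\bp^\flat)\le -(K_\Sig+E)D_0+g'-1+|(C\cap E)\setminus\bp^\flat|\le R_\Sig(D,g,\bet)$ using the nefness of $-(K_\Sig+E)$, and equality forces $(K_\Sig+E)D_0=0$, which characterizes exactly the excluded list. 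Finally, Kleiman--Bertini transversality is not available: $\PP^2_5$ is not a homogeneous space, so genericity of $C$ in its family gives no free transversality with the fixed curve $C'$. The paper proves transversality (and base-point freeness off $\bp^\flat$) by the same mechanism as above --- fixing the tangency point $z$ (resp.\ base point) cuts $\Idim$ by at most $3$ (resp.\ $0$), while the bound $\Idim U\le h^0(\hat C,{\cal N}(-\bd-\bd'))$ with $\bd'=2w_1+2w_2$ (resp.\ $\bd'=\nu^{-1}(z)$) forces a strictly smaller dimension than $R_\Sig(D,g,\bet)$, again via the B\'ezout positivity. Without these computations your ``sharpness'' heuristics do not close the argument.
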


\begin{proof}
First, we prove the lemma under an additional assumption that for a
generic element $(\hat C,\hat\bp,\nu)$ of the component $V\subset
{\cal V}_\Sig(D,g,\alp,\bet,\bp^\flat)$, the curve $\hat C$ is
mapped by $\nu$ birationally to its image $C=\nu(\hat C)$.

The relation (\ref{enov1}) is evident if $D$ is a $(-1)$-curve as
well as if $D=dL-k_1E_1-...-k_5E_5$ with $d\le 2$ or with $k_i<0$
for some $i$. So, we assume that $D=dL-k_1E_1-...-k_5E_5$ with $d\ge
3$, $k_1,...,k_5\ge 0$.

Due to generic position of $\bp^\flat$, one has
$$
\Idim V\le \Idim W-\|\alp\|\
$$
for at least one of the components $W$ of ${\cal
V}_\Sig(D,g,0,\alp+\bet,\emptyset)$. Hence, to prove (\ref{enov1}),
it is sufficient to assume that $\alp=0$ and $\bp^\flat=\emptyset$,
and to check that the intersection dimension of any component of
${\cal V}_\Sig(D,g,0,\bet,\emptyset)$ is at most $R_\Sig(D,g,\bet)$.
To shorten the notation, we write (within the present proof) ${\cal
V}_\Sig(D,g,\bet)$ for ${\cal V}_\Sig(D,g,0,\bet,\emptyset)$.

The remaining part of the proof of (\ref{enov1}) literally follows
the lines of \cite{CH} and is based on the following numerical
observations.
\begin{enumerate}
\item[(E1)] The conclusion of \cite[Corollary 2.4]{CH}
reads in our situation as $$\Idim {\cal V}_\Sig(D,g,(DE)\theta_1)\le
-DK_\Sig+g-1\ ,$$ and it holds since the hypothesis of
\cite[Corollary 2.4]{CH}, which is equivalent to $DK_\Sig<0$, is
true for any effective divisor $D\in\Pic(\Sig)$.
\item[(E2)] The inequality
$\deg(\nu^*{\cal O}_{\PP^2}(1)(-\bd))\ge0$ in \cite[Page 363]{CH}
($\bd$ is defined by (\ref{ediv})) reads in our situation as $DE\ge
I\bet-\|\bet\|$, and it holds true since $I\bet=DE$ under our
assumptions.
\item[(E3)] The
inequality $\deg(c_1({\cal N}(-\bd))\otimes\omega^{-1}_{\hat C})>0$
in \cite[The last paragraph in page 363]{CH} (${\cal N}$ is the
normal sheaf on $\hat C$ and $\omega_{\hat C}$ is the dualizing
bundle) reads in our setting as
$$\displaylines{
-DK_\Sig+ 2g-2 -\deg\bd + 2 - 2g \cr
=-D(K_\Sig+E)+\|\bet\|=(2d-k_1-k_2-k_5)+\|\bet\| > 0, }
$$
and it holds true since $2d-k_1-k_2-k_5\ge 2$ (the latter inequality
follows from B\'ezout's bound applied to the intersection of
$C=\nu(\hat C)$ with an appropriate curve from the linear system
$|2L-E_1-E_2-E_5|$).\end{enumerate} As in the end of the proof of
\cite[Proposition 2.1]{CH}, it implies that the intersection
dimension of any component of ${\cal V}_\Sig(D,g,\bet)$ does not
exceed
$$\deg(c_1({\cal
N}(-\bd))-g+1=R_\Sig(D,g,\bet)\ ,
$$
which completes the proof of (\ref{enov1}).

A stronger inequality $\deg(c_1({\cal
N}(-\bd))\otimes\omega^{-1}_{\hat C})\ge 2$, which is indeed
established in (E3), implies that $\nu:\hat C\to \Sig$ is an
immersion away from $\nu^{-1}( E)$ (see \cite[First paragraph of the
proof of Proposition 2.2]{CH}). Similarly, $\nu$ is an immersion at
$\nu^{-1}( E)$ as soon as $\deg(c_1({\cal
N}(-\bd))\otimes\omega^{-1}_{\hat C})\ge 4$, or, equivalently,
\begin{equation}(2d-k_1-k_2-k_5)+\|\bet\|\ge 4\
.\label{ejan6_3}\end{equation} On the other hand, assuming that
$C=\nu(\hat C)$ has a singular local branch centered at a point
$z\in E$, have $\|\bet\|>0$ and $2d-k_1-k_2-k_5\ge 3$, where the
latter inequality follows from B\'ezout's bound applied to the
intersection of $C$ with a curve belonging to $|2L-E_1-E_2-E_5|$ and
passing through $z$ and another point of $C$.

From now, on we suppose that  $\nu$ is an immersion. To show that
$C$ is nonsingular along $E$ and nodal, we may assume that $C\cap
E\ne\emptyset$, {\it i.e.}, $\|\bet\|>0$. Indeed, otherwise, we
either replace $E$ by another $(-1)$-curve, or, if $C$ does not meet
any $(-1)$-curve, we blow them down and reduce the problem to the
planar case.

To check the remaining statements, we argue by contradiction.
Namely, assuming that one of them fails, we derive that necessarily
$\Idim {\cal V}_\Sig(D,g,\bet)<R_\Sig(D,g,\bet)$.

Suppose that $\nu$ takes the points $q_1,...,q_s$ ($s\ge2$) of the
divisor $\bd$ to the same point $z\in E$. Fixing the position of $z$
in $E$, we obtain a subvariety $U\subset {\cal V}_\Sig(D,g,\bet)$ of
dimension $\Idim U\ge \Idim {\cal V}_\Sig(D,g,\bet)-1$. On the other
hand, the same argument as in the proof of \cite[Proposition
2.1]{CH} gives
$$\Idim U\le h^0(\hat C,
{\cal N}(-\bd-\bd')),$$ where $\bd'=q_1+...+q_s$. Verifying that
$c_1({\cal N}(-\bd-\bd'))\otimes\omega^{-1}_{\hat C}$ is positive on
$\hat C$ and applying \cite[Observation 2.5]{CH}, we get (cf.
\cite[Page 364]{CH}) $$\Idim {\cal V}_\Sig(D,g,\bet)\le\Idim U+1\le
h^0(\hat C,{\cal N}(-\bd-\bd'))+1$$
$$=\deg(c_1({\cal
N}(-\bd-\bd'))-g+2=-DK_\Sig+2g-2-\deg(\bd+\bd')-g+2$$
$$=-D(K_\Sig+E)+g+\|\bet\|-s<R_\Sig(D,g,\bet)\ .$$ The above
positivity is equivalent to
\begin{equation}-DK_\Sig+2g-2-\deg(\bd+\bd')-2g+2=(2d-k_1-k_2-k_5-s)
+\|\bet\| > 0,\label{ejan6_4}\end{equation} and it holds due to
$\|\bet\|>0$ and $2d-k_1-k_2-k_5-s>0$, where the latter inequality
follows from B\'ezout's bound applied to the intersection of $C$
with a curve belonging to \mbox{$|2L-E_1-E_2-E_5|$} and passing
through $z$ and another point of $C$.

Suppose that for some point $z\in\Sig\backslash E$ the set
$\nu^{-1}(z)$ consists of $s\ge 3$ points. Fixing the position of
the point $z$, we obtain a subvariety $U\subset {\cal
V}_\Sig(D,g,\bet)$ of dimension $\Idim U\ge\Idim {\cal
V}_\Sig(D,g,\bet)-2$. On the other hand, by the same arguments as
above we have again the inequality (\ref{ejan6_4}) and an upper
bound
$$\Idim V\le h^0(
\hat C,{\cal N}(-\bd-\bd'))\ ,$$ where $\bd'=\nu^{-1}(z)$. Due to
(\ref{ejan6_4}), the bundle $c_1({\cal
N}(-\bd-\bd'))\otimes\omega^{-1}_{\hat C}$ is positive on $\hat C$,
and thus applying \cite[Observation 2.5]{CH} we get
$$\Idim
{\cal V}_\Sig(D,g,\bet)\le\Idim U+2\le h^0( \hat C,{\cal
N}(-\bd-\bd'))+2$$
$$=\deg(c_1({\cal
N}(-\bd-\bd'))-g+3=-DK_\Sig+2g-2-\deg(\bd+\bd')-g+3$$
$$=-D(K_\Sig+E)+g+\|\bet\|-s+1<R_\Sig(D,g,\bet)\ .$$

Suppose that $\nu^{-1}(z)=w_1+w_2$, $w_1\ne w_2\in\hat C$ for some
point $z\in\Sig\backslash E$, and the two local branches of
$C=\nu(\hat C)$ at $z$ are tangent. Fixing the position of $z$ and
the direction of the tangent, we obtain a subvariety $U\subset {\cal
V}_\Sig(D,g,\bet)$ of dimension $\Idim U\ge\Idim {\cal
V}_\Sig(D,g,\bet)-3$. As in the two preceding computations, we have
$$\Idim U\le h^0(\hat C,{\cal N}(-\bd-\bd'))\ ,$$ where $\bd'=2w_1+2w_2$. The
inequality
$$-DK_\Sig+2g-2-\deg(\bd+\bd')-2g+2=(2d-k_1-k_2-k_5-4)+\|\bet\|>0\
,$$ which we derive from $\|\bet\|>0$ and B\'ezout's bound applied
to the intersection of $C$ with a curve belonging to
$|2L-E_1-E_2-E_5|$, passing through $z$ and tangent to $C$, results
in
$$\Idim {\cal V}_\Sig(D,g,\bet)\le\Idim U+3\le h^0(
\hat C,{\cal N}(-\bd-\bd'))+3$$
$$=\deg(c_1({\cal
N}(-\bd-\bd'))-g+4=-DK_\Sig+2g-2-\deg(\bd+\bd')-g+4$$
$$=-D(K_\Sig+E)+g+\|\bet\|-s-2<R_\Sig(D,g,\bet)\ .$$

Assuming that $R_\Sig(D,g,\bet)>0$ and ${\cal
V}_\Sig(D,g,\alp,\bet,\bp^\flat)$ has a base point
$z\in\Sig\backslash\bp^\flat$, we obtain $$\Idim {\cal
V}_\Sig(D,g,\alp,\bet,\bp^\flat)\le h^0(\hat C,{\cal N}(-\bd-\bd'))\
,$$ where $\bd'=\nu^{-1}(z)$. The positivity of the bundle
$c_1({\cal N}(-\bd-\bd'))\otimes\omega^{-1}_{\hat C}$ on $\hat C$
reduces to the inequality
$$(2d-k_1-k_2-k_5)+\|\bet\|-1>0,$$
which holds by B\'ezout's bound. It allows us to conclude that
$$\Idim
{\cal V}_\Sig(D,g,\alp,\bet,\bp^\flat)\le h^0(\hat C,{\cal
N}(-\bd-\bd'))$$
$$=-DK_\Sig+2g-2-\deg(\bd+\bd')-g+1$$
$$=-D(K_\Sig+E)+g+\|\bet\|-2<R_\Sig(D,g,\bet).
$$

Finally, if $R_\Sig(D,g,\bet)>0$ then according to the same
arguments as above the existence of a tangency point $z$ with $C'$
would lead to the following computation:
$$\Idim {\cal V}_\Sig(D,g,\bet)\le
h^0(\hat C,{\cal N}(-\bd-\bd'))$$
$$=\deg(c_1({\cal
N}(-\bd-\bd'))-g+1=-DK_\Sig+2g-2-\deg(\bd+\bd')-g+1$$
$$=-D(K_\Sig+E)+g+\|\bet\|-s-2<R_\Sig(D,g,\bet)\ ,$$ where $\bd'=\nu^{-1}(z)$.

Thus, the proof of Lemma \ref{ln1} is completed under the assumption
that $\nu$ is birational on its image. Hence, to end the proof it
remains to assume instead that for a generic element $(\hat C, \nu,
\hat\bp)$ of ${\cal V}_\Sig(D,g,\alp,\bet,\bp^\flat)$ the map $\nu$
is an $m$-fold covering to its image, $m\ge2$. But, in such a case,
we would have $D=mD_0$, $2-2g=m(2-2g')-r$, where $g'$ is the
geometric genus of $C=\nu(\hat C)$, $r$ is the total ramification
multiplicity of the covering, and $|(C\cap
E)\backslash\bp^\flat|\le\|\bet\|$. Therefore, using (\ref{enov1})
for the case of birational $\nu$, we get (cf. \cite[Page 62]{Va})
$$\Idim {\cal V}_\Sig(D,g,\alp,\bet,\bp^\flat)\le-(K_\Sig+E)D_0+g'-1+|(C\cap
E)\backslash\bp^\flat|$$
$$\le-\frac{(K_\Sig+E)D}{m}+\frac{g-1-r/2}{m}+\|\bet\|\le-(K_\Sig+E)D+g-1+\|\bet\|=R_\Sig(D,g,\bet),$$
the latter inequality coming from the nef property of $-(K_\Sig+E)$.
The case of equality $\Idim {\cal
V}_\Sig(D,g,\alp,\bet,\bp^\flat)=R_\Sig(D,g,\bet)$ leaves the only
possibility $(K_\Sig+E)D_0=0$. The latter equality holds only for
$D_0=0$ or $D_0\in\{E_3,E_4,L-E_i-E_j,\ \{i,j\}\subset\{1,2,5\}\}$.
\end{proof}

\begin{lem}\label{ltn2}
Consider the surface $\Sig=\PP^2_5$ and the smooth rational curve
$E$ linearly equivalent to $L-E_3-E_4$. Let $D\in \Pic_+(\Sig,E)$,
an integer~$g$, and $\alp,\bet\in\Z^\infty_+$ satisfy (\ref{en1}),
and let $\bp^\flat=(p_i)_{1\le i\le\|\alp\|}$ be a generic sequence
of points of $E$. Assume that $R_\Sig(D,g,\bet)=0$, and $D$ is
represented by a reduced irreducible curve. Then, ${\cal
V}_\Sig(D,g,\alp,\bet,\bp^\flat)$ consists of one element for the
following quadruples $(D,g,\alp,\bet)$:
\begin{enumerate}
\item[(i)] $(E_i,0,0,\theta_1)$ for $i=3,4$,
\item[(ii)] $(L-E_i-E_j,0,0,\theta_1)$
for $i, j = 1, 2, 5$, $i \ne j$,
\item[(iii)] $(L-E_i,0,\theta_1,0)$ for $i=1,2,5$,
\item[(iv)] $(2L-E_1-E_2-E_i-E_5,0,\theta_1,0)$ for $i=3,4$,
\item[(v)] $(2L-E_1-E_2-E_5,0,\alp,0)$ as long as $I\alp=2$,
\end{enumerate}
and ${\cal V}_\Sig(D,g,\alp,\bet,\bp^\flat)=\emptyset$ in all the
other cases.
\end{lem}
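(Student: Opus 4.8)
The plan is to enumerate the finitely many divisor classes $D\in\Pic_+(\Sig,E)$ that can satisfy $R_\Sig(D,g,\bet)=0$ together with the constraint $I\alp+I\bet=DE$ from \eqref{en1}, and then for each surviving class verify that ${\cal V}_\Sig$ is a single reduced point. Recall $R_\Sig(D,g,\bet)=-D(E+K_\Sig)+\|\bet\|+g-1$. Since $-(K_\Sig+E)$ is nef on $\Sig=\PP^2_5$ (as used at the end of Lemma~\ref{ln1}), the term $-D(E+K_\Sig)$ is nonnegative, and $g\ge 0$, so $R_\Sig(D,g,\bet)=0$ forces each of the three contributions to be tightly constrained. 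First I would treat the generic-position reduction: by the same argument as in the proof of Lemma~\ref{ln1}, imposing the points $\bp^\flat$ on $E$ cuts the intersection dimension by exactly $\|\alp\|$ on a dominating component, so the count ${\cal V}_\Sig(D,g,\alp,\bet,\bp^\flat)$ is governed by $\Idim{\cal V}_\Sig(D,g,0,\alp+\bet,\emptyset)$, whose maximal value is $R_\Sig(D,g,\alp+\bet)$ by \eqref{enov1}. The vanishing of $R_\Sig$ thus says we are in the \emph{zero-dimensional}, i.e.\ expected-finite, regime.

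Next I would carry out the combinatorial classification. Writing $D=dL-\sum_{i=1}^5 k_iE_i$ with the nef/effectivity constraints and $DE=k_3+k_4$ when $E\sim L-E_3-E_4$ (so $DE=I\alp+I\bet$), the condition $g\ge 0$ together with $R_\Sig=0$ and the adjunction bound $g\le g(\Sig,D)=\tfrac{D^2+DK_\Sig}{2}+1$ restricts $D$ to small classes. Because $D$ is assumed \emph{reduced and irreducible}, I can also invoke the immersion/nodality conclusion of Lemma~\ref{ln1}: at equality in \eqref{enov1}, either $\nu$ is birational onto a nodal $C$ nonsingular along $E$, or $D=mD_0$ with $(K_\Sig+E)D_0=0$. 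The exceptional list $D_0\in\{E_3,E_4,L-E_i-E_j\}$ with $\{i,j\}\subset\{1,2,5\}$ is exactly where multiple covers could intrude; but irreducibility of $D$ together with $m\ge 2$ would make $D$ non-reduced or reducible, so those are excluded and I keep only the reduced classes. Running through the possibilities of $(d,k_1,\dots,k_5)$ satisfying all constraints should leave precisely the classes $E_3,E_4$; $L-E_i-E_j$; $L-E_i$; $2L-E_1-E_2-E_i-E_5$; and $2L-E_1-E_2-E_5$, matching cases (i)--(v), with $(g,\alp,\bet)$ forced by $DE$ and by which intersection points with $E$ are pinned.

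For the uniqueness count in each surviving case, I would argue directly and geometrically. In cases (i)--(iv) the class $D$ is a $(-1)$-curve or a conic class through prescribed points, and on $\PP^2_5$ such a linear system (after imposing the single point condition on $E$ dictated by $\alp$ or $\bet$) contains exactly one reduced irreducible member through the generic constraints — a classical fact for Del Pezzo surfaces of degree $4$, where the relevant $|D|$ is a pencil or a single effective class. The genus is $0$ throughout, consistent with $R_\Sig=0$ and $g=0$. Case (v), $2L-E_1-E_2-E_5$ with $I\alp=2$ and $\bet=0$, is the most delicate: here $\alp$ can be either $2\theta_1$ or $\theta_2$, corresponding to two simple tangency-free intersection points on $E$ or one point of contact order two; I would check that in each subcase the conic through $E_1,E_2,E_5$ meeting $E$ as prescribed is again unique, using $\dim|2L-E_1-E_2-E_5|=2$ minus the $\|\alp\|$ point conditions on $E$ to land at dimension zero. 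The \textbf{main obstacle} I anticipate is precisely case (v) and the bookkeeping of which $(\alp,\bet)$ are admissible: one must rule out every other quadruple giving $R_\Sig=0$ (so that ${\cal V}_\Sig=\emptyset$ elsewhere), which requires showing that any $D$ not on the list either violates $-D(K_\Sig+E)\ge 0$ tightness, forces $g<0$, or fails $D\in\Pic_+(\Sig,E)$; the tangency-order subtlety for $\theta_2$ versus $2\theta_1$ and confirming the single point is \emph{reduced} (transverse in moduli) is where the argument must be handled with care.
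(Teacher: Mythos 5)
Your overall strategy --- reduce to a finite combinatorial classification of the data $(d,k_1,\dots,k_5,g,\alp,\bet)$ with $R_\Sig(D,g,\bet)=0$, then verify uniqueness in each surviving case --- is exactly the paper's strategy (the paper, too, dismisses uniqueness in (i)--(v) as ``clear''). But your classification step, which is the real content of the lemma, has two genuine gaps. First, the one concrete formula you write down is wrong: with $E\sim L-E_3-E_4$ one has $DE=d-k_3-k_4$, not $DE=k_3+k_4$. This is not cosmetic, since the entire case analysis runs on it: for $d=0$ it is $DE=-k_3-k_4>0$ that forces $D=E_3$ or $E_4$; for $d=1$ the condition $DE>0$ forces $k_3=k_4=0$; for $d=2$ it forces $k_3+k_4\le 1$, which is precisely what separates (iv) from (v). Second, ``running through the possibilities should leave precisely the classes\dots'' is asserted, not carried out, and the constraints you list do not even make the search finite: nefness of $-(K_\Sig+E)$ together with $R_\Sig(D,g,\bet)=0$ only gives $2d-k_1-k_2-k_5=1-\|\bet\|-g\le 1$, which places no bound on $d$. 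The paper closes this with B\'ezout-type bounds that are absent from your sketch: intersecting an irreducible representative of $D$ with the lines in $|L-E_i-E_j|$ yields $k_i+k_j\le d$, hence $k_1+k_2+k_5\le\frac{3}{2}d$ and $R_\Sig(D,g,\bet)\ge\frac{d}{2}-1>0$ for $d\ge 3$; and for $d=1$, intersecting with an irreducible conic in $|2L-E_1-E_2-E_5|$ yields $k_1+k_2+k_5\le 2$, which is what pins down cases (ii) and (iii). Without these two inputs your enumeration cannot be completed, so the ``emptiness in all other cases'' half of the lemma remains unproved.

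Two smaller points. Your detour through the multiple-cover dichotomy of Lemma~\ref{ln1} is unnecessary here: the hypothesis that $D$ is represented by a reduced irreducible curve already rules out $D=mD_0$ for the rigid $(-1)$-classes $D_0$, and the paper's proof of Lemma~\ref{ltn2} is purely numerical, never invoking that dichotomy. Also, you use $g\ge 0$ without justification; since (\ref{en1}) gives only an upper bound on $g$, you should say explicitly that $g\ge 0$ follows from the assumed non-emptiness of ${\cal V}_\Sig(D,g,\alp,\bet,\bp^\flat)$, whose elements parameterize smooth curves of genus $g$ (this is used, e.g., when you deduce $k_1+k_2+k_5\ge 1$ in the $d=1$ case from $k_1+k_2+k_5=1+\|\bet\|+g$). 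Your treatment of case (v), including the $\theta_2$ versus $2\theta_1$ distinction, is fine and consistent with what the paper takes for granted.
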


\begin{proof} Clearly, ${\cal V}_\Sig(D,g,\alp,\bet,\bp^\flat)$
consists of one element in the cases (i)-(v). Assuming that ${\cal
V}_\Sig(D,g,\alp,\bet,\bp^\flat)\ne\emptyset$ and
$R_\Sig(D,g,\bet)=0$, we will derive that $(D,g,\alp,\bet)$
necessarily belongs to the list (i)-(v). Let $D$ be equal to
$dL-k_1E_1-...-k_5E_5$, where $d, k_1,\dots, k_5$ are integers and
$d \geq 0$.

Suppose that $d=0$. Since $DE=d - k_3 - k_4 > 0$, we obtain $D=E_i$,
$i=3,4$; this is the case (i).

If $d>0$ then $k_1,...,k_5\ge 0$. Thus, if $d=1$, then
$DE=d-k_3-k_4>0$ yields $k_3=k_4=0$. Taking additionally into
account that
$$R_\Sig(D, g, \bet) = 2d-k_1-k_2-k_5+|\bet|+g-1=
2-k_1-k_2-k_5+|\bet|+g-1=0\ ,$$ we derive that $k_1+k_2+k_5\ge 1$.
On the other hand, the B\'ezout upper bound applied to the
intersection of a curve $C=\nu(\hat C)$ as $\{\nu:\hat C\to\Sig\}\in
{\cal V}_\Sig(D,g,\alp,\bet,\bp^\flat)$ with an irreducible curve
belonging to $|2L - E_1 - E_2 - E_5|$ implies $k_1 + k_2 + k_5 \leq
2$. Hence, for $D=L-E_i-E_j$, $i, j = 1, 2, 5$, $i \ne j$, we
necessarily obtain $\alp=0$, $\bet=\theta_1$, $g=0$; this is the
case (ii). Similarly, for $D=L-E_i$, $i=1,2,5$, we necessarily
obtain $\bet=0$, $g=0$, and $\alp=\theta_1$; this is the case (iii).

Assume now that $d = 2$. In view of the relations
$$DE=2-k_3-k_4>0,\quad R_\Sig(D, g, \bet) =
4-k_1-k_2-k_5+|\bet|+g-1=0\ ,$$ and the irreducibility of $D$, we
obtain
$$\bet = 0, \;\;\; g = 0, \;\;\;
k_1 = k_2 = k_5 = 1, \;\;\; 0 \le k_3 + k_4 \le 1,$$ coming to the
cases (iv) and (v).

Finally, if $d\ge 3$, then intersecting $D$ with the lines from
$|L-E_i-E_j|$, we derive that $k_i+k_j\le d$, which implies
$$R_\Sig(D, g,  \bet)=2d-k_1-k_2-k_5+|\bet|+g-1\ge
2d-\frac{3}{2}d+|\bet|+g-1\ge\frac{d}{2}-1>0\qedhere$$\end{proof}

\begin{lem}\label{lnov1}
Consider the surface $\Sig=\PP^2_5$ and the smooth rational curve
$E$ linearly equivalent to $L-E_3-E_4$. Let $D\in\Pic(\Sig,E)$, an
integer $g$, and $\alp,\bet\in\Z^\infty_+$ satisfy (\ref{en1}) and
$R_\Sig(D,g,\bet)>0$, and let $\bp^\flat=(p_i)_{1\le i\le\|\alp\|}$
be a generic sequence of points of $E$. Let $V$ be a component of
${\cal V}_\Sig(D,g,\alp,\bet,\bp^\flat)$ of intersection dimension
$R_\Sig(D,g,\bet)$, and let $\{( \hat C_t,\nu_t,
\hat\bp_t)\}_{t\in(\C,0)}\subset V$ be a generic one-parameter
family such that $\hat C_t$ is smooth connected as $t\ne0$, and
$\nu_0(\hat C_0)\supset E$.

Then $\hat C_0=\widetilde E\cup C^{(1)}\cup...\cup C^{(m)}\cup Z$,
where \begin{enumerate}\item[(1)] $\nu_0$ takes $\widetilde E$
isomorphically onto $E$, takes each component $C^{(i)}$, $1\le i\le
m$, to a curve different from $E$ and crossing $E$, and contracts
the components of $Z$ to points, \item[(2)] $\nu_0$ takes
$C^{(1)}\cup...\cup C^{(m)}$ birationally onto its image $C\in|D-E|$
which is a reduced nodal curve, nonsingular along its intersection
with $E$, \item[(3)] for each $i=1,...,m$, the map
$\nu_0:C^{(i)}\to\Sig$ represents a generic element in a component
of some ${\cal
V}_\Sig(D^{(i)},g^{(i)},\alp^{(i)},\bet^{(i)},(\bp^\flat)^{(i)})$ of
dimension $R_\Sig(D^{(i)},g^{(i)},\bet^{(i)})$ such that
\begin{enumerate}\item[(a)] $\sum_{i=1}^mD^{(i)}=D-E$,
\item[(b)] $(\bp^\flat)^{(i)}$, $i=1,...,m$, are disjoint subsets of
$\bp^\flat$, \item[(c)]
$\sum_{i=1}^mR_\Sig(D^{(i)},g^{(i)},\bet^{(i)})=R_\Sig(D,g,\bet)-1$,
\item[(d)] each quadruple $(D^{(i)},g^{(i)},\alp^{(i)},\bet^{(i)})$ with $n_i=0$ appears in the list
$\qquad$ $(D^{(i)},g^{(i)},\alp^{(i)},\bet^{(i)})$, $i=1,...,m$, at
most once.\end{enumerate}
\end{enumerate}
\end{lem}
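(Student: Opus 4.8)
The plan is to run the degeneration analysis of \cite[Section 2]{CH} and \cite{Va}, with the intersection-dimension bound of Lemma \ref{ln1} as the quantitative engine. First I would use properness of $\overline{\cal M}_{g,\|\alp\|}(\Sig,D)$ to obtain a limit stable map $(\hat C_0,\nu_0,\hat\bp_0)$ with $\nu_{0*}[\hat C_0]=D$ and $\nu_0(\hat p_i)=p_i$ for every $i$. Since $\nu_0(\hat C_0)\supset E$, I decompose $\hat C_0$ by the behaviour of $\nu_0$: let $\widetilde E$ be the union of components carried onto $E$, let $C^{(1)},\dots,C^{(m)}$ be the components whose images differ from $E$, and let $Z$ gather the contracted components. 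Conservation of class then reads $\deg(\widetilde E\to E)\cdot E+\sum_i\nu_{0*}[C^{(i)}]=D$ in $\Pic(\Sig)$.

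The next step is to show $\deg(\widetilde E\to E)=1$, so that $C:=\nu_0(\bigcup_iC^{(i)})\in|D-E|$. If this degree were $\ge 2$, or if $C$ were nonreduced or acquired a singular branch along $E$, then the residual family would sit in a Severi variety whose intersection dimension, bounded through Lemma \ref{ln1}, would be strictly below the value $R_\Sig(D,g,\bet)$ that the maximality $\Idim V=R_\Sig(D,g,\bet)$ forces onto the limit; this is the same mechanism by which the auxiliary divisors $\bd'$ were used to produce strict drops in the proof of Lemma \ref{ln1}. Running the same bound in the equality regime then yields that $C$ is reduced nodal and nonsingular along $C\cap E$, with each $C^{(i)}$ birational onto its image and genuinely crossing $E$; this gives assertions (1) and (2).

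The core is the dimension count of item (3). Specializing $E$ off along the one parameter $t$ is a codimension-one degeneration inside $V$, so the locus swept by the image of $\hat C_0$ has intersection dimension exactly $R_\Sig(D,g,\bet)-1$. Fixing how the generic points $\bp^\flat$ split among the branches and applying Lemma \ref{ln1} to each piece bounds the intersection dimension of the relevant component of ${\cal V}_\Sig(D^{(i)},g^{(i)},\alp^{(i)},\bet^{(i)},(\bp^\flat)^{(i)})$ by $R_\Sig(D^{(i)},g^{(i)},\bet^{(i)})$; these dimensions add because $-(K_\Sig+E)$ is nef and $\bp^\flat$ is generic, so the branches impose independent conditions. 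Matching the two computations forces $\sum_iR_\Sig(D^{(i)},g^{(i)},\bet^{(i)})=R_\Sig(D,g,\bet)-1$ together with equality in every application of Lemma \ref{ln1}; hence each $C^{(i)}$ is a generic element of a component of its Severi variety of the expected dimension, and (a)--(c) follow, the genus $g$ being recovered from the $g^{(i)}$ and the nodes created along $E$.

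Finally, (d) is a genericity statement: in the free case $n_i=0$ two branches carrying identical data $(D^{(i)},g^{(i)},\alp^{(i)},\bet^{(i)})$ could be interchanged, so their occurring twice would endow the generic limit with a nontrivial symmetry compatible with all constraints, producing a positive-dimensional family where a finite one is expected; thus such a quadruple appears at most once. I expect the main obstacle to be the additivity of intersection dimensions in step (3): one must exclude the possibility that splitting $E$ off imposes fewer than the expected conditions, that is, that the branches fail to meet $E$ and one another transversally, and ruling this out is exactly where the nef property of $-(K_\Sig+E)$ and the genericity of $\bp^\flat$ supplied by Lemma \ref{ln1} have to be combined, rather than any single estimate closing the gap.
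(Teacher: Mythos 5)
Your skeleton --- limit stable map, decomposition of $\hat C_0$ into $\widetilde E$, the $C^{(i)}$ and contracted components, semistable reduction in the style of [CH, Section 3], with Lemma \ref{ln1} as the quantitative engine --- is the same as the paper's, but there is a genuine gap at precisely the step you treat as routine: excluding multiple covers and nonreduced limits by a dimension count. Lemma \ref{ln1} does \emph{not} give a strict drop of intersection dimension for multiply covered components: its final paragraph shows that the bound (\ref{enov1}) is attained with equality by $m$-fold covers whenever the image class $D_0$ satisfies $(K_\Sig+E)D_0=0$, i.e.\ $D_0\in\{E_3,E_4\}$ or $D_0=L-E_i-E_j$ with $\{i,j\}\subset\{1,2,5\}$ --- this is exactly why the conclusion of Lemma \ref{ln1} explicitly excepts $D=mD_0$ for these classes. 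Consequently your claim that a nonreduced residual curve ``would sit in a Severi variety whose intersection dimension \dots would be strictly below'' $R_\Sig(D,g,\bet)$ fails for a limit containing $E_3$ or $E_4$ with multiplicity $s\ge2$ (one component covering it multiply, or two components sharing that image), and these are the only dangerous cases. The paper closes them by an intersection-theoretic, not dimension-theoretic, argument: after semistable reduction, the identity (\ref{enov6}) obtained from (\ref{edim}) shows that exactly $\|\sum_i\bet^{(i)}-\bet\|$ intersection points of $\widetilde E$ with $C^{(1)}\cup\dots\cup C^{(m)}$ smooth up in the deformation and no others; if $E_3$ had multiplicity $s\ge2$ in the image, the residual curve $C'$ would meet $E_3\setminus E$ in $k_3+s-1$ points which therefore persist, so the nearby curves $(\pi_\Sig)_*(Y_t)$ would meet $E_3$ with multiplicity $\ge k_3+s-1>k_3=DE_3$, a contradiction. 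Nothing in your proposal substitutes for this persistence argument.

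Your treatment of (d) is also not a proof: a finite symmetry interchanging two identical summands does not produce a positive-dimensional family (finite automorphisms are perfectly compatible with zero-dimensional fibres), and for the $n_i=0$ quadruples with $\alp^{(i)}\ne0$ (types (iii)--(v) in Lemma \ref{ltn2}) two occurrences would involve disjoint subsets of $\bp^\flat$ and hence genuinely distinct curves, so no interchange is even available. The actual mechanism in the paper is again the bookkeeping above: every component of the central fibre must contain at least one smoothed-up intersection point with $\widetilde E$ (a consequence of (\ref{enov6})), so an $n_i=0$ summand occurring in a genuine degeneration has $\bet^{(i)}\ne0$ and, by Lemma \ref{ltn2}, is the \emph{unique} rigid curve $E_3$, $E_4$ or $L-E_i-E_j$; a repeated quadruple would then force two components with the same image, which is excluded by the persistence argument. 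Thus (d) is a corollary of Lemma \ref{ltn2} combined with the very step your dimension-count strategy is missing, not of a genericity or symmetry heuristic.
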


\begin{proof} The central element $(\hat C_0,\nu_0, \hat\bp_0)$ must belong to a substratum of
intersection dimension $R_\Sig(D,g,\bet)-1$. Therefore, the fact
that there is only one component of $\hat C_0$ taken to $E$ and the
corresponding map is an isomorphism can be proven as in \cite[Proof
of Theorem 5.1, Case II]{Va}, where the argument is based on the
inequality (\ref{enov1}) and the equality $-(K_\Sig+E)E=2$.

In the further steps we argue as in \cite[Section 3]{CH}. Namely, we
replace the given family $\nu_t:\hat C_t\to\Sig$ by a family with
the same generic fibres and a semi-stable central fibre so that (cf.
conditions (a), (b), (c), (d) in \cite[Section 3.1]{CH}, and
assumptions (b), (c) in \cite[Section 3.2]{CH}):
\begin{itemize}\item the family is represented by a smooth surface
$Y$ and two morphisms $\pi_\Sig:Y\to\Sig$, $\pi_\C:Y\to(\C,0)$, and
for each $t\ne 0$, $\pi_\Sig:Y_t= \pi_\C^{-1}(t)\to\Sig$ is
isomorphic to $\nu_t:\hat C_t\to\Sig$,
\item the central fibre $Y_0$ is a connected nodal
curve splitting into the union of the following parts: the component
$\widetilde E$ isomorphically mapped by $\pi_\Sig$ onto $E$, the
components $C^{(1)}$, ..., $C^{(m)}$ mapped by $\pi_\Sig$ to curves,
and $\widetilde Z$, the union of the components contracted by
$\pi_\Sig$ to points, \item the components of $Z$ are rational and
form disjoint chains joining $\widetilde E$ and $C^{(1)}\cup...\cup
C^{(m)}$, \item the sections $t\in(\C,0)\backslash\{0\}\mapsto\hat
p_{i,t}$, $1\le i\le\|\alp\|$, and
$t\in(\C,0)\backslash\{0\}\mapsto\hat q_{i,t}$, $1\le i\le\|\bet\|$,
defined by (cf. (\ref{ediv}))
$$\bd_t=\pi_\Sig^*(E\cap\pi_\Sig(Y_t))=\sum_{i=1}^{\|\alp\|}\mt(\alp,i)\cdot\hat
p_{i,t}+\sum_{i=1}^{\|\bet\|}\mt(\bet,i)\cdot\hat q_{i,t}\subset
Y_t\ ,$$ $$\pi_\Sig(\hat p_{i,t})=p_i\in\bp^\flat,\ 1\le
i\le\|\alp\|\ ,$$ close up at $t=0$ into disjoint global sections
avoiding singularities of $Y_0$ and the components of $Z$, and such
that $\hat q_{i,t}\in C^{(1)}\cup...\cup C^{(m)}$ for all
$i=1,...,\|\bet\|$, \item for each $i=1,...,m$, the triple $
(C^{(i)},\pi_\Sig, \hat\bp^{i)})$ with $\hat\bp^{(i)}=\hat\bp_0\cap
C^{(i)}$ represents a generic element of some ${\cal
V}_\Sig(D^{(i)},g^{(i)},\alp^{(i)},\bet^{(i)},(\bp^\flat)^{(i)})$,
where $\sum D^{(i)}=D-E$, the sequences $(\bp^\flat)^{(i)}$,
$i=1,...,m$, are disjoint subsequences of $\bp^\flat$, and
\begin{equation}\sum_{i=1}^mR_\Sig(D^{(i)},g^{(i)},\bet^{(i)})=R_\Sig(D,g,\bet)-1\
.\label{edim}\end{equation}
\end{itemize} The proof literally follows the argument of
\cite[Section 3]{CH}, whose main ingredient is the inequality
(\ref{enov1}).

Blow down all the components of $Z$ and obverse now that
(\ref{edim}) can be rewritten in the form
\begin{equation}
\chi(\hat C_t)=
\sum_{i=1}^m\chi(C^{(i)})+\chi(E)-2\|\sum_{i=1}^m\bet^{(i)}-\bet\|,\,
\quad t\ne 0\ . \label{enov6}\end{equation} Since at least
$\|\sum_{i=1}^m\bet^{(i)}-\bet\|$ intersection points of $\widetilde
E$ with $C^{(1)}\cup...\cup C^{(m)}$ smooth up when deforming
$Y'_0=\widetilde E\cup C^{(1)}\cup...\cup C^{(m)}$ (the blown-down
$Y_0$) to $Y_t=\hat C_t$, $t\ne 0$, we derive that they are the only
smoothed up intersection points. In particular, each component
$C^{(i)}$ of $Y'_0$ intersects with $\widetilde E$.

It follows from Lemmas \ref{ln1} and \ref{ltn2} that if $\pi_\Sig$
maps $C^{(i)}$ multiply onto its image, or if $\pi_\Sig$ maps
$C^{(i)},C^{(j)}$ onto the same curve, then this image curve must be
a $(-1)$-curve crossing $E$ at one point. We can assume that this
image curve is $E_3$. Let $E_3$ have multiplicity $s\ge2$ in
$C=(\pi_\Sig)_*(C^{(1)}\cup...\cup C^{(m)})$, and let
$C^{(k+1)},...,C^{(m)}$ be all the components of $Y_0$ mapped onto
$E_3$. Since $R_\Sig(D,g,\bet)>0$, we have $D=dL-k_1E_1-...-k_5E_5$,
$d\ge 1$, $k_1,...,k_5\ge 0$. Thus,
$C'=(\pi_\Sig)_*(C^{(1)}\cup...\cup C^{(k)})\in
|(d-1)L-k_1E_1-k_2E_2-(k_3+s-1)E_3-(k_4-1)E_4+k_5E_5|$. So, $C'$
crosses $E_3\backslash E$ with multiplicity $k_3+s-1$, and as
explained above these intersection points persist in the deformation
$Y'_0\to Y_t$, $t\ne0$. Hence, $(\pi_\Sig)_*(Y_t)$ must cross $E_3$
with multiplicity $\ge k_3+s-1>k_3$, which gives a contradiction.

So, the map $\pi_\Sig:C^{(1)}\cup...\cup C^{(m)}\to\Sig$ is
birational on its image. Furthermore, the genericity of $
(C^{(i)},\pi_\Sig, \hat\bp^{(i)})$ in ${\cal
V}_\Sig(D^{(i)},g^{(i)},\alp^{(i)},\bet^{(i)},(\bp^\flat)^{(i)})$
implies that the above image $C$ is a nodal curve, nonsingular along
$E$. \end{proof}

\subsection{Vakil recursive formula}\label{general}
For any variety ${\cal V}_\Sig(D,g,\alp,\bet,\bp^\flat)$, denote by
${\mathfrak V}_\Sig(D,g,\alp,\bet,\bp^\flat)$ the union of the
components of dimension $R_\Sig(D,g,\bet)$ of the natural image of
${\cal V}_\Sig(D,g,\alp,\bet,\bp^\flat)$ in the linear system $|D|$
on $\Sig$. Introduce the numbers
\begin{equation}N_\Sig(D, g, \alp, \bet)=\begin{cases}0,\quad
&\text{if}\,\, {\mathfrak V}_\Sig(D, g, \alp, \bet, \bp^\flat)=\emptyset,\\
\deg{\mathfrak V}_\Sig(D, g, \alp, \bet, \bp^\flat),\quad &
\text{if}\,\, {\mathfrak V}_\Sig(D, g, \alp, \bet,
\bp^\flat)\ne\emptyset .\end{cases} \label{esym6}
\end{equation}
These numbers do not depend on the choice of $\bp^\flat$ and are
enumerative: they count the irreducible nodal curves in $|D|$ which
pass through $R_\Sig(D, g,  \bet)$ generic points in $\Sig\setminus
E$ and belong to ${\mathfrak V}_\Sig(D, g, \alp, \bet, \bp^\flat)$
(cf. \cite[Section 2.4.2]{Va}).

To formulate a recursive formula for the numbers
$N_\Sig(D,g,\alp,\bet)$, we use the following conventions and
notation:
\begin{itemize}
\item the relation $\alp \geq \alp'$ means that
$\alp - \alp' \in\Z^\infty_+$,
\item $I^\alp=\prod_{k\ge 1}k^{\alp_k}$,
\item if
$\alp\ge\alp^{(1)}+...+\alp^{(s)}$, then $$\left(\begin{matrix}\alp \\
\alp^{(1)}...\alp^{(s)}\end{matrix}\right)=\prod_{k\ge
1}\frac{\alp_k!}{\alp^{(1)}_k!...\alp^{(s)}_k!(\alp_k-\alp^{(1)}_k-...
-\alp^{(s)}_k)!}\ .
$$
\end{itemize}
Let us introduce also the semigroup
$$
\displaylines {
A(\Sig,E)=\{(D,g,\alp,\bet)\in\Pic_+(\Sig,E)\times\Z\times
\Z^\infty_+\times\Z^\infty_+\ | \cr D,g,\alp,\bet\ \text{satisfy
(\ref{en1}) and} \; R_\Sig(D, g,  \bet) \geq 0 \} }
$$ with the operation
$$(D^{(1)},g^{(1)},\alp^{(1)},\bet^{(1)})+(D^{(2)},g^{(2)},
\alp^{(2)},\bet^{(2)})$$
$$=(D^{(1)}+D^{(2)},g^{(1)}+g^{(2)}-1,\alp^{(1)}+\alp^{(2)},
\bet^{(1)}+\bet^{(2)})\ .$$ Notice that a quadruple $(D, g, \alp,
\bet)$ in $A(\Sig, E)$ may have negative $g$. Put $N_\Sig(D, g,
\alp, \beta) = 0$ whenever $g < 0$.

\begin{thm}\text{\rm(cf.~\cite{Va})}.\label{tn1}
Consider a divisor $D \in \Pic_+(\Sig,E)$, an integer $g$, and two
elements $\alp,\bet\in\Z^\infty_+$ such that
$$
(D, g, \alp, \beta) \in A(\Sig, E), \quad D \ne E, \quad R_\Sig(D,
g, \bet) > 0.
$$
Then,
$$N_\Sig(D,g,\alp,\bet)=\sum_{j\ge 1,
\ \bet_j>0}jN_\Sig(D,g,\alp+\theta_j,\bet-\theta_j)$$
\begin{equation}+\sum
\left(\begin{matrix}\alp\\
\alp^{(1)}...\alp^{(m)}\end{matrix}\right)
\frac{(n-1)!}{n_1!...n_m!}\prod_{i=1}^m\left(\left(
\begin{matrix}\bet^{(i)}\\
\widetilde\bet^{(i)}\end{matrix}\right) I^{\widetilde\beta^{(i)}}
N_\Sig(D^{(i)},g^{(i)},\alp^{(i)},\bet^{(i)})\right),\label{en5}
\end{equation}
where
$$
n = R_\Sig(D, g,  \bet), \quad n_i = R_\Sig(D^{(i)}, g^{(i)},
\bet^{(i)}) \; for \; any \; i = 1, \ldots, m,
$$
and the second sum in~(\ref{en5}) is taken
\begin{itemize}
\item over all
splittings
\begin{equation}(D-E,g',\alp',\bet')=\sum_{i=1}^m(D^{(i)},g^{(i)},
\alp^{(i)},\bet^{(i)})\ ,\label{en4}
\end{equation}
in $A(\Sig, E)$ of all possible collections $(D-E,g',\alp',\bet')
\in A(\Sig, E)$ such that
\begin{enumerate}
\item[(a)]
$\alp'\le\alp,\quad \bet\le\bet', \quad g - g' = \|\beta' - \beta\|
- 1$,
\item[(b)] each summand
$(D^{(i)},g^{(i)},\alp^{(i)},\bet^{(i)})$ with $n_i=0$ appears
in~(\ref{en4}) at most once,
\end{enumerate}
\item over all splittings
\begin{equation}
\bet'=\bet+\sum_{i=1}^m\widetilde\bet^{(i)},\quad
\|\widetilde\bet^{(i)}\|>0,\ i=1,...,m\ ,\label{en96}
\end{equation}
satisfying the restriction $\bet^{(i)}\ge\widetilde\bet^{(i)},\
i=1,...,m\ $,
\end{itemize}
and factorized by simultaneous permutations in the both splittings
(\ref{en4}) and (\ref{en96}).
\end{thm}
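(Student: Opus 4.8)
The plan is to follow the degeneration method of Caporaso--Harris and Vakil, now using Lemmas~\ref{ln1}, \ref{ltn2}, and~\ref{lnov1} as the replacements for the corresponding estimates of \cite{CH,Va}. Fix a generic sequence $\bp^\flat$ on $E$ and $n=R_\Sig(D,g,\bet)$ generic points of $\Sig\setminus E$ cutting out the curves counted by $N_\Sig(D,g,\alp,\bet)$. First I would let one of these $n$ points travel along a generic arc until it reaches a generic point $z\in E$, and study the flat limit of the resulting one-parameter family of curves in ${\mathfrak V}_\Sig(D,g,\alp,\bet,\bp^\flat)$. By Lemma~\ref{ln1} the generic member of each component has the expected intersection dimension and is nodal and nonsingular along $E$, and the family has no base points outside $\bp^\flat$, so the limit is governed by the two mutually exclusive alternatives: the limit curve either does not contain $E$, or contains $E$ as a component.

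In the first alternative the limit curve $C_0$ still meets $E$ with the contact profile recorded by $\bet$ and passes through the generic point $z\in E$. Since $z$ is generic on $E$, it must coincide with one of the free contact points, say of order $j$ (so $\bet_j>0$); in the limit this free contact becomes a fixed contact at $z$, converting $(\alp,\bet)$ into $(\alp+\theta_j,\bet-\theta_j)$ and using up one of the $n$ generic points (indeed $R_\Sig(D,g,\bet-\theta_j)=n-1$). The standard local analysis of a contact of order $j$ sweeping across $z$ shows that each such limit is attained with local multiplicity $j$; summing over the admissible $j$ yields the first sum $\sum_{j\ge1,\ \bet_j>0}jN_\Sig(D,g,\alp+\theta_j,\bet-\theta_j)$.

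In the second alternative Lemma~\ref{lnov1} describes the limit precisely: $C_0=\widetilde E\cup C^{(1)}\cup\dots\cup C^{(m)}\cup Z$, where $\widetilde E\to E$ is an isomorphism, the residual curve $C=\bigcup_iC^{(i)}\in|D-E|$ is nodal and nonsingular along $E$, and each $C^{(i)}$ is a generic element of some ${\cal V}_\Sig(D^{(i)},g^{(i)},\alp^{(i)},\bet^{(i)},(\bp^\flat)^{(i)})$ of the expected dimension, with $\sum_iD^{(i)}=D-E$. The genus relation $g-g'=\|\bet'-\bet\|-1$ and the inequality $\bet\le\bet'$ of condition~(a) come from the node-smoothing count of Lemma~\ref{lnov1}: exactly $\|\bet'-\bet\|$ of the intersection points of $\widetilde E$ with $\bigcup_iC^{(i)}$ are smoothed when one deforms back to $C_t$, and these account for the excess free contacts $\bet'-\bet=\sum_i\widetilde\bet^{(i)}$. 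The combinatorial coefficients then arise as follows: $(n-1)!/(n_1!\cdots n_m!)$ distributes the remaining $n-1$ generic points among the components (legitimate since $\sum_in_i=n-1$ by the dimension balance \eqref{edim} of Lemma~\ref{lnov1}); the multinomial $\binom{\alp}{\alp^{(1)}\dots\alp^{(m)}}$ distributes the fixed points of $\bp^\flat$ among the $C^{(i)}$; and for each $i$ the factor $\binom{\bet^{(i)}}{\widetilde\bet^{(i)}}I^{\widetilde\bet^{(i)}}$ selects which free contacts of $C^{(i)}$ arise from a smoothed node and weights each smoothing of a tangency of order $k$ by $k$. Factoring by simultaneous permutations of the splittings \eqref{en4} and \eqref{en96} removes the overcounting of isomorphic reassemblies, and produces the second sum.

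The principal difficulty is the multiplicity computation in the second alternative: one must show that each degenerate configuration $C_0$ is the limit of a family reassembled in exactly $\prod_iI^{\widetilde\bet^{(i)}}$ ways and that the smoothing of the chosen nodes of $\widetilde E\cup\bigcup_iC^{(i)}$ is unobstructed, so that the evaluation map is generically finite and reduced onto the point configuration and each $C^{(i)}$ contributes with multiplicity $N_\Sig(D^{(i)},g^{(i)},\alp^{(i)},\bet^{(i)})$. This is a local deformation-theoretic analysis at each tangential node together with a transversality argument; the intersection-dimension bound \eqref{enov1} and the structure results of Lemmas~\ref{ln1}, \ref{ltn2}, and~\ref{lnov1} are exactly what rules out excess-dimensional strata as well as multiply-covered or non-reduced limits, so that the whole degeneration is enumeratively transverse and the two sums exhaust all contributions.
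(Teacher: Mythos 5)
Your degeneration narrative is, in substance, the paper's own proof of Theorem~\ref{tn1} in its core case $\Sig=\PP^2_5$: the paper likewise just invokes the Caporaso--Harris/Vakil scheme, with Lemma~\ref{ln1} supplying the expected-dimension bound and the genericity properties, Lemmas~\ref{ln1} and~\ref{lnov1} excluding multiple components in the degenerations, and Lemmas~\ref{ltn2} and~\ref{lnov1} forcing each summand with $n_i=0$ to occur at most once in~(\ref{en4}). Your expanded account --- the two alternatives for the flat limit, the local multiplicity~$j$ in the first sum, and the coefficients $\left(\begin{matrix}\alp\\ \alp^{(1)}...\alp^{(m)}\end{matrix}\right)\frac{(n-1)!}{n_1!\cdots n_m!}\prod_i\left(\begin{matrix}\bet^{(i)}\\ \widetilde\bet^{(i)}\end{matrix}\right)I^{\widetilde\bet^{(i)}}$, with $\sum_i n_i=n-1$ coming from the dimension balance~(\ref{edim}) --- is consistent with what the paper delegates to \cite{CH,Va}, and is in fact more detailed than the paper's own sketch.

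There is, however, one genuine gap. The theorem is asserted for all $\Sig=\PP^2_q$, $0\le q\le 5$, with $E$ as in~(\ref{en61}), whereas Lemmas~\ref{ln1}, \ref{ltn2}, and~\ref{lnov1} --- on which your entire argument rests --- are stated and proved only for $\Sig=\PP^2_5$ and $E=L-E_3-E_4$. As written, your proposal therefore establishes~(\ref{en5}) only for $\PP^2_5$. The paper closes this by a preliminary reduction which you omit: blow up $\PP^2_q$ at $5-q$ appropriately chosen points, $\pi:\Sig^*\to\Sig$, let $E^*$ be the strict transform of $E$ and $E_1,\ldots,E_b$ the exceptional divisors whose images lie on $E$; the identity $\pi^*D-E^*=\pi^*(D-E)+\sum_{j=1}^b E_j$, together with the enumerativity of the numbers $N_\Sig$, sends each splitting~(\ref{en4}) on $\Sig$ to the splitting of $(\pi^*D-E^*,g',\alp',\bet'+b\theta_1)$ on $\Sig^*$ obtained by adjoining the summands $(E_j,0,0,\theta_1)$, and this is a bijection between the summands of the two formulas. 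Note that the adjoined summands are exactly rigid ones ($n_i=0$, each occurring once, consistently with condition~(b)), so the correspondence is not entirely formal. Without this step --- or without reproving the three lemmas separately for each $q\le 4$ --- your argument does not cover the smaller blow-ups.
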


\begin{rem}\label{rn1}
\begin{enumerate}
\item[(1)]
The second sum in the right-hand side of (\ref{en5}) becomes empty
if $D - E$ is not effective. Notice also that, under the hypotheses
of Theorem~\ref{tn1}, one has $(D-E)E>0$. Indeed, with our choice of
$\Sig$ and $E$, the inequalities $DE>0$ and $(D-E)E\le0$ may occur
only in the case $\Sig=\PP^2_2$, $E^2=1$, and $DE=1$, but then $D =
E$ since $D \in \Pic_+(\Sig, E)$.
\item[(2)] The divisors $D^{(i)}$, $i=1,...,m$, in (\ref{en5}) satisfy
$D^{(i)}E\ge I\widetilde\beta^{(i)}>0$.
\end{enumerate}
\end{rem}

{\bf Proof of Theorem 1}. The cases $\Sig=\PP^2_q$, $0\le q\le 4$,
can be reduced to the case of $\Sig=\PP^2_5$ by means of blowing up
$\PP^2_q$ at appropriately chosen $5 - q$ points. Indeed, let $\pi:
\Sig^* \to \Sig$ be the blow-up under consideration, $E^*$ the
strict transform of $E$, and $E_1$, $\ldots$, $E_b$ the exceptional
divisors of~$\pi$ whose images belong to~$E$. According to the
pull-back formula $\pi^* D - E^*=\pi^*(D-E) + \sum_{j = 1}^b E_j$
and due to the fact that the numbers $N_\Sig(D,g,\alp,\bet)$ are
enumerative, the map which sends a decomposition
$$
(D-E,g',\alp',\bet')=\sum_{i=1}^m(D^{(i)},g^{(i)},
\alp^{(i)},\bet^{(i)})
$$
to the decomposition
$$(
\pi^*D-E^*,g',\alp',\bet' + b\theta_1) =
\sum_{i=1}^m(\pi^*D^{(i)},g^{(i)}, \alp^{(i)},\bet^{(i)}) + \sum_{j
= 1}^b(E_j, 0, 0, \theta_1).
$$
gives rise to a 1-to-1 correspondence between the summands in the
right-hand side of formula (\ref{en5}) for $\Sig$ and $\Sig^*$.

The proof of formula (\ref{en5}) for $\Sig=\PP^2_5$ follows the
scheme of \cite{CH,Va}. Lemma \ref{ln1} provides the (expected)
upper bound to the dimension of the considered families of curves
and ensures required properties of generic elements in the families
of expected dimension. The fact that multiple components do not
appear in degenerations follows from Lemmas \ref{ln1} and
\ref{lnov1}. Finally, the condition that each summand
$(D^{(i)},g^{(i)},\alp^{(i)},\bet^{(i)})$ with $n_i=0$ may appear in
(\ref{en4}) at most once follows from Lemmas \ref{ltn2} and
\ref{lnov1} and from the fact that the second sum in the right-hand
side of (\ref{en4}) corresponds to the degenerations described in
Lemma \ref{lnov1} (cf., \cite[Section 3]{CH} and \cite[Section
5]{Va}). \proofend

\subsection{Initial conditions}\label{initial}

\begin{thm}\label{tn2}
All the numbers $N_\Sig(D,g,\alp,\bet)$ with $(D,g,\alp,\bet)\in
A(\Sig,E)$ are determined recursively by formula (\ref{en5}) and the
following list of initial values.

(1) In the cases $\Sig=\PP^2_q$, $q\le 2$, one has
\begin{enumerate}
\item[(i)] $N_\Sig(L, 0, \alp, \bet)=1$ as long
as $I\alp+I\bet=1$,
\item[(ii)]
if $1 \leq q \leq 2$, then $N_\Sig(L-E_i,0,\theta_1,0)=1$ for each
$1\le i\le q$,
\item[(iii)] if $q = 2$, then $N_\Sig(L-E_1-E_2, 0, 0, \theta_1) = 1$.
\end{enumerate}

(2) In the case $\Sig=\PP^2_3$ one has
\begin{enumerate}
\item[(i)]
$N_\Sig(E_3,0,0,\theta_1)=1$,
\item[(ii)] $N_\Sig(L-E_i,0,\theta_1,0)=1$ for $i=1,2$,
\item[(iii)]
$N_\Sig(L-E_1-E_2,0,0,\theta_1)=1$.
\end{enumerate}

(3) In the case $\Sig=\PP^2_4$ one has
\begin{enumerate}
\item[(i)] $N_\Sig(E_i,0,0,\theta_1)=1$ for $i=3,4$,
\item[(ii)]
$N_\Sig(L-E_i,0,\theta_1,0)=1$ for $i=1,2$,
\item[(iii)] $N_\Sig(L-E_1-E_2,0,0,\theta_1)=1$.
\end{enumerate}

(4) In the case $\Sig=\PP^2_5$ one has
\begin{enumerate}
\item[(i)] $N_\Sig(E_i,0,0,\theta_1)=1$ for $i=3,4$,
\item[(ii)] $N_\Sig(L-E_i-E_j,0,0,\theta_1)=1$
for $i, j = 1, 2, 5$, $i \ne j$,
\item[(iii)] $N_\Sig(L-E_i,0,\theta_1,0)=1$ for $i=1,2,5$,
\item[(iv)] $N_\Sig(2L-E_1-E_2-E_i-E_5,0,\theta_1,0)=1$ for $i=3,4$,
\item[(v)] $N_\Sig(2L-E_1-E_2-E_5,0,\alp,0)=1$ as long as $I\alp=2$.
\end{enumerate}

(5) $N_\Sig(D,g,\alp,\bet)=0$ for all other tuples
$(D,g,\alp,\bet)\in A(\Sig,E)$ such that either $D=E$, or $R_\Sig(D,
g,  \bet) \leq 0$.
\end{thm}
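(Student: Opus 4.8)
The plan is to run an induction on the nonnegative integer $R_\Sig(D,g,\bet)$, using Theorem~\ref{tn1} as the inductive step and items~(1)--(5) as the base of the induction. First I would check that formula~(\ref{en5}) expresses $N_\Sig(D,g,\alp,\bet)$, for every $(D,g,\alp,\bet)\in A(\Sig,E)$ with $D\neq E$ and $R_\Sig(D,g,\bet)>0$, through values of $N_\Sig$ at quadruples of strictly smaller $R_\Sig$. In the first sum one has $R_\Sig(D,g,\bet-\theta_j)=R_\Sig(D,g,\bet)-1$, since $D$ and $g$ are unchanged while $\|\bet-\theta_j\|=\|\bet\|-1$; in the second sum the identity $\sum_{i=1}^m n_i=n-1$ with $n=R_\Sig(D,g,\bet)$ (which follows from constraint~(a) after~(\ref{en4}) via $E(E+K_\Sig)=-2$) forces each factor to carry $n_i\le n-1$. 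Using the constraints after~(\ref{en4}) and~(\ref{en96}) together with Remark~\ref{rn1}, every quadruple on the right-hand side either again lies in $A(\Sig,E)$ or contributes $0$ (for instance by the convention $N_\Sig=0$ when $g<0$). Hence $R_\Sig$ is a well-founded complexity and the recursion halts exactly at the quadruples to which Theorem~\ref{tn1} does not apply, namely those with $R_\Sig(D,g,\bet)=0$ and those with $D=E$; it remains to identify the values there.

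For a base quadruple with $R_\Sig(D,g,\bet)=0$ and $D\neq E$, I would appeal to Lemma~\ref{ltn2}. When $D$ is represented by a reduced irreducible curve the variety ${\cal V}_\Sig(D,g,\alp,\bet,\bp^\flat)$ is a single point exactly for the quadruples (i)--(v) of that lemma, so $N_\Sig=\deg{\mathfrak V}_\Sig=1$ there and $N_\Sig=0$ for every other such $D$; for $\Sig=\PP^2_5$ this list is literally item~(4). When $D$ admits no reduced irreducible representative, a smooth connected source mapping to $|D|$ must be a multiple cover (a birational image would be reduced and irreducible); by the last part of Lemma~\ref{ln1} such a cover can attain $\Idim=R_\Sig$ only when $D=mD_0$ with $(K_\Sig+E)D_0=0$. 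For $m\ge 2$ one computes $g(\Sig,mD_0)=(m^2D_0^2+mD_0K_\Sig)/2+1<0$, so the bound $g\le g(\Sig,D)$ in~(\ref{en1}), combined with the relation $g=1-\|\bet\|$ forced by $R_\Sig=0$, leaves no admissible quadruple in $A(\Sig,E)$; for every other such $D$ one has ${\cal V}_\Sig=\emptyset$. Thus $N_\Sig=0$, in agreement with item~(5).

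For the base quadruples with $D=E$ I would split according to $E^2$. If $q\le 2$, then $E\sim L$, $E^2=1$, and $I\alp+I\bet=DE=1$; the unique line through the prescribed data gives $N_\Sig(L,0,\alp,\bet)=1$ (item~(1)(i)), while $g\neq 0$ is ruled out by $g\le g(\Sig,L)=0$ and the $g<0$ convention. If $q\ge 3$, then $E^2\le 0$, so $E$ is not a generator of $\Pic_+(\Sig,E)$ and $D=E\notin\Pic_+(\Sig,E)$, making this base case vacuous, as item~(5) allows. The remaining $R_\Sig=0$ base values on $\PP^2_q$ for $q<5$ (items~(1)(ii),(iii), (2), (3)) are each the count of a single rational curve---an exceptional curve, a line, or a conic---through the prescribed data, hence equal to~$1$; this follows from the verbatim analogue of Lemma~\ref{ltn2}, or from the blow-up correspondence of Theorem~\ref{tn1} (via $\pi^*D-E^*=\pi^*(D-E)+\sum_jE_j$) reducing $\PP^2_q$ to $\PP^2_5$. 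Assembling the three groups reproduces items~(1)--(5), which with the inductive step shows that the recursion determines all $N_\Sig(D,g,\alp,\bet)$, $(D,g,\alp,\bet)\in A(\Sig,E)$.

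The step I expect to be most delicate is the vanishing of $N_\Sig$ at the non-listed quadruples with $R_\Sig=0$. Since $N_\Sig$ is defined as $\deg{\mathfrak V}_\Sig$, one must rule out spurious contributions from multiple covers or degenerate limits whose image in $|D|$ is zero-dimensional; the genus inequality in~(\ref{en1}) is precisely what excludes multiples of the relevant $(-1)$-curves, but confirming that it, together with the emptiness of ${\cal V}_\Sig$ in the remaining cases, disposes of \emph{every} class $D$ lacking a reduced irreducible representative is the crux. A secondary difficulty is the bookkeeping in the blow-up reduction for $q<5$: one must verify that the correspondence of Theorem~\ref{tn1} matches the base cases of $\PP^2_q$ bijectively with the relevant base cases of $\PP^2_5$, so that no $R_\Sig=0$ quadruple is overlooked or counted twice.
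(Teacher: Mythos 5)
Your proposal is correct and takes essentially the same route as the paper: the paper's own proof of Theorem~\ref{tn2} is the one-line ``Straightforward from Lemmas~\ref{ln1} and~\ref{ltn2}'', and your induction on $R_\Sig(D,g,\bet)$ (with the descent $\sum_i n_i = n-1$ coming from $E(E+K_\Sig)=-2$ and condition~(a)) together with base cases settled by Lemma~\ref{ltn2} and the multiple-cover clause of Lemma~\ref{ln1} is precisely the fleshed-out version of that argument. The delicate point you flag --- vanishing at the non-listed $R_\Sig=0$ quadruples, excluded via $(K_\Sig+E)D_0=0$, $g(\Sig,mD_0)<0$ for $m\ge 2$, and $\|\bet\|\le I\bet\le DE$ --- is exactly what those lemmas are designed to handle.
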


\begin{proof} Straightforward from Lemmas \ref{ln1} and \ref{ltn2}.
\end{proof}

\subsection{Modified recursive formula}\label{modified}
For further purposes, we switch the ground field~$\C$ to the
(algebraically closed) field of complex locally convergent Puiseux
series $\K=\bigcup_{m\ge0}\C\{t^{1/m}\}$; this does not affect the
enumerative invariants under consideration. In addition, we rewrite
Vakil's recursive formula~(\ref{en5}) in a slightly different way.
We specialize the formula to the case $\Sig=\PP^2_5$\ and
$E=L-E_3-E_4$; the other cases can be reduced to this one in the
same way as in the proof of Theorem~\ref{tn1}.

Define the sub-semigroup $A^{tr}(\Sig,E)\subset A(\Sig,E)$ by
$$A^{tr}(\Sig,E)=\{(D,g,\alp,\bet)\in A(\Sig,E)\ |\
DE_i\ge0,\ i=1,...,5\}\ .$$ Notice that the condition $DE_i\ge 0$,
$i=1,...,5$, in the definition of $A^{tr}(\Sig,E)$ means that $E_3$
and $E_4$ are excluded from the semigroup generators.

\begin{proposition}\label{tn3}
(1) If $(D,g,\alp,\bet)\in A^{tr}(\Sig,E)$ and $R_\Sig(D, g, \bet) >
0$, then
$$N_\Sig(D,g,\alp,\bet)=\sum_{j\ge 1,\ \bet_j>0}jN_\Sig(D,g,\alp+\theta_j,
\bet-\theta_j)$$
\begin{equation}
+ \sum\left(\begin{matrix}\alp\\
\alp^{(1)}...\alp^{(m)}\end{matrix}\right)
\frac{(n-1)!}{n_1!...n_m!}\prod_{i=1}^m\left(\left(
\begin{matrix}\bet^{(i)}\\
\widetilde\bet^{(i)}\end{matrix}\right)I^{\widetilde\bet^{(i)}}
N_\Sig(D^{(i)},g^{(i)},\alp^{(i)} ,\bet^{(i)})\right),\label{en8}
\end{equation} where
$$
n = R_\Sig(D, g,  \bet), \quad n_i = R_\Sig(D^{(i)}, g^{(i)},
 \bet^{(i)}) \; for \; any \; i = 1, \ldots, m,
$$
and the second sum in~(\ref{en8}) is taken
\begin{itemize}
\item over all elements~$\daleth \in \{0, E_3, E_4, E_3 + E_4\}$
such that $D - E - \daleth \in \Pic_+(\Sig,E)$,
\item over all
splittings
\begin{equation}(D - E - \daleth, g', \alp', \bet') =
\sum_{i=1}^m(D^{(i)},g^{(i)}, \alp^{(i)},\bet^{(i)})\
,\label{en9}\end{equation} of all possible collections $(D - E -
\daleth, g', \alp', \bet') \in A^{tr}(\Sig,E)$ such that
\begin{enumerate}
\item[(a)]
$\alp'\le\alp,\quad \bet\le\bet', \quad g - g' = \|\beta' - \beta\|
- 1$,
\item[(b)] each summand
$(D^{(i)},g^{(i)},\alp^{(i)},\bet^{(i)})$ with $n_i=0$ appears in
(\ref{en9}) at most once,
\end{enumerate}
\item over all splittings
\begin{equation}
\bet'=\bet+\sum_{i=1}^m\widetilde\bet^{(i)},\quad
\|\widetilde\bet^{(i)}\|>0,\ i=1,...,m\ ,\label{en98}
\end{equation}
satisfying the restriction $\bet^{(i)}\ge\widetilde\bet^{(i)},\
i=1,...,m\ $,
\end{itemize}
and factorized by simultaneous permutations in the both splittings
(\ref{en9}) and (\ref{en98}).

(2)  Formula (\ref{en8}) recursively determines all the numbers
$N_\Sig(D,g,\alp,\bet)$, $(D,g,\alp,\bet)\in A^{tr}(\Sig,E)$, from
the data listed in Theorem \ref{tn2} {\rm (}4{\rm (}ii{\rm )}- 4{\rm
(}v{\rm )},5{\rm )}.
\end{proposition}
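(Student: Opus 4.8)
The plan is to obtain (\ref{en8}) by regrouping the splittings in Vakil's formula (\ref{en5}) of Theorem~\ref{tn1}, and then to deduce part~(2) by a well-founded induction. As in the proof of Theorem~\ref{tn1}, it suffices to treat $\Sig=\PP^2_5$ with $E=L-E_3-E_4$; the first sum in (\ref{en8}) coincides verbatim with the first sum in (\ref{en5}), so all the work concerns the second sum.

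The first key step is to determine which components $(D^{(i)},g^{(i)},\alp^{(i)},\bet^{(i)})$ of a splitting (\ref{en4}) can give a nonzero summand while failing to lie in $A^{tr}(\Sig,E)$, i.e.\ while violating $D^{(i)}E_k\ge0$ for some $k$. Since the numbers $N_\Sig$ enumerate \emph{irreducible} curves, I would argue that $N_\Sig(D^{(i)},g^{(i)},\alp^{(i)},\bet^{(i)})=0$ as soon as $D^{(i)}E_k<0$ and $D^{(i)}\ne E_k$ (every divisor in $|D^{(i)}|$ then contains $E_k$ as a fixed component and is reducible), and likewise $N_\Sig(mE_k,\dots)=0$ for $m\ge2$. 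Moreover, no element of $\Pic_+(\Sig,E)$ can satisfy $D^{(i)}E_k<0$ for $k\in\{1,2,5\}$, because the irreducible generators of $\Pic_+(\Sig,E)$ all meet each such $E_k$ nonnegatively ($E_1,E_2,E_5$ themselves are not generators, since they have zero intersection with $E$). Hence among the contributing components the only ones outside $A^{tr}(\Sig,E)$ are $E_3$ and $E_4$ themselves. A short computation gives $E_k(E+K_\Sig)=0$, so $R_\Sig(E_k,0,\theta_1)=0$; by condition~(b) of Theorem~\ref{tn1} each of $E_3,E_4$ then occurs at most once, and the part of the splitting lying outside $A^{tr}(\Sig,E)$ is exactly one element $\daleth\in\{0,E_3,E_4,E_3+E_4\}$.

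I would then set up the bijection between the splittings of (\ref{en5}) and the pairs formed by a choice of $\daleth$ and a splitting (\ref{en9}) of $D-E-\daleth$ inside $A^{tr}(\Sig,E)$, and check that the summands agree. For an extracted component $(E_k,0,0,\theta_1)$ the constraints force $\alp^{(i)}=0$, $g^{(i)}=0$, $\bet^{(i)}=\widetilde\bet^{(i)}=\theta_1$; thus its local factor $\binom{\bet^{(i)}}{\widetilde\bet^{(i)}}I^{\widetilde\bet^{(i)}}N_\Sig(E_k,0,0,\theta_1)$ equals~$1$, it consumes no part of $\alp$ in $\binom{\alp}{\alp^{(1)}\cdots\alp^{(m)}}$, and, since $n_i=R_\Sig(E_k,0,\theta_1)=0$ while $n=R_\Sig(D,g,\bet)$ is unchanged, it leaves $\frac{(n-1)!}{n_1!\cdots n_m!}$ intact. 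The remaining bookkeeping is the part I would carry out most carefully: deleting such a component turns $(g',\bet')$ into $(g'+1,\bet'-\theta_1)$ and $\sum D^{(i)}=D-E$ into $\sum D^{(i)}=D-E-\daleth$, so the genus constraint $g-g'=\|\bet'-\bet\|-1$ and the $\bet$-splitting (\ref{en96}) pass exactly into their counterparts (a) and (\ref{en98}) for the reduced collection, and the requirement that the latter lie in $A^{tr}(\Sig,E)$ (together with $D-E-\daleth\in\Pic_+(\Sig,E)$) is automatic; note that $(D-E)E_k=DE_k-1$ may be negative, which is precisely what forces $E_k$ into $\daleth$. This yields (\ref{en8}) and proves part~(1).

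For part~(2) I would order $A^{tr}(\Sig,E)$ lexicographically by the pair $\bigl(-D K_\Sig,\ \|\bet\|\bigr)$; here $-K_\Sig$ is ample on $\PP^2_5$, so $-D'K_\Sig>0$ for every nonzero effective $D'$. When $R_\Sig(D,g,\bet)>0$, each term on the right of (\ref{en8}) is strictly smaller in this order: the first sum keeps $D$ and lowers $\|\bet\|$ by one, while in the second sum $\sum_i(-D^{(i)}K_\Sig)=-D K_\Sig-1-|\daleth\cap\{E_3,E_4\}|$ with every $-D^{(i)}K_\Sig\ge1$, so that each $-D^{(i)}K_\Sig<-DK_\Sig$. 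The recursion therefore terminates at tuples with $R_\Sig\le0$ or $D=E$, all furnished by Theorem~\ref{tn2}; crucially the generators $E_3,E_4$ of item~4(i) lie outside $A^{tr}(\Sig,E)$ and are never reached, so the data 4(ii)--4(v) together with the vanishing in item~5 suffice. I expect the coefficient-matching of part~(1)---verifying that the extraction of $E_3,E_4$ is genuinely multiplicative and interacts trivially with all three combinatorial factors and with the genus constraint---to be the main obstacle, the termination argument of part~(2) being routine once (\ref{en8}) is established.
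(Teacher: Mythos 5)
Your argument is correct and is essentially the paper's own proof: the paper likewise obtains (\ref{en8}) by subdividing the second sum of Vakil's formula (\ref{en5}) into four sums according to the presence of the summands $(E_3,0,0,\theta_1)$ and $(E_4,0,0,\theta_1)$ (each occurring at most once by condition (b) of Theorem \ref{tn1}, since $R_\Sig(E_k,0,\theta_1)=0$), and it obtains the initial data for part (2) by deleting the cases $D=E_3$ and $D=E_4$ from the list of Theorem \ref{tn2}(4,5). Your extra verifications --- that a component meeting some $E_k$ negatively contributes trivially unless it equals $E_3$ or $E_4$, the multiplicative bookkeeping of the combinatorial factors and the genus constraint, and the well-founded lexicographic ordering by $(-DK_\Sig,\|\bet\|)$ for the termination in part (2) --- correctly spell out details the paper leaves implicit.
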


{\bf Proof.} Due to condition~(b) in Theorem \ref{tn1}, each
splitting~(\ref{en4}) contains at most one summand with $D^{(i)} =
E_3$ and at most one summand with $D^{(i)} = E_4$. The second sum in
the right-hand side of formula (\ref{en8}) is obtained by
subdividing the second sum of the right-hand side of (\ref{en5}) in
four sums according to the presence of summands with $D^{(i)} = E_3$
and $D^{(i)} = E_4$ in~(\ref{en4}).

The list of initial conditions is obtained from the list given in
Theorem~\ref{tn2}(4, 5) by removing the cases $D = E_3$ and $D =
E_4$. \proofend

\section{Tropical Caporaso-Harris type formulas}\label{tropical-formulas}

\subsection{$\CH$-configurations}\label{CH-config}

Let~$l$ and~$r$ be non-negative integers such that $l \leq r$.
Introduce the space $\qquad\qquad$ \mbox{${\cal P}(l, r) \subset
(L_{-\infty})^l\times (\R^2)^{r + 1 - l}$} formed by the (ordered)
{\it configurations} \mbox{$\bx=(\bx^\flat,\bx^\sharp, \bx_{r +
1})$} of $r + 1$ points in $\widehat\R^2$ such that
\begin{itemize}
\item $\bx^\flat = (p_1, \ldots, p_l)$
is a sequence of $l$ points on $L_{-\infty}$,
\item $\bx^\sharp = (p_{l + 1}, \ldots, p_r)$ is
a sequence of $r - l$ points in $\R^2$,
\item $\bx_{r + 1}$ is a point in $\R^2$,
\item for any indices $i$ and $j$
such that $l + 1 \leq i < j \leq r + 1$, the first coordinate of
$p_i$ is less than the first coordinate of $p_j$,
\item for
any index $i$ such that $1 \leq i \leq r$, the second coordinate of
$p_{r + 1}$ is less than the second coordinate of $p_i$.
\end{itemize}
Consider a finite multi-set~$\Del$ of vectors in $\Z^2$, two
non-negative integers $k_1$ and $k_2$, an integer~$g$, and two
elements $\alp$ and $\bet$ in $\Z^{\infty}_+$. We say that the
collection $(\Del, k_1, k_2, g, \alp, \bet)$ is {\it $(l,
r)$-admissible} if
\begin{itemize}
\item $\|\alp\| = l$ and $|\Del| - I\alp - I\beta
+ \|\alp\| + \|\beta\| + g - 1 - k_1 - k_2 = r$, where $|\Del|$ is
the number of vectors in the multi-set~$\Del$,
\item the sum of the vectors in~$\Del$
is equal to~$0$, and each vector in~$\Del$ belongs to the list $(0,
1)$, $(0, -1)$, $(1, 1)$, $(-1, -1)$, $(1, 0)$, $(-1, 0)$,
\item the number of vectors $(-1, 0)$ in~$\Del$
is non-zero and equal to $I\alp + I\beta$,
\item $k_3 + k_4 < d$, where $d$ is the number of
vectors in~$\Del$ which have non-negative coordinates, $k_3$ is the
number of vectors $(0, 1)$ in~$\Del$, and $k_4$ is the number of
vectors $(-1, -1)$ in~$\Del$.
\item $g \leq \frac{(d - 1)(d - 2)}{2} -
\sum_{i = 1}^5\frac{k_i(k_i - 1)}{2}$, where $k_5$ is the number of
vectors $(1, 0)$ in~$\Del$.
\end{itemize}

Since the sum of the vectors in~$\Del$ is equal to~$0$, there exists
a convex lattice polygon $\Pi(\Del)$, possibly reduced to a vertical
segment, such that
\begin{itemize}
\item each vector in $\Del$ is an outgoing normal vector
of a certain side of $\Pi(\Del)$,
\item for each side~$\sig$ of $\Pi(\Del)$,
the integer length of~$\sig$ ({\it i.e.}, its number of integer
points diminished by~$1$) is equal to the multiplicity of the
outgoing normal vector of~$\sig$ in $\Del$.
\end{itemize}
Such a polygon $\Pi(\Del)$ is unique up to translation by a vector
with integer coordinates.

The geometric meaning of the number~$d$ appearing in the definition
of an $(l, r)$-admissible collection is as follows: $d$ is the
smallest positive integer such that $\Pi(\Del)$ can be shifted into
the triangle with vertices $(0, 0)$, $(d, 0)$, $(0, d)$.

The multi-set of vectors $B - A$, where $(A, B)$ runs over all
couples of points of $\Pi(\Del)$ which have integer coordinates, is
denoted by $\widetilde\Del$. We say that a finite multi-set $\Theta$
of vectors in $\Z^2$ {\it dominates} $\Del$, if $\widetilde\Del
\subset \Theta$ (since $\widetilde\Del$ is finite, there always
exists a finite multi-set~$\Theta$ which dominates~$\Del$).

\begin{lem}\label{finite}
For any integers $0 \leq l \leq r$ and any finite multi-set $\Theta$
of vectors in $\Z^2$, the set of $(l, r)$-admissible collections
$(\Del, k_1, k_2, g, \alp, \beta)$ such that $\Theta$ dominates
$\Del$ is finite.
\end{lem}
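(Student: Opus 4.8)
The plan is to use the domination hypothesis solely to pin $\Del$ down to finitely many possibilities, and then to propagate finiteness through the admissibility conditions to the remaining data $k_1,k_2,g,\alp,\bet$. The key step, and the only place where the hypothesis on $\Theta$ is genuinely used, is the claim that there are finitely many admissible $\Del$ with $\widetilde\Del\subseteq\Theta$. To see this, fix an integer point $A_0$ of $\Pi(\Del)$ and recall that $\widetilde\Del$ consists of all differences $B-A$ between integer points of $\Pi(\Del)$; in particular every integer point $A$ of $\Pi(\Del)$ satisfies $A-A_0\in\widetilde\Del\subseteq\Theta$ (and the zero vector lies in $\widetilde\Del$, so $0\in\Theta$). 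Translating $\Pi(\Del)$ by $-A_0$ therefore places its entire set of integer points inside the finite set $\Theta$. Since $\Pi(\Del)$ is a lattice polygon it is the convex hull of its integer points, so up to translation it equals the convex hull of one of the finitely many subsets of $\Theta$. As $\Del$ is recovered from $\Pi(\Del)$ (the outgoing normals of the sides, taken with multiplicities equal to their integer lengths) while $\Pi(\Del)$ is determined by $\Del$ up to translation, finitely many translation classes of polygons correspond to finitely many $\Del$.

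Next I would fix one such $\Del$. Then the integers $d$, $k_3$, $k_4$, $k_5$ and $|\Del|$ are all determined, as is the number $m$ of vectors $(-1,0)$ occurring in $\Del$. The third admissibility condition reads $I\alp+I\bet=m$; since $I\alp=\sum_k k\alp_k$ and $I\bet=\sum_k k\bet_k$ with all entries non-negative, both $I\alp\le m$ and $I\bet\le m$ force $\alp_k=\bet_k=0$ for $k>m$ and bound each remaining coordinate. Hence there are only finitely many pairs $(\alp,\bet)\in\Z^\infty_+\times\Z^\infty_+$ compatible with the fixed $\Del$, and I would fix one of them as well.

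It remains to bound $g$, $k_1$ and $k_2$. Solving the first admissibility condition for $g$ gives $g=k_1+k_2+c$, where $c=r-|\Del|+I\alp+I\bet-\|\alp\|-\|\bet\|+1$ is now a constant. Substituting this into the genus bound (the fifth condition) and using $\tfrac{k_i(k_i-1)}{2}+k_i=\tfrac{k_i(k_i+1)}{2}$ for $i=1,2$, I obtain an inequality of the form
$$\frac{k_1(k_1+1)}{2}+\frac{k_2(k_2+1)}{2}\le C-c,$$
where $C=\tfrac{(d-1)(d-2)}{2}-\sum_{i=3}^5\tfrac{k_i(k_i-1)}{2}$ depends only on $\Del$. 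The left-hand side is strictly increasing in each of the non-negative integers $k_1,k_2$ and tends to infinity, so the inequality bounds both $k_1$ and $k_2$; this leaves finitely many pairs $(k_1,k_2)$, and $g=k_1+k_2+c$ is then determined in each case. Multiplying together the finitely many choices made at the successive stages yields the finiteness asserted in the lemma.

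I expect the first step to be the real obstacle: all the later steps are bookkeeping with the defining inequalities, whereas the passage from $\widetilde\Del\subseteq\Theta$ to finiteness of $\Del$ requires the geometric observation that a lattice polygon whose pairwise integer-point differences all lie in a fixed finite set is, up to translation, one of finitely many polygons. Care is also needed to confirm that the correspondence $\Del\leftrightarrow\Pi(\Del)$ is genuinely a bijection on translation classes, so that counting polygons indeed counts the $\Del$.
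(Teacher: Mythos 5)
Your proof is correct, and there is nothing in the paper to diverge from: the paper's entire proof of this lemma is the single word ``Straightforward,'' and your argument supplies exactly the intended bookkeeping --- domination forces the lattice points of a translate of $\Pi(\Del)$ into the finite set $\Theta$, pinning down $\Del$ to finitely many possibilities, after which $I\alp+I\bet=\#\{(-1,0)\in\Del\}$ bounds $(\alp,\bet)$, and eliminating $g=k_1+k_2+c$ via the first admissibility condition turns the genus bound into $\frac{k_1(k_1+1)}{2}+\frac{k_2(k_2+1)}{2}\le C-c$, which bounds $k_1,k_2$ and hence $g$. One harmless nit: if the ``couples'' $(A,B)$ in the definition of $\widetilde\Del$ are read as pairs of \emph{distinct} points, then $0$ need not lie in $\widetilde\Del$ (so your parenthetical ``$0\in\Theta$'' may fail), but your translation argument is unaffected, since the integer points of $\Pi(\Del)-A_0$ then lie in $\{0\}\cup\Theta$, still a finite set.
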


\begin{proof} Straightforward. \end{proof}

For any $\bx \in {\cal P}(l, r)$ and any $(l, r)$-admissible
collection $(\Del, k_1, k_2, g, \alp, \beta)$, the points of
$\bx^\flat$ are naturally divided into groups: the first group
consists of the first $\alp_1$ points of $\bx^\flat$, the second
group consists of the next $\alp_2$ points of $\bx^\flat$, and so
on.

For any $\bx \in {\cal P}(l, r)$ and any $(l, r)$-admissible
collection $(\Del, k_1, k_2, g, \alp, \beta)$, introduce the set
${\cal T}(\Del, k_1, k_2, g, \alp, \bet, \bx)$ (respectively, ${\cal
T}^{\text{\rm sym}}(\Del, k_1, k_2, g, \alp, \bet, \bx)$) of
isomorphism classes of irreducible marked pseudo-simple
$\CL$-curves~$Q = (\overline\Gam, {\cal V}, h, \bpp)$ (respectively,
irreducible symmetric marked pseudo-simple ${\cal L}$-curves $Q =
(\overline\Gam, {\cal V}, h, \bpp,\xi)$) satisfying the following
conditions:
\begin{itemize}
\item $Q$
is of genus~$g$ and degree $\Del^\circ$, where $\Del^\circ$ is
obtained from $\Del$ by replacing $i(\alp_i + \beta_i)$ vectors
$(-1, 0)$ with $\alp_i + \beta_i$ vectors $(-i, 0)$ for each
positive integer~$i$;
\item ${\widehat h}(\bpp^\flat) = \bx^\flat$, $h(\bpp^\sharp) = \bx^\sharp$;
\item if $\bpp^1 \cup \bpp^2 \cup \bpp^\nu\ne\emptyset$,
then $h(\bpp^1 \cup \bpp^2 \cup \bpp^\nu) = p_{r + 1}$;
\item the number of points in $\bpp^\aleph \cup \bpp^\nu$
is equal to $k_\aleph$, $\aleph = 1, 2$,
\item any point
$p_m \in \bx^\flat$ is contained in the image of a left end of $Q$
of weight $\mt(\alp, m)$.
\end{itemize}

\begin{rem}\label{rem-sym}
(1) The set ${\cal T}^{\text{\rm sym}}(\Del, k_1, k_2, g, \alp,
\bet, \bx)$ is nonempty only if $k_1 = k_2$.

(2) If $l>0$ or $r>l$, then any symmetric marked ${\cal L}$-curve
$(\overline\Gam, {\cal V}, h, \bpp, \xi)$ whose isomorphism class
belongs to ${\cal T}^{\text{\rm sym}}(\Del, k_1, k_2, g, \alp, \bet,
\bx)$ has a connected graph $\overline\Gam$, {\it i.e.}, the marked
${\cal L}$-curve $(\overline\Gam, {\cal V}, h, \bpp)$ is
irreducible.

(3) Assume that $l = r = 0$ and the isomorphism class of an
irreducible symmetric marked $\CL$-curve $(\overline\Gam, \CV, h,
\bpp, \xi)$ with disconnected graph $\overline\Gam$ belongs to
$\qquad\qquad\qquad$ ${\cal T}^{\text{\rm sym}}(\Del, k_1, k_2, g,
\alp, \bet, \bx)$. Then, $\bx=\{p_1\}\subset\R^2$, all the points of
$\bpp$ are mapped to $p_1$, and as it follows from the last
condition in the definition of marked $\CL$-curves and the last
condition in the definition of $(l,r)$-admissible collections,
\begin{itemize}
\item
$\Del=\{(-1,0),(-1,0),(1,0),(1,0)\}$, $\alp=0$, $\bet=(2)$, $g=-1$,
$k_1=k_2=1$,
\item $\Gam$
consists of two edges interchanged by~$\xi$ and mapped by~$h$ onto
the horizontal straight line passing through $p_1$ (see Figure
\ref{lines}).
\end{itemize}
\end{rem}

\begin{figure}
\setlength{\unitlength}{1cm}
\begin{picture}(11,2)(-3,0)
\thinlines\put(0,1){\line(1,0){4}}\put(3,0.87){$\bullet$}
\put(5,0.9){$2L-E_1-E_2-2E_5$}\put(2.9,0.5){$p_1$}
\put(1.4,1.1){$2$}\put(3.5,1.1){$2$}
\end{picture}
\caption{Exceptional class $T^{sp}(p_1)$}\label{lines}
\end{figure}
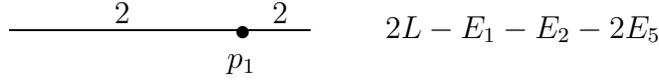

The collection $(\{(-1, 0), (-1, 0), (1, 0), (1, 0)\}, 1, 1, -1,
(0), (2))$ is denoted by $K^{sp}$, and the isomorphism class
described is denoted by $T^{sp}(p_1)$. If $(\Del, k_1, k_2, g, \alp,
\beta) \ne K^{sp}$, there is a well-defined forgetful map
\begin{equation}
\Psi:{\cal T}^{\text{\rm sym}} (\Del, k_1, k_2, g, \alp, \bet, \bx)
\to {\cal T}(\Del, k_1, k_2, g, \alp, \bet, \bx)\ ,\label{esym1}
\end{equation}
$$\Psi[(\overline\Gam,{\cal V},h,\bpp,\xi)]=
[(\overline\Gam,{\cal V},h,\bpp)]\ .$$

Let $\Theta$ be a finite multi-set of vectors in $\Z^2$. A
configuration $\bx \in {\cal P}(l, r)$ is called a {\it weak
$\CH_\Theta$-configuration of type $(l, r)$} if there exist a
positive real number~$\varepsilon$ and real numbers $\delta_l <
\ldots < \delta_{r + 1}$ such that
\begin{itemize}
\item for each integer $i = l + 1$, $\ldots$, $r + 1$,
the point $p_i$ belongs to the rectangle $(\delta_{i-1}, \delta_i)
\times (-\eps,\eps)$,
\item for any $(l, r)$-admissible collection
$(\Del, k_1, k_2, g, \alp, \beta)$ having $g \geq 0$ and dominated
by $\Theta$, and for each marked $\CL$-curve $(\overline\Gam, {\cal
V}, h, \bpp)$ whose isomorphism class belongs to ${\cal T}(\Del,
k_1, k_2, g, \alp, \bet, \bx)$, the following properties hold:
\begin{enumerate}
\item[(1)]
the image under~$h$ of any vertex of $\overline\Gam$ has the first
coordinate different from $\delta_i$, $i = l$, $\ldots$, $r + 1$.
\item[(2)]
any edge~$E$ of $\overline\Gam$ such that the image $h(E)$ of~$E$
intersects one of the vertical segments $I_i$ with endpoints
$(\delta_i, -\varepsilon)$ and $(\delta_i, \varepsilon)$, $i = l$,
$\ldots$, $r + 1$, is horizontal,
\item[(3)]
for each integer $i=l,...,r$ and each irreducible component
$\qquad\qquad\qquad$ $(\overline\Gam_{{\cal X}_i, \CX'_{i+1}, j},
{\cal V}_{{\cal X}_i, \CX'_{i+1}, j}, h_{{\cal X}_i, \CX'_{i+1}, j},
\bpp_{{\cal X}_i, \CX'_{i+1}, j})$ of a marked cut of
$(\overline\Gam_{\CX_i}, {\cal V}_{\CX_i}, h_{\CX_i}, \bpp_{\CX_i})$
at ${\cal X}'_{i+1}$ (where ${\cal X}_i$ is the inverse image
under~$h$ of the segment $I_i$, the curve $(\overline\Gam_{\CX_i},
\CV_{\CX_i}, h_{\CX_i}, \bpp_{\CX_i})$ is a marked cut of
$(\overline\Gam, \CV, h, \bpp)$ at $\CX_i$, and $\CX'_{i+1} =
h^{-1}_{\CX_i}(I_{i+1})$; notice that $\CX_i$ and $\CX'_{i+1}$ are
sparse), non-emptiness of
$$
h_{{\cal X}_i, \CX'_{i+1}, j}(\Gam_{{\cal X}_i, \CX'_{i+1}, j} \cap
\Gam) \cap \{(x, y) \in \R^2 \ | \ \delta_i \leq x \leq \delta_{i +
1}, \; -\varepsilon \leq y \leq \varepsilon\}
$$
implies that $(\overline\Gam_{{\cal X}_i, \CX'_{i+1}, j}, {\cal
V}_{{\cal X}_i, \CX'_{i+1}, j}, h_{{\cal X}, \CX'_{i+1}, j})$ is
either horizontal or one-sheeted (see Figure~\ref{new-picture5}),
\item[(4)]
for each irreducible component $(\overline\Gam_{{\cal X}_l, j},
{\cal V}_{{\cal X}_l, j}, h_{{\cal X}_l, j}, \bpp_{{\cal X}_l, j})$
of a marked cut of $(\overline\Gam, {\cal V}, h, \bpp)$ at ${\cal
X}_l$, the non-emptiness of
$$h_{{\cal X}_l, j}(\Gam_{{\cal X}_l, j} \cap \Gam)
\cap \{(x, y) \in \R^2 \ | \ x \leq \delta_l, \; -\varepsilon \leq y
\leq \varepsilon\}$$ implies that $(\overline\Gam_{{\cal X}_l, j},
{\cal V}_{{\cal X}_l, j}, h_{{\cal X}_l, j})$ is horizontal.
\end{enumerate}
\end{itemize}

Let $\bx \in {\cal P}(l, r)$ be a weak $\CH_\Theta$-configuration.
Numbers $\varepsilon$, $\delta_l$, $\ldots$, $\delta_{r + 1}$
certifying that $\bx$ is a weak $\CH_\Theta$-configuration are said
to be {\it parameters} of~$\bx$ (of course, these parameters are far
from being unique). The parameter~$\varepsilon$ is called {\it
vertical}, and the parameters $\delta_l$, $\ldots$, $\delta_{r + 1}$
are called {\it horizontal}. Pick a marked $\CL$-curve
$(\overline\Gam, {\cal V}, h, \bpp)$ whose isomorphism class belongs
to ${\cal T}(\Del, k_1, k_2, g, \alp, \bet, \bx)$. For any integer
$i=l,...,r$, put $\CX_i = h^{-1}(I_i)$, where $I_i$ is defined
in~(2) above, and consider a marked cut $(\overline\Gam_{{\cal
X}_i}, {\cal V}_{{\cal X}_i}, h_{{\cal X}_i}, \bpp_{{\cal X}_i})$ of
$(\overline\Gam, \CV, h, \bpp)$ at~${\cal X}_i$. An irreducible
component $(\overline\Gam_{{\cal X}_i, j}, {\cal V}_{{\cal X}_i, j},
h_{{\cal X}_i, j}, \bpp_{{\cal X}_i, j})$ of $(\overline\Gam_{{\cal
X}_i}, {\cal V}_{{\cal X}_i}, h_{{\cal X}_i}, \bpp_{{\cal X}_i})$ is
called {\it left} (respectively, {\it right}) if
$$h_{{\cal X}_i, j}(\Gam_{{\cal X}_i, j} \cap \Gam)
\cap \{(x, y) \in \R^2 \ | \ x \leq \delta_i, \; -\varepsilon \leq y
\leq \varepsilon\} \ne \varnothing$$ (respectively, $h_{{\cal X}_i,
j}(\Gam_{{\cal X}_i, j} \cap \Gam) \cap \{(x, y) \in \R^2 \ | \ x
\geq \delta_i, \; -\varepsilon \leq y \leq \varepsilon\} \ne
\varnothing$).

\begin{figure}
\setlength{\unitlength}{0.9cm}
\begin{picture}(15,7)(-1,0)
\thicklines
\put(0.5,3){\line(1,0){3}}\put(0.5,4){\line(1,0){5.5}}\put(0.5,5.5){\line(1,0){3}}
\put(4,2.5){\line(1,0){2}}\put(6.5,2){\line(1,0){2.5}}\put(3.5,3){\line(1,-1){0.5}}
\put(6,2.5){\line(1,-1){0.5}}
\put(3.5,3){\line(0,1){2.5}}\put(4,0.5){\line(0,1){2}}\put(6,2.5){\line(0,1){1.5}}
\put(6.5,0.5){\line(0,1){1.5}}\put(9,0.5){\line(0,1){1.5}}
\put(3.5,5.5){\line(1,1){1}}\put(6,4){\line(1,1){2.5}}\put(9,2){\line(1,1){4.5}}
\thinlines
\put(0.5,0.5){\line(0,1){6.5}}\dottedline{0.1}(0.5,1)(14.5,1)\dottedline{0.1}(0.5,6)(14.5,6)
\dashline{0.2}(1,1)(1,6)\dashline{0.2}(2.2,1)(2.2,6)\dashline{0.2}(3,1)(3,6)
\dashline{0.2}(4.5,1)(4.5,6)\dashline{0.2}(5.4,1)(5.4,6)\dashline{0.2}(8.5,1)(8.5,6)
\dashline{0.2}(13.5,1)(13.5,6)
\put(0.4,5.35){$\bullet$}\put(1.5,3.85){$\bullet$}\put(2.5,2.85){$\bullet$}
\put(3.65,2.65){$\bullet$}\put(4.8,2.35){$\bullet$}\put(5.93,3.2){$\bullet$}
\put(8.9,1.85){$\bullet$}
\put(0.2,0.9){$\eps$}\put(-0.1,5.9){$-\eps$}\put(0.6,6.6){$L_{-\infty}$}
\put(0.9,0.5){$I_l$}\put(1.3,4.25){$\bx_{l+1}$}\put(6.2,3.2){$\bx_r$}\put(8.4,0.5){$I_r$}\put(9.2,1.8){$\bx_{r+1}$}
\put(13.3,0.5){$I_{r+1}$}
\end{picture}
\caption{Cutting, VI}\label{new-picture5}
\end{figure}

For any configuration $\bx \in {\cal P}(l, r)$ and any integer
number $u=l+1,...,r$, the configuration, which belongs to ${\cal
P}(u, r)$ and is obtained from~$\bx$ by removing the points $p_{l +
1}$, $\ldots$, $p_u$ from $\bx^\sharp$ and inserting their
horizontal projections to $L_{-\infty}$ into $\bx^\flat$ at
arbitrary places, is called the {\it $u$-projection} of~$\bx$.

A configuration $\bx \in {\cal P}(l, r)$ is called a {\it
$\CH_\Theta$-configuration of type $(l, r)$} if there exist a
positive real number~$\varepsilon$ and real numbers $\delta_l$,
$\ldots$, $\delta_{r + 1}$ such that
\begin{itemize}
\item for any non-negative
integers~$l'$ and~$r'$ satisfying the inequalities $l' \leq l$ and
$l' \leq r' \leq r$, any subconfiguration $\bx' \subset \bx$ such
that $\bx' \in {\cal P}(l', r')$ is a weak
$\CH_\Theta$-configuration of type $(l', r')$ and has parameters
$\varepsilon$, $\delta_{s(l')}$, $\ldots$, $\delta_{s(r')}$,
$\delta_{r + 1}$, where $(p_{s(l')}, \ldots,
p_{s(r')})=(\bx')^\sharp$;
\item for any integer number $u=l+1,...,r$,
any $u$-projection of~$\bx$ is a weak $\CH_\Theta$-configuration of
type $(u, r)$ having $\varepsilon$, $\delta_u$, $\ldots$,
$\delta_r$, $\delta_{r + 1}$ as parameters.
\end{itemize}

\begin{proposition}\label{CH-existence}
Let~$l$ and~$r$ be non-negative integer numbers such that $l \leq
r$, and $\Theta$ a finite multi-set of vectors in $\Z^2$. Then, the
set of $\CH_\Theta$-configurations in ${\cal P}(l, r)$ contains a
non-empty subset which is open in ${\cal P}(l, r)$.
\end{proposition}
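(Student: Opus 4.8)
The plan is to produce the required open set by a \emph{horizontal stretching} construction: I will place the constraint points in a very thin horizontal strip with rapidly growing horizontal gaps, verify that such configurations (after a generic perturbation) satisfy all the defining properties, and obtain openness from the stability of those properties under small perturbations of the points. First I would set up the finiteness framework. By Lemma~\ref{finite} there are only finitely many $(l,r)$-admissible collections $(\Del,k_1,k_2,g,\alp,\bet)$ with $g\ge0$ dominated by~$\Theta$, and by Propositions~\ref{finiteness} and~\ref{sym} each of them carries only finitely many combinatorial types of marked $\CL$-curves. The strong $\CH_\Theta$-condition additionally refers to the subconfigurations $\bx'\in{\cal P}(l',r')$ with $l'\le l$, $l'\le r'\le r$, and to the $u$-projections; for all of these the pertinent admissible collections are again dominated by~$\Theta$, so altogether only finitely many moduli spaces ${\cal M}^\lambda(\cdot)$ enter the picture, and on each of them the evaluation map is affine. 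This reduces the statement to imposing finitely many conditions.

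Next comes the construction. I fix a small $\eps>0$ and real numbers $\delta_l<\delta_{l+1}<\ldots<\delta_{r+1}$ whose consecutive gaps $\delta_{i+1}-\delta_i$ are enormous, large enough to dominate $\eps$ and the bounded geometry of all curves of degree dominated by~$\Theta$ (bounded edge-number, with slopes restricted to the six primitive directions $(0,\pm1)$, $(\pm1,\pm1)$, $(\pm1,0)$). I then place each $p_i$, $i=l+1,\ldots,r+1$, generically inside the rectangle $(\delta_{i-1},\delta_i)\times(-\eps/2,\eps/2)$ and put $\bx^\flat$ at $l$ generic heights on $L_{-\infty}$, respecting the ordering conventions of ${\cal P}(l,r)$ (in particular, assigning $p_{r+1}$ the smallest second coordinate).

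The verification of the defining properties is the core of the argument, and conditions~(2)--(4) are where the main obstacle lies. The governing geometric fact is that, inside the strip $|y|<\eps$, an edge that is not horizontal has horizontal extent at most $2\eps$ (diagonal case) or zero (vertical case); since the separators $\delta_i$ sit at mutual distance, and at distance from each first coordinate of the points, far exceeding $\eps$, no non-horizontal edge can reach a separator from within the strip. Consequently every matching curve acquires a floor-type decomposition: the long edges bridging the horizontal gaps are forced to be horizontal, so that any cut component straddling a band $\{\delta_i\le x\le\delta_{i+1}\}$ and meeting the strip is horizontal or one-sheeted, which is exactly the tropical analogue of the Caporaso--Harris degeneration encoded in~(2)--(4). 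I expect the delicate point to be showing that for a generic stretched $\bx$ no vertex of a matching curve can stray into the narrow window $|x-\delta_i|<2\eps$ inside the strip; this is handled by the transversality of the (affine, generically finite) evaluation maps together with the freedom to perturb the $\delta_i$.

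Condition~(1) is then arranged by nudging the $\delta_i$ off the finitely many first coordinates of vertices of matching curves, and the compatibility of parameters demanded by the strong $\CH_\Theta$-condition is automatic, since every subconfiguration and every $u$-projection inherits the \emph{same} $\eps$ together with a nested subfamily of the $\delta_i$. Finally, openness holds because, for the chosen generic $\bx$, the matching curves are finite and the evaluation is transverse, so every sufficiently close configuration carries combinatorially identical matching curves satisfying the same structural conditions with the parameters $\eps$, $\delta_l,\ldots,\delta_{r+1}$ left unchanged.
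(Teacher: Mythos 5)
Your overall route---points in a thin horizontal strip with rapidly growing gaps, finiteness via Lemma~\ref{finite} and the finiteness of combinatorial types, openness from stability of the gap conditions---is exactly the skeleton of the paper's proof, which deduces the proposition from Lemmas~\ref{splitting} and~\ref{floors}. But your verification of conditions (2)--(4) has a genuine gap at the passage from ``edges bridging the gaps are horizontal'' to ``band components are horizontal or one-sheeted''. The metric estimate you rely on (non-horizontal edges have small horizontal extent inside the strip)---which, incidentally, needs correcting: bounded edges of a matching curve are \emph{not} restricted to the six primitive directions appearing in the degree, only to the finitely many slopes realizable for degrees dominated by~$\Theta$, so the bound is $C(\Theta)\,\eps$ rather than $2\eps$---only rules out non-horizontal edges crossing the separators, i.e.\ condition~(2). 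It does not rule out a cut component that stays inside one band and has four or more non-horizontal ends (several ``sheets'' going up and down within the band without any edge meeting a separator non-horizontally), nor does it rule out a curve escaping the strip vertically and re-crossing the lines $x=\delta_i$ at heights $|y|>\eps$, where your extent estimate says nothing. Both phenomena are excluded in the paper by a counting argument based on the defining axiom of marked $\CL$-curves: each connected component of $\overline\Gam\setminus\bpp$ contains exactly one univalent vertex, hence carries at most one non-rigid end. A vertex above $y=\eps$ would force two non-rigid ends in one such component (first statement of Lemma~\ref{splitting}), and a band component with at least four non-horizontal ends would have at least three non-rigid ones, whereas at most two components of the complement of the marked points can meet a band containing a single point of $\bx^\sharp$ (Lemma~\ref{floors}). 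Your ``consequently'' is silently doing the work of this entire argument, and without it the floor decomposition does not follow.

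Two smaller remarks. Your appeal to transversality of the evaluation maps---both to keep vertices out of the windows around the $\delta_i$ and to get openness---is unnecessary and somewhat fragile, since near walls a nearby configuration need not carry combinatorially identical matching curves. The constant $C(\Theta)\,\eps$ in the extent bound is uniform in the configuration, so within each sufficiently long gap there is an interval $(a,b)$, uniform over all matching curves \emph{and} over all nearby configurations, on which $h(\Gam)$ consists of horizontal segments; placing $\delta_i\in(a,b)$ settles condition~(1) (a non-degenerate curve has a non-horizontal edge at every non-univalent vertex) and makes the whole stretched region an open set of $\CH_\Theta$-configurations outright. Finally, the compatibility for subconfigurations and $u$-projections is not quite ``automatic'': it requires re-running the same two lemmas for each of the finitely many induced admissible collections, which your finiteness set-up does accommodate but should be said explicitly.
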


The proof of Proposition~\ref{CH-existence} is based on the
following lemmas.

\begin{lem}\label{splitting}\text{\rm(cf.~\cite[proof of Theorem
4.3]{GM2} and~\cite[Lemma~28]{IKS3})}. Let~$l$ and~$r$ be
non-negative integer numbers such that $l \leq r$, and let $(\Del,
k_1, k_2, g, \alp, \beta)$ be an $(l, r)$-admissible collection such
that $g \geq 0$. Fix a positive real number~$\varepsilon$ and two
real numbers $N_1$ and $N_2$ such that $N_1 < N_2$. Consider a
configuration $\bx \in {\cal P}(l, r)$ such that
\begin{itemize}
\item the second coordinates of all points in~$\bx$
belong to the interval $(-\varepsilon, \varepsilon)$,
\item no first coordinate of a point in $\bx^\sharp \cup \bx_{r + 1}$
belongs to the interval $[N_1, N_2]$.
\end{itemize}
Then, for each marked $\CL$-curve $(\overline\Gam, {\cal V}, h,
\bpp)$ whose isomorphism class belongs to $\qquad$ ${\cal T}(\Del,
k_1, k_2, g, \alp, \bet, \bx)$, the second coordinate of the image
under~$h$ of any vertex of $\Gam$ belongs to the interval
$(-\varepsilon, \varepsilon)$. Furthermore, if the length of the
interval $[N_1, N_2]$ is sufficiently large with respect
to~$\varepsilon$, then there exist real numbers~$a$ and~$b$ such
that $N_1 < a < b < N_2$ and satisfying the following condition: for
each marked $\CL$-curve $(\overline\Gam, {\cal V}, h, \bpp)$ whose
isomorphism class belongs to ${\cal T}(\Del, k_1, k_2, g, \alp,
\bet, \bx)$, the intersection of~$h(\Gam)$ with the rectangle $\{(x,
y) \in \R^2: \; a \leq x \leq b \; \text{\rm and} \; -\varepsilon
\leq y \leq \varepsilon\}$ consists of horizontal segments.
\end{lem}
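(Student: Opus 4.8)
The plan is to prove the two assertions of the lemma in turn: first the confinement of all vertices to the horizontal strip $\{|y|<\varepsilon\}$, and then, using this, the existence of a horizontal ``wall'' $[a,b]\times[-\varepsilon,\varepsilon]$ crossed only by horizontal segments.

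For the first assertion I would argue by contradiction, analysing the topmost vertex (the lowest one being treated symmetrically). Suppose some vertex of $\Gam$ has second coordinate $\ge\varepsilon$, and let $V$ be one of maximal height $y^{*}\ge\varepsilon$. All edges joining $V$ to other vertices point weakly downward, while the images of the edges at $V$ must span $\R^{2}$ by non-degeneracy; hence the balancing condition forces an end at $V$ pointing strictly upward, of direction $(0,1)$ or $(1,1)$. A short integrality computation with the balancing condition (the vertical components of the remaining, weakly downward edges are nonpositive integers whose weighted sum must be $-1$) then shows that at any pseudo-simple vertex carrying an upward end, either $V$ carries a second upward or horizontal end, or $V$ is incident to a horizontal edge lying entirely at height $y^{*}$. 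In the latter case I would follow the maximal horizontal chain through $V$ at height $y^{*}$ and repeat the balancing analysis at its extreme vertex, which again must emit an upward or horizontal end. In every case the component of $\overline\Gam\setminus\bpp$ containing $V$ (or the whole top chain) acquires two distinct unbounded ends, and no marked point can separate them: every marked point lies in the strip $\{|y|<\varepsilon\}$, whereas any separating point would sit at height $\ge y^{*}\ge\varepsilon$. This contradicts the defining property of a marked $\CL$-curve that each connected component of $\overline\Gam\setminus\bpp$ contains exactly one univalent vertex. Thus no vertex lies at height $\ge\varepsilon$, and symmetrically (using the downward ends $(0,-1)$ and $(-1,-1)$, and noting that an unpinned left end at such a height would itself furnish a second end) none lies at height $\le-\varepsilon$.

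For the second assertion I would first reduce to a finite problem. By Lemma~\ref{finite} only finitely many $(l,r)$-admissible collections are dominated by $\Theta$, and for each of them Proposition~\ref{finiteness} shows that the fibre of the evaluation map over the point determined by $\bx$ is finite; since $ev^{\lambda}$ is affine on each combinatorial stratum, a finite (hence convex and finite) fibre is a single point, so there is at most one curve per combinatorial type. Therefore the union of the sets ${\cal T}(\Del,k_{1},k_{2},g,\alp,\bet,\bx)$ consists of finitely many curves, whose number and complexity are bounded solely in terms of $\Theta$. Next I would bound the horizontal extent of non-horizontal edges: by the first assertion the endpoints of any bounded edge lie in the strip, so its vertical span is $<2\varepsilon$; as its primitive direction is orthogonal to a vector of $\widetilde\Del\subset\Theta$, the ratio $|dx/dy|$ along it is at most a constant $D_{\Theta}$ depending only on $\Theta$, whence its $x$-projection has length $<2D_{\Theta}\varepsilon$. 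A non-horizontal unbounded end incident to a strip vertex contributes $x$-extent $0$ for direction $(0,\pm1)$ and $<2\varepsilon$ for $(\pm1,\pm1)$. Summing over the finitely many curves, the union of the $x$-projections of all non-horizontal edges has total length at most $C_{\Theta}\varepsilon$ for a constant $C_{\Theta}$ depending only on $\Theta$.

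Consequently, as soon as $N_{2}-N_{1}>C_{\Theta}\varepsilon$ (this is the meaning of ``the length of $[N_{1},N_{2}]$ sufficiently large with respect to $\varepsilon$''), the interval $[N_{1},N_{2}]$ is not covered by those projections, and I may choose $N_{1}<a<b<N_{2}$ with $[a,b]$ disjoint from all of them; the hypothesis that no first coordinate of a point in $\bx^{\sharp}\cup\bx_{r+1}$ lies in $[N_{1},N_{2}]$ additionally guarantees that $[a,b]$ meets no marked point. For $x\in[a,b]$ every edge of every curve meeting the rectangle is then horizontal, as claimed. I expect the main obstacle to be the first assertion, specifically the extremal-vertex case analysis and the verification, via the balancing condition and integrality, that a vertex above the strip always forces a second unbounded end into its component; this adapts the arguments of~\cite[proof of Theorem~4.3]{GM2} and~\cite[Lemma~28]{IKS3}.
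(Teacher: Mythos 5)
Your first assertion is proved exactly as in the paper: both arguments take a vertex of maximal height, use non-degeneracy plus balancing to extract a weight-$1$ upward end of direction $(0,1)$ or $(1,1)$ together with a second weakly-upward or horizontal edge, follow horizontal chains if necessary, and contradict the condition that each connected component of $\overline\Gam\setminus\bpp$ contains exactly one univalent vertex; your expanded chain-following case analysis is just a more explicit version of what the paper compresses into ``the component has at least two non-rigid ends.''

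For the second assertion, however, you take a genuinely different route. The paper argues locally: from any point~$z$ of a non-horizontal edge mapped into the rectangle it propagates, via balancing, a strictly monotone path in $\Gam\setminus\bpp$ whose image is a graph over $[N_1,x_1]$ or $[x_1,N_2]$, and then uses the finitely-many-slopes fact to bound the length of that interval by a constant depending only on $\Del$ and~$\varepsilon$; hence every non-horizontal point lies within bounded distance of $x=N_1$ or $x=N_2$, and $a,b$ can be taken as $N_1+c$, $N_2-c$. You instead argue globally by measure: you bound the total length of the $x$-projections of all non-horizontal edge portions inside the strip (bounded edges have vertical span $<2\varepsilon$ and bounded slope ratio via the dual-subdivision fact that weighted edge directions are rotations of vectors in $\widetilde\Del$; the ends $(0,\pm1)$ and $(\pm1,\pm1)$ contribute $0$ and $<2\varepsilon$ respectively), and then pigeonhole to find $[a,b]$ avoiding this union. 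This works, but it needs a uniform bound on the number of curves, which you supply with a nice observation absent from the paper: each fiber of the affine map $ev^\lambda$ over a stratum is convex, so finiteness (Proposition~\ref{finiteness}) forces at most one curve per combinatorial type, and the types are finitely many. The paper's local propagation avoids any appeal to finiteness of ${\cal T}(\Del, k_1, k_2, g, \alp, \bet, \bx)$, while your counting argument trades that extra machinery for a more elementary covering estimate. One blemish: your appeal to a multi-set~$\Theta$ and Lemma~\ref{finite} is out of place, since the lemma fixes a single $(l,r)$-admissible collection and no $\Theta$ appears in its hypotheses; everything you need follows from the fixed~$\Del$ alone, so you should strip those references out. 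Your remark that $[a,b]$ meets no marked point is likewise superfluous for the stated conclusion, though harmless.
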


\begin{proof} The proof is completely similar to the proof of Lemma~28
in~\cite{IKS3}, but since our present setting is slightly different
from the one in~\cite{IKS3}, we repeat the proof here.

Consider a marked pseudo-simple $\CL$-curve $(\overline\Gam, {\cal
V}, h, \bpp)$ whose isomorphism class belongs to ${\cal T}(\Del,
k_1, k_2, g, \alp, \bet, \bx)$. Among the non-univalent vertices
of~$\overline\Gam$, choose a vertex~$V$ whose image $h(V) = (v_1,
v_2)$ has the maximal second coordinate. The curve $\qquad$
$(\overline\Gam, {\cal V}, h, \bpp)$ has an end~$E$ such that~$E$ is
incident to~$V$ and the second coordinate of the vector $u_V(E)$ is
positive. This end is of weight~$1$ and $u_V(E)$ is equal either to
$(0, 1)$, or to $(1, 1)$. Hence, $(\overline\Gam, {\cal V}, h,
\bpp)$ should have another edge~$E'$ such that~$E'$ is incident
to~$V$ and the second coordinate of the vector $u_V(E')$ is
non-negative. If $v_2 > \varepsilon$, the connected component of
$\overline\Gam \setminus \bpp$ containing $V$ has at least two
non-rigid ends, which is impossible by the definition of marked
${\cal L}$-curves. In the same way one shows that $\overline\Gam$
has no non-univalent vertex whose image under~$h$ is below the line
$y = -\varepsilon$. This proves the first statement of the lemma.

Denote by~$R$ the rectangle $\{(x, y) \in \R^2: \; N_1 \leq x \leq
N_2 \; \text{\rm and} \; -\varepsilon \leq y \leq \varepsilon\}$. It
follows from the first statement of the lemma, that the image
under~$h$ of any path~$\gamma \subset \Gamma \setminus \bpp$ does
not intersect at least one of the two horizontal edges of~$R$.
Let~$z \in \Gam$ be a point such that $h(z) = (x_1, y_1)$ belongs to
the interior of~$R$, and~$z$ belongs to a non-horizontal edge of
$\overline\Gam$. Then, there exists a path $\gamma \subset \Gam
\setminus \bpp$ having~$z$ as an extreme point and such that
$h(\gamma)$ is the graph of a strictly monotone function~$f$ defined
either on the interval $[N_1, x_1]$, or on the interval $[x_1,
N_2]$. Since there are only finitely many slopes that can be
realized by the images of edges of a parameterized plane tropical
curve of degree~$\Del^\circ$, the length of the definition interval
of~$f$ is bounded from above by a constant depending only on~$\Del$
and~$\varepsilon$. This proves the second statement of the lemma.
\end{proof}

\begin{lem}\label{floors}
Let~$l$ and~$r$ be non-negative integer numbers such that $l \leq
r$, and let $(\Del, k_1, k_2, g, \alp, \beta)$ be an $(l,
r)$-admissible collection such that $g \geq 0$. Fix a positive real
number~$\varepsilon$ and two real numbers $M_1$ and $M_2$ such that
$M_1 < M_2$. Consider a configuration $\bx \in {\cal P}(l, r)$ such
that
\begin{itemize}
\item the second coordinates of all points in~$\bx$
belong to the interval $(-\varepsilon, \varepsilon)$,
\item there exists a point in $\bx^\sharp$
such that the first coordinate of this point belongs to the interval
$(M_1, M_2)$, and no other point in $\bx^\sharp \cup \bx_{r + 1}$
has the first coordinate in the interval $[M_1, M_2]$.
\end{itemize}
Pick an irreducible marked pseudo-simple $\CL$-curve
$(\overline\Gam, {\cal V}, h, \bpp)$ satisfying the following
properties: its isomorphism class belongs to ${\cal T}(\Del, k_1,
k_2, g, \alp, \bet, \bx)$, and each edge~$E$ of $\overline\Gam$ such
that the image $h(E)$ of~$E$ intersects one of the two vertical
segments $J_i$ with endpoints $(M_i, -\varepsilon)$ and $(M_i,
\varepsilon)$, $i = 1, 2$, is horizontal. Consider a marked cut
$(\overline\Gam_{\cal X}, {\cal V}_{\cal X}, h_{\cal X}, \bpp_{\cal
X})$ of $(\overline\Gam, {\cal V}, h, \bpp)$ at $\CX = h^{-1}(J_1)$.
Then, any irreducible component $\qquad\qquad\qquad\qquad$
$(\overline\Gam_{{\cal X}, \CX', j}, {\cal V}_{{\cal X}, \CX', j},
h_{{\cal X}, \CX', j}, \bpp_{{\cal X}, \CX, j})$ of a marked cut of
$(\overline\Gam_{\cal X}, {\cal V}_{\cal X}, h_{\cal X}, \bpp_{\cal
X})$ at ${\cal X}' = h^{-1}_\CX(J_2)$ such that
$$h_{{\cal X}, \CX', j}(\Gam_{{\cal X}, \CX', j} \cap \Gam)
\cap \{(x, y) \in \R^2 \ : \ M_1 < x < M_2, \; -\varepsilon \leq y
\leq \varepsilon\} \ne \varnothing,$$ is either horizontal or
one-sheeted.
\end{lem}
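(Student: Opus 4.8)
The plan is to follow the scheme of the proof of Lemma~\ref{splitting} and of~\cite[Lemma~28]{IKS3}, combining a confinement statement for the vertices with a counting argument based on the fact that only one point of $\bx$ lies strictly between $M_1$ and $M_2$. Throughout, write $C_j = (\overline\Gam_{{\cal X}, \CX', j}, {\cal V}_{{\cal X}, \CX', j}, h_{{\cal X}, \CX', j}, \bpp_{{\cal X}, \CX', j})$ for the component in question. First I would establish the confinement. The first statement of Lemma~\ref{splitting} uses only that every point of $\bx$ has second coordinate in $(-\eps, \eps)$, and not the gap hypothesis on $[N_1,N_2]$; hence it applies verbatim and shows that the image under $h$ of every non-univalent vertex of $\overline\Gam$ has second coordinate in $(-\eps,\eps)$. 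The added vertices produced by the two cuts lie on the horizontal edges crossing $J_1$ and $J_2$, so they too map into the strip $-\eps < y < \eps$.

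Moreover, since every edge of $\overline\Gam$ meeting $J_1$ or $J_2$ is horizontal and is therefore cut, no non-horizontal edge of $C_j$ crosses the lines $x = M_1$ or $x = M_2$; consequently the non-univalent vertices of $C_j$, together with all of its non-horizontal edges, lie in the box $M_1 \le x \le M_2$, $-\eps \le y \le \eps$. Finally, the two cuts are performed along horizontal edges, so every added end of $C_j$ is horizontal; hence the only non-horizontal ends of $C_j$ are (weight-one) ends of the original degree $\Del^\circ$, and the primitive direction of each of them has second coordinate $\pm1$, contributing $\pm1$ to the vertical flux. Next comes the dichotomy. If $C_j$ has no non-horizontal end, then, looking at a vertex of maximal second coordinate and applying the balancing condition, all edges at that vertex must be horizontal; propagating this along the connected graph shows that $C_j$ is horizontal. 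Otherwise $C_j$ has at least one end of positive vertical flux (an \emph{up-end}) and, since the degree vectors sum to zero and only the non-horizontal ends carry vertical flux, at least one end of negative vertical flux (a \emph{down-end}). It remains to prove that there is \emph{exactly} one up-end and one down-end, which is precisely the statement that $C_j$ is one-sheeted.

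The crux is to bound the number of up-ends by one. Suppose $C_j$ had at least two up-ends; by the vertical-flux balance it would then also have at least two down-ends, hence at least four non-horizontal ends, i.e.\ at least four univalent vertices of $\overline\Gam_{{\cal X},\CX',j}$. Here I would use the hypothesis on $\bx$: the points of $\bpp^1$, $\bpp^2$ and $\bpp^\nu$ are mapped to $p_{r+1}$, whose first coordinate is not in $[M_1,M_2]$, so none of them lies on $C_j$; the points of $\bpp^\flat$ (and of the $\bpp^\flat$-type added ends) sit at univalent vertices and do not separate the graph; and the only point of $\bx^\sharp$ with first coordinate in $(M_1,M_2)$ is a single one. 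Thus $C_j$ carries at most one marked point in the interior of an edge capable of disconnecting $\overline\Gam_{{\cal X},\CX',j}$, so $\overline\Gam_{{\cal X},\CX',j} \setminus \bpp_{{\cal X},\CX',j}$ has at most two connected components meeting the box. Distributing the four or more non-horizontal ends among these components, some component would contain at least two univalent vertices of $\overline\Gam_{{\cal X},\CX',j}$, contradicting the defining property of a marked $\CL$-curve that each connected component of the complement of the marked points contains exactly one univalent vertex. Hence $C_j$ has a single up-end and a single down-end and is one-sheeted.

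The main obstacle, and the place requiring the most care, is the bookkeeping of the marked points after the two successive cuts: one must verify that the rules governing which added edges and added vertices receive marked points do not produce a second \emph{disconnecting} (edge-interior) marked point inside the box, so that the unique point of $\bx^\sharp$ in $(M_1,M_2)$ really is the only interior cut point available to $\overline\Gam_{{\cal X},\CX',j}$. Once this is secured, the pigeonhole step against the one-univalent-vertex axiom closes the argument; the confinement and balancing steps are routine adaptations of Lemma~\ref{splitting}.
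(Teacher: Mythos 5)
Your proposal is correct and follows essentially the same route as the paper's proof: confinement of the non-univalent vertices via Lemma~\ref{splitting}, the balancing/flux count showing that a component which is neither horizontal nor one-sheeted must have at least four non-horizontal ends, and then a pigeonhole against the axiom that each connected component of $\overline\Gam_{\CX, \CX', j} \setminus \bpp_{\CX, \CX', j}$ contains exactly one univalent vertex, using that only one point of $\bx^\sharp$ lies in the strip (the paper phrases this as: at least three of the four ends are non-rigid, while at most two components of the complement of the marked points meet the strip). The bookkeeping issue you flag is harmless and is likewise left implicit in the paper: a marked point created on an added edge sits, by the distance rule in the cutting procedure, at the image of the corresponding cut point, i.e.\ on $J_1 \cup J_2$, hence outside the open strip $M_1 < x < M_2$, so the unique $\bx^\sharp$-point is indeed the only disconnecting marked point available there.
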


\begin{proof} Assume that an irreducible component
$(\overline\Gam_{{\cal X}, {\cal X}', j}, {\cal V}_{{\cal X}, {\cal
X}', j}, h_{{\cal X}, {\cal X}', j}, \bpp_{{\cal X}, {\cal X}', j})$
is neither horizontal, nor one-sheeted. Then, $\overline\Gam_{{\cal
X}, {\cal X}', j}$ has at least four non-horizontal ends $E_1$,
$E_2$, $E'_1$, and $E'_2$. Lemma~\ref{splitting} implies that the
images under~$h_{{\cal X}, {\cal X}', j}$ of all non-univalent
vertices of $\overline\Gam_{{\cal X}, {\cal X}', j}$ belong to the
rectangle
$$\{(x, y) \in \R^2 \ : \ M_1 < x < M_2,\ -\varepsilon < y < \varepsilon\},$$
and at least three of the ends $E_1$, $E_2$, $E'_1$, and $E'_2$ are
non-rigid. This contradicts the fact that $\overline\Gam_{{\cal X},
{\cal X}', j} \setminus \bpp_{{\cal X}, {\cal X}', j}$ has at most
two connected components whose images under~$h_{{\cal X}, {\cal X}',
j}$ intersect the strip $\{(x, y) \in \R^2 \ : \ M_1 < x < M_2\}$.
\end{proof}

{\bf Proof of Proposition~\ref{CH-existence}}. The statement follows
from Lemmas~\ref{splitting},~\ref{floors}, and the fact that the
number of $(l', r')$-admissible collections $(\Del, k_1, k_2, g,
\alp, \beta)$ such that $l' \leq r' \leq r$, and~$\Theta$
dominates~$\Del$, is finite. \proofend

Let~$l$ and~$r$ be non-negative integer numbers such that $l \leq
r$, and $\Theta$ a finite multi-set of vectors in $\Z^2$. A
configuration $\bx \in {\cal P}(l, r)$ is called {\it
$\Theta$-generic}, if the following condition is satisfied: for any
end-marked rational marked $\CL$-curve $(\overline\Gam, \CV, h,
\bpp)$ such that its degree is contained in~$\Theta$ and $h(\bpp)
\subset \bx$, the image under~$h$ of the non-rigid end of
$(\overline\Gam, \CV, h, \bpp)$ does not contain any point of~$\bx$.
Let ${\cal P}^{\text{\rm gen}}_\Theta(l, r) \subset {\cal P}(l, r)$
be the subset formed by the $\Theta$-generic configurations.

The following lemma is an immediate consequence of ~\cite[Lemma
2]{Sh09}.

\begin{lem}\label{genericity}
Let~$l$ and~$r$ be non-negative integer numbers such that $l \leq
r$, and $\Theta$ a finite multi-set of vectors in $\Z^2$. Then, the
subset ${\cal P}^{\text{\rm gen}}_\Theta(l, r)$ is dense in ${\cal
P}(l, r)$.\proofend
\end{lem}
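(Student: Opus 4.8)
The plan is to show that the complement of ${\cal P}^{\text{\rm gen}}_\Theta(l, r)$ in ${\cal P}(l,r)$ — the set of \emph{bad} configurations $\bx$ admitting an end-marked rational $\CL$-curve of degree contained in $\Theta$, with all marked points mapped into $\bx$ and with the image of its non-rigid end passing through some point of $\bx$ — is contained in a finite union of nowhere dense closed subsets of ${\cal P}(l,r)$. First I would reduce to finitely many discrete data. Since $\Theta$ is finite, only finitely many multi-sets $\Del \subseteq \Theta$ can occur as degrees, and for each such $\Del$ the set of combinatorial types of end-marked rational $\CL$-curves is finite (as in the discussion preceding Proposition~\ref{finiteness}, via finiteness of the sets $\Lambda(\Del,\ldots)$). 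Because $\bx$ has exactly $r+1$ points, there are also only finitely many ways to assign each marked point of such a curve, together with the prescribed incidence point of the non-rigid end, to an index in $\{1,\ldots,r+1\}$. It therefore suffices to prove that, for each fixed combinatorial type $\lambda$ and each fixed assignment, the set of configurations realizing this datum is a proper closed subset of ${\cal P}(l,r)$.

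For a fixed datum, I would parametrize the relevant curves by the moduli space ${\cal M}^\lambda$, which is identified with the relative interior of a convex polyhedron on which the evaluation map $ev^\lambda$ is affine. Imposing that every marked point maps to its assigned point of $\bx$ pins down the images $\widehat h(\bpp)$; by Proposition~\ref{finiteness} the fibres of the evaluation map are finite, so these constraints determine the whole curve — and hence the image of its non-rigid end, a ray in $\widehat\R^2$ whose direction is fixed by $\lambda$ — up to finitely many choices. The requirement that the configuration point of the chosen index $i_0$ lie on this determined ray is then a single affine incidence condition relating the coordinates of $\bx$. The heart of the matter is to verify that this condition is nontrivial, i.e.\ that it is \emph{not} satisfied identically as $\bx$ varies within the locus where the marked-point constraints are satisfiable; this is exactly the content of \cite[Lemma~2]{Sh09}, applied after translating the notion of non-rigid end into the setting of loc.\ cit.

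The main obstacle is precisely this nontriviality (positive codimension) of the incidence condition: a priori one must rule out that the combinatorics of $\lambda$ force the non-rigid end to pass through a configuration point automatically. Once \cite[Lemma~2]{Sh09} supplies this, each bad locus indexed by a discrete datum is a proper closed (indeed affine-algebraic) subset of ${\cal P}(l,r)$, hence of positive codimension and therefore nowhere dense. Taking the finite union over all combinatorial types and assignments, the set of bad configurations is nowhere dense, and consequently ${\cal P}^{\text{\rm gen}}_\Theta(l, r)$ contains a dense open subset of ${\cal P}(l,r)$; in particular it is dense, as claimed.
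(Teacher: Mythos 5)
Your strategy is, in substance, the paper's: the paper's entire proof of Lemma~\ref{genericity} is the one-line remark that it is an immediate consequence of \cite[Lemma 2]{Sh09}, and your stratified dimension count (finitely many degrees contained in $\Theta$, finitely many combinatorial types as in Lemma~\ref{finite} and the finiteness of the sets $\Lambda(\cdot)$, finitely many assignments of marked points to indices) delegates its only non-formal ingredient --- the positive codimension of the incidence of the non-rigid end with a configuration point --- to that same \cite[Lemma 2]{Sh09}. So the overall route is faithful; but one intermediate step of your scaffold fails as written.

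You invoke Proposition~\ref{finiteness} to conclude that once the images $\widehat h(\bpp)$ are pinned, the curve --- and hence the ray carried by its non-rigid end --- is determined up to finitely many choices. Proposition~\ref{finiteness} concerns the spaces ${\cal M}(\Del, k_1, k_2, g, l, r)$ of \emph{non-degenerate} marked $\CL$-curves, and its inductive proof uses non-degeneracy to find two rigid ends with $u_V(E_i) \ne u_V(E_j)$ at a vertex; by contrast, the definition of a $\Theta$-generic configuration quantifies over \emph{all} end-marked rational marked $\CL$-curves with degree contained in $\Theta$, degenerate ones included, and for these the fibres of the evaluation map can be positive-dimensional. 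Concretely, take a tripod with one degenerate trivalent vertex: two left ends of weights $w_1$, $w_2$ whose univalent vertices are both marked and mapped to the same point $p_1 \in \bx^\flat \subset L_{-\infty}$, and one right end of weight $w_1 + w_2$, the whole image being one horizontal line; this is a legitimate end-marked rational marked $\CL$-curve (its non-rigid end is the right one), yet the vertex slides freely along the line while $\widehat h(\bpp)$ stays fixed, so the curve is not determined up to finitely many choices and the image of the non-rigid end sweeps the entire line. The damage is local and reparable: for each stratum the bad configurations form the image, under an affine projection, of a polyhedral incidence locus of dimension at most $\dim {\cal P}(l, r) - 1$ --- in the tripod example the bad condition is simply that some other point of $\bx$ has the same second coordinate as $p_1$, a codimension-one coincidence --- and the needed codimension bound for degenerate strata is again exactly what \cite[Lemma 2]{Sh09} supplies. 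You should therefore either restrict the finite-fibre claim to non-degenerate types and treat degenerate ones by this incidence-variety count, or otherwise note explicitly that finite fibres are not needed; with that patch your argument is a correct unwinding of the paper's citation-based proof.
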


\subsection{Tropical complex recursive formula}\label{sec42}

Introduce the set ${\cal S}$ of the admissible $6$-tuples $(\Del,
k_1, k_2, g, \alp, \beta)$, each $6$-tuple $\qquad\qquad$ $(\Del,
k_1, k_2, g, \alp, \beta)$ being $(l, r)$-admissible for certain
non-negative integer numbers~$l$ and~$r$ such that $l \leq r$
(recall that by the definition of $(l,r)$-admissibility, $l =
\|\alp\|$ and $r = |\Del| - I\alp - I\beta + \|\alp\| + \|\beta\| +
g - 1 - k_1 - k_2$). Define in ${\cal S}$ the following operation:
$$
\displaylines{ (\Del, k_1, k_2, g, \alp, \bet) + (\Del', k'_1, k'_2,
g', \alp', \bet') \cr =(\Del \cup \Del', k_1 + k'_1, k_2 + k'_2, g +
g' - 1, \alp + \alp', \bet + \bet',)\ . }
$$

Let~$l$ and~$r$ be non-negative integer numbers such that $l \leq
r$. Fix an $(l, r)$-admissible collection $(\Del, k_1, k_2, g, \alp,
\beta) \in {\cal S}$, choose a multi-set $\Theta$ dominating $\Del$,
and consider a $\CH_\Theta$-configuration~$\bx$ of type $(l, r)$.
Let $\varepsilon$, $\delta_l$, $\ldots$, $\delta_{r + 1}$ be
parameters of~$\bx$. For any $i = l$, $\ldots$, $r$, denote by $I_i$
the vertical segment with the endpoints $(\delta_i, -\varepsilon)$
and $(\delta_i, \varepsilon)$.

Denote by ${\cal T}^c(\Del, k_1, k_2, g, \alp, \bet, \bx) \subset
{\cal T}(\Del, k_1, k_2, g, \alp, \bet, \bx)$ the set formed by the
isomorphism classes of irreducible marked pseudo-simple $\CL$-curves
$(\overline\Gam, {\cal V}, h, \bpp)$ such that
\begin{enumerate}
\item[(i)] for any integer $i=l,...,r-1$
and any irreducible component $(\overline\Gam_{{\cal X}_i, j}, {\cal
V}_{{\cal X}_i, j}, h_{{\cal X}_i, j}, \bpp_{{\cal X}_i, j})$ of a
marked cut of $(\overline\Gam, {\cal V}, h, \bpp)$ at ${\cal X}_i =
h^{-1}(I_i)$, no two right irreducible components of a marked cut of
$(\overline\Gam_{{\cal X}_i, j}, {\cal V}_{{\cal X}_i, j}, h_{{\cal
X}_i, j}, \bpp_{{\cal X}_i, j})$ at ${\cal X}'_{i + 1} =
h^{-1}_{\CX_i}(I_{i + 1})$ are isomorphic,
\item[(ii)] any right irreducible component
$(\overline\Gam_{{\cal X}_r, j}, {\cal V}_{{\cal X}_r, j}, h_{{\cal
X}_r, j}, \bpp_{{\cal X}_r, j})$ of a marked cut of $\qquad$
$(\overline\Gam, {\cal V}, h, \bpp)$ at ${\cal X}_r = h^{-1}(I_r)$
has one of the combinatorial types presented in Figure~\ref{fn2}
(the collection $(\bpp_{{\cal X}_r, j})^\sharp$ is empty, and the
symbol~$\bullet$ which does not coincide with a univalent vertex
represents the only element in $(\bpp_{{\cal X}_r, j})^1 \cup
(\bpp_{{\cal X}_r, j})^2 \cup (\bpp_{{\cal X}_r, j})^\nu$; this
symbol~$\bullet$ is equipped with an index~$i$ if and only if the
corresponding point belongs to $(\bpp_{{\cal X}_r, j})^i$, $i = 1,
2$).
\end{enumerate}

\begin{figure}
\setlength{\unitlength}{1cm}
\begin{picture}(11,14)(-1,-1)
\thinlines\put(0,2){\line(1,0){3}}\put(3,2){\line(0,-1){1}}
\put(3,2){\line(1,1){1}}\put(0,6){\line(1,0){3}}
\put(3,6){\line(0,-1){1}}\put(3,6){\line(1,1){0.5}}\put(0,9){\line(1,0){4}}
\put(0,12.5){\line(1,0){4}}
\put(5.5,2){\line(1,0){2}}\put(5.5,2.5){\line(1,0){2}}\put(7.5,2){\line(0,11){
0.5}}
\put(7.5,2.5){\line(1,1){0.5}}\put(7.5,2){\line(1,-1){0.5}}\put(8,1.5){\line(0,
-1){0.5}}
\put(8,1.5){\line(1,0){2}}\put(5.5,6){\line(1,0){2}}\put(7.5,6){\line(1,1){0.5}}
\put(7.5,6){\line(1,-1){0.5}}\put(8,5.5){\line(0,-1){0.5}}\put(8,5.5){\line(1,0)
{2}}
\put(5.5,9.5){\line(1,0){2}}\put(8,9){\line(0,-1){0.5}}\put(7.5,9.5){\line(1,-1){
0.5}}
\put(8,9){\line(1,0){2}}\put(7.5,9.5){\line(0,1){0.5}}\put(5.5,13){\line(1,0){
2.5}}
\put(8,13){\line(1,1){0.5}}\put(8,13){\line(0,-1){0.5}}\put(8,12.5){\line(-1,-1)
{0.5}} \put(8,12.5){\line(1,0){2}}\put(2.91,5.37){$\bullet$}
\put(2.9,1.89){$\bullet$}\put(2.9,8.89){$\bullet$}
\put(7.9,1.38){$\bullet$}\put(7.9,5.4){$\bullet$}\put(7.9,8.87){$\bullet$}
\put(7.9,12.37){$\bullet$}
\put(-0.16,5.88){$\bullet$}\put(-0.16,12.38){$\bullet$}\put(5.35,1.89){$\bullet$}
\put(5.35,2.38){$\bullet$}
\put(5.35,5.88){$\bullet$}\put(5.35,9.38){$\bullet$}\put(5.35,12.87){$\bullet$}
\put(0,0){$\text{\rm (d)}\ L-E_1-E_2$}\put(5.5,0){$\text{\rm (h)}\
2L-E_1-E_2-E_5,\ \alp=2\theta_1$}\put(0,4){$\text{\rm (c)}\ L-E_i,\
i=1,2,\ \alp=\theta_1$}\put(5.5,4){$\text{\rm (g)}\ 2L-E_1-E_2-E_5,\
\alp=\theta_2$}\put(0,7.5){$\text{\rm (b)}\ L-E_i-E_5,\
i=1,2$}\put(5.5,7.5){$\text{\rm (f)}\ 2L-E_1-E_2-E_3-E_5,\
\alp=\theta_1$}\put(0,11){$\text{\rm (a)}\ L-E_5,\
\alp=\theta_1$}\put(5.5,11){$\text{\rm (e)}\ 2L-E_1-E_2-E_4-E_5,\
\alp=\theta_1$}\put(3.2,5.4){$i$}\put(3,9.2){$i$}\put(6.5,6.1){$2$}
\put(-0.5,-1){$\text{\rm All edges are of weight~$1$, except for the
left end of weight~$2$ in~(g).}$}
\end{picture}
\caption{Tropical initial conditions}\label{fn2}
\end{figure}

Notice that the set ${\cal T}^c(\Del, k_1, k_2, g, \alp, \bet, \bx)$
does not depend on the choice of parameters $\varepsilon$,
$\delta_l$, $\ldots$, $\delta_{r + 1}$, of~$\bx$.

For each $T=[(\overline\Gam, {\cal V}, h, \bpp)] \in{\cal T}^c(\Del,
k_1, k_2, g, \alp, \bet, \bx)$, define the {\it complex
multiplicity} $N(T)=N(\overline\Gam, {\cal V}, h, \bpp)$ as $AB$,
where~$A$ is the product of weights of all left ends of
$\overline\Gam$ which are not of $\alp$-type, and~$B$ is the product
of squares of weights of all bounded edges of $\overline\Gam$. Put
$$
{\cal N}(\Del, k_1, k_2, g, \alp, \bet, \bx) = \sum_{T\in {\cal
T}^c(\Del, k_1, k_2, g, \alp, \bet, \bx)} N(T)\ .
$$

\begin{rem}\label{rn56}
The complex multiplicity $N(T)$ defined above coincides with the
complex weight $M(T)$ introduced in \cite[Section 2.6]{Sh09}. This
can easily be checked applying \cite[Formula (7)]{Sh09} to the
classes $T\in{\cal T}^c(\Del, k_1, k_2, g, \alp, \bet, \bx)$.
\end{rem}

Assume that~$\Del$ has a subset~$\daleth$ formed by two vectors: one
with a positive second coordinate and the other with a negative
second coordinate. The sum of the vectors of $\daleth$ is either
$0$, or $(-1,0)$, or $(1,0)$. In the first case, put $\Del_\daleth$
to be the multi-set~$\Del\backslash\daleth$, and, in the two other
cases obtain $\Del_\daleth$ from ~$\Del\backslash\daleth$ by adding
or removing vector $(-1, 0)$ in such a way that the sum of the
vectors in $\Del_\daleth$ becomes~$0$.

\begin{proposition}\label{inv1}
Let~$l \leq r$ be non-negative integer numbers. Fix an $(l,
r)$-admissible collection $(\Del, k_1, k_2, g, \alp, \beta)$ and
choose a multi-set~$\Theta$ dominating~$\Del$. Let~$\bx$ be a
$\Theta$-generic $\CH_\Theta$-configuration of type $(l, r)$. Then,
the number ${\cal N}(\Del, k_1, k_2, g, \alp, \bet, \bx)$ does not
depend on the choices of~$\Theta$ and~$\bx$.
\end{proposition}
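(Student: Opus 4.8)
The plan is to prove invariance by a deformation (wall-crossing) argument, in the spirit of Mikhalkin's correspondence and of our earlier treatment in \cite{IKS3}, adapted to the $\CH_\Theta$-setting. The first step is to reduce independence of~$\Theta$ to independence of~$\bx$. If $\Theta\subseteq\Theta'$ both dominate~$\Del$, then every $\Theta'$-generic $\CH_{\Theta'}$-configuration is automatically a $\Theta$-generic $\CH_\Theta$-configuration: enlarging the dominating multi-set only enlarges the family of admissible collections and end-marked curves over which the $\CH$- and genericity conditions are quantified, hence strengthens both. Since the set ${\cal T}^c(\Del,k_1,k_2,g,\alp,\bet,\bx)$ and the multiplicities $N(T)=AB$ depend, by the Remark following their definition, only on $\Del$ and~$\bx$ (and not on the chosen parameters $\eps,\delta_l,\ldots,\delta_{r+1}$, hence not on~$\Theta$), the number ${\cal N}(\Del,k_1,k_2,g,\alp,\bet,\bx)$ is literally the same whether computed relative to~$\Theta$ or to~$\Theta'$. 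For arbitrary $\Theta,\Theta'$ I would compare both with $\Theta\cup\Theta'$. Thus it suffices to fix~$\Theta$ and prove that ${\cal N}$ does not depend on the $\Theta$-generic $\CH_\Theta$-configuration~$\bx$.

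For fixed~$\Theta$ the set of $\CH_\Theta$-configurations is open and nonempty by Proposition~\ref{CH-existence}, the $\Theta$-generic ones are dense in it by Lemma~\ref{genericity}, and their complement is a finite union of subsets of codimension at least one. First I would establish local constancy of~${\cal N}$ on the $\Theta$-generic locus: for such~$\bx$ the curves in ${\cal T}^c(\Del,k_1,k_2,g,\alp,\bet,\bx)$ are isolated by Proposition~\ref{finiteness}, each deforms uniquely as~$\bx$ varies (the evaluation map is affine on each combinatorial stratum), and the conditions (i)--(ii) cutting out ${\cal T}^c$ are open; hence no curve is created or destroyed and $\sum_T N(T)$ stays constant. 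It then remains to join any two $\Theta$-generic $\CH_\Theta$-configurations by a path lying among $\CH_\Theta$-configurations and meeting the non-generic locus transversally in finitely many points, and to check that $\sum_T N(T)$ is preserved across each such wall. Here I would exploit the rigid geometry of $\CH_\Theta$-configurations: the horizontal order of the points and the floor decomposition obtained by cutting at the segments $I_l,\ldots,I_r$ stay constant along the path, so every wall analysis localizes to a single floor.

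The main obstacle is precisely this wall-crossing balancing, and the walls fall into two kinds. Walls of the first kind, where the non-rigid end of an auxiliary end-marked component produced by a cut sweeps through a point of~$\bx$, are exactly the loci excluded by $\Theta$-genericity; since the marked points that pin a curve of ${\cal T}^c$ to~$\bx$ never lie on such a free end, crossing these walls alters neither ${\cal T}^c$ nor~${\cal N}$. Walls of the second kind correspond to a genuine degeneration of a curve of ${\cal T}^c$ inside one floor, namely a bounded edge contracting to zero length or two trivalent vertices colliding into a four-valent one. For these I would run a local model computation, analogous to the vertex-resolution analysis of \cite{Mi} and \cite{IKS3}: on the two sides of the wall the degenerate curve resolves into finitely many pseudo-simple curves, and one verifies that the total complex multiplicity $N(T)=AB$ — the product~$A$ of the weights of the non-$\alp$-type left ends and the product~$B$ of the squares of the weights of the bounded edges — agrees on both sides, the balancing condition forcing the local weighted counts to match.

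The delicate point in this last step is to confirm that the defining restriction of ${\cal T}^c$ (no two isomorphic right components at the intermediate cuts $I_l,\ldots,I_{r-1}$, and the prescribed combinatorial types of Figure~\ref{fn2} at~$I_r$) is respected throughout the resolution. This is where the floor structure is decisive: because each degeneration is confined to one floor and leaves the combinatorics at the remaining cuts untouched, the right components at the other floors — and in particular those at~$I_r$ — are carried along unchanged, so membership in ${\cal T}^c$ is preserved across the wall. Once balancing is verified at every wall, local constancy propagates ${\cal N}$ to a single value over the whole $\Theta$-generic $\CH_\Theta$-locus, which together with the reduction of the first paragraph completes the proof. \proofend
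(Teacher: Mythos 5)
Your plan takes a genuinely different route from the paper --- a wall-crossing/deformation argument instead of what the paper actually does --- and it has two genuine gaps. The paper proves Proposition~\ref{inv1} \emph{simultaneously} with Theorem~\ref{CHtrop-for}, by induction on $r-l$: for $r=l$ the claim is immediate from property~(4) of weak $\CH_\Theta$-configurations and property~(ii) of ${\cal T}^c$, and for $l<r$ the analysis of the point $P_{l+1}$ (on a left end, giving the first sum via a $1$-projection; or not, giving the cut at $I_{l+1}$ with a unique one-sheeted left component and right components reconstructible uniquely by \cite[Lemma 29]{IKS3}) expresses ${\cal N}(\Del,k_1,k_2,g,\alp,\bet,\bx)$ as a universal expression in numbers with strictly smaller $r^{(i)}-l^{(i)}$, which are configuration-independent by induction. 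Invariance is thus a byproduct of the recursion; no path between configurations is ever needed.

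The first gap in your version is the unproven connectivity claim: Proposition~\ref{CH-existence} only yields a nonempty open subset of $\CH_\Theta$-configurations, not that the $\Theta$-generic $\CH_\Theta$-locus is path-connected, and your assertion that along a path ``the horizontal order of the points and the floor decomposition stay constant'' begs the question --- the defining conditions of a $\CH_\Theta$-configuration quantify over \emph{all} dominated admissible collections and all matching marked $\CL$-curves, so whether one can deform while staying inside this locus (and what happens when one cannot) is precisely the hard point, not a given. The second gap is the deferred ``local model computation'': the standard vertex-resolution invariance of \cite{Mi} and \cite{GM2} does not apply off the shelf here, because the curves in ${\cal T}^c$ carry $k_1+k_2$ marked points ($\bpp^1\cup\bpp^2\cup\bpp^\nu$) all mapped to the single point $p_{r+1}$ --- the tropical encoding of the two blown-up points --- and, as the introduction of the paper stresses, with such fixed multiple-point conditions the naive tropical moduli spaces have the \emph{wrong dimension}; moreover the multiplicity $N(T)=AB$ is tailored to curves over stretched ($\CH$-type) configurations and there is no reason it balances across an arbitrary four-valent degeneration, especially since pseudo-simple curves already admit higher-valent vertices and the constraints (i)--(ii) of ${\cal T}^c$ are defined only relative to the cut structure at the segments $I_l,\ldots,I_r$, which ceases to exist once the configuration leaves the $\CH_\Theta$-locus. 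So the heart of your argument is exactly the obstruction the authors designed the $\CH$-configurations and the recursive proof to circumvent.
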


Such an independence allows us to write simply ${\cal N}(\Del, k_1,
k_2, g, \alp, \beta)$ for $\qquad\qquad$ ${\cal N}(\Del, k_1, k_2,
g, \alp, \bet, \bx)$ as soon as $(\Del, k_1, k_2, g, \alp, \beta)$
and~$\bx$ are as in Proposition~\ref{inv1}. Notice that according to
our definitions, ${\cal N}(\Del, k_1, k_2, g, \alp, \beta) = 0$
whenever $g < 0$.

The proof of Proposition~\ref{inv1} is given below, simultaneously
with the proof of the following theorem.

\begin{thm}\label{CHtrop-for}
Let~$l < r$ be non-negative integer numbers. If $(\Del, k_1, k_2, g,
\alp, \beta)$ is an $(l, r)$-admissible collection, then
$$
{\cal N}(\Del, k_1, k_2, g, \alp, \beta) =\sum_{j\ge 1,\ \beta_j>0}
j {\cal N}(\Del, k_1, k_2, g, \alp + \theta_j, \bet - \theta_j)
$$
\begin{equation}
+\sum\left(\begin{matrix}\alp\\
\alp^{(1)},...,\alp^{(m)}\end{matrix}\right) \frac{(r - l -
1)!}{(r^{(1)} - l^{(1)})!...(r^{(m)} - l^{(m)})!}\label{en21}
\end{equation}
$$\times\prod_{i=1}^m\left(
\left(\begin{matrix}\bet^{(i)}\\
\widetilde\bet^{(i)}\end{matrix}\right) I^{\widetilde\bet^{(i)}}
{\cal N}(\Del^{(i)}, k^{(i)}_1, k^{(i)}_2, g^{(i)}, \alp^{(i)},
\bet^{(i)})\right)\ ,
$$
where
$$
l^{(i)} = \|\alp^{(i)}\|, \;\;\; i = 1, \ldots, m,
$$
$$
r^{(i)} = |\Del^{(i)}| - I\alp^{(i)} - I\beta^{(i)} + \|\alp^{(i)}\|
+ \|\beta^{(i)}\| + g^{(i)} - 1 - k^{(i)}_1 - k^{(i)}_2, \;\;\; i =
1, \ldots, m,
$$
and the second sum in~(\ref{en21}) is taken
\begin{itemize}
\item over all subsets $\daleth$ of~$\Del$
which are formed by two vectors, one with a positive second
coordinate and the other with a negative second coordinate, and such
that the multi-set $\Del_\daleth$ contains at least one vector $(-1,
0)$,

\item over all
splittings
\begin{equation}
(\Del_\daleth, k_1, k_2, g', \alp', \bet') =\sum_{i=1}^m(\Del^{(i)},
k^{(i)}_1, k^{(i)}_2, g^{(i)}, \alp^{(i)}, \bet^{(i)})\label{en25}
\end{equation}
in ${\cal S}$ of all possible collections $(\Del_\daleth, k_1, k_2,
g', \alp', \bet') \in {\cal S}$ with
$$\displaylines{
\alp'\le\alp,\quad \bet\le\bet', \quad g - g' = \|\beta' - \beta\| -
1\ ,}$$ such that each summand $(\Del^{(i)}, k^{(i)}_1, k^{(i)}_2,
g^{(i)}, \alp^{(i)}, \bet^{(i)})$ with $r^{(i)} - l^{(i)}= 0$
appears in (\ref{en25}) at most once,
\item over all splittings
\begin{equation}
\bet'=\bet+\sum_{i=1}^m\widetilde\bet^{(i)},\quad
\|\widetilde\bet^{(i)}\|>0,\ i=1,...,m\ ,\label{en-beta}
\end{equation}
satisfying the restriction $\bet^{(i)}\ge\widetilde\bet^{(i)},\
i=1,...,m\ $,
\end{itemize}
and factorized by simultaneous permutations in the both splittings
(\ref{en25}) and (\ref{en-beta}).
\end{thm}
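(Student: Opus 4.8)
The plan is to prove Proposition~\ref{inv1} and Theorem~\ref{CHtrop-for} simultaneously by induction on $r-l$, following the tropical Caporaso--Harris scheme of \cite{GM2} and \cite{IKS3}, with the additional bookkeeping forced by the non-toric classes and encoded in the subsets $\daleth$. Before starting the induction I would record the finiteness and existence inputs that make all the counts well defined: Proposition~\ref{CH-existence} and Lemma~\ref{genericity} give a non-empty open set of $\Theta$-generic $\CH_\Theta$-configurations, while Propositions~\ref{finiteness} and~\ref{sym} guarantee that ${\cal T}^c(\Del,k_1,k_2,g,\alp,\bet,\bx)$ and all the sets arising from it are finite. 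The base case $r=l$ is handled directly: there are no interior strip points, the curves in ${\cal T}^c$ are rigid of the types in Figure~\ref{fn2}, and invariance of ${\cal N}$ under the choice of $\bx$ and $\Theta$ is immediate (two dominating multi-sets are compared through a common refinement, whose stronger $\CH_\Theta$-conditions do not alter the set ${\cal T}^c$ for the fixed datum $(\Del,k_1,k_2,g,\alp,\bet)$).

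The heart of the inductive step is a local analysis near the leftmost interior point $p_{l+1}$ of $\bx^\sharp$. Using the invariance to be established, I would stretch $\bx$ so that the strip $(\delta_l,\delta_{l+1})\times(-\eps,\eps)$ is wide and contains $p_{l+1}$ as its only point of $\bx^\sharp\cup\{\bx_{r+1}\}$, and then apply Lemma~\ref{floors} to the successive cuts at $I_l$ and $I_{l+1}$. This forces the irreducible component $P$ of the cut carrying $p_{l+1}$ to be either horizontal or one-sheeted, which is exactly the dichotomy matching the two sums of~(\ref{en21}). In the horizontal case, $p_{l+1}$ lies on a left end of some weight $j$ with $\bet_j>0$; passing to the $(l+1)$-projection of $\bx$ (a weak $\CH_\Theta$-configuration by the very definition of a $\CH_\Theta$-configuration) converts this end into one of $\alp$-type, identifying the curve canonically with a curve counted by ${\cal N}(\Del,k_1,k_2,g,\alp+\theta_j,\bet-\theta_j)$ of type $(l+1,r)$. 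Since the weight $j$ of this end leaves the factor $A$ of the complex multiplicity $N(T)$ precisely when the end becomes of $\alp$-type, one has $N(T)=j\,N(T')$, and summing over the admissible weights $j$ produces the first sum.

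In the one-sheeted case, cutting at $I_l$ detaches $P$ together with the entire right part of $\overline\Gam$; by weak $\CH_\Theta$-condition~(2) every cut edge is horizontal, so the two non-horizontal ends of the one-sheeted floor are genuine ends of $\overline\Gam$ and form a subset $\daleth\subset\Del$ with one vector of positive and one of negative second coordinate. Capping these ends replaces $\Del$ by $\Del_\daleth$, and the remaining right part decomposes, via the cuts at $I_{l+1},\ldots,I_r$, into irreducible pieces of degrees $\Del^{(i)}$; here condition~(i) in the definition of ${\cal T}^c$ furnishes exactly the ``appears at most once'' restriction on summands with $r^{(i)}-l^{(i)}=0$, and condition~(ii) pins the rightmost blocks to the initial types of Figure~\ref{fn2}. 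Reversing this decomposition yields the combinatorial coefficients: $\binom{\alp}{\alp^{(1)}\ldots\alp^{(m)}}$ counts the distributions of the fixed $L_{-\infty}$-points among the pieces, $(r-l-1)!/\prod_i(r^{(i)}-l^{(i)})!$ counts the admissible orderings of the remaining strip points, and the left ends newly created along the cut, recorded by $\widetilde\bet^{(i)}$ with $\bet'=\bet+\sum_i\widetilde\bet^{(i)}$, contribute the selection factor $\binom{\bet^{(i)}}{\widetilde\bet^{(i)}}$ and the weight factor $I^{\widetilde\bet^{(i)}}$. The invariance claim of Proposition~\ref{inv1} then follows from the same correspondence: as $\bx$ varies within the $\Theta$-generic $\CH_\Theta$-configurations, ${\cal T}^c$ changes only across walls where a combinatorial type degenerates, and the cutting/gluing bijection shows that the weighted count is preserved.

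The main obstacle I expect is establishing that the cutting procedure gives an \emph{exact}, multiplicity-preserving bijection between ${\cal T}^c(\Del,k_1,k_2,g,\alp,\bet,\bx)$ and the terms of~(\ref{en21}). Three points require care: first, that the $\daleth$-subsets correspond bijectively to the ways the peeled floor can meet $L_{-\infty}$, including the $(-1,0)$-corrections that play the role of the $E_3,E_4$ terms in~(\ref{en8}); second, that conditions~(i) and~(ii) eliminate precisely the overcounting and fix the rightmost floors, so that the gluing is well defined and surjective; and third, that the product $A\cdot B$ defining $N(T)$ splits correctly under cutting, a cut bounded edge of weight $w$ contributing $w^2$ that must be matched against the two free left ends it produces and against the single factors $I^{\widetilde\bet^{(i)}}$ appearing in the formula. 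Verifying that all binomial, multinomial and $I^{\widetilde\bet}$-weights emerge with exactly the right powers is the delicate computation, but it is modeled on the corresponding bookkeeping in \cite{GM2} and \cite{IKS3} and on the weight formula of \cite{Sh09} recalled in Remark~\ref{rn56}.
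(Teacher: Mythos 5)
Your proposal follows essentially the same route as the paper's proof: simultaneous induction on $r-l$ with Proposition~\ref{inv1}, the base case $r=l$ from the Figure~\ref{fn2} types, the dichotomy at $p_{l+1}$ (on a left end of weight~$j$, giving the first sum via the $1$-projection with $N(T)=jN(T')$, versus the one-sheeted left component of the cut at $I_{l+1}$ whose two non-horizontal ends form~$\daleth$), the same combinatorial coefficients, conditions~(i)/(ii) of ${\cal T}^c$ supplying the at-most-once restriction and the rightmost initial types, and the multiplicity bookkeeping you flag as delicate resolved exactly as you predict (a cut edge of weight~$w$ contributes~$w$ to the right piece's factor~$A$ and another~$w$ through $I^{\widetilde\bet^{(i)}}$, with uniqueness of the reassembled curve secured by \cite[Lemma~29]{IKS3} and $\Theta$-genericity fixing the horizontal shift by $p_{l+1}$). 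The one adjustment: invariance needs no wall-crossing analysis — once the cutting/gluing bijection is in place, the right-hand side of~(\ref{en21}) involves only quantities that are configuration-independent by the induction hypothesis, so ${\cal N}(\Del, k_1, k_2, g, \alp, \bet, \bx)$ is automatically independent of~$\bx$ and~$\Theta$.
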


{\bf Proof of Proposition~\ref{inv1} and Theorem~\ref{CHtrop-for}}
(cf.~\cite{GM2} and~\cite{IKS3}). In the case $r = l$, the statement
of Proposition~\ref{inv1} immediately follows from the property~(4)
in the definition of weak $\CH_\Theta$-configurations and the
property~(ii) of the set ${\cal T}^c(\Del, k_1, k_2, g, \alp, \bet,
\bx)$.

Consider now non-negative integers $l < r$, fix an $(l,
r)$-admissible collection $\qquad$ $(\Del, k_1, k_2, g, \alp,
\beta)$, and choose a multi-set~$\Theta$ dominating~$\Del$. Assume
that for any non-negative integer numbers $l^\checkmark$ and
$r^\checkmark$ satisfying the inequalities $l^\checkmark \leq
r^\checkmark$ and $r^\checkmark - l^\checkmark < r - l$, the fact
that the numbers ${\cal N}(\Del^\checkmark, k^\checkmark_1,
k^\checkmark_2, g^\checkmark, \alp^\checkmark, \bet^\checkmark,
\bx^\checkmark)$, where $(\Del^\checkmark, k^\checkmark_1,
k^\checkmark_2, g^\checkmark, \alp^\checkmark, \bet^\checkmark)$ is
an $(l^\checkmark, r^\checkmark)$-admissible collection, and
$\Theta$ dominates $\Del^\checkmark$, do not depend on the choice of
a $\CH_\Theta$-configuration~$\bx^\checkmark$ of type
$(l^\checkmark, r^\checkmark)$ is already established.

Pick a $\Theta$-generic $\CH_\Theta$-configuration~$\bx$ of type
$(l, r)$ and assume that there exists an irreducible marked
$\CL$-curve $(\overline\Gam, {\cal V}, h, \bpp)$ whose isomorphism
class belongs to $\qquad$ ${\cal T}^c(\Del, k_1, k_2, g, \alp, \bet,
\bx)$. Suppose, first, that the point~$P_{l + 1} \in \bpp^\sharp$
belongs to a left end~$E$ of~$\overline\Gam$. Denote by~$j$ the
weight of~$E$, and by~$V$ the unique univalent vertex incident
to~$E$. Consider the $\CH_\Theta$-configuration $\widehat\bx$ which
is a $1$-projection of $\bx$ such that the horizontal projection of
$p_{l + 1}$ is inserted in the $j$-th group of points in
$\bx^\flat$. Consider also an $(l + 1, r)$-admissible collection
$(\Del, k_1, k_2, g, \alp + \theta_j, \bet - \theta_j)$, and a
marked $\CL$-curve $(\overline\Gam, {\cal V}, h, \bpp')$ such that
$(\bpp')^\flat = \bpp^\flat \cup \{V\}$, $(\bpp')^\sharp =
\bpp^\sharp \setminus \{P_{l + 1}\}$, and $(\bpp')^\aleph =
\bpp^\aleph$ for any $\aleph \in \{1, 2, \nu\}$.

The isomorphism class of the curve $(\overline\Gam, {\cal V}, h,
\bpp')$ belongs to $\qquad\qquad\qquad\qquad\qquad$ \mbox{${\cal
T}^c(\Del, k_1, k_2, g, \alpha + \theta_j, \beta - \theta_j,
\widehat\bx)$}, and
$$
N(\overline\Gam, {\cal V}, h, \bpp') = \frac{1}{j} N(\overline\Gam,
{\cal V}, h, \bpp).
$$
The described procedure establishes a bijection between \mbox{${\cal
T}^c(\Del, k_1, k_2, g, \alpha + \theta_j, \beta - \theta_j,
\widehat\bx)$} and those isomorphisms classes in \mbox{${\cal
T}^c(\Del, k_1, k_2, g, \alpha, \beta, \bx)$} that are realized by
curves $(\overline\Gam, {\cal V}, h, \bpp)$ such that $P_{l + 1}$
belongs to a left end of $\overline\Gam$ of weight~$j$. Thus, by the
induction assumption, the contribution to $N(\Del, k_1, k_2, g,
\alp, \bet, \bx)$ of the latter isomorphism classes is equal to
$$\sum_{j\ge 1,\ \beta_j>0}
jN(\Del, k_1, k_2, g, \alpha + \theta_j, \beta - \theta_j)\ .$$

Suppose now that $P_{l + 1}$ does not belong to any left end of
$\overline\Gam$. Let $\varepsilon$, $\delta_l$, $\ldots$, $\delta_{r
+ 1}$ be parameters of the $\CH_\Theta$-configuration $\bx$.
Consider a marked cut $(\overline\Gam_{\cal X}, {\cal V}_{\cal X},
h_{\cal X}, \bpp_{\cal X})$ of $(\overline\Gam, {\cal V}, h, \bpp)$
at ${\cal X} = h^{-1}(I_{l + 1})$, where $I_{l + 1}$ is the vertical
segment with endpoints $(\delta_{l + 1}, -\varepsilon)$ and
$(\delta_{l + 1}, \varepsilon)$.

Let $(\overline\Gam_{{\cal X}, j_i}, {\cal V}_{{\cal X}, j_i},
h_{{\cal X}, j_i}, \bpp_{{\cal X}, j_i})$, $i = 1$, $\ldots$, $m$,
be the right irreducible components of $(\overline\Gam_{\cal X},
{\cal V}_{\cal X}, h_{\cal X}, \bpp_{\cal X})$. For each $i = 1$,
$\ldots$ $m$, introduce the following numbers:
\begin{itemize}
\item $l^{(i)}$ is the number
of points in $(\bpp_{{\cal X}, j_i})^\flat$, and $r^{(i)}$ is the
number of points in $(\bpp_{{\cal X}, j_i})^\flat \cup (\bpp_{{\cal
X}, j_i})^\sharp$,
\item $\alp^{(i)}_t$ (respectively, $\beta^{(i)}_t$), $t$ being a positive integer,
is the number of those left ends of weight~$t$ in
$\overline\Gam_{{\cal X}, j_i}$ which are of $\alp$-type
(respectively, not of $\alp$-type),
\item $g^{(i)}$ is the genus of
$(\overline\Gam_{{\cal X}, j_i}, {\cal V}_{{\cal X}, j_i}, h_{{\cal
X}, j_i}, \bpp_{{\cal X}, j_i})$,
\item $k^{(i)}_1$ (respectively, $k^{(i)}_2$)
is the number of points in $(\bpp_{{\cal X}, j_i})^1 \cup
(\bpp_{{\cal X}, j_i})^\nu$ (respectively, $(\bpp_{{\cal X}, j_i})^2
\cup (\bpp_{{\cal X}, j_i})^\nu$).\end{itemize} Let $\bx_{j_i}$ be a
configuration in ${\cal P}(l^{(i)}, r^{(i)})$ such that
$$(\bx_{j_i})^\flat =
\widehat{h}_{{\cal X}, j_i}((\bpp_{{\cal X}, j_i})^\flat), \;\;\;
(\bx_{j_i})^\sharp = \widehat{h}_{{\cal X}, j_i}((\bpp_{{\cal X},
j_i})^\sharp),$$ and the $(r^{(i)} + 1)$-th point of $\bx_{j_i}$
coincides with the $(r + 1)$-th point $\bx_{r + 1}$ of~$\bx$. For
each $i=1,...,m$, the collection $(\Del^{(i)}, k^{(i)}_1, k^{(i)}_2,
g^{(i)}, \alp^{(i)}, \beta^{(i)})$ is $(l^{(i)},
r^{(i)})$-admissible, and
$$
[(\overline\Gam_{{\cal X}, j_i}, {\cal V}_{{\cal X}, j_i}, h_{{\cal
X}, j_i}, \bpp_{{\cal X}, j_i})] \in {\cal T}^c(\Del^{(i)},
k^{(i)}_1, k^{(i)}_2, g^{(i)}, \alp^{(i)}, \beta^{(i)}, \bx_{j_i})\
.
$$

Since $P_{l + 1}$ does not belong to any left end of
$\overline\Gam$, among the left irreducible components of
$(\overline\Gam_{\cal X}, {\cal V}_{\cal X}, h_{\cal X}, \bpp_{\cal
X})$ there is one (and exactly one) which is one-sheeted. Denote
this component by $(\overline\Gam_{\CX, \LIC}, \CV_{\CX, \LIC},
h_{\CX, \LIC}, \bpp_{\CX, \LIC})$. The point $P_{l + 1}$ belongs to
$\overline\Gam_{\CX, \LIC}$ and lies on a non-horizontal edge. Any
left end of $\overline\Gam_{\CX, \LIC}$ is of $\alp$-type. The
degree of $(\overline\Gam_{\CX, \LIC}, \CV_{\CX, \LIC}, h_{\CX,
\LIC}, \bpp_{\CX, \LIC})$ contains two vectors having non-zero
second coordinate. Let~$\daleth$ be the set formed by these two
vectors.

The sum
\begin{equation}
\sum_{i=1}^m(\Del^{(i)}, k^{(i)}_1, k^{(i)}_2, g^{(i)}, \alp^{(i)},
\bet^{(i)}),\label{en21-new}
\end{equation}
taken in $\cal S$, gives a collection $(\Del_\daleth, k_1, k_2, g',
\alp', \bet')$ with certain $\alp',\bet',g'$. For each right
irreducible component $(\overline\Gam_{\CX, j_i}, \CV_{\CX, j_i},
h_{\CX, j_i}, \bpp_{\CX, j_i})$, any left end of $\alp$-type of this
component matches the edge of a horizontal left irreducible
component, and the edge-predecessor of the two edges in question is
of $\alp$-type. Thus, $\alp' \leq \alp$. Since any left end of
$\overline\Gam_{\CX, \LIC}$ is of $\alp$-type, we obtain $\beta'
\geq \beta$. For each $i = 1$, $\ldots$, $m$, let the sequence
$\widetilde\beta^{(i)}$ encode the weights of the left ends of
$\overline\Gam_{\CX, j_i}$ which match edges of $\overline\Gam_{\CX,
\LIC}$. Notice that $\|\widetilde\beta^{(i)}\| > 0$ for any $i = 1$,
$\ldots$, $m$. Furthermore, $g - g' = \|\beta' - \beta\| - 1$.
Finally, the fact that each summand $(\Del^{(i)}, k^{(i)}_1,
k^{(i)}_2, g^{(i)}, \alp^{(i)}, \bet^{(i)})$ with $r^{(i)} -
l^{(i)}= 0$ cannot not appear twice in the sum~(\ref{en21-new})
follows from the property~(i) of the set ${\cal T}^c(\Del, k_1, k_2,
g, \alp, \beta, \bx)$. Thus, each marked $\CL$-curve whose
isomorphism class belongs to ${\cal T}^c(\Del, k_1, k_2, g, \alp,
\beta, \bx)$ gives rise to a subset $\daleth \subset \Del$ and a
pair of splittings~(\ref{en25}) and~(\ref{en-beta}) satisfying all
the conditions mentioned in the theorem.

Assume now that we are given a subset $\daleth \subset \Del$, a
collection $(\Del_\daleth, k_1, k_2, g', \alp', \bet')$, and a pair
of splittings~(\ref{en25}) and~(\ref{en-beta}) such that all these
data obey the restrictions listed in the theorem. Put
$$
l^{(i)} = \|\alp^{(i)}\|, \;\;\; i = 1, \ldots, m,
$$
$$
r^{(i)} = |\Del^{(i)}| - I\alp^{(i)} - I\beta^{(i)} + \|\alp^{(i)}\|
+ \|\beta^{(i)}\| + g^{(i)} - 1 - k^{(i)}_1 - k^{(i)}_2, \;\;\; i =
1, \ldots, m.
$$

Choose~$m$ pairwise disjoint subsequences $(\bx^{(1)})^\flat$,
$\ldots$, $(\bx^{(m)})^\flat$ of $\bx^\flat$ such that, for each $i
= 1$, $\ldots$, $m$, and each positive integer number $t$, the
number of points in $(\bx^{(i)})^\flat$ which belong to the $t$-th
group of $\bx^\flat$ is equal to $\alp^{(i)}_t$ (the number of
possible choices is equal to
$\left(\begin{matrix}\alp\\
\alp^{(1)},...,\alp^{(m)}\end{matrix}\right)$). Choose~$m$ pairwise
disjoint subsequences $(\bx^{(1)})^\sharp$, $\ldots$,
$(\bx^{(m)})^\sharp$ of $\bx^\sharp \setminus \{p_{l + 1}\}$ such
that, for each $i = 1$, $\ldots$, $m$, the number of points in
$(\bpp^{(i)})^\sharp$ is equal to $r^{(i)} - l^{(i)}$ (the number of
possible choices is equal to $\frac{(r - l - 1)!}{(r^{(1)} -
l^{(1)})!...(r^{(m)} - l^{(m)})!}$). For each $i = 1$, $\ldots$,
$m$, denote by $\bx^{(i)}$ the sequence of points formed by the
sequence $(\bx^{(i)})^\flat$, the sequence $(\bx^{(i)})^\sharp$, and
the point $p_{r + 1}$. Each of these~$m$ sequences is a
$\Theta$-generic $\CH_\Theta$-configuration. If all the sets ${\cal
T}^c(\Del^{(i)}, \alp^{i}, \beta^{(i)}, g^{(i)}, k^{(i)}_1,
k^{(i)}_2, \bx^{(i)})$ are non-empty, then, for each $i = 1$,
$\ldots$, $m$, pick a marked $\CL$-curve $(\overline\Gam^{(i)},
\CV^{(i)}, h^{(i)}, \bpp^{(i)})$ whose isomorphism class belongs to
${\cal T}^c(\Del^{(i)}, \alp^{i}, \beta^{(i)}, g^{(i)}, k^{(i)}_1,
k^{(i)}_2, \bx^{(i)})$. For each $i = 1$, $\ldots$, $m$, and each
positive integer number~$t$, choose $\widetilde\beta^{(i)}_t$ left
ends of $\overline\Gam^{(i)}$ which are of weight~$t$ and are not of
$\alp$-type (the number of possible
choices is equal to $\left(\begin{matrix}\bet^{(i)}\\
\widetilde\bet^{(i)}\end{matrix}\right)$).

There exists a unique isomorphism class
$$
[(\overline\Gam, \CV, h, \bpp)] \in {\cal T}^c(\Del, k_1, k_2, g,
\alp, \beta, \bx)
$$
such that the point $P_{l + 1}$ does not belong to a left end of
$\overline\Gam$ and the curves $(\overline\Gam^{(i)}, \CV^{(i)},
h^{(i)}, \bpp^{(i)})$, $i = 1$, $\ldots$, $m$, are the right
irreducible components of a marked cut $(\overline\Gam_\CX, \CV_\CX,
h_\CX, \bpp_\CX)$ of $(\overline\Gam, \CV, h, \bpp)$ at $\CX =
h^{-1}(I_{l + 1})$. Indeed, the isomorphism class of a unique left
one-sheeted irreducible component $(\overline\Gam_{\CX, \LIC},
\CV_{\CX, \LIC}, h_{\CX, \LIC}, \bpp_{\CX, \LIC})$ of
$(\overline\Gam_\CX, \CV_\CX, h_\CX, \bpp_\CX)$ is given, up to
composition of~$h$ with a horizontal shift, by the degree of this
component (the degree is the two vectors of $\daleth$ completed by
$\alp_i - \alp'_i$ vectors $(-i, 0)$ and $\beta'_i - \beta_i$
vectors $(i, 0)$ for any positive integer~$i$), the points in
$\bx^\flat \setminus \cup_{i = 1}^m (\bx^{(i)})^\flat$ (together
with the distribution of these points by groups), and the heights
and the weights of the chosen left ends of $(\overline\Gam^{(i)},
\CV^{(i)}, h^{(i)}, \bpp^{(i)})$, $i = 1$, $\ldots$, $m$ (see
\cite[Lemma 29]{IKS3}). The horizontal shift is uniquely determined
by the position of the point $p_{l + 1}$, since this point should
belong to the image of a non-horizontal edge of
$(\overline\Gam_{\CX, \LIC}, \CV_{\CX, \LIC}, h_{\CX, \LIC},
\bpp_{\CX, \LIC})$ due to the assumption that~$\bx$ is a
$\Theta$-generic configuration. Furthermore,
$$
N(\overline\Gam, \CV, h, \bpp) = \prod_{i = 1}^m
\left(I^{\widetilde\beta^{(i)}} N(\overline\Gam^{(i)}, \CV^{(i)},
h^{(i)}, \bpp^{(i)})\right).
$$
Thus, the contribution to $N(\Del, k_1, k_2, g, \alp, \bet, \bx)$ of
the sets $\qquad\qquad\qquad\qquad\qquad\qquad$ ${\cal
T}^c(\Del^{(i)}, k^{(i)}_1, k^{(i)}_2, g^{(i)}, \alp^{(i)},
\beta^{(i)}, \bx^{(i)})$, $i = 1$, $\ldots$, $m$, is equal to
$$
\left(\begin{matrix}\alp\\
\alp^{(1)},...,\alp^{(m)}\end{matrix}\right) \frac{(r - l -
1)!}{(r^{(1)} - l^{(1)})!...(r^{(m)} - l^{(m)})!}
$$
$$\times\prod_{i=1}^m\left(
\left(\begin{matrix}\bet^{(i)}\\
\widetilde\bet^{(i)}\end{matrix}\right) I^{\widetilde\bet^{(i)}}
{\cal N}(\Del^{(i)}, k^{(i)}_1, k^{(i)}_2, g^{(i)}, \alp^{(i)},
\bet^{(i)})\right)\ .
$$
This implies that the number ${\cal N}(\Del, k_1, k_2, g, \alp,
\bet, \bx)$ depends neither on the choice of $\Theta$, nor on the
choice of a $\Theta$-generic $\CH_\Theta$-configuration~$\bx$ of
type $(l, r)$, and proves the formula~(\ref{en21}). \proofend

\begin{proposition}\label{initial-conditions}
All the numbers ${\cal N}(\Del, k_1, k_2, g, \alp, \beta)$, where
$(\Del, k_1, k_2, g, \alp, \beta) \in {\cal S}$, are recursively
determined by formula {\rm (}\ref{en21}{\rm )} of
Theorem~\ref{CHtrop-for} and the following initial values: for any
non-negative integer~$r$ and any $(r, r)$-admissible collection
$(\Del, k_1, k_2, g, \alp, \beta)$, the number ${\cal N}(\Del, k_1,
k_2, g, \alp, \beta)$ is equal to~$0$ or~$1$, and this number is
equal to~$1$ if and only if the collection $(\Del, k_1, k_2, g,
\alp, \beta)$ is of one of the combinatorial types presented on
Figure~\ref{fn2}, {\it i.e.} $(\Del, k_1, k_2, g, \alp, \beta)$
coincides with one of the following collections:
\begin{itemize}
\item $(\{(-1, 0), (1, 0)\}, 0, 0, 0, (1), (0))$,
\item $(\{(-1, 0), (1, 0)\}, 1, 0, 0, (0), (1))$,
\item $(\{(-1, 0), (1, 0)\}, 0, 1, 0, (0), (1))$,
\item $(\{(-1, 0), (0, -1), (1, 1)\}, 1, 0, 0, (1), (0))$,
\item $(\{(-1, 0), (0, -1), (1, 1)\}, 0, 1, 0, (1), (0))$,
\item $(\{(-1, 0), (0, -1), (1, 1)\}, 1, 1, 0, (0), (1))$,
\item $(\{(-1, 0), (-1, -1), (1, 0), (1, 1)\}, 1, 1, 0, (1), (0))$,
\item $(\{(-1, 0), (0, -1), (1, 0), (0, 1)\}, 1, 1, 0, (1), (0))$,
\item $(\{(-1, 0), (-1, 0), (0, -1), (1, 0), (1, 1)\}, 1, 1, 0, (0, 1), (0))$,
\item $(\{(-1, 0), (-1, 0), (0, -1), (1, 0), (1, 1)\}, 1, 1, 0, (2), (0))$,
\end{itemize}
\end{proposition}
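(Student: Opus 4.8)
The plan is to argue by induction on the difference $r - l$, using formula~(\ref{en21}) of Theorem~\ref{CHtrop-for} for the inductive step and a direct inspection of tropical curves for the base case $r = l$. First I would check that the right-hand side of~(\ref{en21}) involves only numbers ${\cal N}$ attached to collections with strictly smaller value of $r - l$. In the first sum the pair $(\alp, \bet)$ is replaced by $(\alp + \theta_j, \bet - \theta_j)$; this leaves $|\Del|$, $I\alp + I\bet$, $\|\alp\| + \|\bet\|$, $g$, $k_1$ and $k_2$ unchanged, hence keeps $r$ fixed while raising $l = \|\alp\|$ by one, so that $r - l$ drops by one. In the second sum the multinomial factor $\frac{(r - l - 1)!}{(r^{(1)} - l^{(1)})! \cdots (r^{(m)} - l^{(m)})!}$ reflects the identity $\sum_{i = 1}^m (r^{(i)} - l^{(i)}) = r - l - 1$ (the point $p_{l+1}$ is absorbed by the one-sheeted left component and the remaining $r - l - 1$ points of $\bx^\sharp$ are distributed among the right components), so each summand satisfies $r^{(i)} - l^{(i)} < r - l$. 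Thus, granting the values of ${\cal N}$ on all $(r, r)$-admissible collections, formula~(\ref{en21}) recursively determines every remaining number; the finiteness that makes each step well posed is supplied by Lemma~\ref{finite}, and the configurations used to define ${\cal N}$ exist by Proposition~\ref{CH-existence} and Lemma~\ref{genericity}.

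It then remains to evaluate ${\cal N}(\Del, k_1, k_2, g, \alp, \bet)$ for an $(r, r)$-admissible collection. Here the configuration $\bx \in {\cal P}(r, r)$ reduces to $r$ points on $L_{-\infty}$ together with a single point $p_{r + 1} \in \R^2$, with $\bx^\sharp = \varnothing$. I would fix parameters $\varepsilon$, $\delta_r$, $\delta_{r + 1}$ of $\bx$ and study, for a class in ${\cal T}^c(\Del, k_1, k_2, g, \alp, \bet, \bx)$, the marked cut at ${\cal X}_r = h^{-1}(I_r)$. By Lemma~\ref{splitting} every non-univalent vertex of $\overline\Gam$ is mapped into the strip $\{|y| < \varepsilon\}$; by property~(4) in the definition of a weak $\CH_\Theta$-configuration all left irreducible components of this cut are horizontal; and by condition~(ii) in the definition of ${\cal T}^c$ every right irreducible component is of one of the combinatorial types of Figure~\ref{fn2}. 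Since $\bx^\sharp$ is empty and $\overline\Gam$ is connected, I expect the cut to produce a single right component carrying $p_{r + 1}$, its left ends being precisely the added ends obtained by cutting the horizontal edges that cross $I_r$; the degree, genus and markings of this component then coincide with those of the whole curve. Matching this right component against Figure~\ref{fn2} should show that ${\cal T}^c$ is nonempty exactly when $(\Del, k_1, k_2, g, \alp, \bet)$ is one of the ten collections listed, and that in each such case the isomorphism class of the curve is uniquely determined by $\bx$.

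For the ten listed collections I would then compute the complex multiplicity $N(T) = AB$, with $A$ the product of the weights of the left ends that are not of $\alp$-type and $B$ the product of the squares of the weights of the bounded edges. A direct look at the ten pictures shows that all bounded edges and all non-$\alp$-type left ends have weight~$1$: the only weight-$2$ edge, appearing in type~(g), is an $\alp$-type left end and therefore contributes neither to $A$ nor to $B$. Hence $N(T) = 1$ and ${\cal N} = 1$ for each of these collections, while ${\cal T}^c = \varnothing$ and ${\cal N} = 0$ for every other $(r, r)$-admissible collection.

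The hard part will be this base case, namely proving that for an $(r, r)$-admissible collection no combinatorial type outside Figure~\ref{fn2} can be realized by an irreducible curve in ${\cal T}^c$, and that each listed type is realized by a unique class. This is a finite but delicate enumeration in which the balancing condition, pseudo-simplicity, the genus bound $g \leq \frac{(d - 1)(d - 2)}{2} - \sum_{i = 1}^5 \frac{k_i(k_i - 1)}{2}$, and the inequality $k_3 + k_4 < d$ from the definition of $(l, r)$-admissibility must be combined to rule out all spurious shapes. It is the tropical analogue of the algebraic classification carried out in Lemmas~\ref{ln1} and~\ref{ltn2}, and I would expect the arguments there to guide the exclusion of the unwanted configurations.
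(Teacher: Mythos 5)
Your inductive skeleton coincides with the paper's argument: the base of the recursion is the case $l=r$, and the step is formula~(\ref{en21}), whose right-hand side only involves collections with smaller $r-l$ (the first sum raises $l=\|\alp\|$ by one while keeping $r$ fixed, and in the second sum $\sum_{i}(r^{(i)}-l^{(i)})=r-l-1$, so each $r^{(i)}-l^{(i)}<r-l$); finiteness of each step is Lemma~\ref{finite}, and independence of the auxiliary choices is Proposition~\ref{inv1}. Your computation of the multiplicities in the base case is also correct, including the observation that the weight-$2$ left end in Figure~\ref{fn2}(g) is of $\alp$-type and hence contributes neither to $A$ nor to $B$, so $N(T)=1$ for all ten types.

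The one genuine flaw is that you defer the base case to a ``delicate enumeration'' ruling out all combinatorial types outside Figure~\ref{fn2} by means of the balancing condition, pseudo-simplicity and the genus bound, in analogy with Lemmas~\ref{ln1} and~\ref{ltn2} --- and as written your proof of the base case is therefore incomplete. But no such enumeration is needed, because Figure~\ref{fn2} enters by \emph{definition}, not by classification: ${\cal N}$ counts classes in ${\cal T}^c(\Del,k_1,k_2,g,\alp,\bet,\bx)$, and condition~(ii) in the definition of ${\cal T}^c$ (Section~\ref{sec42}) already \emph{requires} every right irreducible component of the marked cut at ${\cal X}_r=h^{-1}(I_r)$ to have one of the combinatorial types of Figure~\ref{fn2}. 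In the case $l=r$ one combines this with property~(4) in the definition of weak $\CH_\Theta$-configurations, which forces every left component of that cut to be horizontal; each horizontal left component is a segment reaching $L_{-\infty}$ and matching exactly one severed edge, so connectedness of $\overline\Gam$ leaves a single right component, namely the one carrying $p_{r+1}$, exactly as you guessed. Hence any class in ${\cal T}^c$ is one of the ten types with prolonged left ends, its data pins down the collection, and conversely each type is realized by a unique class over a given~$\bx$. This pair of observations --- property~(4) plus condition~(ii) --- \emph{is} the paper's entire proof of the base case; the heavy lifting you anticipated was front-loaded into the definition of ${\cal T}^c$, and the comparison with the algebraic classification of Lemmas~\ref{ln1} and~\ref{ltn2} is made only later, in Proposition~\ref{ln4} and Theorem~\ref{tn11}.
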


\begin{proof} The description of the numbers ${\cal N}(\Del, k_1,
k_2, g, \alp, \beta)$, where $(\Del, k_1, k_2, g, \alp, \beta)$ is
an $(r, r)$-admissible collection and~$r$ a non-negative integer,
follows from the property~(4) in the definition of weak
$\CH_\Theta$-configurations and the property~(ii) of sets $\qquad$
${\cal T}^c(\Del, k_1, k_2, g, \alp, \bet, \bx)$. All the other
numbers ${\cal N}(\Del, k_1, k_2, g, \alp, \beta)$, where
$\qquad\qquad$ $(\Del, k_1, k_2, g, \alp, \beta) \in {\cal S}$, can
be expressed as linear combinations of these values by successive
use of formula~(\ref{en21}). \end{proof}

\subsection{Tropical real recursive formulas}\label{sec43}
Denote by $(\Z^\infty_+)^{\text{\rm odd}}$ the sub-semigroup of
$\Z^\infty_+$ formed by the sequences~$\alp$ such that $\alp_{2i} =
0$ for any positive integer~$i$, and denote by ${\cal S}^{\text{\rm
odd}}$ the sub-semigroup of $\cal S$ formed by the $6$-tuples
$(\Del, k_1, k_2, g, \alp, \beta)$ such that $\alp, \beta \in
(\Z^\infty_+)^{\text{\rm odd}}$.

Denote by ${\cal S}^{\text{\rm odd, sym}}$ the sub-semigroup of
${\cal S}^{\text{\rm odd}}$ formed by the $6$-tuples $(\Del, k_1,
k_2, g, \alp, \beta)$ such that $k_1 = k_2$.

Let~$l$ and~$r$ be non-negative integer numbers such that $l \leq
r$. Choose an $(l, r)$-admissible collection $(\Del, k_1, k_2, g,
\alp, \beta) \in {\cal S}^{\text{\rm odd}}$, a finite multi-set
$\Theta$ dominating $\Del$, and a $\Theta$-generic
$\CH_\Theta$-configuration $\bx$.

For each isomorphism class $T=[(\overline\Gam, {\cal V}, h,
\bpp)]\in {\cal T}^c(\Del, k_1, k_2, g, \alp, \bet, \bx)$, define
the {\it Welschinger multiplicity} $W(T)=W(\overline\Gam, {\cal V},
h, \bpp)$ to be equal to~$1$ if all the edges of $\overline\Gam$
have odd weight, and~$0$ otherwise. Put
$$
{\cal W}(\Del, k_1, k_2, g, \alp, \bet, \bx) = \sum_{T \in {\cal
T}^c(\Del, k_1, k_2, g, \alp, \bet, \bx)} W(T)\ .
$$
For any element $(\Del, k_1, k_2, g, \alp, \bet)\in{\cal
S}^{\text{\rm odd,sym}}$ different from $K^{sp}$ (see
Section~\ref{CH-config}), a finite multi-set $\Theta$ dominating
$\Del$, and a $\Theta$-generic $\CH_\Theta$-configuration $\bx$,
introduce a subset ${\cal T}^{c, \text{\rm sym}}(\Del, k_1, k_2, g,
\alp, \bet, \bx) \subset {\cal T}^{\text{\rm sym}}(\Del, k_1, k_2,
g, \alp, \bet, \bx)$ defined by
$$
{\cal T}^{c, \text{\rm sym}}(\Del, k_1, k_2, g, \alp, \bet, \bx)
=\Psi^{-1}({\cal T}^c(\Del, k_1, k_2, g, \alp, \beta, \bx))\ ,
$$ where $\Psi$ is the forgetful map (\ref{esym1}).
In addition, for any point $p_1 \in \R^2$, we put
$${\cal T}^{c, \text{\rm sym}}(K^{sp}, \{p_1\}) = \{T^{sp}(p_1)\}\ ,$$
where $T^{sp}(p _1)$ is as in Section~\ref{CH-config}. We consider
the following Welschinger multiplicities:
\begin{itemize}
\item $W^{\text{\rm sym}}(T)=
W(\Psi(T))$ for all $T\in{\cal T}^{c, \text{\rm sym}} (\Del, k_1,
k_2, g, \alp, \bet, \bx)$ with $\qquad\qquad$ $(\Del, k_1, k_2, g,
\alp, \beta) \ne K^{sp}$,
\item $W^{\text{\rm sym}}(T^{sp}(p_1)) = 1$ for any point $p_1 \in \R^2$,
\end{itemize}
and then put
$$
{\cal W}^{\text{\rm sym}}(\Del, k_1, k_2, g, \alp, \bet, \bx) =
\sum_{T \in {\cal T}^{c,\text{\rm sym}} (\Del, k_1, k_2, g, \alp,
\bet, \bx)} W^{\text{\rm sym}}(T)\ .
$$

\begin{proposition}\label{inv1r}
Let~$l \leq r$ be non-negative integer numbers. Fix an $(l,
r)$-admissible collection $(\Del, k_1, k_2, g, \alp, \beta)\in{\cal
S}^{\text{\rm odd}}$ {\rm (}respectively, $(\Del, k_1, k_2, g, \alp,
\beta)\in{\cal S}^{\text{\rm odd,sym}}${\rm )}, and choose a
multi-set~$\Theta$ dominating~$\Del$. Let~$\bx$ be a
$\Theta$-generic $\CH_\Theta$-configuration of type $(l, r)$. Then,
the number ${\cal W}(\Del, k_1, k_2, g, \alp, \bet, \bx)$
(respectively, ${\cal W}^{\text{\rm sym}}(\Del, k_1, k_2, g, \alp,
\bet, \bx)$) does not depend on the choices of~$\Theta$ and~$\bx$.
\end{proposition}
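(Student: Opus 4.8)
The plan is to prove Proposition~\ref{inv1r} by the same inductive, cut-and-paste strategy that establishes Proposition~\ref{inv1} and Theorem~\ref{CHtrop-for}, since the Welschinger multiplicities $W(T)$ and $W^{\text{\rm sym}}(T)$ are defined on exactly the same sets ${\cal T}^c$ and ${\cal T}^{c,\text{\rm sym}}$ whose cardinality-weighted sums were already shown to be $\Theta$- and $\bx$-independent. The induction is on $r-l$. In the base case $r=l$, the property~(4) in the definition of weak $\CH_\Theta$-configurations together with property~(ii) of ${\cal T}^c$ forces the set ${\cal T}^c(\Del,k_1,k_2,g,\alp,\bet,\bx)$ to be explicitly described by the finite list of combinatorial types in Figure~\ref{fn2} (equivalently, Proposition~\ref{initial-conditions}). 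One then simply inspects this list, computing $W(T)$ and $W^{\text{\rm sym}}(T)$ for each type; the resulting values are manifestly independent of $\Theta$ and $\bx$.

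\textbf{Derivation of a real recursion.} For the inductive step with $l<r$, I would run verbatim the cutting construction from the proof of Theorem~\ref{CHtrop-for}: given a $\Theta$-generic $\CH_\Theta$-configuration~$\bx$ with parameters $\varepsilon,\delta_l,\ldots,\delta_{r+1}$, I cut each marked $\CL$-curve along ${\cal X}=h^{-1}(I_{l+1})$ and analyse whether the point $P_{l+1}$ lies on a left end or on the unique left one-sheeted component. The same bijections established there partition ${\cal T}^c(\Del,k_1,k_2,g,\alp,\bet,\bx)$ into pieces indexed by $(j,\theta_j)$-reductions and by splittings $(\ref{en25})$--$(\ref{en-beta})$. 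The crucial new point is that the multiplicity $W$ is \emph{multiplicative} under this decomposition in the sense needed: since $W(T)$ is the indicator that \emph{all} edges of $\overline\Gam$ have odd weight, and the bounded edges of a cut are precisely the bounded edges of the pieces together with the matched pairs created at the cut, one has $W(\overline\Gam,{\cal V},h,\bpp)=\prod_{i=1}^mW(\overline\Gam^{(i)},\CV^{(i)},h^{(i)},\bpp^{(i)})$ whenever every matched edge across the cut has odd weight, and $W=0$ otherwise. I would record this as the Welschinger analogue of the displayed product formula for $N$ in the proof of Theorem~\ref{CHtrop-for}, and then observe that the combinatorial weighting factors $\binom{\alp}{\alp^{(1)}\ldots\alp^{(m)}}$, $(r-l-1)!/\prod(r^{(i)}-l^{(i)})!$, and $\binom{\bet^{(i)}}{\widetilde\bet^{(i)}}$ that count the choices of subconfigurations and left ends are \emph{the same} combinatorial counts as before and do not involve the particular positions in~$\bx$.

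\textbf{The symmetric case and the exceptional class.} For ${\cal W}^{\text{\rm sym}}$, I would exploit that ${\cal T}^{c,\text{\rm sym}}=\Psi^{-1}({\cal T}^c)$ and that $W^{\text{\rm sym}}(T)=W(\Psi(T))$ off the exceptional collection $K^{sp}$; thus the fibers of the forgetful map $\Psi$ of $(\ref{esym1})$ must be understood. By Remark~\ref{rem-sym}, for $l>0$ or $r>l$ every symmetric curve in count is irreducible, so $\Psi$ has a controlled (and $\bx$-independent) fiber structure, and the sum $\sum W^{\text{\rm sym}}$ over a fiber equals a fixed combinatorial multiple of $W$ on the base; the induction then transports exactly as in the complex case. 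The only genuinely separate input is the isolated class $K^{sp}$, where ${\cal T}^{c,\text{\rm sym}}(K^{sp},\{p_1\})=\{T^{sp}(p_1)\}$ by definition and $W^{\text{\rm sym}}(T^{sp}(p_1))=1$, giving a value trivially independent of~$\bx$.

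\textbf{The main obstacle} I expect is bookkeeping the odd-weight condition across the cutting procedure in the symmetric setting: I must verify that the involution $\xi$ is compatible with the cut, so that the pieces inherit a symmetric (or, for the components interchanged by $\xi$, a conjugate-pair) structure, and that the odd-weight indicator behaves multiplicatively with respect to this $\xi$-equivariant decomposition. Once this compatibility is checked, the independence of ${\cal W}$ and ${\cal W}^{\text{\rm sym}}$ on $\Theta$ and $\bx$ follows by the same induction that proves Proposition~\ref{inv1}, because every quantity entering the recursion has already been shown to be intrinsic.
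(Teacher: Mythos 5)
Your proposal is correct and takes essentially the same route as the paper: the paper's proof of Proposition~\ref{inv1r} simply declares the argument ``completely similar'' to that of Proposition~\ref{inv1} (the same induction on $r-l$ via the cutting procedure at $h^{-1}(I_{l+1})$, with the $0/1$ Welschinger indicator replacing the complex multiplicity so that the factors $j$ and $I^{\widetilde\bet^{(i)}}$ drop out, and the base case read off from the initial conditions in Figure~\ref{fn2}), plus the separate observation that ${\cal W}^{\text{\rm sym}}(T^{sp}(p_1))=1$ by definition for the exceptional class $K^{sp}$. Your ``main obstacle'' is precisely the one additional point the paper records (just before Proposition~\ref{CHtrop-for-real}): since $\widehat h\circ\xi=\widehat h$, the sets $h^{-1}(I_i)$ are invariant under the involution and each cut inherits a uniquely defined involution, so the symmetric case transports through the induction directly, without any separate analysis of the fibers of the forgetful map~$\Psi$.
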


\begin{proof} If $(\Del, k_1, k_2, g, \alp, \beta) \ne K^{sp}$, the
proof is completely similar to the proof of Proposition~\ref{inv1}.
Furthermore, ${\cal W}^{\text{\rm sym}}(T^{sp}(p_1)) = 1$ for any
point $p_1 \in \R^2$. \end{proof}

Such an independence allows us to write simply ${\cal W}(\Del, k_1,
k_2, g, \alp, \beta)$ for $\qquad\qquad$ ${\cal W}(\Del, k_1, k_2,
g, \alp, \bet, \bx)$ as soon as $(\Del, k_1, k_2, g, \alp, \beta)$
and~$\bx$ are as in Proposition~\ref{inv1r}. The latter requirement
on $(\Del, k_1, k_2, g, \alp, \beta)$ and~$\bx$ is similar to the
requirement on the constraints related to the enumerative invariants
which appear in the recursive formula presented in~\cite{ABLdM}.

The proof of the following Proposition~\ref{CHtrop-for-real} is
completely similar to the proof of Theorem~\ref{CHtrop-for}. The
only additional observation consists of the fact that, in the case
of symmetric marked ${\cal L}$-curves, the sets $h^{-1}(I_i)$ are
invariant with respect to the involution, and each cut inherits a
uniquely defined involution.

\begin{proposition}\label{CHtrop-for-real}
Let~$l < r$ be non-negative integer numbers. Fix an $(l,
r)$-admissible collection $(\Del, k_1, k_2, g, \alp, \beta)\in{\cal
S}^*$, where ${\cal S}^*$ stands either for ${\cal S}^{\text{\rm
odd}}$, or for ${\cal S}^{\text{\rm odd,sym}}$. Then
$$
{\cal W}^*(\Del, k_1, k_2, g, \alp, \beta) =\sum_{j\ge1,\ \beta_j>0}
{\cal W}^*(\Del, k_1, k_2, g, \alp + \theta_j, \bet - \theta_j)
$$
\begin{equation}
+\sum\left(\begin{matrix}\alp\\
\alp^{(1)},...,\alp^{(m)}\end{matrix}\right) \frac{(r - l -
1)!}{(r^{(1)} - l^{(1)})!...(r^{(m)} - l^{(m)})!}\label{en26}
\end{equation}
$$\times\prod_{i=1}^m\left(
\left(\begin{matrix}\bet^{(i)}\\
\widetilde\bet^{(i)}\end{matrix}\right) {\cal W}^*(\Del^{(i)},
k^{(i)}_1, k^{(i)}_2, g^{(i)}, \alp^{(i)}, \bet^{(i)})\right)\ ,
$$
where ${\cal W}^*$ stands for ${\cal W}$ {\rm (}respectively, ${\cal
W}^{\text{\rm sym}}${\rm )} if ${\cal S}^*={\cal S}^{\text{\rm
odd}}$ {\rm (}respectively, ${\cal S}^*={\cal S}^{\text{\rm
odd,sym}}${\rm )}, and
$$
l^{(i)} = \|\alp^{(i)}\|, \;\;\; i = 1, \ldots, m,
$$
$$
r^{(i)} = |\Del^{(i)}| - I\alp^{(i)} - I\beta^{(i)} + \|\alp^{(i)}\|
+ \|\beta^{(i)}\| + g^{(i)} - 1 - k^{(i)}_1 - k^{(i)}_2, \;\;\; i =
1, \ldots, m,
$$
and the second sum in~(\ref{en26}) is taken
\begin{itemize}
\item over all subsets $\daleth$ of~$\Del$
which are formed by two vectors, one with positive second coordinate
and one with negative second coordinate, and such that the multi-set
$\Del_\daleth$ contains at least one vector $(-1, 0)$,
\item over all
splittings
\begin{equation}
(\Del_\daleth, k_1, k_2, g', \alp', \bet') =\sum_{i=1}^m(\Del^{(i)},
k^{(i)}_1, k^{(i)}_2, g^{(i)}, \alp^{(i)}, \bet^{(i)},)\label{en27}
\end{equation}
in ${\cal S}^*$ of all possible collections $(\Del_\daleth, k_1,
k_2, g', \alp', \bet') \in {\cal S}^*$, such that
$$\displaylines{
\alp'\le\alp,\quad \bet\le\bet', \quad g - g' = \|\beta' - \beta\| -
1\ ,}$$ and each summand $(\Del^{(i)}, k^{(i)}_1, k^{(i)}_2,
g^{(i)}, \alp^{(i)}, \bet^{(i)}, )$ with $r^{(i)} - l^{(i)} = 0$
appears in (\ref{en27}) at most once,
\item over all splittings
\begin{equation}\bet'=\bet+\sum_{i=1}^m\widetilde\bet^{(i)},
\quad\|\widetilde\bet^{(i)}\|>0,\ i=1,...,m\
,\label{en97}\end{equation} satisfying the restriction
$\bet^{(i)}\ge\widetilde\bet^{(i)},\ i=1,...,m\ $,
\end{itemize}
and factorized by simultaneous permutations in the both splittings
(\ref{en27}) and (\ref{en97}).\proofend\end{proposition}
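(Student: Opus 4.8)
The plan is to follow, almost verbatim, the proof of Theorem~\ref{CHtrop-for}, replacing the complex multiplicity $N$ by the Welschinger multiplicity $W$ (respectively $W^{\text{\rm sym}}$) and tracking how the latter behaves under the two basic moves used there. First I would fix a $\Theta$-generic $\CH_\Theta$-configuration $\bx$ of type $(l,r)$ and, for each marked $\CL$-curve counted by ${\cal T}^c(\Del,k_1,k_2,g,\alp,\bet,\bx)$, examine the position of the leftmost movable point $P_{l+1}\in\bpp^\sharp$. As in the complex case there are exactly two alternatives. If $P_{l+1}$ lies on a left end of $\overline\Gam$ of weight $j$, then projecting it to $L_{-\infty}$ into the $j$-th group of $\bx^\flat$ produces a bijection onto the curves counted by ${\cal T}^c(\Del,k_1,k_2,g,\alp+\theta_j,\bet-\theta_j,\widehat\bx)$. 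Here I would note the one real discrepancy with the complex situation: whereas $N$ scales by $1/j$ under this move, the Welschinger multiplicity, depending only on the parity of edge weights, is literally unchanged (and $\bet_j>0$ forces $j$ odd because $(\Del,\dots)\in{\cal S}^{\text{\rm odd}}$), so summing over $j$ contributes $\sum_{j\ge1,\ \bet_j>0}{\cal W}^*(\Del,k_1,k_2,g,\alp+\theta_j,\bet-\theta_j)$, without the factor $j$ of the complex formula. If $P_{l+1}$ lies on no left end, I would cut at $\CX=h^{-1}(I_{l+1})$ and analyse the right irreducible components together with the unique left one-sheeted component exactly as in Theorem~\ref{CHtrop-for}, extracting the subset $\daleth\subset\Del$ and the splittings~(\ref{en27}) and~(\ref{en97}).

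The decisive point is the multiplicativity of $W$ under gluing. Since the cutting procedure preserves the weight of every edge, and since the matched left ends (those encoded by $\widetilde\bet^{(i)}$) inherit odd weights from the hypothesis $(\Del,k_1,k_2,g,\alp,\bet)\in{\cal S}^{\text{\rm odd}}$ together with $\widetilde\bet^{(i)}\le\bet^{(i)}$, I would conclude that $W(\overline\Gam)=\prod_{i=1}^m W(\overline\Gam^{(i)})$. In the complex relation $N(\overline\Gam)=\prod_{i=1}^m\bigl(I^{\widetilde\bet^{(i)}}N(\overline\Gam^{(i)})\bigr)$ the factor $I^{\widetilde\bet^{(i)}}$ records that a matched left end of weight $t$ becomes a bounded edge, trading a weight $t$ in the factor $A$ for a weight $t^2$ in the factor $B$; no such factor survives for $W$, which takes only the values $0$ and $1$. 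This is exactly the reason why $I^{\widetilde\bet^{(i)}}$ is absent from~(\ref{en26}). Counting the choices of the subsequences $(\bx^{(i)})^\flat$ and $(\bx^{(i)})^\sharp$ and of the matched ends then reproduces, word for word, the multinomial factor, the factor $\frac{(r-l-1)!}{(r^{(1)}-l^{(1)})!\cdots(r^{(m)}-l^{(m)})!}$, and the binomials $\binom{\bet^{(i)}}{\widetilde\bet^{(i)}}$, with all the dimension and genericity inputs supplied by Lemmas~\ref{splitting} and~\ref{floors} already invoked in Theorem~\ref{CHtrop-for}. The independence of the choice of $\Theta$ and $\bx$ (the analogue of Proposition~\ref{inv1} for $W$ and $W^{\text{\rm sym}}$) follows from the same bijection, as recorded in Proposition~\ref{inv1r}.

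For the symmetric count $W^{\text{\rm sym}}$ the very same argument applies, provided one first checks that the cutting procedure is compatible with the involution, and this is where I expect the only genuine obstacle to lie. Because $\widehat h\circ\xi=\widehat h$, every fibre $h^{-1}(I_i)$ is $\xi$-invariant; in particular $\CX=h^{-1}(I_{l+1})$ is invariant, so $\xi$ descends to a uniquely determined involution on the marked cut, and the forgetful map $\Psi$ of~(\ref{esym1}) intertwines the symmetric and non-symmetric cuttings, giving $W^{\text{\rm sym}}=W\circ\Psi$. Since the points of $\bpp^\sharp$ (hence $P_{l+1}$) are fixed by $\xi$ while $\bpp^1$ and $\bpp^2$ are interchanged, the substantive verification is that cutting a symmetric irreducible curve yields components whose induced involutions glue back to a single symmetric class, with the labels $1$, $2$, $\nu$ distributed consistently across the two splittings. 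Granting this equivariance, the bijection and the multiplicativity of $W^{\text{\rm sym}}$ carry the proof through verbatim; note that the exceptional collection $K^{sp}$ and the class $T^{sp}(p_1)$ play no role here, as $K^{sp}$ is $(0,0)$-admissible and the statement concerns $l<r$. I do not anticipate any new enumerative difficulty beyond this bookkeeping, since no dimension estimate specific to the real setting is required.
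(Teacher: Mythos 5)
Your overall route is exactly the paper's: the paper proves Proposition~\ref{CHtrop-for-real} by repeating the proof of Theorem~\ref{CHtrop-for} with a single additional observation --- which you state correctly --- that $\widehat h\circ\xi=\widehat h$ makes each fibre $h^{-1}(I_i)$ invariant under the involution, so every marked cut inherits a uniquely defined involution. Your explanations of the two deviations from the complex formula are also sound: the factor $j$ disappears from the first sum because $W$, unlike $N$, is insensitive to the $1/j$ rescaling under the projection move (and $\bet_j>0$ with $\bet\in(\Z_+^\infty)^{\text{\rm odd}}$ indeed forces $j$ odd), and $I^{\widetilde\bet^{(i)}}$ disappears from the second because a matched end of weight $t$ trades a factor $t$ in $A$ for $t^2$ in $B$ in the complex multiplicity, while $W$ sees only parities of weights, which cutting preserves.

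The one genuine flaw is your closing claim that $K^{sp}$ and $T^{sp}(p_1)$ ``play no role here'' because the statement concerns $l<r$. That argument only shows $K^{sp}$ cannot be the collection on the left-hand side; it does not exclude $K^{sp}$ from the summands of the splitting~(\ref{en27}), and in the symmetric case it does occur there: the splitting conditions constrain only $g-g'=\|\bet'-\bet\|-1$, so an individual summand may have $g^{(i)}=-1$, and ${\cal S}^{\text{\rm odd,sym}}$ contains $K^{sp}$. Geometrically, cutting an irreducible symmetric curve at $\CX=h^{-1}(I_{l+1})$ can produce a right component whose underlying graph is disconnected --- a pair of horizontal edges over $p_{r+1}$ interchanged by $\xi$, exactly as in Remark~\ref{rem-sym}(3) --- which is irreducible as a \emph{symmetric} curve, realizes the class $T^{sp}$, and carries ${\cal W}^{\text{\rm sym}}$-value $1$ by decree rather than via the forgetful map $\Psi$ (which is undefined for $K^{sp}$). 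If these components are ruled out, the bijection in your inductive step misses all splittings containing $K^{sp}$, and the resulting formula would be wrong precisely in the cases relevant to $\PP^2_{3,1}$: the proof of Theorem~\ref{tn12} uses these summands, with $\|\widetilde\bet^{(i)}\|=2$ forced because $\xi$ swaps the two edges, so either both or neither match the one-sheeted component. The repair is small --- your own equivariance paragraph already supplies the mechanism, and you need only apply it to symmetric-irreducible components with disconnected graph instead of excluding them --- but as written the symmetric half of your induction is incomplete.
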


The proof of the next statement coincides with the proof of
Proposition \ref{initial-conditions}.

\begin{proposition}\label{initial-conditionsr}
All the numbers ${\cal W}(\Del, k_1, k_2, g, \alp, \beta)$, where
$(\Del, k_1, k_2, g, \alp, \beta) \in {\cal S}^*$, and ${\cal S}^*$
stands either for ${\cal S}^{\text{\rm odd}}$, or for ${\cal
S}^{\text{\rm odd,sym}}$, are recursively determined by formula {\rm
(}\ref{en26}{\rm )} of Proposition~\ref{CHtrop-for-real} and the
following initial values. For any non-negative integer~$r$ and any
$(r, r)$-admissible collection $(\Del, k_1, k_2, g, \alp, \beta) \in
{\cal S}^*$, the number ${\cal W}^*(\Del, k_1, k_2, g, \alp, \beta)$
is equal either to~$0$, or to~$1$. Furthermore,
\begin{enumerate}
\item[--] if ${\cal S}^*={\cal S}^{\text{\rm odd}}$, then ${\cal
W}(\Del, k_1, k_2, g, \alp, \beta) = 1$ if and only if the
collection $\qquad$ $(\Del, k_1, k_2, g, \alp, \beta)$ coincides
with one in the list presented in
Proposition~\ref{initial-conditions},
\item[--] if
${\cal S}^*={\cal S}^{\text{\rm odd,sym}}$, then ${\cal
W}^{\text{\rm sym}}(\Del, k_1, k_2, g, \alp, \beta) = 1$ if and only
if $(\Del, k_1, k_2, g, \alp, \beta)$ coincides with one of the
following collections {\rm (}corresponding curves are shown in
Figures~\ref{fn2}{\rm (}a,d-h{\rm )} and~\ref{lines}{\rm )}:
\begin{itemize}
\item $(\{(-1, 0), (1, 0)\}, 0, 0, 0, (1), (0))$,
\item $(\{(-1, 0), (0, -1), (1, 1)\}, 1, 1,0, (0), (1))$,
\item $(\{(-1, 0), (-1, -1), (1, 0), (1, 1)\}, 1, 1,0, (1), (0))$,
\item $(\{(-1, 0), (0, -1), (1, 0), (0, 1)\}, 1, 1,0, (1), (0))$,
\item $(\{(-1, 0), (-1, 0), (0, -1), (1, 0), (1, 1)\}, 1, 1,0, (0, 1), (0))$,
\item $(\{(-1, 0), (-1, 0), (0, -1), (1, 0), (1, 1)\}, 1,
1,0, (2), (0))$,
\item $(\{(-1, 0), (-1, 0), (1, 0), (1, 0)\}, 1, 1,-1, (0), (2))$. \proofend
\end{itemize}
\end{enumerate}
\end{proposition}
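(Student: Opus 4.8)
The plan is to follow, almost verbatim, the two-step scheme used for Proposition~\ref{initial-conditions}, replacing the complex multiplicity $N(T)$ by the Welschinger multiplicities $W(T)$ and $W^{\text{\rm sym}}(T)$ and the recursion~(\ref{en21}) by~(\ref{en26}). First I would dispose of the base case, namely the $(r,r)$-admissible collections. Here the reduction is purely combinatorial and uses no multiplicity: property~(4) in the definition of a weak $\CH_\Theta$-configuration, together with property~(ii) of ${\cal T}^c(\Del,k_1,k_2,g,\alp,\bet,\bx)$, forces this set to contain at most one isomorphism class, which must be one of the finitely many combinatorial types listed in Figure~\ref{fn2}. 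Since this step is identical to the one in the proof of Proposition~\ref{initial-conditions}, it transfers unchanged to the real setting.

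Next I would evaluate the appropriate multiplicity on the unique surviving class. Because $W(T)=1$ exactly when every edge of $\overline\Gam$ carries odd weight, I would simply read off the edge weights of each type in Figure~\ref{fn2}: every type consists of weight-$1$ edges except type~(g), whose left end has weight~$2$. In the case ${\cal S}^*={\cal S}^{\text{\rm odd}}$ the defining constraint $\alp,\bet\in(\Z^\infty_+)^{\text{\rm odd}}$ rules out type~(g), so each base class that actually occurs has all edges odd and hence ${\cal W}=W(T)=1$ precisely on the collections of Proposition~\ref{initial-conditions} lying in ${\cal S}^{\text{\rm odd}}$. For ${\cal S}^*={\cal S}^{\text{\rm odd,sym}}$ I would use the identity $W^{\text{\rm sym}}(T)=W(\Psi(T))$ valid away from the exceptional tuple $K^{sp}$, so that the computation is transported through the forgetful map~(\ref{esym1}) to the non-symmetric one already carried out; the exceptional collection $K^{sp}$ is then fed in separately through its prescribed value $W^{\text{\rm sym}}(T^{sp}(p_1))=1$ (Figure~\ref{lines}).

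Having the base values in hand, I would close the argument by descending induction on the difference $r-l$. For any tuple in ${\cal S}^*$ with $r>l$, Proposition~\ref{CHtrop-for-real} expresses ${\cal W}^*(\Del,k_1,k_2,g,\alp,\bet)$ through formula~(\ref{en26}) as a non-negative integer combination of numbers ${\cal W}^*$ attached to tuples with strictly smaller $r-l$. Iterating, every such number is reduced to the base data, so that~(\ref{en26}) together with the stated initial values determines all the ${\cal W}^*$, as claimed.

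The step I expect to be the genuine obstacle is the symmetric base case rather than the recursion. There one must verify that $\Psi$ behaves as a bijection on the $(r,r)$-admissible symmetric classes, so that the odd-weight criterion may legitimately be pulled back through~(\ref{esym1}); that the exceptional class $K^{sp}$---which carries weight-$2$ edges and would be assigned $W^{\text{\rm sym}}=0$ if treated by the general rule $W^{\text{\rm sym}}=W\circ\Psi$---is correctly isolated and entered with its declared multiplicity $W^{\text{\rm sym}}(T^{sp}(p_1))=1$; and that the involution is compatible with the marked cuts at the vertical segments $I_i$ underlying~(\ref{en26}), the sets $h^{-1}(I_i)$ being $\xi$-invariant. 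This bookkeeping around symmetric curves and the exceptional class is the only place where the present proof departs from that of Proposition~\ref{initial-conditions}.
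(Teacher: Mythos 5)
Your proposal is correct and takes essentially the same route as the paper, whose entire proof is the remark that it coincides with the proof of Proposition~\ref{initial-conditions}: the base case for $(r,r)$-admissible collections follows from property~(4) in the definition of weak $\CH_\Theta$-configurations together with property~(ii) of the sets ${\cal T}^c(\Del, k_1, k_2, g, \alp, \bet, \bx)$, after which every other number is reduced to these values by successive application of formula~(\ref{en26}). The symmetric bookkeeping you single out as the delicate point is in fact already absorbed into the paper's definitions and preparatory remarks---$W^{\text{\rm sym}} := W\circ\Psi$ away from $K^{sp}$, the separately declared value $W^{\text{\rm sym}}(T^{sp}(p_1)) = 1$, and the observation preceding Proposition~\ref{CHtrop-for-real} that the sets $h^{-1}(I_i)$ are invariant under the involution so that each cut inherits a uniquely defined involution---so no argument beyond what you outline is required.
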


\section{Correspondence theorem}\label{corresp-thm}

\subsection{Auxiliary statements}\label{auxiliary}
Let $l \leq r$ be  non-negative integers, and $\Theta$ a finite
multi-set of vectors in $\Z^2$. A configuration~$\bx \in {\cal P}(l,
r)$ is called {\it $\Theta$-proper}, if
\begin{itemize}
\item $\bx$ is a $\Theta$-generic $\CH_\Theta$-configuration,
\item
for any integers $0 \leq l' < r'$ and $0 \leq l'' \leq r''$, any
disjoint sub-configurations $\bx'$ and $\bx''$ of~$\bx$ such that
$\bx' \in {\cal P}(l', r')$ and $\bx'' \in {\cal P}(l'', r'')$, any
$(l', r')$-admissible collection $(\Del', k'_1, k'_2, g', \alp',
\bet')$ and any $(l'', r'')$-admissible collection $(\Del'', k''_1,
k''_2, g'', \alp'', \beta'')$ such that~$\Theta$ dominates~$\Del'$
and $\Del''$, one has the following property:
\begin{enumerate}
\item[]
for any elements
$$[(\overline\Gam',{\cal V}',h',\bpp')]\in{\cal T}^c
(\Del', k'_1, k'_2, g', \alp', \bet', \bx')\ ,$$
$$[(\overline\Gam'',{\cal V}'',h'',\bpp'')]\in{\cal T}^c
(\Del'', k''_1, k''_2, g'', \alp'', \bet'', \bx'')\ ,$$ the map
${\widehat h}'\cup {\widehat h}'': ((\Gam')^0_\infty \backslash
{\cal V'}) \cup ((\Gam'')^0_\infty \backslash {\cal V}'') \to
L_{-\infty}$ is injective.
\end{enumerate}
\end{itemize}

\begin{lem}\label{ln10}
For any non-negative integers $l < r$ and any finite
multi-set~$\Theta$ of vectors in $\Z^2$, there exists a
$\Theta$-proper configuration $\bx \in {\cal P}(l, r)$.
\end{lem}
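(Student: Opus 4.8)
The plan is to obtain $\Theta$-proper configurations as generic points: I will start from the nonempty open set of $\CH_\Theta$-configurations supplied by Proposition~\ref{CH-existence}, intersect it with the dense set of $\Theta$-generic configurations from Lemma~\ref{genericity}, and then delete a finite union of nowhere dense ``collision'' loci encoding the possible failures of the injectivity condition. The first thing to record is a finiteness statement for the relevant data. Since $l$, $r$ and $\Theta$ are fixed and all $\Del'$, $\Del''$ occurring in the definition of $\Theta$-properness are dominated by $\Theta$ with $l',r',l'',r''\le r$, Lemma~\ref{finite} shows that only finitely many admissible collections $(\Del',k'_1,k'_2,g',\alp',\bet')$ and $(\Del'',k''_1,k''_2,g'',\alp'',\bet'')$ can arise; for each of them the number of combinatorial types of marked $\CL$-curves in the corresponding set ${\cal T}^c$ is finite (finiteness of $\Lambda(\cdot)$, cf.\ Proposition~\ref{finiteness}), and there are finitely many ways to split the points of $\bx$ into disjoint sub-configurations $\bx'$ and $\bx''$.

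Next I would use the affine structure described in Section~\ref{moduli-spaces}. Fixing a combinatorial type $\lambda$, the moduli piece ${\cal M}^\lambda$ is identified with the relative interior of a polyhedron so that the evaluation map is affine; prescribing the constraints coming from $\bx'$ (respectively $\bx''$) therefore determines each corresponding curve up to finitely many choices (Proposition~\ref{finiteness}), and on each choice the image on $L_{-\infty}$ of every univalent vertex of $\widehat\Gam'$ outside ${\cal V}'$ (respectively $\widehat\Gam''$ outside ${\cal V}''$) is an affine function $f_\bullet$ of $\bx$. The feet of $\alp$-type left ends are just the prescribed points of $(\bx')^\flat$ or $(\bx'')^\flat$, whereas the feet of the remaining left ends are honest affine functions of the edge-length and position parameters. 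For each unordered pair of distinct feet $a\ne b$ coming from this data, the locus $\{f_a=f_b\}\subset{\cal P}(l,r)$ on which the two $L_{-\infty}$-coordinates agree is defined by a single affine equation, hence closed; whenever this equation is non-trivial the locus is nowhere dense, and the union over the finitely many pieces above is a finite union of nowhere dense closed sets.

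The crux is to check that every such equation $f_a=f_b$ is non-trivial, i.e.\ that two distinct left-end feet are never forced to coincide for all $\bx$. For a pair of $\alp$-type feet this is immediate: inside the disjoint sub-configurations $\bx'$ and $\bx''$ they are pinned to distinct points of $\bx^\flat$, and the points of a configuration are distinct by definition. For a pair consisting of one $\alp$-type foot and one non-$\alp$-type foot, I would vary the single $\bx^\flat$-coordinate carrying the $\alp$-type foot while freezing all other constraints: this displaces $f_a$ but not $f_b$, so $f_a\not\equiv f_b$. For a pair of non-$\alp$-type feet, whether inside one curve or across the two, I would translate one of the points of $\bx^\sharp$ lying to the right of both feet along the horizontal direction and invoke the floor structure imposed on a $\CH_\Theta$-configuration (the cutting picture of Lemma~\ref{floors} and Figure~\ref{new-picture5}): the two feet are governed by different one-sheeted pieces, so the deformation separates them and the two affine functions cannot agree identically. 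Producing, in every case, an admissible deformation of $\bx$ that separates the two feet is the delicate part of the argument and the step I expect to require the most care.

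Finally I would assemble the pieces. The complement of the finite union of collision loci is a dense open subset of ${\cal P}(l,r)$; intersecting it with the nonempty open set of $\CH_\Theta$-configurations (Proposition~\ref{CH-existence}) yields a nonempty open set, which therefore meets the dense subset ${\cal P}^{\text{\rm gen}}_\Theta(l, r)$ of $\Theta$-generic configurations (Lemma~\ref{genericity}). Any $\bx$ in this intersection is a $\Theta$-generic $\CH_\Theta$-configuration whose left-end feet satisfy the required injectivity, hence is $\Theta$-proper.
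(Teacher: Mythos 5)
Your overall architecture coincides with the paper's: both proofs start from a $\Theta$-generic $\CH_\Theta$-configuration, use Lemma~\ref{finite} and Proposition~\ref{finiteness} to reduce to finitely many admissible collections, sub-configurations and combinatorial types, exploit the affine structure on ${\cal M}^\lambda$ and the affineness of $ev^\lambda$, and reduce $\Theta$-properness to showing that each collision between two left-end feet is a non-trivial affine condition on $\bx$; the paper realizes this by finitely many explicit small perturbations of an initial configuration rather than by deleting hyperplane loci, which is a cosmetic difference. (The paper also pre-conditions the initial configuration by an open transversality property --- avoiding the images of all moduli cells of dimension $<2r'+2-l'$ --- which is what makes each relevant $ev^\lambda$ injective and your feet $f_\bullet$ well-defined affine functions of $\bx$; your appeal to Proposition~\ref{finiteness} can substitute for this, since an affine map with finite fibres on the relative interior of a polyhedron is injective.) The genuine gap is exactly the step you flag but do not carry out: non-triviality of $f_a=f_b$ for two non-$\alp$-type feet. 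Your proposed move --- horizontal translation of a point of $\bx^\sharp$ lying to the right of both feet, justified by ``different one-sheeted pieces'' --- would fail as stated: nothing prevents one and the same point of $\bx^\sharp$ from lying on the closures of the free components of both feet, so translating it can displace both feet simultaneously (possibly by equal amounts); moreover, if that marked point sits on a horizontal edge, a horizontal translation is tangent to the image of the edge and need not move the curve's parameters transversally at all. Your $\alp$-versus-non-$\alp$ case has the same defect: when the $\bx^\flat$-point pinning $f_a$ is a constraint of the curve carrying $f_b$, moving it does in general displace $f_b$ as well, so ``displaces $f_a$ but not $f_b$'' is unjustified.

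What closes the gap in the paper is a sharper perturbation with three ingredients missing from your sketch. First, for a non-$\alp$ foot $V$, the connected component $K(V)$ of $\overline\Gam'\setminus\bpp'$ containing $V$ has no other univalent vertex, and --- this is where condition~(ii) in the definition of ${\cal T}^c$ enters --- its boundary contains a point $P\in(\bpp')^\sharp$; without~(ii) the boundary could consist only of points of $(\bpp')^1\cup(\bpp')^2\cup(\bpp')^\nu$, all mapped to the single point $p_{r+1}$ and hence impossible to move independently. Second, one moves $p=h'(P)$ \emph{orthogonally} to $h'(E)$, where $E$ is the edge carrying $P$, and invokes \cite[Formula~(4)]{Sh09} to guarantee that $\widehat h'(V)$ actually moves on $L_{-\infty}$. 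Third, to break a collision $\widehat h'(V_1)=\widehat h'(V_2)$ one needs a constraint affecting exactly one foot: the paper chooses, among the points of $(\bpp')^\sharp$ on the boundary of $K(V_1)\cup K(V_2)$, the one whose image has minimal second coordinate, and observes that this point lies on the boundary of exactly one of $K(V_1)$, $K(V_2)$, so moving it separates the feet. With these three ingredients every collision equation is non-trivial and your deletion argument goes through; without them it does not close.
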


{\bf Proof.} Pick a configuration $\widetilde\bx$ in the interior of
the set of $\Theta$-generic $\CH_\Theta$-configurations of type $(l,
r)$ such that, for any integers $0 \leq l' < r'$, any $(l',
r')$-admissible collection $(\Del', k'_1, k'_2, g', \alp', \bet')$,
and any sub-configuration $\widetilde\bx' \subset \widetilde\bx$ of
type $(l', r')$, one has the following property:
\begin{enumerate}
\item[(*)] $\widetilde\bx'$ does not belong to the image
of any affine map $ev^\lambda: {\cal M}^\lambda(\Del', k'_1, k'_2,
g', \alp', \beta') \to (L_{-\infty})^{l'} \times (\R^2)^{r' + 1 -
l'}$, where ${\cal M}^\lambda(\Del', k'_1, k'_2, g', \alp', \beta')$
is of dimension $< 2r' + 2 - l'$.
\end{enumerate}
Notice that the conditions imposed on $\widetilde\bx$ are open. A
point $p \in L_{-\infty}$ is called {\it bad} for $\widetilde\bx$,
if there exist integers $0 \leq l' \leq r'$, an $(l',
r')$-admissible collection $(\Del', k'_1, k'_2, g', \alp', \beta')$,
a sub-configuration $\widetilde\bx' \subset \widetilde\bx$, a marked
$\CL$-curve $(\overline\Gam', \CV', h', \bpp')$ whose isomorphism
class belongs to ${\cal T}^c(\Del', k'_1, k'_2, g', \alp', \beta',
\widetilde\bx')$, and a vertex $V' \in (\Gam')^0_\infty \backslash
\CV'$ such that
\begin{itemize}
\item ${\widehat h'}(V') = p$,
\item in the case $l' < r'$ and $V' \not\in (\bpp')^\flat$,
there exist integers $0 \leq l'' < r''$, an $(l'', r'')$-admissible
collection $(\Del'', k''_1, k''_2, g'', \alp'', \beta'')$, a
sub-configuration $\widetilde\bx'' \subset \widetilde\bx$ disjoint
from $\widetilde\bx'$, a marked $\CL$-curve $(\overline\Gam'',
\CV'', h'', \bpp'')$ whose isomorphism class belongs to ${\cal
T}^c(\Del'', k''_1, k''_2, g'', \alp'', \beta'', \widetilde\bx'')$,
and a vertex $V'' \in (\Gam'')^0_\infty \backslash (\CV'' \cup
(\bpp'')^\flat)$ such that ${\widehat h''}(V'') = p$.
\end{itemize}
Lemma~\ref{finite} and Proposition~\ref{finiteness} imply that the
set ${\cal B}(\widetilde\bx) \subset L_{-\infty}$ of bad points for
$\widetilde\bx$ is finite.

Choose integers $0 \leq l' \leq r'$, an $(l', r')$-admissible
collection $(\Del', k'_1, k'_2, g', \alp', \bet')$, and a
subconfiguration $\widetilde\bx'$ of~$\widetilde\bx$ of type $(l',
r')$. Let $\lambda \in \Lambda(\Del', k'_1, k'_2, g', \alp',
\beta')$ be a combinatorial type such that
$(ev^\lambda)^{-1}(\widetilde\bx') \ne \varnothing$ (see
section~\ref{moduli-spaces} for notation). If $l' = r'$, then
$\lambda$ is one of the combinatorial types presented in
Figure~\ref{fn2}.

Assume that $l' < r'$. Due to the property~(*) above, the map
$ev^\lambda$ is injective. Thus, there exists a unique isomorphism
class $[(\overline\Gam', {\cal V}', h', \bpp')] \in{\cal T}^c(\Del',
k'_1, k'_2, g', \alp', \beta', \widetilde\bx')$ such that
$(\overline\Gam', \CV', h', \bpp')$ is of combinatorial
type~$\lambda$, and any small variation of $\widetilde\bx'$ uniquely
lifts to a variation of $[(\overline\Gam', {\cal V}', h', \bpp')]$
inside ${\cal M}^\lambda(\Del', k'_1, k'_2, g', \alp', \beta')$.

The map $\widehat h'$ is injective on $(\bpp')^\flat$ by the
definition of marked $\CL$-curves. Let~$V$ be a point in
$(\Gam')^0_\infty \backslash ({\cal V}' \cup (\bpp')^\flat)$, and
$K(V) \subset \overline\Gam'\backslash\bpp'$ be the connected
component containing~$V$. The component $K(V)$ does not contain any
other univalent vertex of $\overline\Gam'$ (see section~\ref{sec4}).
This fact together with the property~(ii) in the definition of
${\cal T}^c(\Del', k'_1, k'_2, g', \alp', \beta', \widetilde\bx')$
implies that the boundary of $K(V)$ contains a point $P \in
(\bpp')^\sharp$. Then, $h'(P) = p \in (\widetilde\bx')^\sharp$.
Let~$E \subset K(V)$ be the edge incident to~$P$. Move slightly the
point~$p$ in the direction orthogonal to $h'(E)$ keeping all the
points of $\bx' \backslash \{p\}$ fixed. Due to \cite[Formula
(4)]{Sh09}, the corresponding variation of $[(\overline\Gam', {\cal
V}', h', \bpp')]$ in ${\cal M}^\lambda(\Del', k'_1, k'_2, g', \alp',
\beta')$ changes the position of ${\widehat h}'(V)$ in
$L_{-\infty}$. This proves that the image of
$(\overline\Gam')^0_\infty \backslash (\CV' \cup (\bpp')^\flat)$
under $\widehat h'$ can be made disjoint from any point in ${\cal
B}(\widetilde\bx)$.

Assume now that ${\widehat h}'(V_1) = {\widehat h}'(V_2)$ for two
distinct points $V_1, V_2 \in (\overline\Gam')^0_\infty \backslash
({\cal V}' \cup (\bpp')^\flat)$. The connected components $K(V_1),
K_2(V_2) \subset \overline\Gam' \backslash \bpp'$ containing $V_1$
and $V_2$, respectively, do not coincide. Since $[(\overline\Gam',
{\cal V}', h', \bpp')] \in{\cal T}^c(\Del', k'_1, k'_2, g', \alp',
\beta', \widetilde\bx')$, the boundary of $K(V_i)$, $i = 1, 2$,
contains a point in $(\bpp')^\sharp$. Suppose that the second
coordinate of ${\widehat h}'(V_1) = {\widehat h}'(V_2)$ does not
exceed the second coordinate of $\widetilde\bx'_{r + 1}$. Among the
points in $(\bpp')^\sharp$ which belong to the boundary of $K(V_1)
\cup K(V_2)$, choose a point $P'$ whose image has the minimal second
coordinate. The point~$P'$ belongs to the boundary of exactly one
component $K(V_i)$, and moving the point $p' = h'(P')$ as above, we
break the equality ${\widehat h}'(V_1) = {\widehat h}'(V_2)$.

Repeating the described procedure for all $\lambda \in
\Lambda(\Del', k'_1, k'_2, g', \alp', \bet')$, we transform
$\widetilde\bx'$ to a certain configuration $\bx' \in {\cal P}(l',
r')$ and modify accordingly the configuration~$\widetilde\bx$. Due
to Lemma~\ref{finite}, we can iterate similar procedures for all
integers $0 \leq l' < r'$, all $(l', r')$-admissible collections
$(\Del', k'_1, k'_2, g', \alp', \beta')$, and all sub-configurations
$\widetilde\bx' \subset \widetilde\bx$ of type $(l', r')$ in order
to transform $\widetilde\bx$ to a $\Theta$-proper configuration~$\bx
\in {\cal P}(l, r)$. \proofend

\begin{lem}\label{new-corollary}
Let $l < r$ be non-negative integers, $(\Del, k_1, k_2, g, \alp,
\beta)$ an $(l, r)$-admissible collection, and $\Theta$ a finite
multi-set of vectors in $\Z^2$ such that~$\Theta$ dominates $\Del$.
Pick a $\Theta$-proper configuration~$\bx$ of type $(l, r)$ and a
class $[(\overline\Gam, \CV, h, \bpp)] \in {\cal T}^c(\Del, k_1,
k_2, g, \alp, \bet, \bx)$. Then, for any vertex~$V$ of
$\overline\Gam$ such that the valency of~$V$ is greater than~$3$,
the following statements hold:
\begin{enumerate}
\item[(1)] all the edges adjacent to~$V$ are of weight~$1$;
\item[(2)] any irreducible component
$(\overline\Gam_{V, j}, {\cal V}_{V, j}, h_{V, j}, \bpp_{V, j})$ of
a marked cut of $(\overline\Gam, {\cal V}, h, \bpp)$ at $V$ has one
of the combinatorial types presented in Figure~\ref{fn2}{\rm (}b,
d{\rm )} provided that $u_{V^a}(E^a) = (1, 0)$ for certain added
vertex $V^a$ of $\overline\Gam_{V, j}$ and the added edge $E^a$
adjacent to $V^a$.
\end{enumerate}
\end{lem}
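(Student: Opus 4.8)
The plan is to reduce everything to the local picture at $V$ and then to control the pieces produced by the marked cut at $V$ with the floor-type technology of Lemmas~\ref{splitting} and~\ref{floors}. First I would record the local structure. Since $(\overline\Gam,\CV,h,\bpp)$ is pseudo-simple and $V$ is non-univalent of valency $k>3$, there are exactly three distinct vectors among $u_V(E_1),\dots,u_V(E_k)$, and only $(1,0)$ is allowed to repeat. Thus $V$ is incident to $m\ge 2$ horizontal edges $E_1,\dots,E_m$ with $u_V(E_i)=(1,0)$, each mapped by $h$ into the line $y=(h(V))_2$, and to exactly two further edges $F_1,F_2$ with distinct directions, both different from $(1,0)$; the balancing condition at $V$ reads $\sum_{i=1}^m w(E_i)\,(1,0)+w(F_1)u_V(F_1)+w(F_2)u_V(F_2)=0$. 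Note that neither $F_1$ nor $F_2$ can be $(-1,0)$ (two such edges would coincide, and they cannot be absorbed into the horizontal balance), so both $u_V(F_1)$ and $u_V(F_2)$ have nonzero second coordinate.

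Next I would cut $(\overline\Gam,\CV,h,\bpp)$ at $V$ and study the components. For each $E_i$ the component $(\overline\Gam_{V,j},\CV_{V,j},h_{V,j},\bpp_{V,j})$ meeting the far endpoint of $E_i$ acquires an added vertex $V^a$ with $u_{V^a}(E^a)=(1,0)$, i.e.\ a horizontal added left end whose image lies at the prescribed height $(h(V))_2$ on $L_{-\infty}$. I would first show that such a component is one-sheeted: inserting an auxiliary vertical segment immediately to the right of $h(V)$ and re-running the argument of Lemma~\ref{floors} there (Lemma~\ref{splitting} confines all its non-univalent vertices to a thin horizontal strip) shows that it cannot carry more than the two non-horizontal degree vectors required for rigidity. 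A one-sheeted component carrying a horizontal left end together with a single point of $\bpp^1\cup\bpp^2\cup\bpp^\nu$ then falls under the classification of initial configurations in Proposition~\ref{initial-conditions}, and the presence of the horizontal left end pins its combinatorial type to Figure~\ref{fn2}(b) or~(d); this is statement~(2). In particular all edges of such a component, and hence each $E_i$, have weight~$1$.

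Finally I would obtain statement~(1). From the previous step $w(E_i)=1$ for all $i$, so $\sum_i w(E_i)=m$ and the balancing relation becomes $w(F_1)u_V(F_1)+w(F_2)u_V(F_2)=(-m,0)$. Applying the same cut-and-floor analysis to the two components carrying $F_1$ and $F_2$ (whose added ends are now non-horizontal) shows these components are one-sheeted as well; since the added $F_1$- and $F_2$-ends contribute degree vectors with nonzero second coordinate, one-sheetedness forces these vectors to be of the form $(A,\pm1)$, whence $w(F_1)=w(F_2)=1$. The step I expect to be the main obstacle is the middle one — proving that a right component carrying a horizontal added left end is genuinely the elementary floor of Figure~\ref{fn2}(b,d) rather than a taller compound of several floors. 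The difficulty is that a cut at a vertex is not a cut along one of the segments $I_i$, so the weak-$\CH_\Theta$ conditions do not apply verbatim; I would have to manufacture the needed vertical segments near $h(V)$ by hand, re-derive the splitting of Lemma~\ref{floors} in that setting, and crucially invoke $\Theta$-properness (the injectivity on $L_{-\infty}$ of the images of univalent vertices supplied by Lemma~\ref{ln10}) to exclude the extra non-rigid ends at the common height $(h(V))_2$ that a non-elementary component would inevitably produce.
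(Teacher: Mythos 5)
Your skeleton for statement (2) --- cut at $V$, recognize the components attached through the horizontal $(1,0)$-edges as initial-condition pieces from Figure~\ref{fn2}, and invoke $\Theta$-properness to exclude anything larger --- is essentially the paper's route, and your local analysis at $V$ (pseudo-simplicity forces exactly two non-horizontal edges $F_1,F_2$, all repetitions in direction $(1,0)$, and neither $F_i$ can be horizontal by balancing) is correct. But two steps fail as written. First, your derivation of statement (1) for the non-horizontal edges is wrong: after cutting at $V$, the components carrying the stubs of $F_1$ and $F_2$ are in general the bulk of the curve --- they contain all the remaining floors and all the remaining marked points (recall also that $g$ may be positive, so the two stubs may even lie in one component) --- and no floor-type argument will make them one-sheeted. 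The paper's actual argument for (1) needs no cut at $V$ at all: any edge whose image meets one of the segments $I_i$ is horizontal (condition~(2) of weak $\CH_\Theta$-configurations), so a non-horizontal edge lies inside a single strip, where by condition~(3) the component of the double cut containing it is one-sheeted; since the vectors of $\Del$ are primitive with second coordinate in $\{0,\pm 1\}$ (second condition of $(l,r)$-admissibility), the vertical flux through such a floor equals $1$, forcing weight~$1$ on every non-horizontal edge.

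Second, the middle step is incomplete exactly where you flag ``the main obstacle'', and that obstacle is the whole proof: one-sheetedness of a right-attached component, even if established, does not place it in the list of Proposition~\ref{initial-conditions} --- a one-sheeted floor can perfectly well pass through points of $\bx^\sharp$. What must be shown is that the component carries no point of $\bpp^\sharp$, i.e.\ that it is an $(r',r')$-type object constrained only at $p_{r+1}$, and this is delivered directly by the injectivity clause of $\Theta$-properness, with no auxiliary vertical segments and no re-derivation of Lemma~\ref{floors}: since the valency of $V$ exceeds~$3$, the cut at $V$ produces at least two horizontal left ends whose univalent vertices have the same image $(-\infty,(h(V))_2)$ on $L_{-\infty}$, and the second bullet in the definition of $\Theta$-properness (which requires $l'<r'$ for one of the two factors) forbids this coincidence as soon as one of the attached components involves a point of $\bx^\sharp$; hence all of them are $(r',r')$-pieces and lie in the Figure~\ref{fn2} list, while two coinciding elementary pieces remain allowed, as they must be. Note also that your phrase ``the presence of the horizontal left end pins the type to (b) or (d)'' is false as stated: every type (a)--(h) in Figure~\ref{fn2} has a horizontal left end; what singles out (b) and (d) is that their horizontal left end is \emph{not} of $\alp$-type, so you still need the paper's final observation that the added vertex $V^a$ does not belong to $(\bpp_{V,j})^\flat$ ({\it cf}.\ the marked-cut rules and the proof of Theorem~\ref{CHtrop-for}). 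A cosmetic point: type (b) is horizontal, not one-sheeted, so even your target classification should read ``horizontal or one-sheeted''.
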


\begin{proof} If~$E$ is an edge adjacent to~$V$ such that $h(E)$ is
not horizontal, then~$E$ is of weight~$1$ due to the definition of
$\CH_\Theta$-configurations and the second condition in the
definition of $(l, r)$-admissible collections.

Let~$E$ be a horizontal edge adjacent to~$V$. In this case, one has
$u_V(E) = (1, 0)$. Consider the irreducible component
$(\overline\Gam_{V, j}, {\cal V}_{V, j}, h_{V, j}, \bpp_{V, j})$ of
a marked cut of $(\overline\Gam, {\cal V}, h, \bpp)$ at $V$ such
that~$E$ is the predecessor of an added edge $E^a$ of
$\overline\Gam_{V, j}$. Since $\bx$ is $\Theta$-proper, the
combinatorial type of $(\overline\Gam_{V, j}, {\cal V}_{V, j}, h_{V,
j}, \bpp_{V, j})$ belongs to the list presented in Figure~\ref{fn2}.
Moreover, the added vertex $V^a$ adjacent to $E^a$ does not belong
to $(\bpp_{V, j})^\flat$ ({\it cf}. the proof of
Theorem~\ref{CHtrop-for}). Thus, the combinatorial type of
$(\overline\Gam_{V, j}, {\cal V}_{V, j}, h_{V, j}, \bpp_{V, j})$ is
presented in Figure~\ref{fn2}(b, d), and $E$ is of weight~$1$.
\end{proof}

\subsection{From tropical to algebraic}\label{sec6}
Let $l\le r$ be non-negative integers, and $(\Del, k_1, k_2, g,
\alp, \beta)$ an $(l, r)$-admissible collection as defined in
section \ref{CH-config}. Assume that the vectors of $\Del$,
clockwise rotated by $\pi/2$, determine the boundary of a
non-degenerate convex lattice polygon $\Pi$ having one of the shapes
depicted in Figure \ref{fn3} (with slopes $0$, $-1$, or $\infty$).

\begin{figure}
\begin{center}
\epsfxsize 145mm \epsfbox{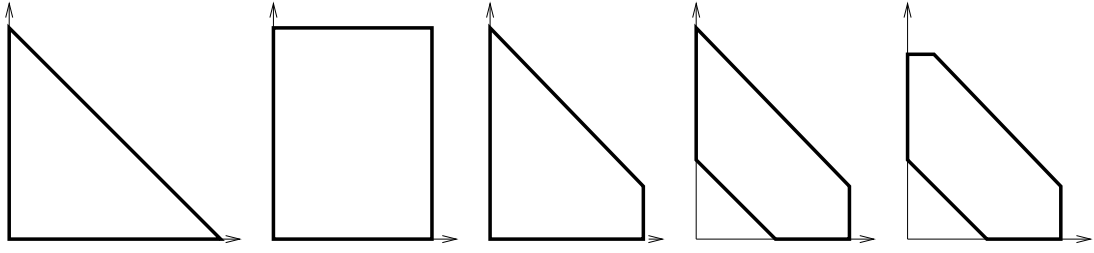}
\end{center}
\caption{Shapes of polygons $\Pi$}\label{fn3}
\end{figure}

The field $\K$ of complex locally convergent Puiseux series is
equipped with the non-Archimedean valuation
$\val(\sum_ia_it^i)=\min\{i\ |\ a_i\ne 0\}$ and the conjugation
involution $\overline{\sum_ia_it^i}=\sum_i\overline{a_i}t^i$.

Let $\Sig'=\Tor_\K(\Pi)$ be the toric surface over $\K$ associated
with $\Pi$, and let \mbox{$\pi:\Sig\to\Sig'$} be the blow-up of
$\Sig'$ at two generic points $z_1,z_2\in(\K^*)^2\subset\Sig'$ such
that $\Val(z_1)=\Val(z_2)$, where $\Val: (\K^*)^2 \to \R^2$ is
defined by $\Val(z^{(1)}, z^{(2)}) = (\val(z^{(1)}),
\val(z^{(2)}))$. Denote by $D'$ the first Chern class of the line
bundle on $\Sig'$, generated by the global sections $z^\omega$,
$\omega\in\Pi\cap\Z^2$, and introduce the divisor class
$D=\pi^*D'-k_1E_1-k_2E_2\in\Pic(\Sig)$, where $E_1$ and $E_2$ are
exceptional divisors of~$\pi$, and $k_1$, $k_2$ are non-negative
integers. Denote also by $E \subset \Sig$ the strict transform of
the toric divisor in $\Sig'$ associated with the left-most vertical
side of $\Pi$ (cf. Figure \ref{fn3}), and denote by ${\breve E}
\subset \Sig$ the union of the strict transforms of the remaining
toric divisors of $\Sig'$. We have a well-defined valuation map
$$\Sig \backslash
{\breve E} \overset{\pi} \longrightarrow \Sig' \backslash
\pi({\breve E}) \overset{-\Val}\longrightarrow\widehat\R^2$$ which
takes $E \backslash {\breve E}$ to $L_{-\infty}$.

We say that the $5$-tuple $(\Sig, D, g, \alp, \beta)$ is {\it
generated} by the collection $(\Del, k_1, k_2, g, \alp, \beta)$.

Let $\Theta$ be a multi-set dominating $\Del$, and $\bx$ a
$\Theta$-generic $\CH_\Theta$-configuration of type $(l, r)$ such
that $\bx_{r+1}=-\Val(z_i)$, $i=1,2$. An ordered configuration
$\bp=(\bp^\flat,\bp^\sharp)$ of $r$ distinct points of $\Sig
\backslash ({\breve E} \cup E_1 \cup E_2)$ is called an {\it
algebraic ${\cal CH}$-configuration over} $\bx$ if
\begin{itemize}
\item the map $(-\Val)\circ\pi$ takes $\bp^\flat\cup\bp^\sharp$ bijectively
onto $\bx^\flat \cup \bx^\sharp$,
\item the configuration $\bp$ is generic among the configurations
satisfying the preceding condition.
\end{itemize}
If $\bp$ is an algebraic $\CH$-configuration over~$\bx$, denote by
$V_\Sig(D, g, \alp, \bet, \bp)$ the subset in $|D|$ represented by
reduced irreducible curves $C\in|D|$ of genus $g$ subject to the
following restrictions:
\begin{itemize}
\item $\bp^\sharp \subset C$,
\item $C$ is nodal and nonsingular along $E$,
\item $C\cap E$
consists of $\|\alp+\bet\|$ distinct points and includes
$\bp^\flat$,
\item the intersection multiplicity $(C\cdot E)_p$
equals $k\ge 1$ for precisely $\alp_k+\bet_k$ points $p\in C\cap E$,
\item $(C\cdot E)_{p_i} = \mt(\alp, i)$.
\end{itemize}

The set $V_\Sig(D, g, \alp, \beta, \bp)$ is finite and the number of
its elements is equal to $N_\Sig(D, g, \alp, \beta)$ (see Lemma
\ref{ln1} and Section~\ref{general} for a discussion of the
corresponding complex enumerative problem and the definition of
$N_\Sig(D, g, \alp, \beta)$).

\begin{proposition}\label{tn11-new}
Let $(\Del, k_1, k_2, g, \alp, \beta)$, $\bx$, $\Sig$, and $D$ be as
above. Then, for any algebraic $\CH$-configuration~$\bp$ over~$\bx$,
there exists a multi-valued map $PW: {\cal T}^c(\Del, k_1, k_2, g,
\alp, \bet, \bx) \to V_\Sig(D, g, \alp, \bet, \bp)$ such that
\begin{itemize}
\item for each $T \in {\cal T}^c(\Del, k_1, k_2, g, \alp, \bet, \bx)$,
the set $PW(T)$ consists of $N(T)$ elements,
\item the sets $PW(T_1)$ and $PW(T_2)$ are disjoint
if $T_1 \ne T_2$.
\end{itemize}
In particular, for any algebraic $\CH$-configuration~$\bp$
over~$\bx$, one has
$${\cal N}(\Del, k_1, k_2, g, \alp, \beta)
\leq N_\Sig(D, g, \alp, \beta).$$
\end{proposition}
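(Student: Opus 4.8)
The plan is to construct the map $PW$ by patchworking: given a tropical curve $T \in {\cal T}^c(\Del, k_1, k_2, g, \alp, \bet, \bx)$, I would produce $N(T)$ algebraic curves over $\K$ whose image under the valuation map $-\Val \circ \pi$ is the plane tropical curve $h(\Gam)$ and which satisfy all the incidence and tangency conditions defining $V_\Sig(D, g, \alp, \bet, \bp)$. The crucial point, already recorded in Remark~\ref{rn56}, is that the complex multiplicity $N(T)$ coincides with the complex weight $M(T)$ of \cite[Section~2.6]{Sh09}. This allows me to invoke directly the patchworking theorem for plane tropical curves matching configurations with fixed multiple points developed in \cite{Sh09}.

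First I would translate our geometric data into the framework of \cite{Sh09}. Pushing forward along $\pi$, a curve in $|D|$ becomes a curve in $|D'|$ on the toric surface $\Sig' = \Tor_\K(\Pi)$ passing through $z_1$ and $z_2$ with multiplicities $k_1$ and $k_2$ respectively; thus the blow-up at $z_1, z_2$ is encoded as two fixed multiple points, and the special marked points $\bpp^1, \bpp^2, \bpp^\nu$, all sent to $p_{r+1} = -\Val(z_i)$, record precisely this condition. The definitions of pseudo-simple $\CL$-curve and of $\CH_\Theta$-configuration are tailored so that the tropical moduli problem has the expected dimension and the hypotheses of the patchworking construction hold: non-degeneracy guarantees that each pseudo-simple vertex carries a transversal local model, while the weak $\CH_\Theta$-conditions (2)--(4) control the behaviour of the curve across the vertical segments $I_i$. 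Applying the patchworking theorem of \cite{Sh09} to each $T$ then yields a set $PW(T)$ of exactly $M(T) = N(T)$ algebraic curves over $\K$.

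Next I would verify that every curve so obtained lies in $V_\Sig(D, g, \alp, \bet, \bp)$. By construction such a curve has genus $g$ and degree class $D$, and its tropical limit is $h(\Gam)$; since $\bx$ is a $\CH_\Theta$-configuration and $\bp$ lies over $\bx$, the curve passes through $\bp^\sharp$ and meets $E$ exactly at $\|\alp + \bet\|$ points with intersection multiplicities prescribed by $\alp$ and $\bet$ (the left ends of weight $\mt(\alp, m)$ through the points of $\bx^\flat$ giving the tangency profile along $E$). The pseudo-simplicity of $T$ together with Lemma~\ref{ln1} ensures that the patchworked curves are nodal and nonsingular along $E$, so no unwanted singularities appear; the passage through $z_1, z_2$ with multiplicities $k_1, k_2$ is guaranteed by the marked structure at $p_{r+1}$. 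Hence $PW(T) \subset V_\Sig(D, g, \alp, \bet, \bp)$, and $|PW(T)| = N(T)$.

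Finally, disjointness follows from the canonicity of tropicalization: an algebraic curve $C \in V_\Sig(D, g, \alp, \bet, \bp)$ has a well-defined tropical limit, and the patchworking data recovered from this limit determine uniquely the isomorphism class $T$ with $C \in PW(T)$; thus $PW(T_1) \cap PW(T_2) = \emptyset$ whenever $T_1 \ne T_2$. Summing over all $T$ and using that the $PW(T)$ are disjoint subsets of the finite set $V_\Sig(D, g, \alp, \bet, \bp)$ gives
$$
{\cal N}(\Del, k_1, k_2, g, \alp, \bet) = \sum_{T} N(T) = \sum_T |PW(T)| \le |V_\Sig(D, g, \alp, \bet, \bp)| = N_\Sig(D, g, \alp, \bet).
$$
The hard part will be the second step: establishing that the patchworking with the fixed multiple points at $z_1, z_2$ is unobstructed and refined, i.e. that the transversality hypotheses of \cite{Sh09} are met so that the number of lifts of $T$ is exactly $N(T)$ and not merely bounded by it. This is precisely the difficulty, flagged in the introduction, that correspondence theorems are unavailable for curves with fixed multiple points in general, and it is overcome here by the careful design of the class of pseudo-simple $\CL$-curves and $\CH_\Theta$-configurations.
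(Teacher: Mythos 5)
Your proposal is correct and follows essentially the same route as the paper: the paper likewise constructs $PW(T)$ by invoking the patchworking theorem \cite[Theorem 2]{Sh09}, using the identification $N(T)=M(T)$ of Remark~\ref{rn56}, treats the blow-up at $z_1,z_2$ as fixed multiple points via ${\widehat\bp}=\pi(\bp)\cup\{z_1,z_2\}$, and obtains the inequality by summing over the disjoint sets $PW(T)$. The only difference is that the paper explicitly carries out the verification you defer as ``the hard part''---checking the hypotheses (T1)--(T7) and (A1)--(A5) of \cite{Sh09} one by one, with (T4)--(T6) supplied by Lemma~\ref{new-corollary} and (A5) by \cite[Lemma 8]{Sh09}---which is precisely the checklist your outline anticipates.
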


\begin{proof} Let~$\bp$ be an algebraic $\CH$-configuration
over~$\bx$, and~$T$ an element of $\qquad\qquad$ ${\cal T}^c(\Del,
k_1, k_2, g, \alp, \bet, \bx)$. Then, the set $PW(T)$ is constructed
using the patchworking procedure described in~\cite{Sh09}. Notice
that~$T$ and~${\widehat\bp} = \pi(\bp) \cup \{z_1, z_2\}$ satisfy
the hypotheses of \cite[Theorem 2]{Sh09}. Indeed,
\begin{itemize}
\item
the condition (T1) in \cite[Section 2.6]{Sh09} immediately follows
from the definition of marked pseudo-simple ${\cal L}$-curves;
\item the condition~(T2) in~\cite[Section 3.1]{Sh09}
follows from the definition of the set $\qquad$ ${\cal T}^c(\Del,
k_1, k_2, g, \alp, \beta, \bx)$;
\item the $\Pi$-genericity of
the configuration $\bx$ (condition (T3) in \cite[Section 2.6]{Sh09})
follows from the definition of $\Theta$-generic ${\cal
CH}_\Theta$-configurations;
\item the condition~(T4) in~\cite[Section 3.1]{Sh09}
follows from the statement~(1) in Lemma~\ref{new-corollary};
\item the condition~(T5) in~\cite[Section 3.1]{Sh09}
follows from the statement~(2) in Lemma~\ref{new-corollary};
\item the condition~(T6) in \cite[Section 3.1]{Sh09}
follows from Lemma~\ref{new-corollary} and the condition (i) in the
definition of the set ${\cal T}^c(\Del, k_1, k_2, g, \alp, \bet,
\bx)$;
\item the condition (T7) in \cite[Section 3.1]{Sh09} follows
from the fact that, for any class $T=[(\overline\Gam, {\cal V}, h,
\bpp)] \in {\cal T}^c(\Del, k_1, k_2, g, \alp, \bet, \bx)$, the
first coordinate of $h(V)$ for any vertex $V$ of $\Gam$ does not
exceed the first coordinate of $\bx_{r+1}$ (cf. section
\ref{sec42}).
\end{itemize}
In addition, ${\widehat\bp} = \pi(\bp) \cup \{z_1, z_2\}$ matches
restrictions (A1)-(A4) in \cite[Section 3.1]{Sh09} by construction,
and meets condition (A5) in \cite[Section 3.1]{Sh09} by \cite[Lemma
8]{Sh09}.

Thus, the required statement follows from \cite[Theorem 2]{Sh09}.
\end{proof}

\subsection{Correspondence}\label{adaptation}
To compare formulas (\ref{en21}) and (\ref{en8}), we construct a map
$\Phi:A^{tr}(\Sig,E)\to{\cal S}$. Any element $(D,g,\alp,\bet)\in
A^{tr}(\Sig,E)$ satisfies \mbox{$D=dL-k_1E_1-...-k_5E_5$} with
$k_1,...,k_5\ge 0$ and
\begin{equation}
k_3 + k_4 < d, \quad \max_{3\le i<j\le 5}(k_i + k_j) \leq d, \quad g
\leq \frac{(d - 1)(d - 2)}{2} - \sum_{i = 1}^5\frac{k_i(k_i -
1)}{2}, \label{en43}
\end{equation}
due to $DE>0$, $D(L-E_i-E_j)\ge 0$ ($i, j = 3, 4, 5$), and
$g\le\frac{D^2+DK_\Sig}{2}+1$. These data give rise to the numbers
$l = \|\alp\|$ and $r = R_\Sig(D, g,  \beta) + l = -D(E + K_\Sigma)
+ \|\beta\| + g - 1$, and the multi-set of vectors~$\Del$ formed by
\begin{itemize}
\item $k_3$ vectors $(0, 1)$,
\item $d - k_4 - k_5$ vectors $(0, -1)$,
\item $d - k_3 - k_5$ vectors $(1, 1)$,
\item $k_4$ vectors $(-1, -1)$,
\item $k_5$ vectors $(1, 0)$,
\item $d - k_3 - k_4$ vectors $(-1, 0)$.
\end{itemize}

\begin{figure}
\setlength{\unitlength}{1cm}
\begin{picture}(5,5.5)(-4,0)
\thinlines\put(1,1){\vector(1,0){5}}\put(1,1){\vector(0,1){4.5}}
\dashline{0.2}(1,5)(3,3)\dashline{0.2}(4,2)(5,1) \thicklines
\put(2,1){\line(-1,1){1}}\put(1,2){\line(0,1){1}}
\put(1,3){\line(1,0){2}}\put(3,3){\line(1,-1){1}}\put(2,1){\line(1,0){
2}} \put(4,1){\line(0,1){1}}
\put(1.8,0.5){$k_4$}\put(3.5,0.5){$d-k_5$}\put(5.1,1.1){$d$}\put(0.5,1.9){$k_4$}
\put(-0.2,2.9){$d - k_3$}\put(0.7,4.9){$d$}
\end{picture}
\caption{Polygon $\Pi$}\label{fn7}
\end{figure}
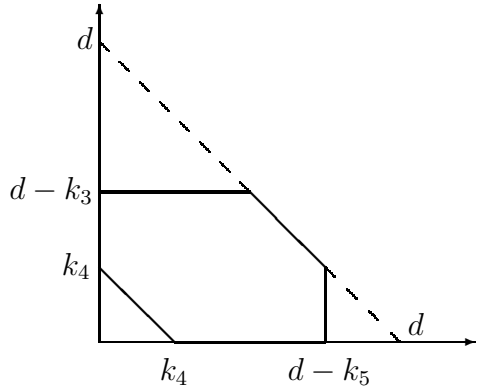

The collection $(\Del, k_1, k_2, g, \alp, \beta)$ is $(l,
r)$-admissible, and $(\Sig, D, g, \alp, \beta)$ is generated by
$(\Del, k_1, k_2, g, \alp, \beta)$; {\it cf}. Section~\ref{sec6}
(the polygon $\Pi$ obtained from $\Del$ is shown in
Figure~\ref{fn7}). Put $\Phi(D, g, \alp, \bet) = (\Del, k_1, k_2, g,
\alp, \bet)$.

\begin{proposition}\label{ln4}
The map $\Phi:A^{tr}(\Sig,E)\to{\cal S}$ is an injective
homomorphism of semi-groups which establishes a bijection between
the initial conditions of (\ref{en21}) {\rm (}which are listed in
Proposition \ref{initial-conditions}{\rm )} and the initial
conditions of (\ref{en8}) {\rm (}which are indicated in Proposition
\ref{tn3}(2){\rm )}. In particular,
\begin{equation}
N_\Sig(D, g, \alp, \bet) \leq {\cal N}(\Del, k_1, k_2, g, \alp,
\bet)\ ,\label{en19}
\end{equation}
for any $(D, g, \alp, \bet) \in A^{tr}(\Sig,E)$ and $(\Del, k_1,
k_2, g, \alp, \bet) = \Phi(D, g, \alp, \bet) \in {\cal S}$.
\end{proposition}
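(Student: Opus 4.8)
The plan is to prove the four assertions in turn — that $\Phi$ is a semigroup homomorphism, that it is injective, that it restricts to a bijection of initial data, and that the inequality~(\ref{en19}) follows — and to push the comparison of the two recursions only as far as the inequality requires, since the reverse inequality ${\cal N}\le N_\Sig$ is supplied separately by Proposition~\ref{tn11-new} (together these will later yield equality). First I would check the homomorphism property. Writing $D=dL-\sum_{i=1}^5 k_iE_i$, I observe that each of the six multiplicities defining $\Del$ — the numbers of $(0,1)$, $(0,-1)$, $(1,1)$, $(-1,-1)$, $(1,0)$, $(-1,0)$, equal respectively to $k_3$, $d-k_4-k_5$, $d-k_3-k_5$, $k_4$, $k_5$, $d-k_3-k_4$ — is an integral \emph{linear} function of $(d,k_3,k_4,k_5)$. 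Since $d$ and the $k_i$ add under $D\mapsto D^{(1)}+D^{(2)}$, the multiset $\Del$ is additive for multiset union, $k_1,k_2$ add directly, and the $g,\alp,\bet$ entries obey in ${\cal S}$ exactly the rule (with the same $-1$ shift in the genus) that they obey in $A^{tr}(\Sig,E)$; hence $\Phi$ is a homomorphism. Injectivity is then immediate: from $\Del$ one reads $k_3,k_4,k_5$ as the numbers of $(0,1)$, $(-1,-1)$, $(1,0)$ vectors and recovers $d=k_3+k_4+\#\{(-1,0)\}$, which with the explicitly retained $k_1,k_2$ reconstructs $D$, while $g,\alp,\bet$ are carried over unchanged.

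Next I would verify the bijection of initial data by direct computation, applying $\Phi$ to each of the ten collections in Theorem~\ref{tn2}(4)(ii)--(v) (the cases $D=E_3,E_4$ being excluded from $A^{tr}(\Sig,E)$) and matching them against the ten collections of Proposition~\ref{initial-conditions}. For instance $\Phi(L-E_5,0,\theta_1,0)=(\{(-1,0),(1,0)\},0,0,0,(1),(0))$, $\Phi(L-E_1,0,\theta_1,0)=(\{(-1,0),(0,-1),(1,1)\},1,0,0,(1),(0))$, $\Phi(2L-E_1-E_2-E_3-E_5,0,\theta_1,0)=(\{(-1,0),(0,-1),(1,0),(0,1)\},1,1,0,(1),(0))$, and so on; the two lists have equal cardinality and the assignment is a bijection, with both $N_\Sig$ and ${\cal N}$ equal to $1$ on each matched pair and to $0$ off the lists.

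Finally, I would obtain~(\ref{en19}) by induction on $R_\Sig(D,g,\bet)=r-l$, comparing~(\ref{en8}) with~(\ref{en21}). The base case $r=l$ is the bijection of initial data just established (and $g<0$ forces both sides to vanish). For the inductive step the decisive point is that the two recursions have the \emph{same} shape: the first (``$\bet\to\bet-\theta_j$'') sums coincide term by term under $\Phi$, since this move leaves $D$ — hence $\Del,k_1,k_2$ — untouched, with matching factors $j$; and the four algebraic choices $\daleth\in\{0,E_3,E_4,E_3+E_4\}$ correspond precisely to the four admissible pair-types for the tropical $\daleth$. Indeed, subtracting $E$, $E+E_3$, $E+E_4$, $E+E_3+E_4$ from $D$ produces, via $\Phi$, the multisets $(\Del\setminus\{(0,1),(-1,-1)\})\cup\{(-1,0)\}$, $\Del\setminus\{(1,1),(-1,-1)\}$, $\Del\setminus\{(0,1),(0,-1)\}$, $\Del\setminus\{(1,1),(0,-1),(-1,0)\}$, which are exactly the $\Del_\daleth$ attached to the four pairs built from an ``up'' vector in $\{(0,1),(1,1)\}$ and a ``down'' vector in $\{(0,-1),(-1,-1)\}$. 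The multinomial factor $\binom{\alp}{\alp^{(1)}\ldots\alp^{(m)}}$ and the factors $\binom{\bet^{(i)}}{\widetilde\bet^{(i)}}I^{\widetilde\bet^{(i)}}$ agree, and the remaining coefficient matches once one records that $r-l=R_\Sig(D,g,\bet)$ and $r^{(i)}-l^{(i)}=R_\Sig(D^{(i)},g^{(i)},\bet^{(i)})=n_i$ (additivity of $R_\Sig$, compatible with $r-l$ under $\Phi$); the ``at most once'' conditions transport since $n_i=0\Leftrightarrow r^{(i)}-l^{(i)}=0$. Because $\Phi$ is an injective homomorphism, every splitting of $D-E-\daleth$ in $A^{tr}(\Sig,E)$ maps to a distinct splitting of $\Del_\daleth$ in ${\cal S}$, and by induction each factor obeys $N_\Sig(D^{(i)},\dots)\le{\cal N}(\Del^{(i)},\dots)$.

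The main obstacle — and the reason only an inequality results — is that $\Phi$ need not be surjective: the tropical sum in~(\ref{en21}) ranges over \emph{all} splittings of $\Del_\daleth$ in ${\cal S}$ and may contain extra summands whose components lie outside $\Phi(A^{tr}(\Sig,E))$ (for instance splittings regrouping the lattice vectors in a manner not realized by any decomposition of $D-E-\daleth$ into classes of $A^{tr}(\Sig,E)$). Since ${\cal N}\ge 0$ throughout, these additional terms only enlarge the tropical right-hand side, so the image of the algebraic right-hand side is dominated term by term, giving ${\cal N}(\Del,\dots)\ge N_\Sig(D,\dots)$ and closing the induction. Care is needed to confirm that the ${\cal S}$-admissibility of $(\Del_\daleth,k_1,k_2,g',\alp',\bet')$ together with the requirement that $\Del_\daleth$ contain a vector $(-1,0)$ corresponds on the algebraic side to $D-E-\daleth\in\Pic_+(\Sig,E)$ — here one uses that the number of $(-1,0)$ vectors equals $(D-E-\daleth)E=I\alp'+I\bet'$ — so that no algebraic term is lost in the comparison.
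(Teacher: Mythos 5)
Your proposal is correct and follows essentially the same route as the paper's (very terse) proof: verify that $\Phi$ is an injective semigroup homomorphism, match the ten initial collections of Theorem~\ref{tn2}(4)(ii)--(v) with those of Proposition~\ref{initial-conditions}, and then identify each splitting~(\ref{en9}) of~(\ref{en8}) with a splitting~(\ref{en25}) of~(\ref{en21}) so that the inequality follows by induction from the non-negativity of the coefficients. You merely make explicit what the paper calls straightforward — in particular the four-fold correspondence $\daleth\in\{0,E_3,E_4,E_3+E_4\}\leftrightarrow$ tropical pairs, the identity $r-l=R_\Sig(D,g,\bet)$ matching the coefficients, and the possible failure of surjectivity of $\Phi$ explaining why only an inequality results — and all these details check out.
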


\begin{proof} The fact that $\Phi$ is an injective homomorphism is
straightforward. Furthermore, $\Phi$ allows one to identify each
splitting~(\ref{en9}) in the recursive formula~(\ref{en8}) with a
splitting~(\ref{en25}) in the recursive formula~(\ref{en21}). Thus,
the inequalities $N_\Sig(D, g, \alp, \bet) \leq {\cal N}(\Del, k_1,
k_2, g, \alp, \bet)$ follow from non-negativity of the coefficients
of the aforementioned recursive formulas. \end{proof}

The following statement is an immediate consequence of
Propositions~\ref{tn11-new} and~\ref{ln4}.

\begin{thm}\label{tn11}
Let $l \leq r$ be non-negative integers, $(\Del, k_1, k_2, g, \alp,
\beta)$ an $(l, r)$-admissible collection, and $\Theta$ a finite
multi-set of vectors in $\Z^2$ such that~$\Theta$ dominates $\Del$.
Pick a $\Theta$-proper configuration $\bx \in {\cal P}(l, r)$ and an
algebraic $\CH$-configuration~$\bp$ over~$\bx$. Then,
\begin{equation}
V_\Sig(D, g, \alp, \bet, \bp)= \coprod_{T\in{\cal T}^c(\Del, k_1,
k_2, g, \alp, \bet, \bx)} PW(T)\ ,\label{en18}
\end{equation}
where $(\Sig, D, g, \alp, \beta)$ is generated by $(\Del, k_1, k_2,
g, \alp, \beta)$. \proofend
\end{thm}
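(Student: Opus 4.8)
The plan is to reduce the asserted set-theoretic equality~(\ref{en18}) to a counting argument, exploiting that both sides are finite and that Propositions~\ref{tn11-new} and~\ref{ln4} supply opposite inequalities between the two relevant numbers. First I would record the inclusion ``$\supseteq$''. Since $\bx$ is $\Theta$-proper, it is in particular a $\Theta$-generic $\CH_\Theta$-configuration, so Proposition~\ref{tn11-new} applies: the map $PW$ sends each $T\in{\cal T}^c(\Del,k_1,k_2,g,\alp,\bet,\bx)$ to a subset $PW(T)\subset V_\Sig(D,g,\alp,\bet,\bp)$, and these subsets are pairwise disjoint. Hence $\coprod_{T}PW(T)$ is a well-defined subset of $V_\Sig(D,g,\alp,\bet,\bp)$, and it only remains to see that this inclusion is an equality.

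Next I would compare cardinalities. Because the sets $PW(T)$ are disjoint and each $PW(T)$ has $N(T)$ elements, the cardinality of $\coprod_{T}PW(T)$ equals $\sum_{T}N(T)={\cal N}(\Del,k_1,k_2,g,\alp,\bet)$ by the very definition of ${\cal N}$. On the other hand, as recalled in Section~\ref{sec6}, the ambient set $V_\Sig(D,g,\alp,\bet,\bp)$ is finite with exactly $N_\Sig(D,g,\alp,\bet)$ elements, where $(\Sig,D,g,\alp,\bet)$ is the tuple generated by $(\Del,k_1,k_2,g,\alp,\bet)$ and therefore corresponds, under the homomorphism $\Phi$ of Proposition~\ref{ln4}, to the given admissible collection.

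Finally I would close the argument by a squeeze. Proposition~\ref{tn11-new} gives ${\cal N}(\Del,k_1,k_2,g,\alp,\bet)\le N_\Sig(D,g,\alp,\bet)$, while Proposition~\ref{ln4} gives the reverse inequality $N_\Sig(D,g,\alp,\bet)\le{\cal N}(\Del,k_1,k_2,g,\alp,\bet)$, whence ${\cal N}=N_\Sig$. Thus the finite subset $\coprod_{T}PW(T)$ and the finite set $V_\Sig(D,g,\alp,\bet,\bp)$ containing it have the same number of elements, and a subset of a finite set of equal cardinality is the whole set; this yields~(\ref{en18}). Since the theorem is genuinely an immediate consequence of the two propositions, there is no real obstacle here: the only point that deserves care is the bookkeeping that upgrades the inclusion to an equality, namely observing that the two inequalities point in opposite directions and force equality of counts, and checking that the data of the theorem simultaneously satisfy the hypotheses of Proposition~\ref{tn11-new} (a $\Theta$-generic $\CH_\Theta$-configuration with $\bx_{r+1}=-\Val(z_i)$) and of Proposition~\ref{ln4} (membership of the generated tuple in $A^{tr}(\Sig,E)$ via $\Phi$).
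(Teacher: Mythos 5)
Your proposal is correct and follows exactly the paper's intended argument: the paper states Theorem~\ref{tn11} as an immediate consequence of Propositions~\ref{tn11-new} and~\ref{ln4}, and your squeeze --- the inclusion $\coprod_T PW(T)\subseteq V_\Sig(D,g,\alp,\bet,\bp)$ with disjoint fibers of sizes $N(T)$, combined with the opposite inequalities ${\cal N}\le N_\Sig$ and $N_\Sig\le{\cal N}$ forcing equality of the finite cardinalities --- is precisely the bookkeeping that makes this immediate. Your closing remark about verifying the hypotheses (that a $\Theta$-proper configuration is in particular $\Theta$-generic with $\bx_{r+1}=-\Val(z_i)$, and that the generated tuple lies in $A^{tr}(\Sig,E)$ and corresponds under $\Phi$) is exactly the right point of care and matches the paper's setup in Sections~\ref{sec6} and~\ref{adaptation}.
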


\section{Recursive formulas for Welschinger invariants}\label{Wel}

From now on, we switch back the ground field to~$\C$.

\subsection{Welschinger invariants}\label{Welsch-inv}
Let $\Sig$ be a real {\it unnodal} ({\it i.e.}, not containing any
rational $(-n)$-curve, $n\ge 2$) Del Pezzo surface with a connected
real part $\R\Sigma$, and let $D\subset\Sigma$ be a real effective
divisor. Consider a generic set~$\bp$ of $c_1(\Sig)\cdot D-1$ real
points of $\Sig$. The set ${\cal R}(\Sig, D, \bp)$ of real rational
curves $C\in|D|$ passing through the points of~$\bp$ is finite, and
all these curves are nodal and irreducible. Due to the Welschinger
theorem~\cite{W1} (and the genericity of the complex structure on
$\Sig$), the number
$$
W(\Sig,D, \bp)=\sum_{C\in{\cal R}(\Sig,D, \bp)}(-1)^{s(C)}\ ,
$$
where $s(C)$ is the number of {\it solitary nodes} of $C$ ({\it
i.e.}, real points, where a local equation of the curve can be
written over $\R$ in the form $x^2+y^2=0$), does not depend on the
choice of a generic set~$\bp$. We denote this Welschinger invariant
by $W(\Sig,D)$.

\subsection{Welschinger numbers and recursive formula}\label{sec5}
Put $\Sig=\PP^2_{q,s}$, where $0\le q+2s\le 5$, $s\le 1$; see
Introduction. Denote by~$L$ the pull-back of a line in $\PP^2$, and
by $E_1,...,E_{q+2s}$ the exceptional curves of the blow up. In the
case $s=1$, assume that $E_1$ and $E_2$ are conjugate imaginary. Fix
a smooth real rational curve $E$ linearly equivalent to (cf. section
\ref{initial})
\begin{itemize}\item $L$ for $q+2s\le2$, \item $L-E_3$
for $q+2s=3$, \item $L-E_3-E_4$ for $q+2s\ge 4$.
\end{itemize}

Denote by $\Pic^{re}_+(\Sig,E)$ the subset of $\Pic_+(\Sig, E)$
formed by the divisors representable by a real reduced irreducible
over $\C$ curve.

\begin{lem}\label{l6n2}
The set $\Pic^{re}_+(\Sig,E)$ consists of the following divisors:
\begin{itemize}
\item
the divisors represented by real $(-1)$-curves which cross~$E$,
\item
the divisors $D=dL-k_1E_1-...-k_qE_q$ satisfying
$$
\displaylines{ DE>0, \quad d > 0, \quad g(\Sig,D) \ge 0, \cr
k_1,...,k_q\ge 0,\quad \max_{1\le i<j\le q}(k_i+k_j)\le d,\quad k_1+
\ldots +k_q\le 2d }$$ in the case $s = 0$, and the divisors
$D=dL-k_1E_1-...-k_{q+2}E_{q+2}$ satisfying
$$
\displaylines{ DE>0, \quad d > 0, \quad g(\Sig,D) \ge 0, \cr k_1,
\ldots,k_{q+2}\ge 0,\quad k_1 = k_2, \quad \max_{1\le i<j \le
q+2}(k_i+k_j)\le d,\quad k_1+ \ldots +k_{q+2}\le 2d }
$$
in the case $s = 1$.
\end{itemize}
\end{lem}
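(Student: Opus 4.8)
The plan is to first rephrase the statement as a classification of real, reduced, geometrically irreducible divisor classes $D$ that meet $E$ positively. If $D$ carries such a curve and $DE>0$, then $D$ is a generator of the semigroup $\Pic_+(\Sig,E)$, hence lies in it; conversely every element of $\Pic^{re}_+(\Sig,E)$ is of this form. So $\Pic^{re}_+(\Sig,E)$ is exactly the set of classes represented by a real reduced geometrically irreducible curve $C$ with $DE>0$. Writing $D=dL-\sum_ik_iE_i$, one has $d=D\cdot L\ge0$ since $L$ is nef, and I would split according to the sign of $D^2$. If $D^2<0$, then, $\Sig$ being unnodal, $C$ is a $(-1)$-curve; since a $(-1)$-class has a unique effective representative it is automatically real as soon as the class is conjugation-invariant, and imposing $DE>0$ yields precisely the first item of the list. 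The remaining classes have $D^2\ge0$, and then $d>0$ (the only irreducible curves with $d=0$ are the $E_i$, which have $D^2=-1$).

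For the forward implication in the case $D^2\ge0$ I would use that two distinct reduced irreducible curves meet non-negatively, so $D\cdot C'\ge0$ for every $(-1)$-curve $C'$ (here $C'\ne C$, as $C^2\ge0$). For $q+2s\le5$ the complete list of $(-1)$-classes is $E_i$, $L-E_i-E_j$, and, only when $q+2s=5$, the conic class $2L-E_{i_1}-\dots-E_{i_5}$. Intersecting $D$ with these gives $k_i\ge0$, $\max_{i<j}(k_i+k_j)\le d$, and $\sum_ik_i\le2d$ (the conic inequality when $q+2s=5$, and a consequence of the pairwise bounds otherwise). The arithmetic genus $g(\Sig,D)=p_a(C)$ dominates the geometric genus of $C$, so $g(\Sig,D)\ge0$ is automatic. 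Finally $D$ must be conjugation-invariant: this is automatic when $s=0$, while when $s=1$ conjugation exchanges $E_1$ and $E_2$, forcing $k_1=k_2$. This places every such $D$ in the second item.

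For the converse I would show that the numerical conditions of the second item force $D$ to be nef with $g(\Sig,D)\ge0$, and that such a class is carried by a real reduced irreducible curve crossing $E$. The inequalities $k_i\ge0$, $k_i+k_j\le d$, $\sum_ik_i\le2d$ say exactly that $D$ is non-negative on every $(-1)$-class above; for $q+2s\ge2$ these classes generate the Mori cone, so $D$ is nef, and the two low cases are direct ($\Sig=\PP^2$ trivially, and on $\PP^2_1$ the only extra extremal ray $L-E_1$ satisfies $D\cdot(L-E_1)=d-k_1\ge0$ because $g(\Sig,D)\ge0$ forces $k_1\le d$). Since $\Sig$ has degree $9-(q+2s)\ge4$, a nef class is base-point free; if $D^2>0$ it is moreover big, so by Bertini a general member of $|D|$ is reduced and irreducible, while if $D^2=0$ the class defines a $\PP^1$-fibration and $g(\Sig,D)\ge0$ forces $D$ to be a single reduced irreducible fibre (a multiple $mF$ would have $g=1-m<0$ for $m\ge2$). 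To obtain a real representative I would use that, $D$ being conjugation-invariant, $|D|$ is defined over $\R$; the reduced irreducible members form a non-empty Zariski-open subset of $|D|$ defined over $\R$, which therefore meets the Zariski-dense real locus of $|D|$, and $DE>0$ guarantees the resulting curve crosses $E$.

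The step I expect to be the main obstacle is the existence assertion in the converse: converting the purely numerical conditions into an actual real reduced irreducible curve. This is where the del Pezzo geometry is genuinely used — base-point freeness of nef classes on a surface of degree $\ge4$, Bertini irreducibility in the big case, and the fibration analysis together with the role of $g(\Sig,D)\ge0$ in the boundary case $D^2=0$. By contrast, recording which $(-1)$-classes occur for each value of $q+2s$, and checking that $\sum_ik_i\le2d$ is implied by the pairwise bounds when $q+2s<5$, is routine bookkeeping that I would carry out case by case only if necessary.
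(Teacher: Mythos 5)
Your proof is correct, but it is considerably more self-contained than the paper's, which disposes of this lemma in two sentences: the inequalities on the $k_i$ ``follow from the B\'ezout theorem'' (your intersections with the line and conic classes, carried out downstairs in $\PP^2$), and the existence of real reduced irreducible representatives is delegated entirely to the citation \cite{GLS}. Your necessity half is thus essentially the paper's argument dressed up on the blown-up surface ($D\cdot C'\ge 0$ against the finitely many $(-1)$-classes, plus the adjunction analysis showing $D^2<0$ forces a $(-1)$-curve, which isolates the first bullet); the genuinely different part is your existence half, where you replace the \cite{GLS} citation by the nef-cone computation, base-point freeness of nef classes on a Del Pezzo surface of degree $\ge 4$, Bertini in the big case, the $\PP^1$-fibration analysis with $g(\Sig,D)\ge 0$ killing multiple fibres when $D^2=0$, and density of real members of a real linear system. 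What \cite{GLS} buys the paper is explicit representatives (in fact nodal, even rational, curves, which is what the later arguments consume); what your route buys is independence from that reference, at the price of invoking standard Del Pezzo facts. Two glosses deserve a flag, though neither is a real gap: for $q+2s\le 1$ the bound $k_1+\ldots+k_q\le 2d$ is \emph{not} implied by the (then vacuous) pairwise bounds and must come from the genus inequality, exactly as in your $\PP^2_1$ remark about $k_1\le d$; and the Zariski-density of the real locus of $|D|$ silently uses $\R\Sig\ne\emptyset$ (otherwise $|D|$ could a priori be a real form of projective space without real points --- the invariant class descends to a bundle over $\R$ precisely because the surface has a real point), which holds throughout the paper's setting.
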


\begin{proof} The inequalities on the numbers $k_i$ follow from the
B\'ezout theorem. Real reduced irreducible curves representing the
described divisors can be found, for example, in~\cite{GLS}.
\end{proof}

Introduce the set $A^{re}_0(\Sig,E)$ which consists of the triples
$$(D,\alp,\bet)\in\Pic_+^{re}(\Sig,E)\times(\Z_+^\infty)^{\text{\rm
odd}}\times(\Z_+^\infty)^{\text{\rm odd}}
$$
such that $I\alp+I\bet=DE$ and $R_\Sig(D, 0, \bet) \geq 0$. In the
case $\Sig=\PP^2_{3,1}$, include in $A^{re}_0(\Sig, E)$ also the
element $(2L-E_1-E_2-2E_5,0,2\theta_1)$.

The following statement is straightforward.

\begin{lem}\label{l6n3} (1)
For each element $(D,\alp,\bet)\in A^{re}_0(\Sig,E)$ different from
$$
(2L-E_1-E_2-2E_5,0,2\theta_1), \quad (E_3,0,\theta_1), \quad
(E_4,0,\theta_1),
$$
the quadruple $(D,0,\alp,\bet)$ belongs to
$A^{tr}(\Sig,E)\cap\Phi^{-1}({\cal S}^*)$, where ${\cal S}^*$ stands
for ${\cal S}^{\text{\rm odd}}$ {\rm (}respectively, ${\cal
S}^{\text{odd,sym}}${\rm )} if $s=0$ {\rm (}respectively, $s =
1${\rm )}.

(2) If $s = 0$, each triple $(D,\alp,\bet)$ presented in Figure
\ref{fn2} and relevant to the surface $\Sig$ belongs to
$A^{re}_0(\Sig,E)$. If $s = 1$, each triple $(D,\alp,\bet)$
presented in Figure \ref{fn2}(a,d-h) and relevant to the surface
$\Sig$ belongs to $A_0^{re}(\Sig,E)$. \proofend
\end{lem}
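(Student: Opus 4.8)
The plan is to unwind all the definitions involved so that both assertions reduce to routine checks against explicit numerical conditions. Recall that membership of a quadruple $(D,0,\alp,\bet)$ in $A^{tr}(\Sig,E)$ amounts to: the relation $I\alp+I\bet=DE$ together with $0\le g(\Sig,D)$ from~(\ref{en1}), the inequality $R_\Sig(D,0,\bet)\ge0$, and the extra toric condition $DE_i\ge0$ for $i=1,\dots,5$; while $\Phi(D,0,\alp,\bet)\in{\cal S}^*$ adds the requirement that $\alp,\bet$ lie in $(\Z^\infty_+)^{\text{\rm odd}}$ and, when $s=1$, that $k_1=k_2$. Each of these is either built into the definition of $A^{re}_0(\Sig,E)$ or can be read off from the classification of $\Pic^{re}_+(\Sig,E)$ furnished by Lemma~\ref{l6n2}.

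For part~(1), given $(D,\alp,\bet)\in A^{re}_0(\Sig,E)$ the equality $I\alp+I\bet=DE$, the inequality $R_\Sig(D,0,\bet)\ge0$, and the oddness of $\alp,\bet$ all hold by definition; so the only points to verify are $g(\Sig,D)\ge0$, the toric inequalities $DE_i\ge0$, and (for $s=1$) $k_1=k_2$. By Lemma~\ref{l6n2} the elements of $\Pic^{re}_+(\Sig,E)$ are either real $(-1)$-curves meeting~$E$, or divisors $dL-\sum k_iE_i$ with $d>0$, $k_i\ge0$ and $g(\Sig,D)\ge0$; in the latter case $DE_i=k_i\ge0$ is immediate, $g(\Sig,D)\ge0$ is explicit, and for $s=1$ the condition $k_1=k_2$ is exactly part of the list in Lemma~\ref{l6n2}. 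For the $(-1)$-curves one checks directly that the only ones meeting $E=L-E_3-E_4$ with $DE_i<0$ for some~$i$ are $E_3$ and $E_4$ themselves (where $DE_3=-1$, resp.\ $DE_4=-1$), which singles out $(E_3,0,\theta_1)$ and $(E_4,0,\theta_1)$ as the classes failing the toric condition. The third excluded triple is $(2L-E_1-E_2-2E_5,0,2\theta_1)$, adjoined by hand for $\PP^2_{3,1}$: a one-line intersection computation gives $g(\Sig,2L-E_1-E_2-2E_5)=-1<0$, so the quadruple with $g=0$ violates~(\ref{en1}) and is rightly excluded. Thus every remaining triple meets all the defining conditions of $A^{tr}(\Sig,E)\cap\Phi^{-1}({\cal S}^*)$.

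For part~(2), I would go through the finitely many triples of Figure~\ref{fn2} one by one. Decoding the degree~$\Del$ of each case back into a divisor class via the recipe defining~$\Phi$ (the numbers of vectors $(0,1),(0,-1),(1,1),(-1,-1),(1,0),(-1,0)$ recovering $d,k_3,k_4,k_5$ and hence $D$), one identifies cases (a)--(h) with the collections of Proposition~\ref{initial-conditions}; each corresponding divisor $L-E_5,\,L-E_i-E_5,\,L-E_i,\,L-E_1-E_2,\,2L-E_1-E_2-E_4-E_5,\,2L-E_1-E_2-E_3-E_5,\,2L-E_1-E_2-E_5$ is a real reduced irreducible curve crossing~$E$, hence lies in $\Pic^{re}_+(\Sig,E)$ by Lemma~\ref{l6n2}, and in each case the attached $(\alp,\bet)$ satisfies $I\alp+I\bet=DE$ and $R_\Sig(D,0,\bet)=0$. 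The restriction to cases (a),(d)--(h) when $s=1$ is precisely the passage to those collections with $k_1=k_2$: cases (b) and (c) have $\{k_1,k_2\}=\{1,0\}$ and so are not symmetric, while (a),(d)--(h) all carry $k_1=k_2$, matching the sublist of Proposition~\ref{initial-conditionsr}.

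The main obstacle is organisational rather than conceptual: it is the clean bookkeeping of the dictionary between the combinatorial data $(\Del,k_1,k_2,g,\alp,\bet)$ and the geometric data $(D,\alp,\bet)$ under~$\Phi$, and the careful tracking of which classes carry $k_1=k_2$. The one point deserving explicit comment is case~(g), where $\alp=\theta_2$ records a single left end of \emph{weight two}: since it is the parity of the edge weights that governs the Welschinger multiplicity, this case must be treated through its symmetric realization, in parallel with the exceptional class $T^{sp}(p_1)$ and the element adjoined to $A^{re}_0(\PP^2_{3,1},E)$, and I would flag the parity check here rather than leave it to the reader.
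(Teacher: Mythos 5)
Your part~(1) is correct, and it is precisely the ``straightforward'' verification the paper has in mind (the paper prints no argument for this lemma at all): membership of $(D,0,\alp,\bet)$ in $A^{tr}(\Sig,E)\cap\Phi^{-1}({\cal S}^*)$ reduces to $g(\Sig,D)\ge 0$, the toric inequalities $DE_i\ge 0$, and (for $s=1$) $k_1=k_2$, all of which you correctly extract from Lemma~\ref{l6n2}; and your identification of the excluded triples is right --- among the $(-1)$-curves crossing $E$ only $E_3,E_4$ violate $DE_i\ge0$, and $g(\Sig,2L-E_1-E_2-2E_5)=-1<0$ kills (\ref{en1}) for $g=0$. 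Two trivial omissions worth one line each: for the $(-1)$-curve case record $g(\Sig,D)=0\ge0$, and for $s=1$ note that a \emph{real} $(-1)$-class automatically has $k_1=k_2$ because conjugation swaps $E_1$ and $E_2$ (Lemma~\ref{l6n2} states $k_1=k_2$ only for the second family). Your decoding of Figure~\ref{fn2} through $\Phi$ in part~(2), and the $s=1$ filtering of cases (b),(c) by reality, are likewise correct for cases (a)--(f) and (h).

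The one genuine problem is your treatment of case~(g), and your diagnosis there is wrong. The difficulty is not that the Welschinger multiplicity of~(g) requires a symmetric computation: it is that the triple $(2L-E_1-E_2-E_5,\theta_2,0)$ fails the membership test outright, because $\theta_2\notin(\Z^\infty_+)^{\text{\rm odd}}$ while the definition of $A^{re}_0(\Sig,E)$ demands $\alp\in(\Z^\infty_+)^{\text{\rm odd}}$. Your appeal to a symmetric realization ``in parallel with $T^{sp}(p_1)$'' does not repair this: the only element adjoined by hand to $A^{re}_0(\Sig,E)$ is $(2L-E_1-E_2-2E_5,0,2\theta_1)$, which is the $\Phi$-preimage of $K^{sp}$ and is unrelated to~(g); moreover the lemma asserts (g)~$\in A^{re}_0$ also for $s=0$, where no symmetric category is available at all. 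So, as literally stated, part~(2) is false at case~(g) --- this is a slip in the lemma itself, which a careful proof must surface rather than absorb. The honest resolution is to observe that the slip is vacuous and hence harmless: starting from the purely real data $\alp=0$, $\bet=(DE)\theta_1$, the recursion (\ref{en60}) (equivalently (\ref{en26})) can never create a positive even-indexed entry of $\alp$, since the first sum transfers $\theta_j$ only for $j$ with $\bet_j>0$ and the splittings in the second sum require $\alp'\le\alp$, $\bet\le\bet'$; thus the (g)-type initial value is never invoked, and part~(2) should simply be read with case~(g) excluded (or with the oddness condition on $\alp$ relaxed). You were right to single out~(g) as the delicate point, but the flag should say ``not in $A^{re}_0$ by the oddness requirement, and irrelevant to the recursion,'' not ``handle via the symmetric count.''
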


Consider the function $W_\Sig: A^{re}_0(\Sig,E)\to\Z$ defined as
follows:
\begin{itemize}
\item $W_\Sig(E_3,0,\theta_1)=1$ if $q+2s\ge 3$,
\item $W_\Sig(E_4,0,\theta_1)=1$ if
$q+2s\ge 4$,
\item $W_\Sig(2L-E_1-E_2-2E_5,0,2\theta_1) = 1$
if $\Sig = \PP^2_{3, 1}$;
\item for any $(D,\alp,\bet)\in A^{re}_0(\Sig,E)$
different from those mentioned above, put $W_\Sig(D, \alp, \bet) =
{\cal W}^*(\Del, k_1, k_2, 0, \alp, \bet)$, where $(\Del, k_1, k_2,
0, \alp, \bet) = \Phi(D, 0, \alp, \bet) \in {\cal S}^*$ (see
Lemma~\ref{l6n3}) and ${\cal W}^*$ stands for ${\cal W}$ or ${\cal
W}^{\text{\rm sym}}$ according to $s=0$ or $1$.
\end{itemize}

\begin{thm}\label{tn12} (1) For any
divisor $D\in\Pic^{re}_+(\Sig,E)$, one has
\begin{equation}W(\Sig,D)=W_\Sig(D,0,(DE)\theta_1)\ .\label{en47}\end{equation}

(2) For any element $(D,\alp,\bet)\in A^{re}_0(\Sig,E)$ such that
$D\in\Pic_+^{re}(\Sig,E)$ and $R_\Sig(D, 0,  \bet) > 0$, the
following formula holds
$$W_\Sig(D,\alp,\bet)=\sum_{j\ge 1,\ \bet_j>0}W_\Sig(D,\alp+\theta_j,\bet-\theta_j)$$
\begin{equation}+\sum
\left(\begin{matrix}\alp\\
\alp^{(1)}...\alp^{(m)}\end{matrix}\right)
\frac{(n-1)!}{n_1!...n_m!}\prod_{i=1}^m\left(\left(
\begin{matrix}\bet^{(i)}\\
\widetilde\bet^{(i)}\end{matrix}\right)W_\Sig(D^{(i)},\alp^{(i)}
,\bet^{(i)})\right),\label{en60}\end{equation} where $n = R_\Sig(D,
0,  \bet)$, the number $n_i$ is equal to
\begin{equation}
\begin{cases}
0 \; & if  \; \Sig=\PP^2_{3,1}, D^{(i)}=2L-E_1-E_2-2E_5, \cr & \quad
\alp^{(i)}=0, \bet^{(i)}=(2), \cr R_\Sig(D^{(i)}, 0,  \bet^{(i)}) \;
& otherwise,
\end{cases}
\end{equation}
and the second sum in~(\ref{en60}) is taken
\begin{itemize}
\item over all
sequences
\begin{equation}(D^{(1)},\alp^{(1)},\bet^{(1)}),\ ...,\ (D^{(m)},\alp^{(m)},
\bet^{(m)})\ ,\label{en70}
\end{equation}
of elements of $A^{re}_0(\Sig,E)$ such that
\begin{enumerate}
\item[(a)] $\sum_{i=1}^mD^{(i)}=D-E$,
\item[(b)] $\alp'\le\alp$ and $\bet\le\bet'$,
where $\alp' = \sum_{i=1}^m\alp^{(i)}$ and $\beta' =
\sum_{i=1}^m\bet^{(i)}$,
\item[(c)] each
triple $(D^{(i)},\alp^{(i)},\bet^{(i)})$ with $n_i=0$ appears in
(\ref{en70}) at most once,
\end{enumerate}
\item over all splittings in $(\Z_+)^{\text{\rm odd}}$
\begin{equation}
\bet'=\bet+\sum_{i=1}^m\widetilde\bet^{(i)} \ ,\label{en99}
\end{equation}
satisfying the restriction $\bet^{(i)}\ge\widetilde\bet^{(i)}$ and
$$\|\widetilde\bet^{(i)}\|=\begin{cases}2,\quad&\text{if}\quad
\Sig=\PP^2_{3,1},\ D^{(i)}=2L-E_1-E_2-2E_5,\\ &\quad \alp^{(i)} =0,\
\bet^{(i)}=(2),\\ 1,\quad&\text{otherwise}
                           \end{cases}$$
for all $i=1,...,m$,
\end{itemize}
and the second sum in (\ref{en60}) is factorized by simultaneous
permutations in the sequence (\ref{en70}) and in the splitting
(\ref{en99}).

(3) All the numbers $W_\Sig(D,\alp,\bet)$, where $(D,\alp,\bet)\in
A^{re}_0(\Sig,E)$, $D\in\Pic^{re}_+(\Sig,E)$, and $R_\Sig(D, 0,
\bet) > 0$, are recursively determined by the formula {\rm
(}\ref{en60}{\rm )}, the value
$W_{\PP^2_{3,1}}(2L-E_1-E_2-2E_5,0,(2))=1$ and the values
$W_\Sig(D,\alp,\bet)$ for the elements $(D,\alp,\bet)\in
A^{re}_0(\Sig,E)$ with $R_\Sig(D, 0,  \bet) = 0$. The latter initial
values are equal to $1$ in the cases listed in Figure~\ref{fn2} for
$s=0$, the cases listed in Figure~\ref{fn2}(a,d-h) for $s=1$, and
vanish in all the remaining cases.
\end{thm}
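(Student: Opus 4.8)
The plan is to read off parts~(2) and~(3), which concern the \emph{tropically defined} function $W_\Sig$, from the tropical recursion of Section~\ref{tropical-formulas} transported through the semigroup homomorphism $\Phi$ of Proposition~\ref{ln4}, and to settle part~(1) by running the correspondence of Section~\ref{corresp-thm} over the reals. For part~(2) I would specialize the tropical formula~(\ref{en26}) of Proposition~\ref{CHtrop-for-real} to genus $g = 0$ and to $(\Del, k_1, k_2, 0, \alp, \bet) = \Phi(D, 0, \alp, \bet)$, using Lemma~\ref{l6n3} and the definition of $W_\Sig$. Since $\Phi$ is an injective semigroup homomorphism, each splitting~(\ref{en27}) of $\Del_\daleth$ matches a splitting~(\ref{en70}) with $\sum_i D^{(i)} = D - E$; the choice of the subset $\daleth \subset \Del$ corresponds, exactly as in the passage from~(\ref{en5}) to~(\ref{en8}) in Proposition~\ref{tn3}, to the algebraic choice $\daleth \in \{0, E_3, E_4, E_3 + E_4\}$, and these four cases are absorbed into condition~(a) by permitting summands $D^{(i)} \in \{E_3, E_4\}$. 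Since $R_\Sig(E_3, 0, \theta_1) = R_\Sig(E_4, 0, \theta_1) = 0$, such summands have $n_i = 0$ and appear at most once, which is condition~(c). The only summand to which $\Phi$ does not apply is $D^{(i)} = 2L - E_1 - E_2 - 2E_5$ on $\PP^2_{3,1}$, whose arithmetic genus $g(\Sig, D) = -1$ violates the bound defining $A^{tr}(\Sig, E)$: this class generates the exceptional $K^{sp}$, with symmetric model $T^{sp}(p_1)$ of Remark~\ref{rem-sym}, and the stipulations $n_i = 0$, $\|\widetilde\bet^{(i)}\| = 2$ recorded in the statement reproduce its contribution.

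Part~(3) then follows by feeding the initial data of Proposition~\ref{initial-conditionsr} into~(\ref{en60}). The triples with $R_\Sig(D, 0, \bet) = 0$ correspond under $\Phi$ to $(r, r)$-admissible collections, whose value ${\cal W}^*$ is~$1$ exactly in the cases of Figure~\ref{fn2} when $s = 0$, and of Figure~\ref{fn2}(a,d--h) together with Figure~\ref{lines} when $s = 1$, and is~$0$ otherwise. Adjoining the posited value $W_{\PP^2_{3,1}}(2L - E_1 - E_2 - 2E_5, 0, (2)) = 1$, iteration of~(\ref{en60}) expresses every remaining $W_\Sig(D, \alp, \bet)$ through these, the process terminating because each step either strictly lowers $R_\Sig(D, 0, \bet)$ (first sum) or replaces $D$ by divisors of strictly smaller degree (second sum, as $E$ has $L$-coefficient~$1$).

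The heart of the matter, and the main obstacle, is part~(1): identifying $W_\Sig(D, 0, (DE)\theta_1) = {\cal W}^*(\Del, k_1, k_2, 0, 0, (DE)\theta_1)$ with the Welschinger invariant $W(\Sig, D)$. The number of constraints already agrees, since $r = R_\Sig(D, 0, (DE)\theta_1) = -DK_\Sig - 1 = c_1(\Sig) \cdot D - 1$. Fix a real algebraic $\CH$-configuration $\bp$ of $r$ points over a $\Theta$-proper $\bx$ (taking the blown-up points conjugate, $z_1 = \overline{z}_2$, when $s = 1$); being generic, $\bp$ computes $W(\Sig, D)$ by Welschinger's theorem. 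Theorem~\ref{tn11} decomposes $V_\Sig(D, 0, 0, (DE)\theta_1, \bp) = \coprod_T PW(T)$, and the real rational curves counted by $W(\Sig, D)$ are the conjugation-invariant members of this union. The plan is to show, by a real patchworking argument paralleling Proposition~\ref{tn11-new}, that the Welschinger-signed number of real curves in each $PW(T)$ equals $W(T)$ when $s = 0$, and that the symmetric strata reproduce $W^{\text{\rm sym}}(T)$ when $s = 1$.

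The delicate point is exactly this signed real count. One must verify that a class $T$ all of whose edges carry odd weight patchworks to a \emph{single} real curve, and that this curve has an \emph{even} number of solitary nodes, so that its Welschinger sign is $+1$ and matches $W(T) = 1$; dually, a class with some even-weight edge must make no net real contribution, its real patchworkings either failing to exist or cancelling. This analysis is the real refinement of~\cite[Theorem~2]{Sh09} (the complex statement of which underlies Proposition~\ref{tn11-new}), and it is precisely what makes the multiplicities lie in $\{0, 1\}$ and hence all coefficients of~(\ref{en60}) nonnegative. Summing $W(T)$ over ${\cal T}^c$ when $s = 0$, and $W^{\text{\rm sym}}(T)$ over ${\cal T}^{c, \text{\rm sym}}$ when $s = 1$ --- with the exceptional class $T^{sp}(p_1)$ of Figure~\ref{lines} furnishing the symmetric configuration carrying $k_1 = k_2 = 1$ --- then gives~(\ref{en47}) and completes the proof.
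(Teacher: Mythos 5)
Your proposal follows essentially the same route as the paper: part~(2) by transporting the tropical recursion~(\ref{en26}) through the homomorphism~$\Phi$, with the four $\daleth$-cases absorbed as summands $(E_3,0,\theta_1)$, $(E_4,0,\theta_1)$ in~(\ref{en70}) and the exceptional class $K^{sp}$ on $\PP^2_{3,1}$ treated separately via Remark~\ref{rem-sym}; part~(3) from Proposition~\ref{initial-conditionsr} and Lemma~\ref{l6n3}; and part~(1) via Theorem~\ref{tn11} combined with the real-signed patchworking count. The one divergence is that the ``delicate point'' you propose to establish afresh --- that the sum of Welschinger signs over the real curves in each $PW(T)$ equals $W(T)$ (resp.\ $W^{\text{\rm sym}}(T)$ in the symmetric case) --- is not re-proved in the paper but invoked directly as \cite[Theorem 3]{Sh09}, of which the multiplicity $W(T)$ is a specialization of the ``real weight'' defined there, so your sketched real-patchworking analysis is exactly the content of that citation rather than a missing step.
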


{\bf Proof.} To prove the first statement of the theorem, put
$(\Del, k_1, k_2, 0, 0, (DE)\theta_1) = \Phi(D, 0, 0, (DE)\theta_1)$
and
$$
{\cal T}^*(\Del, k_1, k_2, 0, 0, (DE)\theta_1, \bx) =
\begin{cases}
{\cal T}^c(\Del, k_1, k_2, 0, 0, (DE)\theta_1, \bx)
\quad & \text{if} \; s = 0, \\
{\cal T}^{c, \text{sym}}(\Del, k_1, k_2, 0, 0, (DE)\theta_1, \bx)
\quad & \text{if} \; s = 1,
\end{cases}
$$
where $\bx$ is an appropriate configuration of points. Formula
(\ref{en47}) follows from Theorem \ref{tn11} and~\cite[Theorem
3]{Sh09}. The latter theorem states that, for any
$\qquad\qquad\qquad$ $T \in {\cal T}^*(\Del, k_1, k_2, 0, 0,
(DE)\theta_1, \bx)$ and the set $PW(T)$ constructed in~\cite{Sh09}
for an algebraic $\CH$-configuration~$\bp$ over~$\bx$ ({\it cf}.
Proposition~\ref{tn11-new}), the sum of the Welschinger signs
$(-1)^{s(C)}$ of the real rational curves~$C$ in $PW(T)$ is equal to
the Welschinger multiplicity $W(T)$ of~$T$. (The number $W(T)$
appears in \cite[Section 2.6]{Sh09} under the name of {\it real
weight}; our definition of $W(T)$ is a specialization of the
definition of the real weight given there.)

To prove the second statement of the theorem, it is sufficient to
establish formula (\ref{en60}) for $\Sig=\PP^2_{5,0}$ or
$\PP^2_{3,1}$ ({\it cf}. proof of Theorem \ref{tn1}). Observe that
by Lemma \ref{l6n3}, one has $(D, 0, \alp, \bet) \in A^{tr}(\Sig,
E)$ and the collection $\Phi(D, 0, \alp, \bet) = (\Del, k_1, k_2, 0,
\alp, \bet)$ belongs to ${\cal S}^*$. This collection is
$(l,r)$-admissible, where $l=\|\alp\|$ and $r = l + R_\Sig(D, 0,
\bet) > l$. Applying formula (\ref{en26}) to ${\cal W}^*(\Del, k_1,
k_2, 0, \alp, \bet) = W_\Sig(D,\alp,\bet)$ in the left-hand side
(${\cal W}^*$ stands for ${\cal W}$ or ${\cal W}^{\text{\rm sym}}$
according to $s=0$ or $1$), we intend to equate the right-hand sides
of (\ref{en26}) and (\ref{en60}). Clearly, the first sum in the
right-hand side of (\ref{en26}) coincides with the first sum in the
right-hand side of (\ref{en60}). So, it remains to compare the
second sums in the right-hand side of (\ref{en26}) and (\ref{en60}).
In view of Remark \ref{rem-sym}, a non-zero value ${\cal
W}^*(\Del^{(i)}, k_1^{(i)}, k_2^{(i)}, g^{(i)}, \alp^{(i)},
\bet^{(i)})$ with $g^{(i)}<0$ is possible only if $\Sig=\PP^2_{3,1}$
and $(\Del^{(i)}, k_1^{(i)}, k_2^{(i)}, g^{(i)}, \alp^{(i)},
\bet^{(i)}) = K^{sp}$. Hence, the restrictions
$$g'=\sum_{i=1}^mg^{(i)}-m+1=-\sum_{i=1}^m|\widetilde\bet^{(i)}|
+1\quad\text{and} \quad\|\widetilde\bet^{(i)}\|>0,\ i=1,...,m\ ,$$
for the second sum in the right-hand side of (\ref{en26}) yield that
each factor $\qquad\qquad\qquad$ ${\cal W}^*(\Del^{(i)}, k^{(i)}_1,
k^{(i)}_2, g^{(i)}, \alp^{(i)}, \beta^{(i)})$ in a non-zero summand
either coincides with ${\cal W}^*(K^{sp})$ (in this case
$g^{(i)}=-1$ and $\|\bet^{(i)}\|=2$; such a situation can occur only
if $\Sig=\PP^2_{3,1}$), or satisfy $g^{(i)}=0$ and
$\|\widetilde\bet^{(i)}\|=1$. Moreover, by Theorem \ref{tn11} and
\cite[Theorem 3]{Sh09}, if ${\cal W}^*(\Del^{(i)}, k^{(i)}_1,
k^{(i)}_2, 0, \alp^{(i)}, \bet^{(i)}) \ne 0$ then
$$(\Del^{(i)},
k^{(i)}_1, k^{(i)}_2, 0, \alp^{(i)}, \bet^{(i)},) = \Phi(\hat D, 0,
\hat\alp, \hat\bet)$$ for some $(\hat D, \hat\alp, \hat\bet) \in
A^{re}_0(\Sig,E)$, $\hat D\in\Pic_+^{re}(\Sig,E)$. We complete the
comparison of formulas~(\ref{en26}) and~(\ref{en60}) noticing that
the summation over $\daleth$ in~(\ref{en26}) is equivalent to the
subdivision of the second sum of the right-hand side of~(\ref{en60})
in four sums according to the presence of sequences $(E_3, 0,
\theta_1)$ and $(E_4, 0, \theta_1)$ in~(\ref{en70}) ({\it cf}.
Lemma~\ref{l6n3}(1) and the proof of Proposition~\ref{tn3})).

The last statement of the theorem immediately follows from
Proposition \ref{initial-conditionsr} and Lemma \ref{l6n3}(2).
\proofend

Theorem~\ref{tn12} gives a possibility to calculate Welschinger
invariants of the surfaces $\PP^2_{q,s}$ with $1\le q\le5$, $0\le
s\le1$, $d+2s\le5$, and their blow-downs. Here are some of the
values.

\begin{itemize}
\item The case $\Sig = \PP^2_{5,0}$, $D = -K$ or $D = -2K$, where
$-K=3L-E_1-...-E_5$:
$$
W(\Sig, -K) = 8, \;\;\; W(\Sig, -2K) = 4160.
$$
\item The case $\Sig = \PP^2_{3,1}$, $D = -K$ or $D = -2K$, where
$-K=3L-E_1-...-E_5$:
$$
W(\Sig, -K) = 6, \;\;\; W(\Sig, -2K) = 2004.
$$
\item The case $\Sig = \PP^2_{4,0}$, $D = -K$ or $D = -2K$, where
$-K=3L-E_1-...-E_4$:
$$
W(\Sig, -K) = 8, \;\;\; W(\Sig, -2K) = 16312.
$$
\item The case $\Sig = \PP^2_{2,1}$, $D = -K$ or $D = -2K$, where
$-K=3L-E_1-...-E_4$:
$$
W(\Sig, -K) = 6, \;\;\; W(\Sig, -2K) = 7222.
$$
\item The case $\Sig = \PP^2_{3,0}$:
$$
\displaylines{ W(\Sig, L) = 1, \;\;\; W(\Sig, 2L) = 1, \;\;\;
W(\Sig, 3L)=8, \;\;\; \cr W(\Sig, 4L) = 240, \;\;\; W(\Sig, 4L -
2E_1) = 48, \;\;\; W(\Sig, 5L - 2E_1 - 2E_2) = 798. }
$$
\item The case $\Sig = \PP^2_{1,1}$
(the divisors $E_1$, $E_2$ are imaginary, and the divisor $E_3$ is
real):
$$
\displaylines{ W(\Sig, L) = 1, \;\;\; W(\Sig, 2L) = 1, \;\;\;
W(\Sig, 3L) = 6, \;\;\; \cr W(\Sig, 4L) = 144, \;\;\; W(\Sig, 4L -
2E_3) = 32, \;\;\; W(5L - 2E_1 - 2E_2) = 432. }
$$
\item The case $\Sig = \PP^1 \times \PP^1$,
the real structure on $\Sig$ is standard, that is, given by $(z, w)
\mapsto (\overline z, \overline w)$, and $D$ is a curve of bi-degree
$(1, 1)$:
$$
W(\Sig, D) = 1, \;\;\; W(\Sig, 2D) = 8, \;\;\; W(\Sig, 3D) = 798.
$$
\item The case $\Sig = (\PP^1 \times \PP^1)_{0, 1}$
(we denote by $(\PP^1 \times \PP^1)_{q, s}$ the real surface
considered in the previous item and blown up at a generic collection
of $q$ real points and $s$ pairs of conjugate imaginary points):
$$
W(\Sig, D - E_1 - E_2) = 1, \;\;\; W(2D - E_1 - E_2) = 6, \;\;\;
W(3D - 2E_1 - 2E_2) = 432.
$$
\item The case $\Sig = \PP^1 \times \PP^1$,
the real structure on $\Sig$ is given by $(z, w) \mapsto (\overline
w, \overline z)$, and $D$ is a curve of bi-degree $(1, 1)$:
$$
W(\Sig, D) = 1, \;\;\; W(\Sig, 2D) = 6, \;\;\; W(\Sig, 3D) = 432.
$$
\item The case $\Sig = \SSS_{1,0}$ (we denote by $\SSS_{q, s}$
the real surface considered in the previous item and blown up at a
generic collection of $q$ real points and $s$ pairs of conjugate
imaginary points):
$$
\displaylines{ W(\Sig, D - kE) = 1, \; k = 0, 1, \cr W(\Sig, 2D -
kE) = 6, \; k = 0, 1, \cr W(\Sig, 2D - 2E) = 1, \cr W(\Sig, 3D - kE)
= 432, \; k = 0, 1, \cr W(\Sig, 3D - 2E) = 144, \;\;\; W(\Sig, 3D -
3E) = 6. }
$$
\item The case $\Sig = \SSS_{2,0}$:
$$
W(\Sig, D - E_1 - E_2) = 1,\;\;\; W(\Sig, 2D - E_1 - E_2) = 6,
\;\;\; W(3D - 3E_1 - 2E_2) = 32.
$$
\end{itemize}

\section{Properties of Welschinger invariants}\label{properties}

\subsection{Positivity and asymptotics}\label{sec-pa}

As is usual, we call a divisor $D$ on a surface $\Sig$ {\it nef} if
$D$ non-negatively intersects any algebraic curve on $\Sig$. When
$\Sig$ is an unnodal Del Pezzo surface, $D$ is nef if and only if
its intersection with any $(-1)$-curve is non-negative. A nef
divisor $D$ is called {\it big} if $D^2>0$.

\begin{thm}\label{tn5}
Let $\Sig=(\PP^1)^2_{0,1}$ or $\PP^2_{q,s}$, $4\le q+2s\le 5$, $s\le
1$. Then, for any real nef and big divisor $D$ on $\Sig$, the
invariant $W(\Sig,D)$ is positive, and the following asymptotic
relation holds:
\begin{equation}\
\log W(\Sig,nD)=(-DK_\Sig) n\log n+O(n),\quad n\to \infty\
.\label{en12}
\end{equation}
In particular,
$$\lim_{n\to\infty}\frac{\log W(\Sig,nD)}{\log GW_0(\Sig,nD)}=1\ .$$
\end{thm}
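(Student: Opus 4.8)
The plan is to obtain both assertions from the recursive formula of Theorem~\ref{tn12} together with a two-sided comparison with genus zero Gromov--Witten invariants. Throughout write $T=-DK_\Sig=c_1(\Sig)\cdot D$, so that $GW_0(\Sig,nD)$ counts the complex rational curves in $|nD|$ through $nT-1$ generic points. The classical asymptotics of these counts (obtainable from the Vakil recursion for the numbers $N_\Sig$, or cited from the literature on rational curves on Del Pezzo surfaces) give
$$\log GW_0(\Sig,nD)=Tn\log n+O(n),\qquad n\to\infty\ .$$
Hence, once~(\ref{en12}) is proved for $W(\Sig,nD)$, the final limit follows at once, both numerator and denominator being $Tn\log n+O(n)$ with the leading term dominating the $O(n)$ error.

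First I would settle positivity. For $n$ large one has $nD\in\Pic_+^{re}(\Sig,E)$, so $W(\Sig,nD)=W_\Sig(nD,0,(nD\cdot E)\theta_1)$ by~(\ref{en47}). The decisive feature, emphasized already in the Introduction, is that \emph{all} coefficients in formula~(\ref{en60}) are nonnegative; together with the nonnegative initial values of Theorem~\ref{tn12}(3) this yields $W_\Sig(D,\alp,\bet)\ge0$ by induction on $R_\Sig(D,0,\bet)$. Strict positivity then follows by exhibiting, for a nef and big $D$, a single chain of strictly positive summands terminating at a base collection of Figure~\ref{fn2} of value~$1$: iterating the degeneration $D\mapsto D-E$ (the $m=1$ branch of the second sum) keeps the class in $\Pic_+^{re}(\Sig,E)$ thanks to nefness and bigness, and reaches such a base case.

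The upper bound in~(\ref{en12}) is immediate. Since $W(\Sig,nD)$ is a signed count of the \emph{real} curves among all complex rational curves in $|nD|$ through $nT-1$ generic real points, one has $0<W(\Sig,nD)\le GW_0(\Sig,nD)$, whence $\log W(\Sig,nD)\le Tn\log n+O(n)$.

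The lower bound is the main obstacle. Here I would feed $W(\Sig,nD)=W_\Sig(nD,0,(nD\cdot E)\theta_1)$ into~(\ref{en60}) and retain a single, carefully chosen summand of the second sum. Writing $R=R_\Sig(nD,0,(nD\cdot E)\theta_1)=nT-1$, one seeks a splitting $nD-E=\sum_{i=1}^mD^{(i)}$ into $m=\Theta(n)$ pieces, each consuming a uniformly bounded number $n_i$ of constraints and each with $W_\Sig(D^{(i)},\alp^{(i)},\bet^{(i)})\ge1$ by the positivity just established. The ``appears at most once'' restriction~(c) poses no problem, since the chosen pieces have $n_i\ge1$ and so may repeat. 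The combinatorial prefactor
$$\binom{\alp}{\alp^{(1)}\cdots\alp^{(m)}}\,\frac{(R-1)!}{n_1!\cdots n_m!}\,\prod_{i=1}^m\binom{\bet^{(i)}}{\widetilde\bet^{(i)}}$$
is then of order $(nT)!$ up to a factor exponential in $n$, and $\log\big((nT)!\big)=Tn\log n+O(n)$ by Stirling. This gives $\log W(\Sig,nD)\ge Tn\log n-O(n)$, matching the upper bound and proving~(\ref{en12}). The delicate points will be (i) the existence of such a fine decomposition of the nef and big class $nD-E$ into admissible small pieces crossing $E$, which is where nefness and bigness are genuinely used, and (ii) the verification that the retained summand satisfies every admissibility constraint of Theorem~\ref{tn12}(2); both are modelled on the corresponding arguments for toric Del Pezzo surfaces in~\cite{IKS2,IKS4}.
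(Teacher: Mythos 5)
Your upper bound and the final limit are handled exactly as in the paper (via $W(\Sig,nD)\le GW_0(\Sig,nD)$ and the asymptotics of \cite{IKS5}), but the core of your lower-bound argument fails on the admissibility constraints of formula~(\ref{en60}). Apply~(\ref{en60}) to $W_\Sig(nD,0,(nDE)\theta_1)$: condition~(b) with $\alp=0$ forces $\alp^{(i)}=0$ for every~$i$, hence $I\bet^{(i)}=D^{(i)}E$ and $I\bet'=(nD-E)E=nDE+1$ (as $E^2=-1$); the splitting~(\ref{en99}) then gives $\sum_i I\widetilde\bet^{(i)}=I\bet'-I\bet=1$, and since every $\widetilde\bet^{(i)}$ is nonzero, necessarily $m=1$. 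So the single summand with $m=\Theta(n)$ pieces that you propose to retain simply does not occur in the second sum at this stage; the unique $m=1$ term is $(nDE+1)\,W_\Sig(nD-E,0,(nDE+1)\theta_1)$, and its multinomial factor is $(n-1)!/n_1!$ with $n_1=n-1$, i.e.\ there is no factorial gain at all from one application. Large~$m$ and large multinomial coefficients become available only after $\Theta(n)$ applications of the first sum have transferred points into~$\alp$, each costing one point constraint, and one must then accumulate and control the factorial growth along an entire branch of the recursion tree --- this is precisely the hard part, which your sketch does not address. Your positivity chain is also broken: with $D=dL-k_1E_1-\cdots-k_5E_5$ and $E=L-E_3-E_4$, the class $D-kE$ has multiplicity $k_3-k$ along $E_3$, so it leaves $\Pic^{re}_+(\Sig,E)$ (Lemma~\ref{l6n2}) after finitely many steps no matter how nef and big $D$ is; the $m=1$ branch alone does not reach a base case of Figure~\ref{fn2}.

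The paper does not extract the lower bound (nor positivity) from the recursion at all. It reduces to $\Sig=\PP^2_{5,0}$ or $\PP^2_{3,1}$, normalizes $D$ by real Cremona transformations (Lemma~\ref{cremona}), assumes $k_2>0$ (otherwise blow down and quote \cite{IKS2}), and then in three cases constructs an auxiliary nef and big divisor $D'$ on a toric surface $\PP^2_{3,0}$ or $\PP^2_{2,0}$, imports from \cite{IKS,IKS2} a family of tropical curves with Welschinger multiplicity~$1$ and total log-count $(-D'K_{\Sig'})n\log n+O(n)$, and extends each of them by explicitly prescribed components across a cut (Figure~\ref{fn1}) so that the Welschinger multiplicity is preserved. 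In the two cases where $-D'K_{\Sig'}=-DK_\Sig$ this already gives~(\ref{en12}); in the remaining case $k_1>k_4+k_5$ the deficit $k_1-k_4-k_5$ in the toric exponent is recovered by an explicit factor $(n(k_1-k_4-k_5))!$ arising from the choices of matching selected ends with the extra points $\bx''_n$. If you wish to salvage a purely recursive proof, you would in effect have to rebuild such a construction inside the recursion tree --- amounting to redoing, rather than citing, the toric analysis of \cite{IKS2}.
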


\begin{rem}\label{rn2}
(1) The positivity and asymptotic behavior as in Theorem \ref{tn5}
were established before for all real toric unnodal Del Pezzo
surfaces with a nonempty real part, except for $(\PP^1)^2_{0,1}$
{\rm (}see \cite{IKS,IKS2,IKS4,Sh2}{\rm )}.

(2) If $D$ is not nef or not big, then $W(\Sig,D)=1$ or $0$
depending on whether the linear system $|D|$ contains an irreducible
curve or not (for the existence of rational irreducible
representatives see, for instance, \cite{GLS}).

(3) The Gromov-Witten and Welschinger invariants do not depend on
variation of tamed almost complex structures; hence Theorem
\ref{tn5} is valid for blow-ups at arbitrary (not necessarily
generic) configurations of points and for any homology class $D\in
H_2(\Sig)$ with $D^2>0$ which non-negatively intersects each class
$e\in H_2(\Sig)$ such that $eK_\Sig=e^2=-1$.
\end{rem}

{\bf Proof of Theorem~\ref{tn5}}. Since the coefficients in the
recursive formula (\ref{en60}) are positive, and its initial values
are nonnegative (see Theorem \ref{tn12}(3)), to prove the positivity
it is enough to find at least one tropical curve which matches a
given configuration of fixed points and has a positive Welschinger
multiplicity. Due to the upper bound $W(\Sig,D)\le GW_0(\Sig,D)$ and
the asymptotics \mbox{$\log GW_0(\Sig,nD)=(-DK_\Sig)n\log n+O(n)$}
(see~\cite{IKS5}), to prove the asymptotic relation~(\ref{en12}) it
is enough to find tropical curves with sufficiently large total
Welschinger multiplicity.

As in the proof of Theorem \ref{tn1}, we may consider only two
cases, $\Sig=\PP^2_{5,0}$ or $\PP^2_{3,1}$.

\begin{lem}\label{cremona}
If $\Sig=\PP^2_{5,0}$ and $D$ is a nef and big divisor on $\Sig$,
then there exists a collection $L,E_1,...,E_5$ of disjoint real
smooth rational curves on $\Sig$ such that $L^2=-E_1^2 = \ldots =
-E_5^2=1$, $D=dL-k_1E_1- \ldots -k_5E_5$, and
\begin{equation}
k_3\ge k_5\ge k_1\ge k_2\ge k_4\ge 0,\quad d\ge k_1+k_3+k_5\
.\label{en13}
\end{equation}
If $\Sig=\PP^2_{3,1}$ and $D$ is a real nef and big divisor on
$\Sig$, then there exist a collection $L,E_3,E_4,E_5$ of real smooth
rational curves and a pair $E_1,E_2$ of conjugate imaginary smooth
rational curves on $\Sig$ which all are pairwise disjoint and such
that $L^2=-E_1^2= \ldots =-E_5^2=1$, $D=dL-k_1E_1-...-k_5E_5$, and
\begin{equation}
k_3\ge k_5\ge k_4\ge 0,\quad k_1=k_2\ge0,\quad
d\ge\max\{k_3+k_4+k_5,k_1+k_2+k_3\}\ . \label{en14}\end{equation}
\end{lem}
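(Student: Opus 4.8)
The plan is to realize the required collection $L,E_1,\dots,E_5$ as a \emph{geometric marking} of $\Sig$ obtained from the given one by a sequence of (real) quadratic Cremona transformations and permutations of the blown-up points, chosen so as to minimize the degree $d=D\cdot L$. Recall that on an unnodal Del Pezzo surface every class of self-intersection $-1$ and anticanonical degree $1$ is represented by a unique smooth rational $(-1)$-curve, and every representation of $\Sig$ as a blow-up of $\PP^2$ at five points in general position yields a basis $L,E_1,\dots,E_5$ of $\Pic(\Sig)$ with $L^2=1$, $E_i^2=-1$, $L\cdot E_i=E_i\cdot E_j=0$ ($i\ne j$); the set of such markings is a torsor under the Weyl group of $\Sig$, whose generators are the transpositions $E_i\leftrightarrow E_j$ and the Cremona transformations based at triples $\{i,j,l\}$, which act by $L\mapsto 2L-E_i-E_j-E_l$, $E_i\mapsto L-E_j-E_l$, and so on. For $\Sig=\PP^2_{3,1}$ I would restrict to \emph{real} markings (with $E_1,E_2$ conjugate and $E_3,E_4,E_5$ real) and to transformations defined over $\R$: permutations of $\{E_3,E_4,E_5\}$ and Cremona transformations based either at $\{3,4,5\}$ or at $\{1,2,j\}$ with $j\in\{3,4,5\}$, the latter two triples being conjugation-invariant.

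First I would fix a marking in the orbit for which $d=D\cdot L$ is minimal; this is possible because $L$ is base-point free, hence nef, so $d=D\cdot L\ge 0$ is a non-negative integer for every marking in the orbit. Writing $D=dL-k_1E_1-\dots-k_5E_5$ in this minimal marking, the nef assumption gives $k_i=D\cdot E_i\ge 0$ for all $i$, and for $\Sig=\PP^2_{3,1}$ the reality of $D$ forces $k_1=k_2$, since conjugate curves have equal intersection with a real divisor. The essential observation is the behaviour of the degree under a Cremona transformation at a triple $\{i,j,l\}$: the new degree is $2d-k_i-k_j-k_l$, while the new coefficients equal $D\cdot E'_t\ge 0$ and so remain non-negative because $D$ stays nef. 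Minimality of $d$ then forces $2d-k_i-k_j-k_l\ge d$, i.e.
\[
k_i+k_j+k_l\le d
\]
for every triple $\{i,j,l\}$ at which a transformation of our (real) generating set is available.

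For $\Sig=\PP^2_{5,0}$ all triples are available, so $d$ dominates the sum of any three of the $k_i$, in particular of the three largest; permuting the indices (a permutation lies in the Weyl group and preserves $d$) so that the decreasingly sorted values occupy the positions $3,5,1,2,4$, I would obtain $k_3\ge k_5\ge k_1\ge k_2\ge k_4\ge 0$ together with $d\ge k_1+k_3+k_5$, which is exactly (\ref{en13}). For $\Sig=\PP^2_{3,1}$ the available triples yield $d\ge k_3+k_4+k_5$ and $d\ge k_1+k_2+k_j$ for each $j\in\{3,4,5\}$; permuting the real indices so that $k_3\ge k_5\ge k_4$ converts the second family into $d\ge k_1+k_2+k_3$, and combined with $k_1=k_2\ge 0$ this is precisely (\ref{en14}).

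The step I expect to be the main obstacle is the geometric realization rather than the lattice bookkeeping: one must guarantee that each transformation used produces an \emph{honest} collection of pairwise disjoint smooth rational curves on $\Sig$ and, in the $\PP^2_{3,1}$ case, that one never leaves the class of real markings. The first point is handled by unnodality, which ensures that the relevant classes are automatically represented by smooth rational curves and that the transformed $E_i,L$ carry the standard intersection pattern; the second is settled by checking that the generators listed above are conjugation-invariant, so that the conjugate pair $E_1,E_2$ is preserved throughout. This invariance is exactly what confines us, for $\PP^2_{3,1}$, to Cremona transformations based at $\{3,4,5\}$ and $\{1,2,j\}$ and to permutations of the real indices, and it is the reason the two families of inequalities in (\ref{en14}) appear in place of the single family in (\ref{en13}).
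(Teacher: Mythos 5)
Your proposal is correct and takes essentially the same route as the paper: the paper's proof (citing Hirschowitz and \cite{GLS}) achieves (\ref{en13}) and (\ref{en14}) by finitely many standard quadratic Cremona transformations based at triples that are either real or contain the conjugate pair, observing that such a move strictly diminishes $d=DL$ whenever $d<k_i+k_j+k_\ell$. Your formulation via a degree-minimal marking in the (real) Weyl-group orbit is the same descent argument in fixed-point form, and your verifications of nefness of the transformed coefficients, of $k_1=k_2$ from reality, and of geometric realization via unnodality supply details the paper leaves implicit.
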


\begin{proof} (cf. \cite{Hir} or \cite[Section 5]{GLS}) The last
inequalities in (\ref{en13}) and (\ref{en14}) can be achieved by
means of finitely many basis changes (standard quadratic Cremona
transformations)
$$(L,E_i,E_j,E_\ell)\mapsto(2L-E_i-E_j-E_\ell,L-E_j-E_\ell,L-E_i-
E_\ell,L-E_i-E_j)\ ,$$ where $E_i,E_j,E_\ell$ either are real, or
two of them are conjugate imaginary. Such a transformation
diminishes $d=DL$ as soon as $d<k_i+k_j+k_\ell$.\end{proof}

In addition to (\ref{en13}) and (\ref{en14}), we may suppose that
$k_2>0$, since otherwise we can blow down two exceptional curves and
thus deduce the theorem from \cite{IKS2}.

Let $E$ be the real smooth rational $(-1)$-curve linearly equivalent
to $L-E_3-E_4$. Define a multi-set $\Del$ of vectors in $\Z^2$ by
relation
$$(\Del, k_1, k_2, 0, 0, (DE)\theta_1) = \Phi(D, 0, 0, (DE)\theta_1)\ .$$
Pick a sequence of finite multi-sets $\Theta_n$ dominating $n\Del$,
$n\ge 1$. Put $p=(0,0)$ and, for any $n\ge 1$, pick a configuration
$\bx_n$ of $r_n=-nDK_\Sig-1$ points in the strip $\{x < 0, \ 0 < y <
\eps\}$ such that $(\bx_n, p)$ is a $\Theta_n$-generic ${\cal
CH}_{\Theta_n}$-configuration of type $(0,r_n)$ and vertical
parameter $\eps$. Denote by $\del_0, \ldots, \del_{r_n + 1}$
horizontal parameters of $(\bx_n, p)$.

{\bf (1)} Assume that $k_5\ge k_1$. Consider the divisor $$D'=
(d-k_2)L-(k_3-k_2)E_3-k_4E_4-(k_1-k_2+k_5)E_5$$ on the real toric
surface $\Sig'=\PP^2_{3,0}$. The divisor $D'$ is nef in view of
(\ref{en13}), (\ref{en14}) and the relations
$$D'E_i\ge0,\ i=3,4,5,\quad D'(L-E_3-E_4)=d-k_3-k_4\ge0\ ,$$
$$D'(L-E_4-E_5)=
d-k_1-k_4-k_5\ge0\ ,$$ $$D'(L-E_3-E_5)=d-k_1+k_2-k_3-k_5\ge0$$ (as
is well-known, the divisors $E_i,L-E_j-E_\ell$, $i,j,\ell=3,4,5$,
$j\ne \ell$, generate the effective cone of $\Sig'$). The linear
system $|D'|$ is naturally associated with the convex lattice
polygon $\Pi'$ depicted in Figure \ref{fn1}(a). Though some sides
may collapse, the polygon is always nondegenerate, which means, in
particular, that $D$ is big. Notice also that
\mbox{$DK_\Sig=D'K_{\Sig'}$}.

Denote by $\Del'_n$ the multi-set of the primitive integral exterior
normal vectors to $n\Pi'$, where the multiplicity of each normal
equals the lattice length of the corresponding side. The subset
${\cal T}_n \subset {\cal T}^c(\Del'_n, 0, 0, 0, 0, n(D'E)\theta_1,
\bx_n)$ formed by the isomorphism classes~$T$ such that $W(T)=1$ is
non-empty (see, for instance, \cite{IKS}). Furthermore, by
\cite[Theorem 3]{IKS2},
$$\log\sum_{T\in{\cal T}_n} W(T)=(-D'K_{\Sig'})n\log n+O(n)
=(-DK_\Sig)n\log n+O(n),\quad n\to\infty\ .$$ Any tropical curve
$(\overline\Gam, \CV, h, \bpp)$ representing a class $T \in {\cal
T}_n$ has $n(k_1-k_2+k_5)$ ends directed by vector $(1,0)$.
Lemma~\ref{splitting} implies that each of these ends crosses the
segment $I_{r_n} = \{x = \del_{r_n}, \ 0 < y <\eps\}$. Among these
ends, select $nk_1\le n(k_1-k_2+k_5)$ ends and consider a marked
tropical $\CL$-curve $(\overline\Gam', \CV', h', \bpp')$ such that
$$
T' = [(\overline\Gam', \CV', h', \bpp')] \in {\cal T}^c(n\Del, nk_1,
nk_2, 0, 0, n(d - k_3 - k_4)\theta_1, (\bx_n, p)),
$$
$(\bpp')^2 = \varnothing$, and a marked cut of this curve at ${\cal
X}_{r_n} = (h')^{-1}(I_{r_n})$ satisfies the following properties:
\begin{itemize}
\item the only left component of the cut is isomorphic to
$(\overline\Gam, \CV, h, \bpp)$,
\item the number of right components
is $n(k_1 - k_2 + k_5)$,
\item exactly $n(k_1 - k_2)$ right components are isomorphic
to those shown on Figure~\ref{fn1}(b) and match $n(k_1 - k_2)$
selected ends of $(\overline\Gam, \CV, h, \bpp)$,
\item exactly $nk_2$
right components are isomorphic to those shown on
Figure~\ref{fn1}(c) and match $nk_2$ selected ends of
$(\overline\Gam, \CV, h, \bpp)$,
\item the remaining right components are horizontal.
\end{itemize}
If $\Sig=\PP^2_{3,1}$, we define an involution $\xi=\Id$ on
$\overline\Gam'$ and thus obtain a class in $\qquad$ ${\cal
T}^{c,\text{\rm sym}} (n\Del, nk_1, nk_2, 0, 0,
n(d-k_3-k_4)\theta_1, (\bx_n, p))$. Finally, notice that $W(T') =
W(T)$, which completes the proof in the case under consideration.

\begin{figure}
\setlength{\unitlength}{1cm}
\begin{picture}(13,19)(0,0.5)
\thinlines\put(1,3){\vector(1,0){3.5}}\put(1,3){\vector(0,1){2.5}}
\thinlines\put(1,9){\vector(1,0){3.5}}\put(1,9){\vector(0,1){3.5}}
\thinlines\put(1,16){\vector(1,0){3.5}}\put(1,16){\vector(0,1){3.5}}
\dashline{0.2}(3,16)(3,19)\dashline{0.2}(1,18)(4,18)
\dashline{0.2}(3,9)(3,12)\dashline{0.2}(1,11)(4,11)
\dashline{0.2}(3,3)(3,5)\dashline{0.2}(1,4)(4,4)

\thicklines \put(1,3){\line(1,0){3}}\put(1,3){\line(0,1){2}}
\put(1,5){\line(1,0){2}}\put(3,5){\line(1,-1){1}}\put(4,3){\line(0,1){1}}
\put(2,9){\line(-1,1){1}}\put(1,10){\line(0,1){2}}\put(2,9){\line(1,0){2}}\put(1
,12){\line(1,0){2}}
\put(3,12){\line(1,-1){1}}\put(4,9){\line(0,1){2}}\put(2,16){\line(-1,1){1}}
\put(1,17){\line(0,1){2}}\put(1,19){\line(1,0){2}}\put(2,16){\line(1,0){2}}
\put(3,19){\line(1,-1){1}}\put(4,16){\line(0,1){2}}\put(6.5,1.5){\line(0,1){3.5}
}
\put(7.5,8){\line(0,1){4}}\put(9,4){\line(1,0){1}}\put(10,4){\line(1,-1){1}}
\put(10,4){\line(0,1){1}}\put(11,3){\line(1,0){1}}\put(11,1.5){\line(0,1){1.5}}
\put(12,1.5){\line(0,1){1.5}}\put(12,3){\line(1,1){1}}\put(9.5,9.5){\line(1,1){
0.5}}
\put(10,10){\line(0,1){2}}\put(10,10){\line(1,0){1}}\put(11,8){\line(0,1){2}}
\put(11,10){\line(1,1){1}}\put(6.5,16){\line(1,0){1}}\put(7.5,15){\line(0,1){1}}
\put(7.5,16){\line(1,1){1}}\put(10.5,15){\line(0,1){1}}\put(10.5,16){\line(-1,1)
{0.5}}
\put(10,16.5){\line(0,1){0.5}}\put(10,16.5){\line(-1,0){0.5}}\put(6.5,19){
\line(1,0){2}}
\put(9.5,19){\line(1,0){2}}\put(10.5,16){\line(1,0){1}}

\put(7.4,18.4){$\bullet$}\put(10.4,18.4){$\bullet$}
\put(7.41,15.4){$\bullet$}\put(10.4,15.9){$\bullet$}
\put(7.9,9.9){$\bullet$}\put(10.9,9.9){$\bullet$}
\put(6,3.9){$\bullet$}\put(7.4,2.88){$\bullet$}
\put(9.4,3.9){$\bullet$}\put(11.9,2.88){$\bullet$}

\put(7.4,2.6){$\bx$}\put(5.9,3.5){$\bx''$}\put(9.4,3.5){$\bx''$}\put(12,2.7){
$\bx$}
\put(7.9,9.6){$\bx$}\put(11.2,9.7){$\bx$}\put(7.7,15.4){$\bx$}\put(10.7,15.6){
$\bx$} \put(7.7,18.4){$\bx$}\put(10.7,18.4){$\bx$}

\put(7.35,17.5){$\Big\Downarrow$}\put(10.35,17.5){$\Big\Downarrow$}
\put(8.5,9.9){$\Longrightarrow$}\put(8,3.4){$\Longrightarrow$}

\put(0.5,16.9){$k_4$}\put(0.6,17.9){$e$}\put(0.6,18.9){$a$}\put(2.9,15.6){$b$}
\put(3.9,15.6){$c$}\put(0.5,15){$a=d-k_3$, $b=k_3-k_2$}
\put(0.5,14.5){$c=d-k_1-k_5$, $e=k_1-k_2+k_5$}\put(7.4,13.7){\rm
(b)}\put(10.4,13.7){\rm (c)}\put(2.7,13.7){\rm (a)}

\put(0.6,9.9){$e$}\put(0.5,10.9){$k_5$}\put(0.6,11.9){$a$}\put(2.9,8.6){$b$}
\put(3.9,8.6){$c$}\put(0.5,8) {$a = d - k_1 - k_3 + k_5$, $b = k_3 -
k_5$} \put(0.5,7.5) {$c = d - k_1 - k_5$, $e = k_4 + k_5 - k_1$}
\put(2.7,6.7){\rm (d)}\put(9,6.7){\rm (e)}

\put(0.5,3.9){$k_5$}\put(0.6,4.9){$a$}\put(2.9,2.6){$b$}\put(3.9,2.6){$c$}
\put(0.5,2){$a=d-k_1-k_3+k_5$}\put(0.5,1.5){$b=k_3-k_5$,
$c=d-k_1-k_5$}\put(2.7,0.7){\rm (f)}\put(9,0.7){\rm (g)}
\end{picture}
\caption{Illustration to the proof of Theorem \ref{tn5}}\label{fn1}
\end{figure}
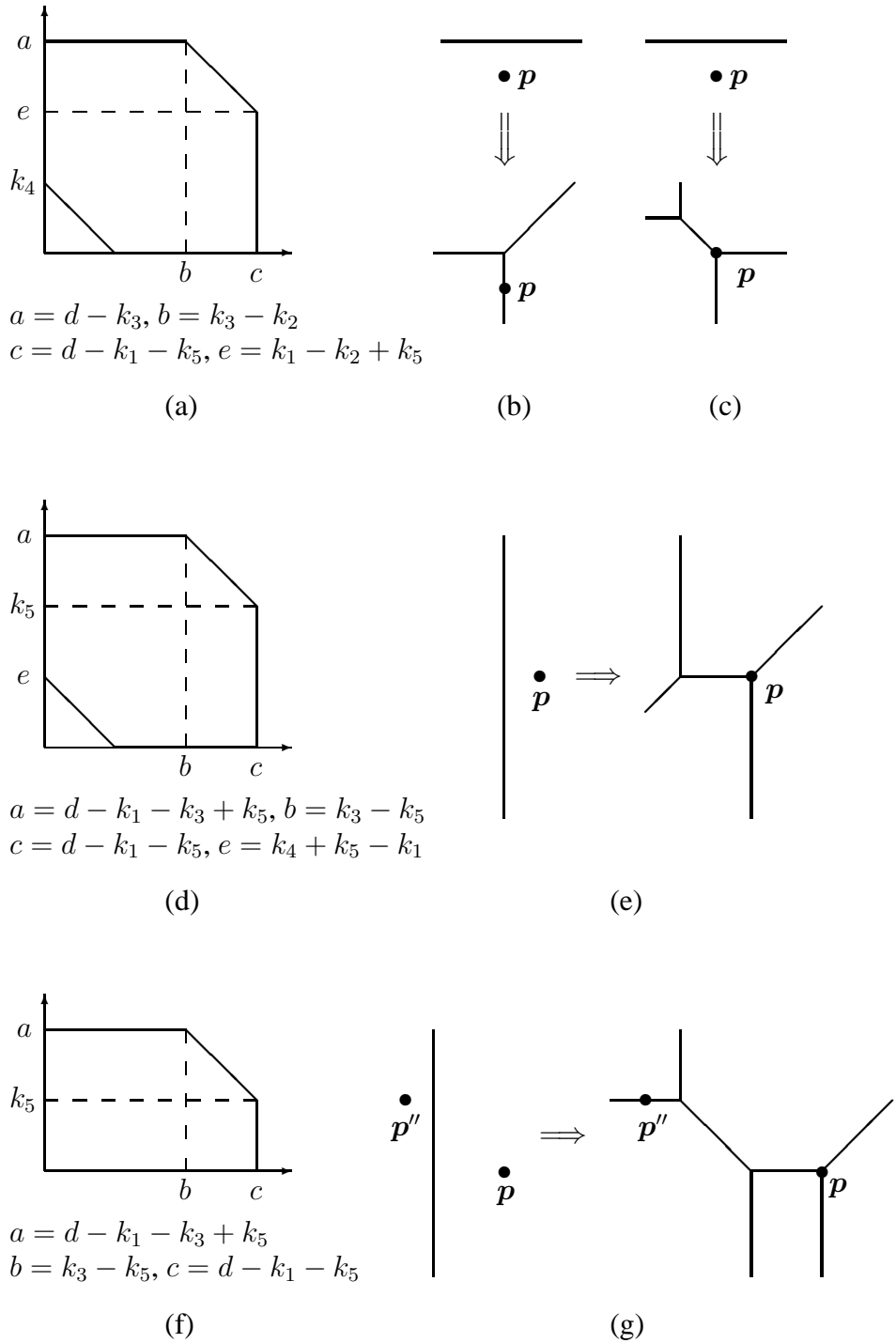

{\bf (2)} Assume that $k_5<k_1\le k_4+k_5$, which is relevant only
to the case $\Sig=\PP^2_{3,1}$, $k_1 = k_2$. Consider the divisor
$$D'= (d-k_1)L-(k_3-k_5)E_3-(k_4+k_5-k_1)E_4-k_5E_5$$ on the real
toric surface $\Sig'=\PP^2_{3,0}$. It is nef in view of our
assumption, inequalities (\ref{en14}) and the relations
$$D'E_i\ge0,\ i=3,4,5,\quad
D'(L-E_3-E_4)=d-k_3-k_4\ge0\ ,$$
$$D'(L-E_4-E_5)=d-k_4-2k_5\ge d-k_3-k_4-k_5\ge0\ ,$$
$$D'(L-E_3-E_5)=d-k_1-k_3\ge0\ .$$ Thus, the
linear system $|D'|$ is naturally associated with a nondegenerate
convex lattice polygon $\Pi'$ (see Figure \ref{fn1}(d)), which
implies that $D'$ is big.

As in the preceding step, we notice that $DK_\Sig=D'K_{\Sig'}$ and
denote by $\Del'_n$ the multi-set of the primitive integral exterior
normal vectors to $n\Pi'$, where the multiplicity of each normal
equals the lattice length of the corresponding side. The subset
${\cal T}_n \subset {\cal T}^c(\Del'_n, 0, 0, 0, 0, n(D'E)\theta_1,
\bx_n)$ formed by the isomorphism classes~$T$ such that $W(T)=1$ is
non-empty and
$$\log\sum_{T\in{\cal T}_n} W(T)=(-D'K_{\Sig'})n\log n+O(n)
=(-DK_\Sig)n\log n+O(n),\quad n\to\infty$$ (see~\cite{IKS}
and~\cite[Theorem 3]{IKS2}). Any tropical curve $(\overline\Gam,
\CV, h, \bpp)$ representing a class $T \in {\cal T}_n$ has $nk_5$
ends directed by vector $(1,0)$. Lemma~\ref{splitting} implies that
each of these ends crosses the segment $I_{r_n} = \{x = \del_{r_n},
\ 0 < y <\eps\}$. Denote by $\Delta''$ the multi-set obtained from
$\Delta$ by removing $k_1 - k_5$ vectors $(-1, -1)$ and $k_1 - k_5$
vectors $(1, 1)$. Consider a marked tropical $\CL$-curve
$(\overline\Gam', \CV', h', \bpp')$ such that
$$
T' = [(\overline\Gam', \CV', h', \bpp')] \in {\cal T}^c(n\Del'',
nk_5, nk_5, 0, 0, n(d - k_3 - k_4)\theta_1, (\bx_n, p))],
$$
and a marked cut of this curve at ${\cal X}_{r_n} =
(h')^{-1}(I_{r_n})$ satisfies the following properties:
\begin{itemize}
\item the only left component of the cut is isomorphic to
$(\overline\Gam, \CV, h, \bpp)$,
\item the number of right components
is $nk_5$, and all these components are isomorphic to those shown on
Figure~\ref{fn1}(c).
\end{itemize}
The tropical curve $(\overline\Gam', \CV', h', \bpp')$ has $n(d -
k_4 - k_5)$ ends directed by the vector $(0, -1)$.
Lemma~\ref{splitting} implies that $n(d - k_4 - 2k_5)$ of these ends
cross the half-line $J = \{x < \del_{r_n}, \ y = \eps'\}$, where
$\eps'$ is a sufficiently small positive number. Consider a marked
tropical $\CL$-curve $(\overline\Gam'', \CV'', h'', \bpp'')$ such
that
$$
T'' = [(\overline\Gam'', \CV'', h'', \bpp'')] \in {\cal T}^c(n\Del,
nk_1, nk_2, 0, 0, n(d - k_3 - k_4)\theta_1, (\bx_n, p))],
$$
and a marked cut of this curve at ${\cal X} = (h'')^{-1}(J)$ has
$n(d - k_1 - k_5) + 1$ irreducible components:
\begin{itemize}
\item one component isomorphic to
$(\overline\Gam', \CV', h', \bpp')$,
\item $n(k_1 - k_5)$ components isomorphic
to those shown on Figure~\ref{fn1}(e),
\item $n(d - k_1 - k_4 - k_5)$ components whose images
are vertical lines,
\item $n(k_4 + k_5 - k_1)$ components whose images
are straight lines of slope~$1$.
\end{itemize}
Define an involution $\xi=\Id$ on $\overline\Gam''$ and thus obtain
a class in ${\cal T}^{c,\text{\rm sym}} (n\Del, nk_1, nk_2, 0, 0,
n(d - k_3 - k_4)\theta_1, (\bx_n, p))$. Finally, notice that $W(T'')
= W(T') = W(T)$, which completes the proof in the case under
consideration.

{\bf (3)} Assume that $k_1 > k_4 + k_5$ which again is relevant only
in the case $\Sig=\PP^2_{3,1}$, $k_1 = k_2$. Impose an additional
condition on each configuration $\bx_n$, $n \geq 1$, namely suppose
that it can be subdivided into two parts $\bx_n=(\bx''_n,\bx'_n)$
such that
\begin{itemize}
\item
the number $|\bx'_n|$ of points in $\bx'_n$ is equal to
$n(3d-3k_1-k_3)-1$,
\item the number
$|\bx''_n|$ of points in $\bx''_n$ is equal to $n(k_1-k_4-k_5)$,
\item
$x_{p''}<x_{p'},\ y_{p''}<y_{p'}$ for all $p'=(x_{p'},
y_{p'})\in\bx'_n$ and $p''=(x_{p''}, y_{p''}) \in\bx''_n$.
\end{itemize}
Consider the divisor $D'=(d-k_1)L-(k_3-k_5)E_3-k_5E_5$ on the real
toric surface $\Sig'=\PP^2_{2,0}$. This divisor is nef due to
$$D'E_3=k_3-k_5\ge 0,\quad D'E_5=k_5\ge0\ ,$$
$$D'(L-E_3-E_5)=d-k_1-(k_3-k_5)-k_5=d-k_1-k_3\ge0\ ,$$ and is big, since
it is associated with a nondegenerate convex lattice polygon $\Pi'$
shown in Figure \ref{fn1}(f). Notice that $|\bx'_n| =
-nD'K_{\Sig'}-1$.

As before, denote by $\Del'_n$ the multi-set of the primitive
integral exterior normal vectors to $n\Pi'$, where the multiplicity
of each normal equals the lattice length of the corresponding side.
The subset ${\cal T}_n \subset {\cal T}^c(\Del'_n, 0, 0, 0, 0,
n(D'E)\theta_1, \bx'_n)$ formed by the isomorphism classes~$T$ such
that $W(T)=1$ is non-empty and
\begin{equation}
\log\sum_{T \in {\cal T}_n} W(T)= (-D'K_{\Sig'})n\log n+O(n)
=(3d-3k_1-k_3)n\log n+O(n)\label{en15}
\end{equation}
(see~\cite{IKS} and~\cite[Theorem 3]{IKS2}). Denote by $\Delta''$
the multi-set obtained from $\Delta$ by removing $n(k_1 - k_4 -
k_5)$ vectors $(0, -1)$, then $n(k_1 - k_4 - k_5)$ vectors (-1, 0),
and finally $n(k_1 - k_4 - k_5)$ vectors $(1, 1)$. Any tropical
curve $(\overline\Gam, \CV, h, \bpp)$ representing a class $T \in
{\cal T}_n$ has $nk_5$ ends directed by the vector $(1, 0)$ and $n(d
- k_1 - k_5)$ ends directed by the vector $(0, -1)$. Applying a
procedure similar to the one described in the previous step, we
obtain a marked tropical $\CL$-curve $(\overline\Gam'', \CV'', h'',
\bpp'')$ such that $T'' = [(\overline\Gam'', \CV'', h'', \bpp'')]
\in {\cal T}^c(n\Del'', n(k_4 + k_5), n(k_4 + k_5), 0, 0, n(d - k_1
- k_3 + k_5)\theta_1, (\bx'_n, p))$ and $W(T'') = W(T)$.

The curve $(\overline\Gam'', \CV'', h'', \bpp'')$ has $n(d - k_1)$
ends directed by the vector $(0, -1)$; choose among them
$n(k_1-k_4-k_5)$ ends not adjacent to~$(\bpp'')^\nu$. There are
$(n(k_1 - k_4 - k_5))!$ possibilities to fix a one-to-one
correspondence between the chosen ends and the points of $\bx''_n$.
Thus, there exist $(n(k_1 - k_4 - k_5))!$ marked tropical
$\CL$-curves $(\overline\Gam''', \CV''', h''', \bpp''')$ such that
\begin{itemize}
\item $T''' = [(\overline\Gam''', \CV''', h''', \bpp''')]
\in {\cal T}^c(n\Del, nk_1, nk_2, 0, 0, n(d - k_3 - k_4)\theta_1,
(\bx_n, p))$,
\item $W(T''') = W(T'') = W(T)$,
\item $(\overline\Gam''', \CV''', h''', \bpp''')$
has a marked cut whose irreducible components are as follows: one of
them is isomorphic to $(\overline\Gam'', \CV'', h'', \bpp'')$, the
other $n(k_1 - k_4 - k_5)$ components are as shown on
Figure~\ref{fn1}(g).
\end{itemize}
Equipping each of these curves with the identity involution, we
obtain representatives of $(n(k_1 - k_4 - k_5))!$ classes in ${\cal
T}^{c,\text{\rm sym}}(n\Del, nk_1, nk_2, 0, 0, (d-k_3-k_4)\theta_1,
(\bx_n,p))$, and the Welschinger multiplicity of all these classes
is equal to $W(T)$. This immediately implies the positivity of
$W(\Sig,D)$ as well as the required asymptotics, since from
(\ref{en15}) and our construction we obtain
$$
\log W(\Sig, nD) \geq \log\left((n(k_1-k_4-k_5))!\sum_{T\in {\cal
T}_n} W(T)\right)$$ $$=(k_1-k_4-k_5)n\log n+(3d-3k_1-k_3)n\log
n+O(n)=(-DK_\Sig)n\log n+O(n)\ .$$ \proofend

\subsection{Mikhalkin's congruence}\label{sec-con}

\begin{thm}\label{tn6}
For any nef and big divisor $D$ on a surface $\Sig=\PP^2$,
$(\PP^1)^2$, or $\PP^2_{q,0}$, $1\le q\le 5$, one has
\begin{equation}W(\Sig,D)\equiv GW_0(\Sig,D)\mod 4\ .\label{en17}
\end{equation}
\end{thm}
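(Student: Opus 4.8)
The plan is to deduce the congruence from a \emph{termwise} comparison of the complex and Welschinger multiplicities of the tropical curves produced by the correspondence theorem. As in the proof of Theorem~\ref{tn1}, it suffices to treat $\Sig=\PP^2_{5,0}$: the remaining surfaces $\PP^2$, $(\PP^1)^2$ and $\PP^2_{q,0}$ with $q<5$ are related to $\PP^2_{5,0}$ by blowing up/down suitable real $(-1)$-curves (for $(\PP^1)^2$ one uses that its blow-up at a real point is $\PP^2_{2,0}$), and both $W(\Sig,D)$ and $GW_0(\Sig,D)$ are compatible with these modifications. Fix $E\sim L-E_3-E_4$ and set $(\Del,k_1,k_2,0,0,(DE)\theta_1)=\Phi(D,0,0,(DE)\theta_1)$. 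By Theorem~\ref{tn12}(1) and the definition of $W_\Sig$ one has
$$
W(\Sig,D)=W_\Sig(D,0,(DE)\theta_1)={\cal W}(\Del,k_1,k_2,0,0,(DE)\theta_1)=\sum_T W(T),
$$
while the correspondence Theorem~\ref{tn11} gives $GW_0(\Sig,D)=N_\Sig(D,0,0,(DE)\theta_1)=\sum_T N(T)$, \emph{both} sums being taken over the same set $T\in{\cal T}^c(\Del,k_1,k_2,0,0,(DE)\theta_1,\bx)$ for a $\Theta$-proper configuration $\bx$. Hence it is enough to prove $N(T)\equiv W(T)\pmod 4$ for each individual $T$.

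The decisive simplification comes from the choice $\alp=0$, $\bet=(DE)\theta_1$. Then $\Del^\circ=\Del$, so every vector of the degree belongs to the list of six primitive vectors $(0,\pm1)$, $(\pm1,\pm1)$, $(\pm1,0)$ of the $(l,r)$-admissibility conditions; consequently every end of $\overline\Gam$ has weight~$1$. In particular all left ends have weight~$1$, so the factor~$A$ in the complex multiplicity $N(T)=AB$ equals~$1$, and
$$
N(T)=B=\prod_{E}w(E)^2,
$$
the product being over the bounded edges of $\overline\Gam$. Moreover, since all ends are of weight~$1$, the Welschinger multiplicity $W(T)$ equals~$1$ if every bounded edge of $\overline\Gam$ has odd weight and equals~$0$ otherwise.

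The termwise congruence is now an elementary parity count. If all bounded edges of $T$ have odd weight, then each factor $w(E)^2\equiv1\pmod 8$, so $N(T)\equiv1\pmod 8$ and in particular $N(T)\equiv1\equiv W(T)\pmod 4$. If some bounded edge has even weight $w(E)$, then $4\mid w(E)^2$, hence $4\mid N(T)$, so that $N(T)\equiv0\equiv W(T)\pmod 4$. Summing over all $T\in{\cal T}^c(\Del,k_1,k_2,0,0,(DE)\theta_1,\bx)$ yields $GW_0(\Sig,D)\equiv W(\Sig,D)\pmod 4$.

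The only genuinely delicate point is the identification $GW_0(\Sig,D)=N_\Sig(D,0,0,(DE)\theta_1)$. One has to know that for a nef and big divisor on a Del Pezzo surface the genus-zero Gromov--Witten invariant coincides with the number of irreducible rational curves through $-DK_\Sig-1$ generic points, i.e. that no reducible or multiply covered configurations contribute to the virtual count. This holds because nefness and bigness force the relevant curves to be irreducible (cf.\ Lemma~\ref{ln1} and the enumerativity of $N_\Sig$), so the virtual count reduces to the honest one. Once this is granted, the rest is the purely arithmetic observation above and presents no further difficulty. \proofend
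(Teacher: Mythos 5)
Your proof is correct and follows essentially the same route as the paper: the paper's (terse) proof of Theorem~\ref{tn6} is exactly the combination of Theorem~\ref{tn12}(1), the correspondence identifying $GW_0(\Sig,D)$ with $\sum_T N(T)$ over the same set ${\cal T}^c(\Del,k_1,k_2,0,0,(DE)\theta_1,\bx)$, and the termwise observation that, all ends having weight~$1$, one has $N(T)=\prod w(E)^2\equiv W(T)\pmod 4$ according to whether all bounded edges have odd weight. Your write-up merely makes explicit the steps the paper calls ``straightforward'' (including the reduction of $\PP^2$, $(\PP^1)^2$, $\PP^2_{q,0}$ with $q<5$ to $\PP^2_{5,0}$ and the enumerativity of $GW_0$ on Del Pezzo surfaces), all of which are handled correctly.
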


\begin{proof} Straightforward from Theorem \ref{tn12}(1) and the definition
of the complex and Wel- schinger multiplicities in sections
\ref{sec42} and \ref{sec43}. \end{proof}

\begin{rem} For $\Sig=\PP^1$, $(\PP^1)^2$, and $\PP^2_{q,0}$, $q=1,2,3$,
congruence (\ref{en17}) has been established by G. Mikhalkin
(\cite{Mip}, {\it cf}.~\cite{BM}).
\end{rem}

\subsection{Monotonicity}\label{sec-mon}

\begin{lem}\label{ln2} Let $D_1,D_2$ be nef and big divisors on $\PP^2_5$.
If $D_2-D_1$ is effective, then $D_2-D_1$ can be decomposed into a
sum $E^{(1)}+...+E^{(k)}$ of smooth rational $(-1)$-curves such that
each of $D^{(i)}=D_1+\sum_{j\le i}E^{(j)}$ is nef and big, and
satisfies $D^{(i)}E^{(i+1)}>0$, $i=0,...,k-1$.
\end{lem}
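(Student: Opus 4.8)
The plan is to build the chain one $(-1)$-curve at a time, from $D_1$ towards $D_2$, by induction on the integer $n=-K\cdot(D_2-D_1)$, where $K$ denotes the canonical class of $\PP^2_5$. Recall that $-K\cdot E=1$ for every $(-1)$-curve $E$, so $n$ is exactly the number of $(-1)$-curves occurring in any decomposition of $D_2-D_1$ into such curves. When $n=0$, ampleness of $-K$ forces $D_2-D_1=0$ and there is nothing to prove. For the inductive step it suffices to produce a single $(-1)$-curve $E^{(1)}$ such that (i) $\Delta-E^{(1)}$ is effective, where $\Delta=D_2-D_1$, and (ii) $D_1\cdot E^{(1)}>0$; I would then apply the induction hypothesis to the pair $(D_1+E^{(1)},D_2)$ and prepend $E^{(1)}$.

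The reason this suffices is a clean positivity computation. If $E$ is a $(-1)$-curve with $D_1\cdot E\ge 1$, then $D_1+E$ is automatically nef: for any $(-1)$-curve $C\ne E$ one has $(D_1+E)\cdot C=D_1\cdot C+E\cdot C\ge 0$, since $D_1$ is nef and two distinct irreducible curves meet non-negatively, while $(D_1+E)\cdot E=D_1\cdot E-1\ge 0$; as the cone of curves of $\PP^2_5$ is generated by $(-1)$-curves, non-negativity on all of them gives nefness. Bigness is then free, because $(D_1+E)^2=D_1^2+2D_1\cdot E-1\ge 1+2-1>0$. Thus conditions (i) and (ii) alone guarantee that every partial sum $D^{(i)}=D_1+\sum_{j\le i}E^{(j)}$ is nef and big, and the required inequality $D^{(i)}\cdot E^{(i+1)}>0$ is precisely condition (ii) applied at the next stage.

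Two inputs remain. First, I would show $D_1\cdot\Delta>0$ whenever $\Delta\ne 0$. Since $D_1$ is nef and $\Delta$ effective, $D_1\cdot\Delta\ge 0$; were it $0$, then $D_1$ would be orthogonal to every irreducible component $C$ of $\Delta$, and by the Hodge index theorem (using $D_1^2>0$, so that $D_1^{\perp}$ is negative definite) each such $C$ would satisfy $C^2<0$, hence be a $(-1)$-curve, with distinct components mutually orthogonal. A direct computation then yields $D_2\cdot C=\Delta\cdot C<0$, contradicting that $D_2$ is nef. Second, I need the structural fact that the nonzero effective class $\Delta$ is a sum of classes of $(-1)$-curves; here I would invoke the geometry of the (unnodal) quartic del Pezzo surface $\PP^2_5$, whose only irreducible curves of negative self-intersection are its finitely many $(-1)$-curves, citing \cite{Hir} and \cite[Section 5]{GLS}. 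Combining the two, once $\Delta=\sum_i F_i$ with the $F_i$ being $(-1)$-curves, the inequality $D_1\cdot\Delta>0$ forces some summand $F_{i_0}$ with $D_1\cdot F_{i_0}>0$, and $\Delta-F_{i_0}$ (the sum of the remaining $F_i$) is effective; taking $E^{(1)}=F_{i_0}$ supplies both (i) and (ii).

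The main obstacle is the structural decomposition of an arbitrary effective class into $(-1)$-curves with control over its intersection with $D_1$. If I did not wish to cite it, I would prove the equivalent one-step statement, that every nonzero effective class dominates some $(-1)$-curve, by splitting into two cases: if the class is not nef it has a $(-1)$-curve as a fixed component; if it is nef I would use Riemann--Roch together with the identity that the sum of all sixteen $(-1)$-curves equals $-4K$ (so some $(-1)$-curve $E$ has $\Delta\cdot E\le\tfrac14(-K)\cdot\Delta$) to force $h^0(\Delta-E)>0$, the vanishing $h^2(\Delta-E)=0$ following from ampleness of $-K$. Iterating this peeling while tracking $n$ yields the full decomposition, after which the positivity computation above closes the argument.
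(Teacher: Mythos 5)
Your proposal is correct and takes essentially the same route as the paper's proof: both assume without reproof that an effective divisor on $\PP^2_5$ splits into smooth rational $(-1)$-curves, both find the next summand by playing the Hodge index theorem (negative definiteness of the orthogonal complement of the nef and big $D^{(i)}$, which forces any remaining curves orthogonal to $D^{(i)}$ to be mutually orthogonal) against nefness of $D_2$, and both preserve nef-and-big through the identical computations $(D^{(i)}+E)^2=(D^{(i)})^2+2D^{(i)}E-1>(D^{(i)})^2>0$ and $(D^{(i)}+E)\cdot E=D^{(i)}E-1\ge 0$. Your induction on $-K\cdot(D_2-D_1)$ merely recasts the paper's reordering of a fixed decomposition, and your optional Riemann--Roch peeling sketch is dispensable but does in fact close, provided you treat the small values $-K\cdot\Delta\le 3$ separately (there the minimizing $(-1)$-curve satisfies $\Delta\cdot E=0$) and invoke the evenness of $\Delta^2+K\cdot\Delta$ in the case $-K\cdot\Delta=1$.
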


\begin{proof} As any effective divisor on $\PP^2_5$, the divisor $D_2
- D_1$ has a splitting $D_2-D_1=E^{(1)}+...+E^{(k)}$ into the sum of
smooth rational $(-1)$-curves. It remains to show that an
appropriate reordering of $E^{(1)},...,E^{(k)}$ ensures the
properties asserted in the lemma.

For $i=0$, the divisor $D^{(0)}=D_1$ is nef and big.

Suppose that $D^{(i)}$ is nef and big for some $i=0,...,k-1$. Show,
first, that there is $j\in[i+1,k]$ with $D^{(i)}E^{(j)}>0$. Indeed,
if $i=k-1$ then $D^{(k-1)}+E^{(k)}=D_2$ is nef, and hence
$$
D^{(k-1)}E^{(k)}=D_2E^{(k)}-(E^{(k)})^2\ge1\ .$$ If $i\le k-2$, and
$E^{(i+1)},...,E^{(k)}$ are all orthogonal to $D^{(i)}$, then they
cannot be all pairwise orthogonal, since otherwise one would have
$D_2E^{(j)}=-1$, $j=i+1,...,k$. Thus, there exist $E^{(j)},E^{(l)}$
with $i<j<l\le k$ and $E^{(j)}E^{(l)}\ge1$. Therefore, our
assumption $D^{(i)}E^{(j)}=D^{(i)} E^{(l)}=0$ leads by Hodge index
theorem to $(D^{(i)})^2 \le 0$ contrary to the bigness of $D^{(i)}$.
Hence there is $j>i$ with $D^{(i)}E^{(j)}>0$.

Now we can suppose that $j=i+1$ and put
$D^{(i+1)}=D^{(i)}+E^{(i+1)}$. This divisor is big in view of
$$(D^{(i+1)})^2=(D^{(i)})^2+2D^{(i)}E^{(i+1)}-1>(D^{(i)})^2>0\ .$$ It is
nef, since $D^{(i)}$ is nef and
$$D^{(i+1)}E^{(i+1)}=D^{(i)}E^{(i+1)}-1\ge 0 \qedhere$$\end{proof}

\begin{thm}\label{tn7}
Let $D_1,D_2$ be nef and big divisors on $\Sig=\PP^2_{5,0}$ such
that $D_2-D_1$ is effective. Then $W(\Sig,D_2)\ge W(\Sig,D_1)$.
Moreover, in the notation of Lemma~\ref{ln2},
$$W(\Sig,D_2)\ge\prod_{i=1}^k(D^{(i-1)}E^{(i)})\cdot W(\Sig,D_1)\ .$$
\end{thm}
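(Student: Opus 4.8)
The plan is to reduce the statement to a single blow-up step and then telescope. By Lemma~\ref{ln2} write $D_2 - D_1 = E^{(1)} + \cdots + E^{(k)}$ with $D^{(i)} = D_1 + \sum_{j \le i} E^{(j)}$ all nef and big and $D^{(i-1)} E^{(i)} > 0$. It then suffices to prove, for each $i$, the one-step bound $W(\Sig, D^{(i)}) \ge (D^{(i-1)} E^{(i)})\cdot W(\Sig, D^{(i-1)})$; multiplying the $k$ inequalities and using that each integer $D^{(i-1)}E^{(i)} \ge 1$ yields both the product estimate and $W(\Sig, D_2) \ge W(\Sig, D_1)$. Fix such a step and abbreviate $D' = D^{(i-1)}$, $F = E^{(i)}$, $D = D^{(i)} = D' + F$. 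Since $F$ is a smooth rational $(-1)$-curve, $F^2 = -1$ and adjunction gives $FK_\Sig = -1$, whence $DK_\Sig = D'K_\Sig - 1$ and $DF = D'F - 1$.

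The main case is $DF > 0$ (equivalently $D'F \ge 2$). Here I would run the relative Caporaso--Harris machinery of Sections~\ref{secn1}--\ref{sec5} with the \emph{distinguished} curve taken to be $F$ instead of the fixed $L - E_3 - E_4$: a real Cremona transformation carries the class of $F$ to that of $L - E_3 - E_4$, and since $W(\Sig, \cdot)$ and the relative numbers $W_\Sig$ depend only on the deformation class of $\Sig$ and on the divisor classes, the recursion~(\ref{en60}) and the identity of Theorem~\ref{tn12}(1) hold verbatim relative to $F$; in particular $W(\Sig, D) = W_\Sig(D, 0, (DF)\theta_1)$ and $W(\Sig, D') = W_\Sig(D', 0, (D'F)\theta_1)$. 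I would then isolate in the second sum of~(\ref{en60}) the single summand with $m = 1$, component $D^{(1)} = D - F = D'$, $\alp^{(1)} = 0$, $\widetilde\bet^{(1)} = \theta_1$, and hence $\bet^{(1)} = (DF)\theta_1 + \theta_1 = (D'F)\theta_1$. The admissibility conditions (a)--(c) and the splitting~(\ref{en99}) hold trivially, since $\|\widetilde\bet^{(1)}\| = 1$, $(DF)\theta_1 \le (D'F)\theta_1$, and $g - g' = 0 = \|\bet' - \bet\| - 1$.

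For this summand the coefficients collapse. With $n = R_\Sig(D, 0, (DF)\theta_1) = -DK_\Sig - 1$ and $n_1 = R_\Sig(D', 0, (D'F)\theta_1) = -D'K_\Sig - 1$, the relation $DK_\Sig = D'K_\Sig - 1$ gives $n = n_1 + 1$, so $\frac{(n-1)!}{n_1!} = 1$; moreover the multinomial $\binom{\alp}{\alp^{(1)}} = 1$ and $\binom{\bet^{(1)}}{\widetilde\bet^{(1)}} = \binom{D'F}{1} = D'F$. Thus the summand equals $(D'F)\, W_\Sig(D', 0, (D'F)\theta_1) = (D'F)\, W(\Sig, D')$. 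Since every numerical coefficient in~(\ref{en60}) is non-negative and every relative number $W_\Sig(\cdot) = {\cal W}^*(\cdot)$ is a sum of Welschinger multiplicities $W(T) \in \{0,1\}$, hence $\ge 0$, discarding all other summands gives $W(\Sig, D) \ge (D'F)\, W(\Sig, D')$, as required.

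It remains to treat the boundary case $DF = 0$ (equivalently $D'F = 1$), where the $F$-relative set-up degenerates and the claimed factor is $1$; this is the point I expect to require the most care. Here I would instead blow $F$ down, $\phi : \Sig \to \bar\Sig = \PP^2_4$. Because $DF = 0$, one has $D = \phi^*\bar D$ for a nef and big $\bar D$, and a generic irreducible rational $C \in |D|$ is disjoint from $F$, so $\phi$ identifies the real curves counted by $W(\Sig, D)$ with those counted by $W(\bar\Sig, \bar D)$, preserving Welschinger signs; as $DK_\Sig = \bar D K_{\bar\Sig}$ the numbers of point constraints also agree, giving $W(\Sig, D) = W(\bar\Sig, \bar D)$. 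On the other hand $D' = \phi^*\bar D - F$ with $D'F = 1$, and $\phi$ sends a generic rational $C' \in |D'|$ (meeting $F$ transversally once) to a rational curve in $|\bar D|$ smooth at and passing through the point $\phi(F)$; since $-D'K_\Sig - 1 = (-\bar D K_{\bar\Sig} - 1) - 1$, constraining one of the $-\bar D K_{\bar\Sig} - 1$ points to be $\phi(F)$ and invoking the configuration-independence of Welschinger invariants yields $W(\Sig, D') = W(\bar\Sig, \bar D)$. Hence $W(\Sig, D) = W(\Sig, D') = (D'F)\, W(\Sig, D')$, completing this case and, with the telescoping above, the proof.
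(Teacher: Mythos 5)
Your proposal is correct and follows essentially the same route as the paper: reduction to a single $(-1)$-curve step via Lemma~\ref{ln2}, a real Picard automorphism (Cremona) identifying the $(-1)$-class with $E$ so that the recursion~(\ref{en60}) applies, isolation of the single summand $(D'F)\,W_\Sig(D-F,0,(D'F)\theta_1)$ together with non-negativity of all other terms, and a blow-down bijection (with one point placed at the blown-down $(-1)$-curve) in the boundary case $DF=0$. Your coefficient computation $n=n_1+1$, $\binom{D'F}{1}=D'F$ matches the paper's $(m+1)W_\Sig(D_2-E,0,(m+1)\theta_1)$ exactly.
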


\begin{rem}\label{rn3}
Theorem \ref{tn7} implies the similar inequalities for divisors on
$\Sig=\PP^2$, $\PP^2_{q,0}$, $1\le q\le 4$, and $(\PP^1)^2$. In
particular, it strengthens the monotonicity result for the surfaces
$\PP^2$, $\PP^2_{q,0}$, $q=1,2,3$, and $(\PP^1)^2$ from
\cite[Corollary 4]{IKS3}.
\end{rem}

{\bf Proof of Theorem \ref{tn7}}. Let $E_1,\dots, E_5$ be the
exceptional curves of the blow up, and $L\in\Pic(\Sig)$ the
pull-back of a line in $\PP^2$.

In view of Lemma \ref{ln2}, it is sufficient to treat the case of
$D_2-D_1=E_1$.

(1) Assume that $D_2E_1=0$. We claim that $W(\Sig,D_1)=W(\Sig,D_2)$.

Indeed, then
$$D_2=dL-k_2E_2-...-k_5E_5,\quad D_1=dL-E_1-k_2E_2-...-k_5E_5\ .$$
Let us blow down the divisor $E_1$. Choosing a generic configuration
$\bp$ of $-D_1K_\Sig-1$ real points in $\Sig$, we obtain a bijection
between ${\cal R}(\Sig,D_1,\bp)$ and ${\cal
R}(\Sig',\pi(D_2),\bp')$, where $\bp'=\pi(\bp)\cup\{\pi(E_1)\}$ is a
generic configuration of $-D_1K_\Sig=-D_2K_\Sig-1$ real points in
$\Sig'=\PP^2_{4,0}$. Hence
$W(\Sig,D_1)=W(\Sig',\pi(D_2))=W(\Sig,D_2)$.

(2) Assume that $D_2E_1=m>0$. Performing a suitable real
automorphism of $\Pic(\Sig)$, we can make $E=E_1$ and apply formula
(\ref{en60}) to $W(\Sig,D_2)=W_\Sig(D_2,0,m\theta_1)$ in the
left-hand side. Then, among the summands of the right-hand side, one
has
$$(m+1)W_\Sig(D_2-E,0,(m+1)\theta_1)=(m+1)W(\Sig,D_1)\ .$$ Thus, we are
done, since all other summands in the formula are non-negative.
\proofend

\bigskip
\footnotesize \noindent\textit{Acknowledgments.} A considerable part
of the work on this text was done during our stays at \emph{Centre
Interfacultaire Bernoulli}, \emph{\'{E}cole Polytechnique
F\'{e}d\'{e}rale de Lausanne} in 2008, at \emph{Mathematisches
Forschungsinstitut Oberwolfach} in 2009, and at the Mathematical
Sciences Research Institute, Berkeley in 2009, as well as during the
visits of the first author to Tel Aviv University and the visits of
the third author to {\it Universit\'e de Strasbourg}. We thank these
institutions for the support and excellent working conditions. We
are grateful to the referee for the careful reading of the first
version of the paper and many useful comments.

The first two authors were partially funded by the ANR-05-0053-01
and ANR-09-BLAN-0039-01 grants of {\it Agence Nationale de la
Recherche}, and are members of FRG Collaborative Research "Mirror
Symmetry \& Tropical Geometry" (Award No. $0854989$). The third
author was supported by the Israeli Science Foundation grant no.
448/09 and by the Hermann-Minkowski-Minerva Center for Geometry at
Tel Aviv University.

\end{document}